\newcommand\blfootnote[1]{%
  \begingroup
  \renewcommand\thefootnote{}\footnote{#1}%
  \addtocounter{footnote}{-1}%
  \endgroup
}
\newtheorem{theorem}{Theorem}[section]
\newtheorem{lemma}[theorem]{Lemma}
\newtheorem{corollary}[theorem]{Corollary}
\newtheorem{proposition}[theorem]{Proposition}
\newtheorem{conjecture}[theorem]{Conjecture}
\newtheorem{remark}[theorem]{Remark}
\newtheorem*{claim}{Claim}
\theoremstyle{definition}
\newtheorem{definition}[theorem]{Definition}
\newtheorem{example}[theorem]{Example}
\newtheorem*{definitionproposition}{Definition-Proposition}
\newtheorem*{acknowledgement}{Acknowledgement}
\newtheorem*{notation}{Notations}
\newtheorem*{idea}{Main ideas and organization of the paper}
\title{Classification results for conformally K\"ahler Gravitational instantons}
\author{Mingyang Li}
\date{}
\newcommand{\Addresses}{{
  \bigskip
  \footnotesize

  \textsc{{Department of Mathematics, University of California, Berkeley,
    CA 94720, USA.}}\par\nopagebreak
  \textit{E-mail address}: \href{mailto:mingyang_li@berkeley.edu}{\texttt{mingyang\_li@berkeley.edu}}

}}
\begin{document}

\begin{abstract} We investigate the asymptotic geometry of 
Hermitian non-K\"ahler Ricci-flat metrics with finite $\int|Rm|^2$ at infinity. Specifically, we prove:
\begin{itemize}
\item 
Any such metric is asymptotic to an ALE, ALF-$A$, AF, skewed special Kasner, $\text{ALH}^*$ model at infinity.

\item
Any Hermitian non-K\"ahler gravitational instanton with non-Euclidean volume growth is one of the following: the Kerr family, the Chen-Teo family, the Taub-bolt space, the reversed Taub-NUT space. This particularly confirms a conjecture by Aksteiner-Andersson. It includes the well-known Kerr family from general relativity.

\item
All Hermitian non-K\"ahler gravitational instantons can be compactified to log del Pezzo surfaces. This explains a curious relation to compact Hermitian non-K\"ahler Einstein 4-manifolds. 
\end{itemize}
For a 4-dimensional Ricci-flat metric, being Hermitian non-K\"ahler is equivalent to being \textit{non-trivially conformally K\"ahler}.
\end{abstract}

\maketitle

\tableofcontents

\section{Introduction}

\blfootnote{This work is partially supported by NSF grant DMS-2004261, DMS-2304692, and Simons Collaboration Grant on Special Holonomy in Geometry, Analysis, and Physics (488633, S.S.). Part of this work is finished during the author's visit to IASM, Zhejiang University.}

A \textit{gravitational instanton} $(M,h)$ is defined as a (non-flat) complete non-compact Ricci-flat 4-manifold with finite energy 
$$\int_M|Rm_h|_h^2dvol_h<\infty.$$
In this paper, we investigate the asymptotic structure of Hermitian non-K\"ahler Ricci-flat metrics at infinity, along with Hermitian non-K\"ahler gravitational instantons that collapse at infinity. We will provide a complete classification of the asymptotic structures of Hermitian non-K\"ahler Ricci-flat metrics at infinity, under the condition of finite $\int |Rm|^2$, as well as a classification of Hermitian non-K\"ahler gravitational instantons.

We begin by discussing the naturality of the condition that gravitational instantons are \textit{Hermitian non-K\"ahler}, a condition analogous to being hyperk\"ahler. For 4-dimensional Ricci-flat metrics, the condition of being Hermitian non-K\"ahler is equivalent to being \textit{non-trivially conformally K\"ahler}.
In general, oriented gravitational instantons can be broadly classified into three types, based on the number of distinct eigenvalues of the self-dual Weyl curvature $W^+$. This classification, originally introduced by Derdzi\'{n}ski \cite{derdzinski}, also applies to 4-dimensional Einstein metrics.

\begin{definitionproposition}
By treating the self-dual Weyl curvature $W^+$ as an endomorphism $W^+:\Lambda^+\to\Lambda^+$ of the bundle of self-dual two-forms, 4-dimensional Ricci-flat metrics can be classified into the following three types:
\begin{itemize}
\item Type I: If $W^+ \equiv 0$, then the metric is anti-self-dual.
\item Type II: If $W^+$ has exactly two distinct eigenvalues, treated as an endomorphism $W^+:\Lambda^+\to\Lambda^+$ everywhere, then up to passing to a double cover\footnote{It was pointed out to the author by Claude LeBrun that in the definition one should include passing to a double cover. The author is very grateful for that.}, there exists a compatible complex structure $J$ such that $(M,h,J)$ is Hermitian and the conformal metric $g=\lambda^{2/3}h$ is K\"ahler, where $\lambda\coloneqq 2\sqrt{6}|W^+|_h$. 
\item Type III: If $W^+$ generically has three distinct eigenvalues, then $(M,h)$ with the given orientation is not locally conformally K\"ahler.
\end{itemize}
\end{definitionproposition}

Type I and Type II are special as the Ricci-flat metric has extra geometric structures:
\begin{itemize}
  \item Gravitational instantons of Type I are equivalent to being \textit{anti-self-dual} and are, in particular, \textit{locally hyperk\"ahler}.
  \item Gravitational instantons of Type II are equivalent to, up to passing to a double cover, being \textit{Hermitian non-K\"ahler}. We will actually see there is no need to pass to double cover in the situation of gravitational instantons, therefore being Type II is equivalent to being Hermitian non-K\"ahler. They are moreover \textit{conformally K\"ahler}. The K\"ahler metric $g = \lambda^{2/3} h$ is, in fact, \textit{extremal K\"ahler} in the sense of Calabi, with scalar curvature $s_g = \lambda^{1/3}$ and extremal vector field $\mathcal{K} = J \nabla_g \lambda^{1/3} = -J \nabla_h \lambda^{-1/3}$.
\end{itemize}

As the first main result of our paper, we classify all asymptotic cones as well as asymptotic models of Hermitian non-K\"ahler Ricci-flat metrics with finite $\int|Rm|^2$. 

\begin{theorem}\label{thm:main classification ends nonrealizable}
  Any Hermitian non-Kähler Ricci-flat metric on an end that is complete at infinity and has finite $\int |Rm|^2$ is asymptotic, with a polynomial rate, to one of the following asymptotic models at infinity: ALE, ALF-$A$, AF, skewed special Kasner, $\text{ALH}^*$. The corresponding asymptotic cones are $\mathbb{R}^4/\Gamma$, $\mathbb{R}^3$, $\mathbb{R}^3/\mathbb{Z}_k$ or the half-plane $\mathbb{H}$, $[0,\infty)$, $[0,\infty)$.
\end{theorem}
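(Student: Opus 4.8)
The plan is to extract from the Hermitian non-K\"ahler condition a nowhere-vanishing Killing field, use it to reduce the Ricci-flat equation on the end to a model problem in one radial variable together with the K\"ahler quotient, and then classify the limiting behaviors according to the rank of collapse. For the reduction: by the Definition--Proposition, $g=\lambda^{2/3}h$ is extremal K\"ahler with $s_g=\lambda^{1/3}$ and extremal vector field $\mathcal{K}=J\nabla_g\lambda^{1/3}=-J\nabla_h\lambda^{-1/3}$. By Calabi's characterization of extremal metrics $\mathcal{K}$ is Killing for $g$; it is moreover a Hamiltonian vector field for $\omega_g$ with Hamiltonian proportional to $s_g$, so it preserves $\lambda$ and hence the conformal factor $\lambda^{-2/3}$, making $\mathcal{K}$ Killing for $h$ as well, and it is nontrivial since $h$ is non-K\"ahler (so $\lambda$ is nonconstant). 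Setting $u\coloneqq\lambda^{-1/3}$, the function $u$ is $\mathcal{K}$-invariant with $|\mathcal{K}|_h=|\nabla_h u|_h$ and $\iota_{\mathcal{K}}\omega_h=-du$. Next, the $\epsilon$-regularity theory for finite-energy Ricci-flat (more generally bounded-Ricci) $4$-manifolds, in the spirit of Anderson, Bando--Kasue--Nakajima, Tian--Viaclovsky and Cheeger--Tian, gives on the end the quadratic curvature decay $|Rm_h|(x)\le C\rho(x)^{-2}$ ($\rho=$ distance to a fixed point), a well-defined asymptotic cone $C(Y)$ which is a metric cone, and $C^\infty_{\mathrm{loc}}$ convergence of the rescalings $(M,r_i^{-2}h)$, $r_i\to\infty$, to $C(Y)$ away from the apex and finitely many singular rays. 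In particular $\lambda=2\sqrt6\,|W^+|_h\to 0$, so $u\to\infty$ is a proper exhaustion of the end. Using $u$ as a radial variable, the $\mathcal{K}$-invariance, and the Calabi momentum construction for K\"ahler metrics with a Hamiltonian Killing field, I would put $h$ on the end in momentum form over $[u_0,\infty)\times\Sigma$, with $\Sigma$ the fixed K\"ahler quotient surface, turning the Ricci-flat equation into a coupled ODE--PDE system for the momentum profile and the geometry of $\Sigma$ whose behavior as $u\to\infty$ must be classified.

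Then comes the case analysis, organized by the rank of collapse, equivalently by $C(Y)$. If there is no collapse, $C(Y)$ is a $4$-dimensional Ricci-flat cone with $\int|Rm|^2<\infty$, hence flat, $C(Y)=\mathbb{R}^4/\Gamma$ with $Y=S^3/\Gamma$, and together with quadratic curvature decay and the standard removable-singularity and gluing arguments this forces $h$ to be ALE. If the collapse has rank one, its direction is generated by the $\mathcal{K}$-orbits, $C(Y)$ is a flat $3$-cone, and the reduced system is a Gibbons--Hawking-type equation whose admissible asymptotics -- pinned down by finite energy -- split according to whether $|\mathcal{K}|_h$ stays bounded, giving ALF-$A$ with $C(Y)=\mathbb{R}^3$, or $|\mathcal{K}|_h\to\infty$, giving AF with $C(Y)=\mathbb{R}^3/\mathbb{Z}_k$. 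If the collapse has rank two, $C(Y)$ is $2$-dimensional, and here I would show that the Hermitian non-K\"ahler structure excludes the wedge and $\mathbb{R}^2$ (ALG, ALG$^*$) possibilities -- for instance because such ends would be anti-self-dual, i.e.\ Type~I rather than Type~II -- leaving the toric AF case with $C(Y)=\mathbb{H}$. If the collapse has rank three, $C(Y)=[0,\infty)$, $u$ agrees with the distance function up to bounded error, and the $3$-dimensional cross-sections -- torus bundles over $S^1$, possibly with parabolic or hyperbolic monodromy -- collapse; separation of variables reduces the Ricci-flat equation to a Kasner-type ODE system whose bounded finite-energy solutions are exactly the skewed special Kasner models and the $\text{ALH}^*$ model. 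In each case the polynomial convergence rate is obtained by linearizing about the model at infinity, computing the indicial roots of the resulting elliptic operator on $Y$ (or on the collapsed cross-section), and iterating in weighted H\"older spaces; the finiteness of $\int|Rm_h|^2$ enters precisely to exclude the borderline and slowly-decaying indicial modes, forcing the leading correction to decay polynomially.

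I expect the main obstacle to be the genuinely collapsed regimes -- rank two, rank three, and rank one with $|\mathcal{K}|_h$ unbounded. There the model is a degenerating family rather than a smooth cone, the relevant elliptic operator lives on a nilmanifold or torus-bundle fibre rather than a sphere, and one must both rule out the non-Hermitian models sharing these asymptotic cones (generic Kasner, ALG, ALG$^*$, the non-$A$ ALF types) -- using the extremal K\"ahler identities $s_g=\lambda^{1/3}$, $|\mathcal{K}|_h=|\nabla_h\lambda^{-1/3}|_h$, $\iota_{\mathcal{K}}\omega_h=-d\lambda^{-1/3}$ -- and show that no logarithmic or oscillatory asymptotics survive, which again rests essentially on $\int|Rm_h|^2<\infty$. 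Establishing the dichotomy "$|\mathcal{K}|_h$ bounded versus unbounded" (the ALF/AF split) and pinning down exactly which $C(Y)$ occurs is where most of the geometric work concentrates.
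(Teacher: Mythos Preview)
Your outline has the right architecture---extract the Killing field $\mathcal{K}$, use the momentum construction, and classify by collapse---but several of the load-bearing steps are misidentified or missing.

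First, your ALF/AF split criterion is wrong. In both the ALF-$A$ and AF cases one has $|\mathcal{K}|_h^2=V^{-1}\to\text{const}$ at infinity; $|\mathcal{K}|_h$ does not blow up in either. The actual dichotomy is whether $\mathcal{K}$ generates a free $S^1$-action (ALF-$A$, cone $\mathbb{R}^3$), a semi-free $S^1$-action (rational AF, cone $\mathbb{R}^3/\mathbb{Z}_k$), or only an $\mathbb{R}$-action whose closure in the isometry group is a $T^2$ (irrational AF, cone $\mathbb{H}$). So the rank-2 collapse case is not a separate analytic regime to be handled by excluding ALG/ALG$^*$; it is simply the irrational-orbit subcase of the same reduction, and the cone $\mathbb{H}$ arises because the entire $T^2$ collapses.

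Second, the reduced equation is not Gibbons--Hawking-type. The LeBrun ansatz puts $h$ in the form $V(d\varrho^2+e^u(dx^2+dy^2))+V^{-1}\eta^2$ with $V=-12\varrho+6\varrho^2u_\varrho$ and $u$ satisfying the $SU(\infty)$ Toda equation $(e^u)_{\varrho\varrho}+u_{xx}+u_{yy}=0$. The decisive global input---which your outline lacks entirely---comes from recognizing that the Toda equation says $\tfrac{1}{2}(e^u)_{\varrho\varrho}e^{-u}$ equals the Gauss curvature of the symplectic reduction $\Sigma$. Integrating via Gauss--Bonnet gives closed formulas $\int_\Sigma e^u=2\pi\chi(\Sigma)\varrho^2+a\varrho+b$ and $\int_\Sigma Ve^u=-6a\varrho^2-12b\varrho$, hence an exact volume-in-terms-of-$\varrho$ formula. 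This is what rules out $\lambda_\infty^{1/3}\equiv 1$, forces $\chi(\Sigma)\ge 0$, and ultimately pins down the model; separation of variables and indicial-root analysis alone will not produce these constraints.

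Third, the structural dichotomy you are missing is the sign of $s_g$. On an end complete at infinity but possibly incomplete inside, $s_g$ can be negative; this is exactly when $\varrho<0$ and the skewed special Kasner and $\text{ALH}^*$ models appear. When $s_g>0$ the integral formulas force $\chi(\Sigma)>0$, $\Sigma=S^2$, and the cone is $3$- or $2$-dimensional. When $s_g<0$ one can have $\chi(\Sigma)=0$, $\Sigma=T^2$, the local limit geometry is genuinely Type~II (not flat), the cone is $[0,\infty)$, and the constants $a,b$ in the integral formula decide between Kasner ($a\neq 0$) and $\text{ALH}^*$ ($a=0$). Your rank-3 case is not a generic Kasner ODE system---the Hermitian structure has already selected the special exponent, and what remains is to show $u$ converges to $\log(-\varrho)+\text{const}$ or to a constant.

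Finally, to make the limits of the rescaled conformal factor exist and be nontrivial, the paper uses a Cheng--Yau gradient estimate on the equation $6\Delta_h\lambda^{1/3}+\lambda^{4/3}=0$ to get a Harnack inequality; this step (or an equivalent) is essential and absent from your plan.
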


A detailed introduction to these asymptotic models will be provided in Section \ref{subsec:define asymptotic models}. Note that this also gives a classification of the asymptotic geometry of Type II Ricci-flat metrics with $\int|Rm|^2<\infty$. Any such Type II Ricci-flat end can only be asymptotic to a further $\mathbb{Z}_2$-quotient of the asymptotic models above. 
Here, for $\mathbb{R}^4/\Gamma$, the group $\Gamma\subset SO(4)$ can only be certain special subgroups, as discussed in more detail in Section \ref{subsec:define asymptotic models}. For $\mathbb{R}^3/\mathbb{Z}_k$, $\mathbb{Z}_k$ acts as rotations along a fixed axis on $\mathbb{R}^3$. For $\text{ALH}^*$ model it turns out to have a rate faster than any polynomial rate. As we will see, neither skewed special Kasner models nor $\text{ALH}^*$ models can be filled in as Hermitian non-K\"ahler gravitational instantons. Interestingly, however, there exist Type III gravitational instantons that are asymptotic to special Kasner model \cite{yamada}. This seems to be the only known class of Type III gravitational instantons so far, though they have infinite fundamental group and are not asymptotically flat. Specifying to the situation of gravitational instantons, we have

\begin{theorem}\label{thm:asymptotic geometry}
  Any Hermitian non-K\"ahler gravitational instanton is asymptotic to a model at infinity. Specifically there are the following situations:
  \begin{itemize}
    \item The gravitational instanton is ALE, with asymptotic cone $\mathbb{R}^4/\Gamma$.
    \item The gravitational instanton is ALF-$A$, with asymptotic cone $\mathbb{R}^3$.
    \item The gravitational instanton is AF, with asymptotic cone $\mathbb{R}^3/\mathbb{Z}_k$ or $\mathbb{H}$.
  \end{itemize}
\end{theorem}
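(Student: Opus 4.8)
The plan is to deduce the statement from the end classification in Theorem~\ref{thm:main classification ends nonrealizable}, after (i) showing a Hermitian non-Kähler gravitational instanton has a single end, and (ii) ruling out the skewed special Kasner and $\mathrm{ALH}^*$ models. For (i): since $(M,h)$ is complete, non-flat and Ricci-flat, if it had two ends it would contain a line — take minimizing segments between points escaping to infinity in the two ends; each such segment meets a fixed compact set, so a subsequence converges to a line — and the Cheeger–Gromoll splitting theorem (applicable since $\mathrm{Ric}_h\equiv0$) would give an isometric splitting $M\cong\mathbb{R}\times N^3$ with $N$ complete Ricci-flat, hence flat, forcing $M$ flat, a contradiction. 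So $M$ has exactly one end $E$; it is complete at infinity and inherits $\int_E|Rm_h|^2\le\int_M|Rm_h|^2<\infty$, so Theorem~\ref{thm:main classification ends nonrealizable} applies and $E$ is asymptotic, with polynomial rate, to one of ALE, ALF-$A$, AF, skewed special Kasner, $\mathrm{ALH}^*$, with asymptotic cone $\mathbb{R}^4/\Gamma$, $\mathbb{R}^3$, $\mathbb{R}^3/\mathbb{Z}_k$ or $\mathbb{H}$, $[0,\infty)$, $[0,\infty)$ respectively. In the first three cases the theorem is then proven; it remains to exclude the last two.

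The extra input for this exclusion is the global extremal Kähler structure. Since $(M,h)$ is Type~II everywhere, $W^+$ is nowhere vanishing, $\lambda=2\sqrt{6}|W^+|_h$ is a positive smooth function on all of $M$, and $g=\lambda^{2/3}h$ is a genuine extremal Kähler metric with $s_g=\lambda^{1/3}>0$ and extremal vector field $\mathcal{K}=J\nabla_g\lambda^{1/3}=-J\nabla_h\lambda^{-1/3}$, as recorded in the discussion after the Definition–Proposition. Because $s_g>0$ cannot be constant — a constant $\lambda$ would make $g$ a Ricci-flat Kähler metric and hence force $s_g\equiv0$ — we have $\mathcal{K}\not\equiv0$; and since $\mathcal{K}$ is $g$-Killing and annihilates $\lambda$ (as $h(X,JX)=0$ for the $h$-orthogonal complex structure $J$), it is also an $h$-Killing field, with $|\mathcal{K}|_h=|\nabla_h\lambda^{-1/3}|_h$ pointwise. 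Thus every Hermitian non-Kähler gravitational instanton carries a nontrivial Killing field which, restricted to $E$, converges to a Killing field of the asymptotic model.

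To rule out the skewed special Kasner and $\mathrm{ALH}^*$ ends I would argue directly from the explicit model metrics of Section~\ref{subsec:define asymptotic models}. In both cases the asymptotic cone is the ray $[0,\infty)$, so $E$ is highly collapsed and one can read off the decay of $\lambda=2\sqrt{6}|W^+|_h$ (a fixed negative power of the distance function for skewed special Kasner, a super-polynomially small quantity for $\mathrm{ALH}^*$), hence the growth of the Hamiltonian $\lambda^{-1/3}$ of $\mathcal{K}$ with respect to $\omega_g$, and the asymptotic behaviour of $\mathcal{K}=-J\nabla_h\lambda^{-1/3}$ itself. Matching this against the finite-dimensional, torus-type space of Killing fields of the model and against the collapsing cross-sectional geometry should produce a contradiction: either the would-be Hamiltonian is incompatible with the bounded or shrinking fibre directions, or the zero set and moment-map image of $\mathcal{K}$ in the interior cannot be reconciled with a one-dimensional asymptotic cone. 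A cleaner alternative — likely what ``as we will see'' refers to — is to feed the boundary behaviour of these two models into the Gauss–Bonnet and signature formulas: the boundary correction terms attached to a skewed special Kasner or $\mathrm{ALH}^*$ end would force $\chi(M)$, $\tau(M)$, or the sign of $\int_M|W^+|^2$, to take values impossible for a smooth closed-up $4$-manifold. Either way, only ALE, ALF-$A$ and AF remain, with the asymptotic cones as stated.

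The main obstacle is precisely this last step. The difficulty is that skewed special Kasner and $\mathrm{ALH}^*$ are perfectly good complete Ricci-flat ends — indeed special Kasner ends occur for the Type~III examples of \cite{yamada} — so the exclusion must use the Type~II / extremal-Kähler constraint and the global topology of a closed-up instanton in an essential way, not any purely local or asymptotic feature of the end. Making the asymptotics of $\lambda$ and of $\mathcal{K}$ sharp enough to run either argument, particularly on the $\mathrm{ALH}^*$ model whose convergence rate is faster than any polynomial, is the delicate point.
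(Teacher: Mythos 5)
Your reduction (one end via Cheeger--Gromoll, then apply Theorem~\ref{thm:main classification ends nonrealizable} to the end) is fine, but the proof is not complete: the exclusion of the skewed special Kasner and $\text{ALH}^*$ models --- which you yourself flag as ``the main obstacle'' --- is exactly the substance of the theorem, and neither of your two sketched strategies closes it. The Killing-field/moment-map matching cannot by itself give a contradiction, because the model ends carry precisely the same structure as the instanton: they are Hermitian non-K\"ahler Ricci-flat with extremal field $\mathcal{K}=-\partial_t$, a Hamiltonian $S^1$ (or nilpotent) collapsing structure, and a one-dimensional asymptotic cone, so nothing about the symmetry algebra or the collapsing fibres distinguishes them from a hypothetical complete filling. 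The Gauss--Bonnet/signature route is also unpromising: $\text{ALH}^*$ ends do bound smooth gravitational instantons (the hyperk\"ahler $\text{ALH}^*$ spaces, with the opposite orientation), so there is no purely topological or characteristic-number obstruction; and your guess that this is what ``as we will see'' refers to is not what the paper does.

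The missing idea is the sign of the scalar curvature, which you actually record ($s_g=\lambda^{1/3}>0$ on a complete gravitational instanton, from $6\Delta_h(s_g^{-1})+s_g^{-4}=0$ and the maximum principle) but never use. The conformal K\"ahler metrics of the skewed special Kasner and $\text{ALH}^*$ models have $s_g<0$; equivalently, the simple eigenvalue of $W^+$ of these models is negative, whereas on the instanton it is positive everywhere, with magnitude comparable to $|W^+|_h$. Since the asymptotics in Theorem~\ref{thm:main classification ends nonrealizable} hold with derivatives at a rate beating the curvature scale, being asymptotic to one of these models would force the simple eigenvalue of $W^+_h$ (hence $s_g$) to be negative near infinity --- a contradiction. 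This is also how the paper itself is organized: the complete case (Theorem~\ref{thm:classification of ends}) is proved first, using $\varrho=s_g^{-1}>0$ to force $\chi(\Sigma)>0$ in the Toda/Gauss--Bonnet integral identities, and the Kasner/$\text{ALH}^*$ models only appear in Section~\ref{subsec:not realizable models} in the $s_g<0$ branch for ends that are incomplete inside. In particular, quoting Theorem~\ref{thm:main classification ends nonrealizable} as a black box and then excluding two of its outputs requires exactly this sign argument, which your proposal leaves unsupplied.
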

Note that the skewed special Kasner models and the $\text{ALH}^*$ models cannot be filled in by Theorem \ref{thm:asymptotic geometry}. Theorem \ref{thm:main classification ends nonrealizable}-\ref{thm:asymptotic geometry} are surprising as it is in \textit{sharp contrast} with the hyperk\"ahler case. By the recent work of \cite{sz}, any hyperk\"ahler gravitational instanton must be asymptotic to an ALE, ALF, ALG, ALH, $\mathrm{ALG}^*$, $\mathrm{ALH}^*$ model \cite{sz}, which a prior all could be asymptotic models of Hermitian non-K\"ahler gravitational instantons. Our theorem above rules out most of them for Hermitian non-K\"ahler gravitational instantons.

\

We now turn to the classification of Hermitian non-K\"ahler gravitational instantons. Due to lots of previous work, an entire classification for hyperk\"ahler gravitational instantons is completed \cite{ah88,biquardminerbe,ch05,ck99,minerbe2010,minerbe,cc17,cc,cc21,ck02,cv21,cjl22,hei12,hsvz22,kro89a,kro89b,CVZ,leelin}. For Hermitian non-K\"ahler gravitational instantons, known examples and their topological types are:
\begin{itemize}
  \item ALE gravitational instantons: 
  \begin{itemize}
    \item The Eguchi-Hanson metric on $T^*\mathbb{P}^1$ with reversed orientation.
  \end{itemize}
\end{itemize}
\begin{itemize}
\item AF gravitational instantons:
\begin{itemize}
\item Kerr metrics on $\mathbb{R}^2\times S^2$;
\item Chen-Teo metrics on $\mathbb{P}^2\setminus S^1$.
\end{itemize}
\item ALF gravitational instantons:
\begin{itemize}
\item the Taub-NUT metric on $\mathbb{C}^2$ with reversed orientation;
\item the Taub-bolt metric on $\mathbb{P}^2\setminus\{point\}$ with both orientations.
\end{itemize}
\end{itemize}
There is a complex structure on the usual Taub-NUT space that is compatible with the metric, whose orientation is opposite to the standard hyperk\"ahler orientation. This is what we mean by \textit{reversed Taub-NUT}. The situation is similar for \textit{reversed Eguchi-Hanson}. These are conventional ways to interpret the topology of these gravitational instantons, as can be found in previous literatures. A more familiar example is the Schwarzschild metric, which is one of the metrics in the Kerr family. These examples all can be explicitly written down. See \cite{gibbons}, \cite{conjecture}, and \cite{chenteo} for example. They are all toric, in the sense that there is an effective $T^2$-action that acts holomorphically and isometrically. 
Aksteiner and Andersson \cite{conjecture} were the first to observe that Chen-Teo spaces are Hermitian non-K\"ahler, and they conjectured that the ALF/AF gravitational instantons in the above list constitute the complete set of Hermitian non-K\"ahler ALF/AF gravitational instantons. 
Toric Hermitian non-K\"ahler ALF/AF gravitational instantons have already been classified by Biquard-Gauduchon \cite{biquardgauduchon}, with an additional assumption on the decay rate $\delta_0$ of the gravitational instanton. 
Chen-Teo spaces are also studied in details in \cite{biquardgauduchonb}.
\begin{conjecture}[Aksteiner-Andersson]
If $(M,h)$ is a Hermitian non-K\"ahler ALF/AF gravitational instanton, it must be one of the Kerr metrics, the Chen-Teo metrics, the Taub-bolt metric with both orientations, or the reversed Taub-NUT metric.
\end{conjecture}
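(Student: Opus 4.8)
The plan is to exploit the extremal Kähler structure carried by these instantons (the Definition-Proposition together with the Type~II discussion) and the sharp asymptotic classification of Theorem~\ref{thm:main classification ends nonrealizable}: first extract torus symmetry from the extremal vector field, then reduce to the toric classification of Biquard-Gauduchon, and finally remove their standing decay hypothesis using Theorem~\ref{thm:main classification ends nonrealizable}. Fix a Hermitian non-Kähler ALF or AF gravitational instanton $(M,h)$; by Theorem~\ref{thm:asymptotic geometry} it is ALF-$A$ (asymptotic cone $\mathbb{R}^3$) or AF (asymptotic cone $\mathbb{R}^3/\mathbb{Z}_k$ or $\mathbb{H}$).

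\emph{Step 1: torus symmetry of rank $\geq 1$.} Let $g=\lambda^{2/3}h$ be the extremal Kähler metric, $s_g=\lambda^{1/3}$, and $\mathcal{K}=J\nabla_g s_g=-J\nabla_h\lambda^{-1/3}$ the extremal vector field. By construction $\mathcal{K}$ is holomorphic and Killing for $g$; since $\mathcal{K}(s_g)=g(\nabla_g s_g,J\nabla_g s_g)=0$ it preserves the conformal factor, hence is Killing for $h$ and preserves $\lambda$. Because $(M,h)$ is Hermitian non-Kähler, $\lambda=2\sqrt{6}\,|W^+|_h>0$ on $M$, while $\lambda\to0$ at infinity (curvature decay in the ALF/AF setting), so $\lambda$ attains a positive interior maximum $p$, which is a zero of $\mathcal{K}$ (in particular $\mathcal{K}\not\equiv 0$, since $\mathcal{K}\equiv 0$ would force $\lambda$ constant). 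The flow of $\mathcal{K}$ fixes $p$ and therefore lies in the isotropy group $\mathrm{Isom}(M,h)_p$, which is compact; its closure is thus a torus $T^{\ell}$, $\ell\geq 1$, acting holomorphically and isometrically on $(M,h,J)$ and preserving $\lambda$. If $\ell\geq 2$ the instanton is already toric; the degenerate possibility $s_g\equiv\mathrm{const}$ (equivalently $W^+\equiv 0$, i.e.\ Type~I) is excluded.

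\emph{Step 2: promoting to a $T^2$-action --- the main obstacle.} It remains to handle $\ell=1$ by producing a second commuting Killing field. The asymptotic model is itself toric --- both the ALF-$A$ and AF models carry a standard holomorphic isometric $T^2$ --- so outside a compact set there is a Killing field $Y$ commuting with $\mathcal{K}$, with polynomial control from Theorem~\ref{thm:main classification ends nonrealizable}. Since a Ricci-flat metric is real-analytic, $Y$ satisfies an overdetermined elliptic (Killing) system and continues analytically along paths; the obstruction to a global extension is the monodromy of this continuation, and one must also ensure the extended field remains $S^1$-periodic. This is the step I expect to be hardest. The natural way around it is to descend: form the $S^1$-quotient $3$-orbifold $N=M/S^1$, whose ends are completely explicit, and use a Jones-Tod/LeBrun-type correspondence to recast the extremal Kähler equation with circle symmetry as a \emph{linear} problem (of Einstein-Weyl/Toda type) on a $2$-dimensional base, whose solutions with the prescribed boundary behaviour can be classified, forcing the second symmetry. (Alternatively, bound $\pi_1(M)$ --- necessarily finite, coming from the ALF/AF end --- and pass to the simply connected finite cover, reducing to a case amenable to a direct analysis.) Either way one arrives at a \emph{toric} Hermitian non-Kähler ALF/AF gravitational instanton.

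\emph{Step 3: conclusion.} Biquard-Gauduchon classified toric Hermitian non-Kähler ALF/AF gravitational instantons under a hypothesis on the decay exponent $\delta_0$; by Theorem~\ref{thm:main classification ends nonrealizable} the end is asymptotic to its model with a definite polynomial rate, which pins $\delta_0$ into the admissible range and makes that hypothesis automatic. Their list is exactly the Kerr family, the Chen-Teo family, the Taub-bolt metric with both orientations, and the reversed Taub-NUT metric, which proves the conjecture. This is also consistent with the compactification to a log del Pezzo surface: the toric surface $\overline{M}$ obtained by adding the divisor at infinity is one of the finitely many toric log del Pezzo surfaces, and matching the momentum polytopes (cut out by $\lambda$ and the Hamiltonian of $Y$) with the known examples both recovers the classification and identifies the underlying spaces $\mathbb{R}^2\times S^2$, $\mathbb{P}^2\setminus S^1$, $\mathbb{P}^2\setminus\{\mathrm{pt}\}$, and $\mathbb{C}^2$.
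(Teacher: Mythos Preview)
Your overall strategy---reduce to toricity, then invoke Biquard--Gauduchon---matches the paper, and Step~1 is correct. The gap is entirely in Step~2, and it is the heart of the matter.

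Your first proposal, analytic continuation of a Killing field from the asymptotic model, does not work as stated: the extra rotation on the model is only an \emph{approximate} Killing field of $h$ (with polynomial error), not an exact one, so there is nothing to continue analytically. Your second proposal, to descend to the $S^1$-quotient and ``recast the extremal K\"ahler equation as a linear problem of Einstein--Weyl/Toda type,'' is not a proof: the $SU(\infty)$ Toda equation $(e^u)_{\varrho\varrho}+u_{xx}+u_{yy}=0$ governing this situation is nonlinear, and the assertion that solutions with the prescribed boundary behaviour are classified and force a second symmetry is exactly what needs to be shown. The finite-cover suggestion does not help either.

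What the paper actually does in the $\ell=1$ case is substantially different and constitutes most of the work. First, it uses the $\mathbb{C}^*$-action generated by $\mathcal{K},J\mathcal{K}$ to compactify $M$ to a projective surface $\overline{M}$ by adding a fixed $\mathbb{P}^1=D$, and shows $-(K_{\overline{M}}+D)$ is ample via LeBrun's positive-curvature hermitian metric $e^{-2\log s_g}g_{-K}$ extended as a positive current across $D$. A short classification of pairs $(\overline{M},D)$ then forces $\overline{M}$ to be \emph{holomorphically} toric. Second---and this is the step replacing your Step~2---a Calabi-type theorem is proved: $\mathfrak{aut}_{\mathcal{K}}(M)=\mathfrak{hamiltoniso}_{\mathcal{K}}(M)\oplus J\,\mathfrak{hamiltoniso}_{\mathcal{K}}(M)$, so every holomorphic vector field commuting with $\mathcal{K}$ decomposes into Hamiltonian Killing pieces. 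This requires building an $L^2$ potential for the second holomorphic field on the non-compact $(M,g)$ (using H\"ormander-type estimates with the positively curved weight $e^{-2\log s_g}$) and justifying several integrations by parts with careful decay estimates coming from the asymptotic expansion of $\lambda^{1/3}$. Only then is the $T^2$ isometric, and Biquard--Gauduchon applies---though, contrary to your Step~3, the decay rate obtained is $\delta_0<1$ and their argument needs minor modification rather than applying verbatim.
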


In this paper, as the second main result, we shall prove
\begin{theorem}\label{thm:main}
Any Hermitian non-K\"ahler ALF/AF gravitational $(M,h)$ must be holomorphically isometrically toric, in the sense that there is an effective $T^2$-action that acts holomorphically and isometrically on $(M,h)$.
\end{theorem}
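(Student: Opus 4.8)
The plan is to establish the existence of a second commuting Killing field for a Hermitian non-Kähler ALF/AF gravitational instanton $(M,h)$, knowing already from the structure theory (Type II $\Rightarrow$ conformally Kähler with an extremal Kähler metric $g = \lambda^{2/3}h$) that there is a canonical Killing field, namely the one generated by $\mathcal{K} = J\nabla_g \lambda^{1/3}$, which is simultaneously holomorphic and Killing for both $g$ and $h$. The first step is to analyze this canonical $S^1$-action: I would show, using Theorem \ref{thm:asymptotic geometry} and the explicit ALF-$A$/AF asymptotic models, that $\mathcal{K}$ generates a genuine $S^1$-action (the orbits are closed), and that at infinity this action agrees with the natural circle action of the model (the Killing field $\mathcal{K}$ has a definite behavior there, dual to a bounded closed $1$-form after the conformal change, so its flow is periodic). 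Thus $(M,h)$ carries an isometric, holomorphic $S^1$-action with at least one fixed point or a bolt.

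The second and main step is to produce a \emph{second}, independent commuting Killing field. Here I would use the quotient construction: dividing out by the $S^1$-action of $\mathcal{K}$ yields a 3-dimensional quotient away from the fixed locus, and the Ricci-flat plus Hermitian structure descends to an $SU(\infty)$-Toda / Gibbons–Hawking-type Ansatz on the quotient (this is precisely the LeBrun–Biquard–Gauduchon picture for extremal Kähler 4-manifolds with a Killing field, used in \cite{biquardgauduchon}). The asymptotic model being ALF-$A$ or AF forces the quotient geometry to be asymptotically a quotient of $\mathbb{R}^3$ or a half-plane, which is \emph{two-dimensional at infinity} with an extra translational symmetry. The task is to upgrade this asymptotic symmetry to a global one: I would run a separation-of-variables / elliptic-uniqueness argument on the Toda-type equation governing the quotient, showing that the solution must be invariant under the asymptotic extra $S^1$, because any deviation would violate the decay rate or the finiteness of $\int |Rm|^2$. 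This is where the finite-energy hypothesis and the polynomial decay from Theorem \ref{thm:main classification ends nonrealizable} are essential: they constrain the harmonic-type data on the quotient enough to force the symmetry.

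An alternative route for the second step — which I would pursue in parallel, since it may be cleaner — is to compactify: by the third bullet of the abstract, $(M,h)$ compactifies to a log del Pezzo surface $\overline{M}$, and the canonical holomorphic vector field extends to $\overline{M}$. On a log del Pezzo surface, one can appeal to the structure of the automorphism group and of extremal Kähler metrics (Calabi's theorem: the isometry group of an extremal Kähler metric is a maximal compact subgroup of the reduced automorphism group). If the reduced automorphism group of $\overline{M}$ has rank $\ge 2$, one gets the torus directly; the point is to rule out rank exactly $1$, which should follow because a rank-one extremal (but non-Kähler-Einstein, non-constant-scalar-curvature, non-product) completion with the required boundary behavior at the compactifying divisor is too rigid — the Futaki-type obstruction and the behavior near the divisor pin down the anticanonical geometry. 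I expect \textbf{the main obstacle} to be exactly this rank-one exclusion: showing that a single $S^1$ of holomorphic isometries cannot be the \emph{full} symmetry, i.e. that the extremal Kähler equation together with the ALF/AF boundary condition cannot be solved with only one-dimensional symmetry. Concretely this reduces to a rigidity statement for the Toda equation on a half-space or slab domain with prescribed asymptotics, and the technical heart will be a Liouville/unique-continuation argument there, using that the curvature decays fast enough (polynomial rate, finite energy) that Fourier modes transverse to the putative extra symmetry must all vanish.
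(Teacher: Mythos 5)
There is a genuine gap in both of your routes. Your primary route rests on the claim that the extra rotational symmetry of the ALF-$A$/AF model at infinity can be upgraded to a global Killing field by a separation-of-variables or Liouville/unique-continuation argument, ``because any deviation would violate the decay rate or the finiteness of $\int|Rm|^2$.'' That principle is false at the level of generality you invoke it: a multi-Taub-NUT metric with distinct centers is asymptotic to the rotationally symmetric ALF model at a polynomial rate, has finite energy, and yet admits no second continuous symmetry, so decay plus finite energy cannot force the transverse Fourier modes of the Toda data to vanish. The symmetry in the Hermitian non-K\"ahler case is a genuinely global phenomenon and is not extracted from the asymptotics alone. A secondary inaccuracy in the same step: $\mathcal{K}$ does \emph{not} always generate a closed $S^1$ (in the irrational AF case its orbits are dense in $T^2$-orbits); that case is actually the easy one, since the closure of the flow in the isometry group already produces the $T^2$ (case ($\spadesuit$) in Section \ref{sec:compactification}), and the hard case is precisely when $\mathcal{K}$ generates only an $S^1$.

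Your alternative route is much closer to what is actually done, but it treats as a citation the step that is the analytic heart of the matter. Calabi's theorem on isometry groups of extremal K\"ahler metrics is a statement about \emph{compact} manifolds, and the extremal metric $g$ does not extend to the compactification $\overline{M}$ (it develops a cusp along the added divisor $D$), so you cannot appeal to it directly; what is needed is a new non-compact Calabi-type theorem (Theorem \ref{thm:calabi}): every real holomorphic vector field commuting with $\mathcal{K}$ splits as $\mathfrak{k}+J\mathfrak{w}$ with $\mathfrak{k},\mathfrak{w}$ Hamiltonian Killing. Its proof requires constructing a potential for the holomorphic field by solving a $\bar\partial$-equation with H\"ormander--Demailly $L^2$ estimates weighted by LeBrun's positively curved hermitian metric $e^{-2\log s_g}g_{-K}$ on $-K_M$, proving uniform bounds $|\nabla_g^k\chi|_g\leq C$, and then justifying Calabi's $\mathfrak{D},\overline{\mathfrak{D}}$ integration-by-parts argument using the finite volume of $g$; none of this is supplied or replaceable by the compact statement. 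Finally, your worry about ``rank-one exclusion'' is resolved by a different mechanism than Futaki-type obstructions: the positivity of $-(K_{\overline{M}}+D)$ (Proposition \ref{prop:metricbasicestimate}, Theorem \ref{thm:ampleness}) together with adjunction pins down the pair $(\overline{M},D)$ to an explicit finite list of toric surfaces (Corollaries \ref{cor:alfpositive}, \ref{cor:ruled}, \ref{cor:af}), which hands you the second \emph{holomorphic} vector field for free; the remaining, and missing, step in your proposal is exactly the conversion of that holomorphic symmetry into an isometric one.
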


Therefore, combined with the previous result by Biquard-Gauduchon \cite{biquardgauduchon}, our classification of asymptotic models Theorem \ref{thm:asymptotic geometry}, and our preivous work on the ALE case \cite{me}, we obtain the following classification result of Hermitian non-K\"ahler gravitational instantons. This particularly gives an affirmative answer to the aforementioned conjecture, and classifies all Hermitian non-K\"ahler gravitational instantons that collapse at infinity explicitly. 
\begin{theorem}
Any Hermitian non-K\"ahler gravitational instanton $(M,h)$ must be in one of the following situations:
\begin{itemize}
  \item ALE, in which case it corresponds to a special Bach-flat K\"ahler orbifold in the terminology of \cite{me}.
  \item ALF, in which case it is either the reversed Taub-NUT metric, or the Taub-bolt metric with both orientations.
  \item AF, in which case it is one of the Kerr metrics or the Chen-Teo metrics. 
\end{itemize}
\end{theorem}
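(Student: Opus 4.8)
The plan is to assemble the statement from three ingredients already established: the asymptotic trichotomy of Theorem~\ref{thm:asymptotic geometry}, the classification of the ALE case in the author's earlier work \cite{me}, and the toricity statement of Theorem~\ref{thm:main} combined with the Biquard--Gauduchon classification of toric Hermitian non-K\"ahler ALF/AF gravitational instantons \cite{biquardgauduchon}. So let $(M,h)$ be a Hermitian non-K\"ahler gravitational instanton. By Theorem~\ref{thm:asymptotic geometry}, $(M,h)$ is asymptotic either to an ALE model with cone $\mathbb{R}^4/\Gamma$, to an ALF-$A$ model with cone $\mathbb{R}^3$, or to an AF model with cone $\mathbb{R}^3/\mathbb{Z}_k$ or $\mathbb{H}$. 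In particular no ALF-$D$, ALG, ALG$^*$, ALH, ALH$^*$, or skewed special Kasner end can occur, so the three bullets are genuinely exhaustive and all that remains is to identify the metrics inside each case.

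In the ALE case, $(M,h)$ is a Hermitian non-K\"ahler ALE gravitational instanton, and I would invoke the classification of \cite{me}: via the extremal K\"ahler structure $g=\lambda^{2/3}h$ of Type~II, which is precisely the setting studied there, such metrics are in correspondence with special Bach-flat K\"ahler orbifolds, giving the first bullet. One should only check that the groups $\Gamma\subset SO(4)$ permitted as asymptotic groups (as constrained in Section~\ref{subsec:define asymptotic models}) are compatible with the hypotheses of \cite{me}; this is immediate from the way the extremal K\"ahler structure extends over the asymptotic orbifold point.

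In the ALF-$A$ or AF case, Theorem~\ref{thm:main} produces an effective $T^2$-action on $(M,h)$ acting holomorphically and isometrically, so $(M,h)$ is holomorphically isometrically toric. The Biquard--Gauduchon classification \cite{biquardgauduchon} then enumerates all toric Hermitian non-K\"ahler ALF/AF gravitational instantons --- under an extra hypothesis on the decay rate $\delta_0$ --- as the reversed Taub-NUT metric and the Taub-bolt metric with both orientations in the ALF case, and as the Kerr and Chen--Teo families in the AF case, which are exactly the last two bullets. The one point requiring care is that the decay-rate hypothesis of \cite{biquardgauduchon} is automatically met: this should follow from the polynomial convergence rate to the asymptotic model furnished by Theorem~\ref{thm:main classification ends nonrealizable}, together with the explicit geometry of the ALF-$A$ and AF models --- the curvature decay and the rate at which the circle-fibration (resp.\ the AF boundary) structure stabilizes force $\delta_0$ into the admissible range.

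I expect this last verification --- reconciling the precise normalization of $\delta_0$ used in \cite{biquardgauduchon} with the convergence rate coming from our asymptotic analysis, uniformly across the ALF-$A$ and AF ends --- to be the main obstacle; the rest of the argument is bookkeeping that glues together Theorems~\ref{thm:asymptotic geometry} and \ref{thm:main} with the cited classifications \cite{me, biquardgauduchon}.
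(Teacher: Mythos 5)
Your proposal follows essentially the same route as the paper: the asymptotic trichotomy of Theorem~\ref{thm:asymptotic geometry}, the ALE classification from \cite{me}, and the toricity of Theorem~\ref{thm:main} fed into the Biquard--Gauduchon classification \cite{biquardgauduchon}. The only point where you diverge is the decay rate: the paper does not claim that $\delta_0$ automatically lands in the admissible range of \cite{biquardgauduchon} (it is a priori only close to, but less than, $1$); instead it uses the expansion $\lambda^{1/3}=1/\rho+O'(\rho^{-1-\delta_0})$ together with toricness to rerun the Biquard--Gauduchon argument with minor modifications to the error-term decay in their Section~4.3, so your ``automatically met'' verification should be replaced by that adaptation rather than expected to hold as stated.
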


Note that it is direct to check all gravitational instantons above have no further $\mathbb{Z}_2$-quotients, so they already constitute a complete list of Type II gravitational instantons.
The conjecture in \cite{conjecture} was formulated based on a comparison to the compact case. A complete classification for compact Hermitian non-K\"ahler Einstein 4-manifolds was obtained by LeBrun through a series of works \cite{lebrun95,cxx,lebrun12}, where all metrics turn out to be toric. 
Our key observation lies in a compactification phenomenon for  Hermitian non-K\"ahler ALF/AF gravitational instantons. The compactified surfaces enjoy certain positivity properties, which is similar to what happens in the non-collapsed case of Hermitian non-K\"ahler gravitational instantons \cite{me}. An intriguing outcome that we will establish while proving our second result is the following theorem.

\begin{theorem}\label{thm:partialcompactify}
Any Hermitian non-K\"ahler ALF or rational AF gravitational instanton $(M,h)$ can be naturally compactified as a toric log del Pezzo surface $\overline{M}$ (with orbifold singularities in the rational AF case) by introducing a  holomorphic sphere $D$ at infinity.
\end{theorem}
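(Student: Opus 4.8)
The plan is to build the compactification out of the two structural inputs already at hand --- the asymptotic classification (Theorem \ref{thm:asymptotic geometry}) and the toric structure (Theorem \ref{thm:main}) --- and then to extract the log del Pezzo property from the positivity of the extremal Kähler metric $g=\lambda^{2/3}h$, which has scalar curvature $s_g=\lambda^{1/3}=(2\sqrt{6}\,|W^+|_h)^{1/3}>0$ on $M$ and satisfies $s_g\to 0$ at infinity since the metric collapses and $|Rm_h|_h\to 0$. By Theorem \ref{thm:asymptotic geometry} the asymptotic cone is $\mathbb{R}^3$ in the ALF case and $\mathbb{R}^3/\mathbb{Z}_k$ in the rational AF case, so a neighborhood of infinity is diffeomorphic to $Y\times[0,\infty)$, where $Y$ is a circle bundle over $S^2$ --- over a $\mathbb{P}^1$ with two orbifold points in the rational AF case.

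First I would analyze the complex structure $J$ near infinity. Since $J$ is pinned down pointwise by $h$ through the Derdziński picture, it is asymptotic, with the polynomial rate of Theorem \ref{thm:main classification ends nonrealizable}, to the standard integrable complex structure $J_0$ on the punctured total space of a holomorphic line bundle $N\to\mathbb{P}^1$ (an orbifold line bundle over a football $\mathbb{P}^1$ in the rational AF case), with the zero section playing the role of the sphere $D$ at infinity, the unit circle bundle of $N$ being $Y$, and the $S^1$-fibre direction being the argument of the fibre coordinate of $N$. I would then upgrade this asymptotic agreement to a genuine biholomorphism onto a deleted neighborhood of $D$: the deficiency $J-J_0$ is a $\bar\partial_{J_0}$-closed $T^{1,0}$-valued $(0,1)$-form of fast decay, and running a Newlander--Nirenberg/Kuranishi argument $T^2$-equivariantly --- the $T^2$-action of Theorem \ref{thm:main} fixes the model $(N,J_0)$, and equivariance annihilates the finite-dimensional obstruction --- lets one integrate it. Gluing the disk bundle $N$ onto $M$ along this biholomorphism produces a compact complex surface $\overline{M}=M\cup D$, a complex orbifold with quotient singularities in the rational AF case, in which $D$ is an irreducible smooth rational curve with $\overline{M}\setminus D=M$; the $T^2$-action is the obvious one near $D$, so it extends holomorphically and effectively over $D$ and $\overline{M}$ is toric. (Alternatively one can bypass the analysis: a $T^2$-invariant neighborhood of infinity is determined by the orbit-space picture near infinity together with the isotropy data along its boundary, and the unique $T^2$-equivariant filling adjoins exactly one invariant rational curve $D$.) I expect this compactification step to be the \emph{main obstacle}: turning ``metrically asymptotic to a disk bundle over $\mathbb{P}^1$'' into an honest biholomorphism onto a deleted neighborhood of a compact rational curve carrying the correct (orbifold) normal bundle, and correctly reading off the Seifert/orbifold data in the rational AF case.

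It remains to see that $\overline{M}$ is log del Pezzo, i.e.\ that $-K_{\overline{M}}$ is ample. The class $[\omega_g]$ is Kähler and $c_1(\overline{M})\cdot[\omega_g]=\frac{1}{4\pi}\int_M s_g\,dvol_g>0$ since $s_g>0$; but positivity against a single Kähler class is not yet ampleness, so to promote it I would argue as in the non-collapsed case of \cite{me}: the Ricci-flat metric $h=s_g^{-2}g$ forces Derdziński's equations for the conformally Einstein Kähler metric $g$, which control the Ricci form of $g$, hence $c_1(\overline{M})$, near $D$; combined with the toric combinatorics --- the interior $T^2$-fixed spheres form at worst chains of curves of self-intersection $\le -2$, contracted to quotient singularities in the rational AF case, and the configuration meeting $D$ has the appropriate self-intersection --- this makes the primitive generators of the fan of $\overline{M}$ the vertices of a convex lattice polygon containing the origin in its interior, which is equivalent to $-K_{\overline{M}}$ being ample. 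As a sanity check this recovers the known examples: the reversed Taub-NUT space compactifies to $\mathbb{P}^2$, the Taub-bolt space to the one-point blow-up $\mathbb{F}_1$ of $\mathbb{P}^2$ (so deleting $D$ returns $\mathbb{P}^2\setminus\{\mathrm{point}\}$), and the Chen-Teo spaces to toric log del Pezzo surfaces with orbifold points; the conformally-Ricci-flat constraint is precisely what excludes the over-negative curve configurations that would otherwise leave $-K_{\overline{M}}$ merely nef.
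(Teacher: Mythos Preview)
Your proposal has a circularity problem: you take Theorem \ref{thm:main} (the holomorphic isometric $T^2$-action) as an \emph{input}, but in the paper's logical order that theorem is strictly downstream of Theorem \ref{thm:partialcompactify}. The compactification is carried out first, in Section \ref{sec:compactification}, using only the $S^1$-action generated by $\mathcal{K}$; only after one knows the pair $(\overline{M},D)$ and the ampleness of $-(K_{\overline{M}}+D)$ does one read off, via the short classification of pairs in Corollaries \ref{cor:alfpositive}--\ref{cor:af}, that $\overline{M}$ is holomorphically toric. The Calabi-type theorem of Section \ref{sec:matsushima} then upgrades this to the isometric $T^2$ of Theorem \ref{thm:main}. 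So invoking the $T^2$ to build $\overline{M}$ (either via your equivariant Newlander--Nirenberg argument or via the ``unique $T^2$-equivariant filling'' shortcut), and again in the positivity step via ``toric combinatorics'', assumes what remains to be proved.

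Even setting circularity aside, the two substantive steps are handled differently in the paper and your versions are too vague to close. For the compactification, no Kuranishi-type analysis is needed: the local ansatz already supplies coordinates $(\xi,t,x,y)$ on the end with $\xi=s_g$ the moment map for $\mathcal{K}$, and after the explicit gauge change $\zeta=\int_\xi^\epsilon W\,d\xi$, $\varphi\equiv -t+\int_\xi^\epsilon \mathcal{Z}\,d\xi$, the complex structure becomes literally independent of $(\zeta,\varphi)$; then $w=e^{-\zeta-i\varphi}$ compactifies by adding $\{w=0\}$ as $D$. For positivity, the paper does not try to control $c_1(\overline{M})$ via the Ricci form of $g$, but uses LeBrun's observation that the hermitian metric $e^{-2\log s_g}g_{-K}$ on $-K_M$ has strictly positive curvature form \eqref{eq:curvatureform}; an explicit estimate in the compactification coordinates shows this extends across $D$ as a continuous singular metric on $-(K_{\overline{M}}+D)$ dominating a smooth strictly positive $(1,1)$-form, and Demailly's regularization then gives ampleness of $-(K_{\overline{M}}+D)$. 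Note in particular that the primary positivity is for $-(K_{\overline{M}}+D)$, not for $-K_{\overline{M}}$ as you write; the ``log del Pezzo'' and toric conclusions are then consequences of the resulting very short list of possible pairs $(\overline{M},D)$.
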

The compactification is natural in the sense that, in these situations, the extremal vector field $\mathcal{K}$ induces a holomorphic $S^1$-action, which can be extended to the compactified surface, and the added holomorphic sphere $D$ is a fixed curve under the extended $S^1$-action. It is also compatible with the K\"ahler metric $g$ in the sense that, $g$ will be shown to be asymptotic to a cusp bundle over sphere, and the compactification is done by compactifying each cusp.  There are also Hermitian non-K\"ahler \textit{irrational} AF gravitational instantons that can be compactified. However, there is no natural way to do so, since there is no way to compactify by adding a fixed curve. 
From our compactification perspective, the most natural way to comprehend the underlying complex structure of Taub-bolt, Kerr, and Chen-Teo spaces is to consider them as metrics on $\mathcal{O}_{\mathbb{P}^1}(1)$ (or $\mathcal{O}_{\mathbb{P}^1}(-1)$ if one reverses the orientation), $\mathbb{C}^1\times\mathbb{P}^1$, and $Bl_p(\mathbb{C}^1\times\mathbb{P}^1)$, respectively.

Our result also establishes a notable dichotomy between Hermitian non-K\"ahler ALF/AF gravitational instantons and compact Hermitian non-K\"ahler Einstein 4-manifolds classified by LeBrun.
LeBrun in \cite{lebrun95} proved that compact Hermitian non-K\"ahler Einstein metrics only reside on del Pezzo surfaces with continuous holomorphic symmetry. And it was shown by LeBrun \cite{lebrun12} that the Page metric \cite{page} and the Chen-LeBrun-Weber metric \cite{cxx} are all the compact Hermitian Einstein 4-dimensional metrics. In the non-compact situation, our result demonstrates that Hermitian non-K\"ahler ALF/AF gravitational instantons can all be compactified to toric log del Pezzo surfaces by adding one rational curve. A similar compactification phenomenon occurs for Hermitian non-K\"ahler ALE gravitational instantons, as demonstrated in our prior work \cite{me}. 
As a consequence of Theorem \ref{thm:asymptotic geometry}, Theorem \ref{thm:partialcompactify}, and \cite{me}, we have the following corollary.
\begin{corollary}[Algebracity]\label{cor:compactify yau conjecture}
  For any given Hermitian non-K\"ahler gravitational instanton $(M,h)$, there is an algebraic surface $X$ and a divisor $D$ such that $M$ is biholomorphic to $X\setminus D$. 
\end{corollary}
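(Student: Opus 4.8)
The plan is to argue by cases according to the asymptotic model of $(M,h)$, invoking the three ingredients already in hand: the trichotomy of Theorem~\ref{thm:asymptotic geometry}, the compactification of Theorem~\ref{thm:partialcompactify}, and the ALE analysis of \cite{me}. Throughout I read ``algebraic surface'' as a normal projective surface with at worst quotient singularities; since $M$ is a smooth complex manifold, all such singularities will lie on the divisor $D=X\setminus M$ added at infinity, so one may resolve them afterwards if a smooth model is preferred, replacing $D$ by its total transform without altering $X\setminus D\cong M$.

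By Theorem~\ref{thm:asymptotic geometry}, $(M,h)$ is ALE, ALF-$A$, or AF. In the ALE case, \cite{me} shows that the extremal K\"ahler rescaling $g=\lambda^{2/3}h$ compactifies $M$ to a compact Bach-flat K\"ahler orbifold that is in fact a log del Pezzo surface in the terminology of \cite{me}, with $M$ realized as the complement of the divisor added at infinity; since log del Pezzo surfaces are Fano and hence projective, this is precisely a presentation $M\cong X\setminus D$ with $X$ algebraic. In the ALF case and the rational AF case, Theorem~\ref{thm:partialcompactify} applies verbatim: it furnishes a toric log del Pezzo surface $\overline{M}$ (with orbifold points in the rational AF case) together with a holomorphic sphere $D$ glued in at infinity such that $M\cong\overline{M}\setminus D$, and again $\overline{M}$ is projective, so we take $X=\overline{M}$.

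The remaining case, irrational AF, is where the real work lies, precisely because --- as remarked after Theorem~\ref{thm:partialcompactify} --- there is no compactification obtained by attaching a fixed curve of the extremal $S^1$-action. Here I would use the asymptotic description of the K\"ahler metric $g$ on an AF end established in the analysis underlying Theorems~\ref{thm:main} and \ref{thm:partialcompactify} (namely that $g$ is asymptotic to a cusp bundle over a sphere) to compactify $M$ analytically by filling in each cusp, producing a compact complex surface $\overline{M}$ with $D\coloneqq\overline{M}\setminus M$ an analytic curve at infinity. It then remains to see that $\overline{M}$ is algebraic; the cleanest route is to show $\overline{M}$ is Moishezon --- for instance by producing two algebraically independent meromorphic functions, one from the $\mathbb{C}^*$-action generated by $\mathcal{K}$ and one from the base sphere of the cusp fibration, so that in fact $\overline{M}$ is rational --- and then to invoke the classical fact that a Moishezon surface is automatically projective. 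Alternatively, one can appeal to the explicit classification available by this point of the paper, under which the irrational AF instantons belong to the Kerr family, and verify algebraicity member by member; this is less conceptual but avoids the analytic gluing.

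The main obstacle is thus the irrational AF case: constructing the compact surface $\overline{M}$ demands enough control of the complex geometry of the AF end to glue in a compact curve even though there is no equivariant way to do so, and then upgrading ``compact complex surface'' to ``projective surface'' via the surface-specific Moishezon criterion. The rest is routine bookkeeping of the three asymptotic regimes against results already proved.
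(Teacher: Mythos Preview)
Your case split and the handling of ALE, ALF-$A$, and rational AF match the paper exactly: these are covered by \cite{me} and Theorem~\ref{thm:partialcompactify}, and the resulting log del Pezzo surfaces are projective.

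For the irrational AF case, however, your primary route has a genuine gap. The ``cusp bundle over a sphere'' picture for $g$ is obtained from the symplectic reduction by the $S^1$-action generated by $\mathcal{K}$; in the irrational case $\mathcal{K}$ generates only an $\mathbb{R}$-action whose orbits are dense in $T^2$-orbits, so there is no global cusp fibration on $M$ to fill in. Relatedly, your proposed meromorphic function ``from the $\mathbb{C}^*$-action generated by $\mathcal{K}$'' does not exist as stated, since $\mathcal{K}$ and $J\mathcal{K}$ do not integrate to a $\mathbb{C}^*$-action here.

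The paper's route is in fact simpler and avoids any analytic gluing. In the irrational AF case $(M,h)$ is \emph{automatically} holomorphically isometrically toric (the closure of the $\mathbb{R}$-action is a $T^2$), and the link is $S^2\times S^1$ with the standard $T^2$-action. Hence as a complex surface the end is the open toric piece $\mathbb{C}^*\times\mathbb{P}^1$, and one may add $\{\infty\}\times\mathbb{P}^1$ exactly as in the ``unnatural'' smooth compactification $\overline{M}^\sharp$ constructed for rational AF in Section~\ref{subsec:compactificationrational}. The result is a compact toric, hence rational and projective, surface, giving $M\cong X\setminus D$ with $X$ algebraic. Your fallback via the explicit classification (Kerr/Chen-Teo) is also valid at this stage of the paper, but the toric argument is what makes the statement a direct corollary of Theorems~\ref{thm:asymptotic geometry} and~\ref{thm:partialcompactify} together with \cite{me}, without invoking the full classification.
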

Corollary \ref{cor:compactify yau conjecture} should be compared to a compactification conjecture proposed by S-T. Yau \cite{problemyau}. Yau proposed:
\begin{quote}
  Let $M$ be a complete K\"ahler manifold with zero Ricci curvature. Prove that $M$ is a Zariski open set of some compact K\"ahler manifold. If this is true, we shall have algebraic means to classify these manifolds.
\end{quote}
Our corollary shows that any Type II gravitational instanton is a Zariski open subset of an algebraic surface. In \cite{sz}, combining with previous works, it has been proved that for any hyperk\"ahler gravitational instanton, there is one complex structure on it, making it is algebraic in the sense of Corollary \ref{cor:compactify yau conjecture}.

Finally, we would like to mention the recent work by Aksteiner et al. \cite{and23}, showing that when an ALF gravitational instanton with $S^1$-symmetry has the same topology as the Kerr or Taub-bolt spaces, then it must be Kerr or Taub-bolt. They also showed that when it has same topology as Chen-Teo spaces, then it must be Hermitian. There is the very recent work concerning the stability of Hermitian non-K\"ahler ALF gravitational instantons by Biquard-Ozuch \cite{biquardozuch}, and the work by Biquard-Gauduchon-LeBrun \cite{claude} studying the deformation of Hermitian non-K\"ahler ALF gravitational instantons.

\begin{idea}

  \

\begin{itemize}

\item Collapsing geometry and classification of ends:

The classification of asymptotic cones and ends is derived from a detailed study based on Cheeger-Fukaya-Gromov's theory \cite{cfg} of collapsing Riemannian manifolds with bounded curvature, combined with the special structure of Hermitian non-K\"ahler Ricci-flat metrics. In the process of taking an asymptotic cone $(M,h_j,p)\to(M_\infty,d_\infty,p_\infty)$ of a Hermitian non-K\"ahler Ricci-flat metric $(M,h)$, it is crucial to capture the information about the K\"ahler metric $g=\lambda^{2/3}h$ at the same time. 
According to Cheeger-Fukaya-Gromov's theory, for sufficiently large $j$ and each small ball $(B_j,h_j) \subset (M,h_j,p)$ away from $p$, $B_j$ has a singular fibration structure fibered by infranilmanifolds, with $B_j$ collapsing to the base. The sequence of universal covers $(\widetilde{B_j}, \widetilde{h_j})$ of $(B_j, h_j)$ is non-collapsed. The conformally K\"ahler property is preserved in each non-collapsed limit of $(\widetilde{B_j}, \widetilde{h_j})$, providing additional structure to each local of the asymptotic cone. Analyzing this special structure leads to the classification of the asymptotic cones. The improvement to the classification of asymptotic models requires a detailed study on the $SU(\infty)$ Toda equation, which appears in the local ansatz for Hermitian non-K\"ahler Ricci-flat metrics. Solutions to this equation play a role analogous to that of harmonic functions in the Gibbons-Hawking ansatz. This is done in Section \ref{sec:collapsegeometry}.

\ 

\item Classification of gravitational instantons, compactification, and positivity:

Given our classification of asymptotic models, the remaining task is to classify Hermitian non-K\"ahler ALF/AF gravitational instantons. The skewed special Kasner models and the $\text{ALH}^*$ models will be excluded due to the negativity of the scalar curvature of the corresponding K\"ahler metrics $g$.
We divide into two cases here:
\begin{itemize}
\item The extremal vector field $\mathcal{K}$ induces an $S^1$-action, which later turns out to induce a holomorphic $\mathbb{C}^*$-action together with $J\mathcal{K}$. In this case we will show that $M$ can be naturally compactified to an algebraic surface $\overline{M}$ by adding a divisor $D=\mathbb{P}^1$ at infinity. 
The observation by LeBrun in \cite{lebrun95} shows that there is a hermitian metric $e^{-2\log s_g}g_{-K}$ on $-K_M$ with positive curvature.
One can show that this hermitian metric extends to a singular hermitian metric on $-(K_{\overline{M}}+D)$. Careful treatments will be done to control the regularity and positivity of the extended hermitian metric. This will imply that $-(K_{\overline{M}}+D)$ is ample.
Further consideration gives that the compactified surfaces all have to be holomorphically toric. This will be discussed in Section \ref{sec:compactification}.
\item The extremal vector field $\mathcal{K}$ induces an $\mathbb{R}$-action only, which later turns out to give rise to a torus action. In this case $(M,h)$ automatically is holomorphically isometrically toric.  Using the analysis we did in Section \ref{sec:collapsegeometry}, we already can apply the method in \cite{biquardgauduchon} and conclude that $(M,h)$ can only be Kerr or Chen-Teo metrics. We will see that $M$ still can be compactified to an algebraic surface $\overline{M}$, but there is no natural way to do so.
\end{itemize}

\ 

\item Calabi-type theorem:

By our previous argument it remains to classify the case where $\mathcal{K}$ induces an $S^1$-action. For compact extremal K\"ahler manifolds, it is a theorem of Calabi \cite{calabi} that the isometry group is a maximal compact subgroup of the automorphism group. We will prove a Calabi-type theorem in this non-compact situation, which will improve holomorphically toric to holomorphically isometrically toric.  This  allows us to apply the method in \cite{biquardgauduchon} and ultimately classify all Hermitian non-K\"ahler ALF/AF gravitational instantons. See Section \ref{sec:matsushima} for details.
\end{itemize}

\end{idea}

\begin{notation}
Throughout this paper, we adopt the following notations:
\begin{itemize}
\item The pair $(M,h)$ refers to Hermitian non-K\"ahler gravitational instantons, with $\lambda\coloneqq2\sqrt{6}|W^+|_h$ as the norm of the self-dual Weyl curvature. The metric $g\coloneqq\lambda^{2/3}h$ is the associated extremal K\"ahler metric. We denote the scalar curvature $\lambda^{1/3}$ of $g$ by $s_g$. 
We put the metric in the lower index to denote their corresponding Levi-Civita connection, or curvature tensor, etc. Use $\rho$ to denote the distance function to a base point under $h$.
\item
We use $\mathcal{K}$ to denote the \textit{extremal vector field} $\mathcal{K}\coloneqq -J\nabla_h\lambda^{-1/3}=J\nabla_g\lambda^{1/3}$. It is real holomorphic and Killing with respect to both the metrics $h$ and $g$. 
By \textit{holomorphic vector field}, we mean $(1,0)$-vector fields that are holomorphic. \textit{Holomorphic extremal vector field} refers to the holomorphic vector field $\mathcal{K}-iJ\mathcal{K}$ that is associated to $\mathcal{K}$.

\item If $T$ is a tensor on $(M,h)$, then we say that $T=O'(\rho^{-\tau})$ if, for any integer $k\geq0$, 
$$|\nabla^k_hT|_h= O(\rho^{-\tau-k}),$$ 
as $\rho$ approaches infinity. 
\item We use $H_k$ to denote the Hirzebruch surface $\mathbb{P}(\mathcal{O}\oplus\mathcal{O}(k))$, where $k\geq0$. We use $C_0$ to denote the curve inside with self-intersection $k$ while $C_\infty$ to denote the curve at infinity with self-intersection $-k$.
\end{itemize}
\end{notation}

\begin{acknowledgement}
I am very grateful to my advisor Song Sun for his constant support and lots of inspiring discussions. I would like to thank Ruobing Zhang for pointing out a lemma in \cite{petrunin}. I am thankful to Olivier Biquard for his interest in this work and a useful conversation, and Lars Andersson for his valuable comments. I am indebted to Claude LeBrun for very helpful discussions.
\end{acknowledgement}

\section{Preliminaries}

\subsection{Gravitational instantons in the Hermitian non-K\"ahler setting}

For a Type II Ricci-flat metric $(M,h)$, the Weyl curvature $W^+:\Lambda^+\to\Lambda^+$ has exactly one repeated eigenvalue and one non-repeated eigenvalue. It was observed by Derdzi\'nski \cite{derdzinski} that if we take $\lambda^{1/3}=(2\sqrt{6}|W^+|_{h})^{1/3}$ as the conformal factor and define $g=\lambda^{2/3}h$, then eigen-two-forms with constant size that correspond to the non-repeated eigenvalue of $W^+$ are parallel under the conformal metric $g$. Up to passing to a double cover, one can pick a globally defined eigen-two-form $\omega$ with size one, which makes the conformal metric $g$ K\"ahler. The complex structure $J$ can be determined using $g,\omega$.
It is a direct calculation to check the scalar curvature of $g$ satisfies $|s_g|=\lambda^{1/3}$.

Now moving to a Type II gravitational instanton. We always pass to a double cover (if necessary), to ensure there is the globally defined two-form $\omega$ and complex structure $J$. From the conformal change of scalar curvature $6\Delta_h(s_g^{-1})+s_g^{-4}=0$, an easy consideration at the maximum/minimum of $s_g$ based on the maximum principle shows that $s_g=\lambda^{1/3}>0$.
Notice that $\lambda^{1/3}$ is positive everywhere as $W^+$ cannot vanish at any point. The following is essentially due to Derdz\'inski \cite{derdzinski}.
\begin{proposition}
Any Type II gravitational instanton $(M,h)$, up to passing to a double cover, is Hermitian non-K\"ahler. Moreover, on the cover the conformal metric $g=\lambda^{2/3}h$ is K\"ahler. The scalar curvature of $g$ is exactly the conformal factor $s_g=\lambda^{1/3}$.
\end{proposition}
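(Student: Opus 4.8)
The plan is to read the structural claims off Derdzi\'nski's theorem (recorded above as the Definition--Proposition) and then to pin down the sign of $s_g$ by a maximum principle argument. For the structural part: one first observes that a Type II metric has $W^+$ nowhere vanishing, since at a zero of $W^+$ all eigenvalues coincide and the ``exactly two distinct eigenvalues'' condition fails; hence the distinguished simple eigenvalue of $W^+\colon\Lambda^+\to\Lambda^+$ is a smooth function and its eigenline is a real line subbundle of $\Lambda^+$. Passing to the double cover over which this line bundle is trivial, one picks a unit section $\omega$, and Derdzi\'nski's computation --- which uses that $\delta W^+=0$ for Einstein, hence Ricci-flat, $4$-manifolds --- shows that after the conformal change $g=\lambda^{2/3}h$ the two-form $\omega$ is parallel, so $g$ is K\"ahler with K\"ahler form $\omega$ and complex structure $J$ determined by $g$ and $\omega$. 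Since being Hermitian is conformally invariant, $(M,h,J)$ is Hermitian. For the non-K\"ahler assertion: if $h$ were K\"ahler with respect to $J$ it would be a Ricci-flat, hence scalar-flat, K\"ahler surface, and the K\"ahler-surface identity $|W^+_h|_h^2=s_h^2/24$ would then force $W^+_h\equiv 0$, contradicting Type II. This gives the first two sentences.

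It remains to upgrade $|s_g|=\lambda^{1/3}$ to $s_g=+\lambda^{1/3}$. The identity $|s_g|=\lambda^{1/3}$ is the ``direct calculation'' alluded to above: conformal covariance of the Weyl tensor gives $|W^+_g|_g=\lambda^{-2/3}|W^+_h|_h=\lambda^{1/3}/(2\sqrt6)$, which equals the K\"ahler-surface value $|W^+_g|_g=|s_g|/(2\sqrt6)$. Since $\lambda>0$ everywhere, $s_g$ is nowhere zero and hence of constant sign on the connected cover. Writing $h=s_g^{-2}g$ and inserting $s_h=0$ into the conformal transformation law for the scalar curvature produces a second-order elliptic equation for $s_g^{-1}=\lambda^{-1/3}$ whose zeroth-order term has a definite sign --- concretely the relation $6\Delta_h(s_g^{-1})+s_g^{-4}=0$ recorded above. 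Because $(M,h)$ is a gravitational instanton it is standard that $|Rm_h|_h\to 0$ at infinity, so $\lambda=2\sqrt6|W^+_h|_h\to 0$ and $|s_g|^{-1}=\lambda^{-1/3}\to+\infty$; in particular $|s_g|$ attains an interior maximum, and evaluating the displayed equation there shows that its undifferentiated term has the sign forced by the maximum principle only if $s_g>0$ at that point, hence everywhere (alternatively one may run the Omori--Yau maximum principle, available because $h$ is Ricci-flat). Therefore $s_g=\lambda^{1/3}$.

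The proof is essentially bookkeeping built on Derdzi\'nski's theory together with the standard conformal identities, so I do not expect a serious obstacle. The one point that needs a little care is the last step: on a non-compact manifold one must guarantee an interior extremum of $|s_g|$ before running the maximum principle, and this is precisely where the gravitational-instanton hypothesis enters, through the curvature decay $|Rm_h|_h\to 0$. I would also remark that passing to the double cover is harmless for this statement, and that its redundancy in the gravitational-instanton setting is dealt with separately.
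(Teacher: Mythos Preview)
Your proposal is correct and follows essentially the same route as the paper: invoke Derdzi\'nski's theorem for the conformal K\"ahler structure on the double cover, note $|s_g|=\lambda^{1/3}$ from the K\"ahler-surface identity for $|W^+|$, and determine the sign by applying the maximum principle to the scalar-curvature conformal-change equation. You supply a couple of details the paper leaves implicit --- the explicit reason $h$ is non-K\"ahler, and the justification via curvature decay that an interior extremum of $|s_g|$ exists on the noncompact $M$ --- but the strategy is the same.
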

Conversely, when a Ricci-flat 4-dimensional metric is Hermitian non-K\"ahler, then it is the result of Goldberg-Sachs that the metric is of Type II.

For a Hermitian non-K\"ahler gravitational instanton $(M,h)$, the K\"ahler metric $g$ is Bach-flat \cite{besse} as it is conformal to an Einstein metric, which implies that $g$ is extremal K\"ahler \cite{lebrun20}.  Since the scalar curvature of $g$ is $s_g=\lambda^{1/3}$, the extremal vector field is $\mathcal{K}=J\nabla_gs_g=-J\nabla_h\lambda^{-1/3}$. The vector field $\mathcal{K}$ is Killing under $g$, hence it preserves the conformal factor $s_g=\lambda^{1/3}$, therefore it is Killing under the Ricci-flat metric $h$ as well. 
\begin{proposition}
The K\"ahler metric $g$ is extremal K\"ahler. The extremal vector field $\mathcal{K}=J\nabla_gs_g=-J\nabla_h\lambda^{-1/3}$ is Killing with respect to both the metrics $g$ and $h$.
\end{proposition}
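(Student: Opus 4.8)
The plan is to assemble the statement from three standard ingredients — the conformal invariance of the Bach tensor, LeBrun's characterization of Bach-flat Kähler surfaces, and Calabi's theory of extremal metrics — and then to propagate the Killing property from $g$ to $h$ by one short conformal computation. First I would record that the Ricci-flat metric $h$ is Bach-flat, since every Einstein (in particular Ricci-flat) $4$-manifold has vanishing Bach tensor \cite{besse}. As the Bach tensor is conformally invariant in dimension four, the Kähler metric $g=\lambda^{2/3}h$ is also Bach-flat. By LeBrun's result \cite{lebrun20} that a Bach-flat Kähler metric on a complex surface is extremal Kähler, I conclude that $g$ is extremal Kähler with scalar curvature $s_g$. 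Here one uses, as established just above, that $s_g=\lambda^{1/3}>0$ (so the a priori sign ambiguity $|s_g|=\lambda^{1/3}$ is resolved), and that after passing to a double cover if necessary $(M,h,J)$ is globally Hermitian with a globally defined Kähler form for $g$, as in the two preceding propositions.

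Next I would identify the extremal vector field. By Calabi's definition \cite{calabi}, the extremal vector field of $g$ is $\mathcal{K}=J\nabla_g s_g$, and the characterizing property of extremality is that $\nabla_g^{1,0}s_g$ is a holomorphic vector field; consequently $\mathcal{K}=J\nabla_g s_g$ is Killing with respect to $g$. To match the two expressions for $\mathcal{K}$ in the statement, I would carry out the elementary conformal identity: for any function $f$ one has $\nabla_g f=\lambda^{-2/3}\nabla_h f$, so taking $f=\lambda^{1/3}$ gives $\nabla_g\lambda^{1/3}=\tfrac13\lambda^{-4/3}\nabla_h\lambda=-\nabla_h\lambda^{-1/3}$, hence $\mathcal{K}=J\nabla_g s_g=J\nabla_g\lambda^{1/3}=-J\nabla_h\lambda^{-1/3}$.

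Finally I would transfer the Killing property to $h$. Since $\mathcal{K}$ is Killing for $g$, it preserves any quantity determined by $g$; in particular $\mathcal{L}_{\mathcal{K}}s_g=0$, which one also sees directly from $\mathcal{L}_{\mathcal{K}}s_g=ds_g(J\nabla_g s_g)=g(\nabla_g s_g,J\nabla_g s_g)=0$ by antisymmetry of $J$ with respect to $g$. Hence $\mathcal{K}$ preserves $\lambda=s_g^{3}$, and therefore
\[
\mathcal{L}_{\mathcal{K}}h=\mathcal{L}_{\mathcal{K}}\big(\lambda^{-2/3}g\big)=\big(\mathcal{L}_{\mathcal{K}}\lambda^{-2/3}\big)g+\lambda^{-2/3}\,\mathcal{L}_{\mathcal{K}}g=0,
\]
so $\mathcal{K}$ is Killing for $h$ as well, and it is also holomorphic since it is a real holomorphic vector field on the Hermitian manifold $(M,J)$.

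Since the argument is essentially a concatenation of cited results together with one short conformal computation, there is no serious obstacle. The only points requiring care are the resolution of the sign of $s_g$ and the global existence of $J$ and the Kähler form, both handled by the propositions immediately preceding, and the fact that LeBrun's ``Bach-flat Kähler $\Rightarrow$ extremal'' implication applies in the present non-compact setting — which it does, because that implication is local, following from Derdziński's identity once $g$ is known to be Kähler and Bach-flat.
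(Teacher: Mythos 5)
Your proposal is correct and follows essentially the same route as the paper: Bach-flatness of $g$ from its conformal relation to the Ricci-flat metric $h$, LeBrun's implication that Bach-flat K\"ahler metrics are extremal, and the observation that $\mathcal{K}$ being Killing for $g$ preserves the conformal factor $s_g=\lambda^{1/3}$ and is therefore Killing for $h$ as well. The extra details you supply (the conformal identity $\nabla_g\lambda^{1/3}=-\nabla_h\lambda^{-1/3}$ and the explicit Lie-derivative computation) are correct and simply make explicit what the paper leaves implicit.
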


\subsection{Gromov-Hausdorff convergence and asymptotic cones}
Denote $\mathcal{MET}_{finite}$ as the set of isometry classes of all metric spaces with finite diameter, and $\mathcal{MET}$ as the set of isometry classes of all pointed complete length spaces $(M,d,p)$ such that every closed ball in $M$ is compact.
\begin{definition}[Gromov-Hausdorff distance]
Let $(X,d_X)$ and $(Y,d_Y)$ be two metric spaces with finite diameter in $\mathcal{MET}_{finite}$. The Gromov-Hausdorff distance $d_{GH}((X,d_X),(Y,d_Y))$ is defined to be the infimum of all $\epsilon>0$ such that there is a metric $d$ on $X\sqcup Y$ extending $d_X$ and $d_Y$ on $X$ and $Y$, such that $X\subset B_\epsilon(Y)$ and $Y\subset B_\epsilon(X)$.
\end{definition}
There is also the pointed version of the Gromov-Hausdorff distance.
\begin{definition}[pointed Gromov-Hausdorff distance]
Let $(X,d_X,p_X)$ and $(Y,d_Y,p_Y)$ be two pointed complete length spaces in $\mathcal{MET}$. The pointed Gromov-Hausdorff distance $d_{GH}((X,d_X,p_X),(Y,d_Y,p_Y))$ is defined to be  $\min\{\epsilon_0,\frac12\}$, where $\epsilon_0\geq0$ is the infinimum of all $\epsilon\in[0,\infty]$ such that there is a metric $d$ on $X\sqcup Y$ extending $d_X$ and $d_Y$ on $X$ and $Y$, such that $d(p_X,p_Y)\leq\epsilon$, $B_{1/\epsilon}(p_X)\subset B_\epsilon(Y)$, and $B_{1/\epsilon}(p_Y)\subset B_\epsilon(X)$. 
\end{definition}

It is a classical result that both $(\mathcal{MET}_{finite},d_{GH})$ and $(\mathcal{MET},d_{GH})$  are complete metric spaces. The \textit{Gromov-Hausdorff convergence} and \textit{pointed Gromov-Haudorff convergence} are defined to be the convergence induced by the Gromov-Hausdorff distance and pointed Gromov-Hausdorff distance respectively. There is also the notion of \textit{equivariant (pointed) Gromov-Hausdorff convergence} when the sequence of metric spaces are equipped with effective isometric group actions. These all can be found in standard reference, for example \cite{rong}. When we merely consider smooth Riemannian manifolds, there is the notion of \textit{Cheeger-Gromov $C^{k,\alpha}$ convergence}.
\begin{definition}[$C^{k,\alpha}$ convergence]
Suppose $(M_j,h_j,p_j)$ is a sequence of pointed $n$-dimensional Riemannian manifolds. We say $(M_j,h_j,p_j)$ converges in $C^{k,\alpha}$-topology to a limit pointed $n$-dimensional Riemannian manifold $(M_\infty,h_\infty,p_\infty)$, if for some sequence $\epsilon_j\to0$, there are $C^{k+1}$ diffeomorphisms $f_j:B_{1/\epsilon_j}(p_j,h_j)\to B_{1/\epsilon_j}(p_\infty,h_\infty)$, such that $f_j(p_j)=p_\infty$, and $(f_j)_*h_j$  converges  to $h_\infty$ as $C^{k,\alpha}$ metrics on any compact subset of $M_\infty$.
\end{definition}

For any sequence of pointed complete $n$-dimensional Riemannian manifolds $(M_j,h_j,p_j)$ with lower Ricci bound $Ric_{h_j}\geq kh_j$ for some $k\in\mathbb{R}$, the Gromov compactness theorem guarantees that by passing to some subsequence, $(M_j,h_j,p_j)$ converges to a complete length space $(M_\infty,d_\infty,p_\infty)$ in the pointed Gromov-Hausdorff topology.
Specializing to the situation of gravitational instantons, given a gravitational instanton $(M,h)$ with base point $p$, for any sequence $r_j\to\infty$, we can consider the rescaled sequence $(M,r_j^{-2}h,p)$. As $i\to\infty$, after passing to subsequences $(M,r_j^{-2}h,p)$ converges to a limit complete metric space $(M_\infty,d_\infty,p_\infty)$ under the pointed Gromov-Hausdorff topology. Any such limit space will be called a \textit{asymptotic cone} of the gravitational instanton $(M,h)$. In the following, we will usually use $(M,h_j,p)$ to denote the rescaled sequence $(M,r_j^{-2}h,p)$.

\subsection{Regular region and smooth region}

\begin{definition}
For a limit space $(M_\infty,d_\infty,p_\infty)$ of a sequence of pointed complete $n$-dimensional Riemannian manifolds $(M_j,h_j,p_j)$ with lower Ricci bound $Ric_{h_j}\geq kh_j$:
\begin{itemize}
\item a point $q_\infty\in M_\infty$ is defined to be \textit{regular} if there is a constant $\delta>0$ and a sequence of $q_j\in M_j$ converging to $q_\infty$, such that $\sup_{B_\delta(q_j)}|Rm_{h_j}|\leq C$ for all $j$;
\item a regular point $q_\infty\in M_\infty$ is defined to be \textit{smooth} if the length space $(M_\infty,d_\infty)$ is a smooth Riemannian manifold in a neighborhood of $q_\infty$.
\end{itemize}
The \textit{regular region} $\mathcal{R}$ is the set of all regular points and the \textit{smooth region} $\mathcal{G}$ is the set of all smooth points.
\end{definition}

Both the regular region and the smooth region are open.
It is a direct application of the $\epsilon$-regularity for 4-dimensional Einstein metrics proved by Cheeger-Tian \cite{cheegertian} that any gravitational instanton $(M,h)$ automatically satisfies the bound
\begin{equation}
|\nabla_h^kRm_h|_h\leq C\rho^{-2-k}.
\end{equation} 
Therefore, in the setting of asymptotic cones of gravitational instantons, the rescaled spaces $(M,h_j,p)$ always have bounded curvature in any definite size ball with definite positive distance to the base point $p$. Henceforth the regular region of an asymptotic cone $(M_\infty,d_\infty,p_\infty)$ of a gravitational instanton is exactly $\mathcal{R}=M_\infty\setminus\{p_\infty\}$. But in general, points in $\mathcal{R}$ not necessarily have to be smooth.
\begin{example}[Gromov's skew rotation]\label{exam:gromovrotation}
This example originates from Section 8.9 in \cite{gromov} and was discussed in previous works \cite{minerbe2010} and \cite{liyu}.
Consider the flat space $\mathfrak{M}_{\mathfrak{a},\mathfrak{b}}$ that  is realized as the quotient of the Euclidean space $\mathbb{R}^3\times\mathbb{R}^1$ by the skew rotation $(\rho,\alpha,\beta,t)\sim(\rho,\alpha+\mathfrak{a},\beta,t+\mathfrak{b})$.
When $\mathfrak{a}/2\pi$ is irrational, $\mathfrak{M}_{\mathfrak{a},\mathfrak{b}}$ has the upper-half-plane $\mathbb{R}^1_+\times\mathbb{R}^1$ as its asymptotic cone. The asymptotic cone consists of regular points entirely, however it does have non-smooth points, namely points in the boundary. 
\end{example}

\subsection{Renormalized limit measure}
\label{subsec:measure}

During the process of taking asymptotic cones of a gravitational instanton $(M,h)$, if the rescaled sequence $(M,h_j,p)\coloneqq(M,r_j^{-2}h,p)$ converges to an asymptotic cone $(M_\infty,d_\infty,p_\infty)$, we can consider the \textit{renormalized measure} defined as 
\begin{equation}\label{eq:volume}
d\nu_j\coloneqq\frac{dvol_{h_j}}{\mathrm{Vol}_{h_j}(B_1(p))}
\end{equation}
on $(M,h_j,p)$. By the work of Cheeger-Colding, passing to a further subsequence if necessary, there exists a Radon measure $d\nu_\infty$ on the limit space $(M_\infty,d_\infty,p_\infty)$, such that we have the \textit{measured Gromov-Hausdorff convergence} $(M,h_j,d\nu_j,p)\xrightarrow{mGH}(M_\infty,d_\infty,d\nu_\infty,p_\infty)$. In particular, for any converging sequence of points $q_j\to q_\infty$ and all $R>0$, we have $d\nu_j(B_R(q_j))\to d\nu_{\infty}(B_R(q_\infty))$.

On the smooth region $\mathcal{G}$, it was shown by Fukaya \cite{fukaya} that the limit measure $d\nu_\infty$ can be expressed using a smooth density function $\chi$
\begin{equation}
d\nu_\infty=\chi dvol_{h_\infty}.
\end{equation}
The density function up to a constant multiple can be determined as follows. For any $q_\infty\in\mathcal{G}$, we find $q_j\in(M,h_j,p)$ such that $q_j\to q_\infty$. Select $\delta>0$, such that the sequence of  universal covers $(\widetilde{B_{\delta}(q_j)},\widetilde{h_j},G_j,\widetilde{q_j})$ is non-collapsed and converges to a limit $(\widetilde{B_\infty},\widetilde{h_\infty},G_\infty,\widetilde{q_\infty})$, 
where $G_j$ is the fundamental group of the cover $\pi_j:\widetilde{B_{\delta}(q_j)}\to{B_{\delta}(q_j)}$ that acts isometrically on  the cover.
The convergence can be made smooth so that the sequence converges in the equivariant $C^\infty$-Cheeger-Gromov sense because of the standard regularity theory for Einstein metrics.
The group $G_\infty$ acts isometrically on $(\widetilde{B_\infty},\widetilde{h_\infty})$, and it is a finite extension of its identity component, which is a nilpotent Lie group, essentially due to Fukaya (or see Theorem 2.1 in \cite{naber}). The ball $B_\delta(q_\infty)$ around $q_\infty$ is isometric to the quotient $\widetilde{B_\infty}/G_\infty$. Fibers of $\pi_\infty$ over points in $B_\delta(q_\infty)$ can be locally identified with an open neighborhood in $G_\infty$. Up to a multiplicative constant, the density function $\chi$ is given by the ratio between the volume form on the fiber induced by the metric $\widetilde{h_\infty}$, and a fixed left-invariant volume form on $G_\infty$.

\subsection{Local ansatzs for Hermitian non-K\"ahler Ricci-flat metrics}
\label{subsec:localansatz}

Just like the Gibbons-Hawking ansatz for hyperk\"ahler metrics with a Hamiltonian Killing field, we also have a local ansatz for complex 2-dimensional Hermitian non-K\"ahler Ricci-flat metrics, essentially due to LeBrun \cite{lebrun}. Since they all automatically have a Killing field, namely the extremal vector field $\mathcal{K}$, which is Hamiltonian under the K\"ahler metric $g$, the ansatz can be applied to any complex 2-dimensional Hermitian non-K\"ahler Ricci-flat metrics locally.
The following theorem is taken from Proposition 3.3 in \cite{biquardgauduchon}. Consider a complex 2-dimensional K\"ahler metric $g$ with a Hamiltonian Killing field $\partial_{t}$. By taking the symplectic reduction with respect to the Hamiltonian Killing field, the metric locally can always be written as
\begin{equation}\label{eq:metricglocallocal}
g=Wd\xi^2+W^{-1}\eta^2+We^v(dx^2+dy^2).
\end{equation}
Here $\eta$ is a connection one-form such that $\eta(\partial_{t})=1$, $\xi$ is the moment map for $\partial_{t}$, and $We^v(dx^2+dy^2)$ is the induced metric on the symplectic reduction.
The complex structure $J$ is given by $d\xi\to -W^{-1}\eta,dx\to-dy$, and the K\"ahler form $\omega$ is given by $d\xi\wedge\eta+We^vdx\wedge dy$.
Integrability condition of the complex structure $J$ is
\begin{align}
d\eta=(We^v)_\xi dx\wedge dy+W_x dy\wedge d\xi+W_y d\xi\wedge dx.\label{eq:deta}
\end{align}
We also have
\begin{align}
(We^v)_{\xi\xi}+W_{xx}+W_{yy}=0\label{eq:compatibility},
\end{align}
which is exactly the equation $d(d\eta)=0$.
The scalar curvature of such a metric can be calculated as
\begin{equation}\label{eq:scalarcurvature}
s_g=-\frac{(e^v)_{\xi\xi}+v_{xx}+v_{yy}}{We^v}.
\end{equation}
The local ansatz for Hermitian non-K\"ahler Ricci-flat metric is:
\begin{theorem}[Local ansatz]\label{thm:localansatz}
For the K\"ahler metric $g$ given by equation (\ref{eq:metricglocallocal}),
the conformal metric $h=\xi^{-2}g$ is Ricci-flat if and only if
\begin{equation}\label{eq:W}
W=k^2\left(\frac{12}{\xi^3}-\frac{6v_{\xi}}{\xi^2}\right)
\end{equation}
and $v$ satisfies the twisted $SU(\infty)$ Toda equation
\begin{equation}\label{eq:twistedtoda}
(e^v)_{\xi\xi}+v_{xx}+v_{yy}=-k^{-2}\xi We^v
\end{equation}
for some positive constant ${k}$.
Any complex 2-dimensional Hermitian non-K\"ahler Ricci-flat metric $h$ is canonically conformally related to such a K\"ahler metric $g=\xi^2h=\lambda^{2/3}h$.
\end{theorem}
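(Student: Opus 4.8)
The plan is to prove the two assertions in turn: the concluding sentence, which merely packages the preliminaries together with a symplectic reduction, and the ``if and only if'', which is a conformal-geometry computation with the explicit metric (\ref{eq:metricglocallocal}). For the ``iff'', fix a K\"ahler metric $g$ of the form (\ref{eq:metricglocallocal}), with moment map $\xi$, connection one-form $\eta$, and reduced metric $We^v(dx^2+dy^2)$; recall that (\ref{eq:deta}) and (\ref{eq:compatibility}) encode the integrability of $J$ and $d(d\eta)=0$, and that (\ref{eq:scalarcurvature}) gives $s_g$. I would start from the conformal transformation law of the Ricci tensor in real dimension four: with $h=\xi^{-2}g$ one has
\begin{equation*}
\mathrm{Ric}_h=0\quad\Longleftrightarrow\quad \xi\,\mathrm{Ric}_g+2\nabla_g^2\xi+\bigl(\Delta_g\xi-3\xi^{-1}|\nabla_g\xi|_g^2\bigr)\,g=0,
\end{equation*}
where $\Delta_g=\mathrm{tr}_g\nabla_g^2$. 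I would then decompose this symmetric $2$-tensor equation into three pieces relative to $J$ and the trace. The $J$-anti-invariant part is $(\nabla_g^2\xi)^-=0$, which holds automatically because $\mathcal K=J\nabla_g\xi$ is the Hamiltonian Killing field $\partial_t$. Taking the trace, and using $|\nabla_g\xi|_g^2=W^{-1}$ and $\Delta_g\xi=v_\xi/W$ in the orthonormal coframe $\{W^{1/2}d\xi,\ W^{-1/2}\eta,\ W^{1/2}e^{v/2}dx,\ W^{1/2}e^{v/2}dy\}$ together with (\ref{eq:scalarcurvature}), the scalar equation $s_h=0$ becomes
\begin{equation*}
(e^v)_{\xi\xi}+v_{xx}+v_{yy}=\Bigl(\frac{6v_\xi}{\xi}-\frac{12}{\xi^2}\Bigr)e^v .
\end{equation*}
Finally, the primitive $J$-invariant traceless part, $\xi(\mathrm{Ric}_g)_0+2(\nabla_g^2\xi)_0=0$, I expect to be equivalent to $W=k^2\bigl(12\xi^{-3}-6v_\xi\xi^{-2}\bigr)$ for some constant $k>0$; plugging this back into the displayed scalar equation rewrites it as the twisted $SU(\infty)$ Toda equation (\ref{eq:twistedtoda}). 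For the converse, assuming (\ref{eq:W}) and (\ref{eq:twistedtoda}), one reverses these three reductions — or, to guard against sign slips, simply feeds the coframe above into the standard Cartan curvature formulas and checks $\mathrm{Ric}_h\equiv 0$ directly, the identities (\ref{eq:deta}) and (\ref{eq:compatibility}) being exactly what makes the off-diagonal terms cancel. Along the way one also reads off $\xi=k^2 s_g$ with no additive constant, which both explains the free parameter $k$ — it reflects the scaling $h\mapsto c^2h$, $\xi\mapsto c^{-2/3}\xi$ — and shows the normalisation $\xi=s_g$ is not a loss of generality.

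For the last sentence, let $h$ be any complex $2$-dimensional Hermitian non-K\"ahler Ricci-flat metric. By Derdzi\'nski's classification recalled above, $g=\lambda^{2/3}h$ is K\"ahler with $s_g=\lambda^{1/3}>0$, and $\mathcal K=J\nabla_g s_g$ is a Hamiltonian Killing field for $g$ whose moment map is $s_g$ under the natural normalisation; hence $h=s_g^{-2}g=\xi^{-2}g$. Since $g$ is a K\"ahler surface carrying a Hamiltonian Killing field, the symplectic reduction recalled above puts $g$ locally in the form (\ref{eq:metricglocallocal}), so by the ``iff'' just proved $v$ satisfies (\ref{eq:twistedtoda}) and $W$ is of the form (\ref{eq:W}).

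The genuine obstacle is the primitive-part step: extracting (\ref{eq:W}) from the traceless part of $\mathrm{Ric}_h=0$ requires computing the Levi-Civita connection one-forms of the coframe above, then the curvature two-forms, then $\mathrm{Ric}_h$, and isolating the primitive $(1,1)$-component — a long but mechanical calculation, and the only place where (\ref{eq:W}) can enter, since the scalar equation involves only $v$. Secondary care points are the conventions for $\Delta_g$ and for the moment map, carrying the constant $k$ consistently throughout, and checking that (\ref{eq:W}) together with (\ref{eq:twistedtoda}) is compatible with the constraint (\ref{eq:compatibility}), i.e.\ $d(d\eta)=0$, rather than over-determining the system. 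An alternative to the brute-force curvature computation would be to invoke Derdzi\'nski's criterion for a K\"ahler surface to be conformal to an Einstein metric, but verifying that criterion in the coordinates (\ref{eq:metricglocallocal}) has essentially the same content, so it would serve only as a cross-check.
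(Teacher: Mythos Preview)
The paper does not prove this theorem: it is stated as a known result, attributed essentially to LeBrun \cite{lebrun} and cited as Proposition 3.3 of Biquard--Gauduchon \cite{biquardgauduchon}. So there is no proof in the paper to compare your proposal against.

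That said, your outline is sound and the computations you display are correct: the conformal Ricci formula, the vanishing of the $J$-anti-invariant part via the Killing property of $\mathcal{K}=J\nabla_g\xi$, and the trace computation using $|\nabla_g\xi|_g^2=W^{-1}$, $\Delta_g\xi=v_\xi/W$ together with (\ref{eq:scalarcurvature}) all check out and do yield the displayed scalar equation. Your identification of the primitive $(1,1)$ traceless part as the crux is exactly right. One point deserves a bit more care than you indicate: that primitive part is three real scalar equations, not one, so to arrive at the single relation (\ref{eq:W}) with a \emph{constant} $k$ you need to verify that two of the three components are redundant given (\ref{eq:compatibility}) and the trace equation, and that the third integrates to (\ref{eq:W}) with $k$ independent of $\xi,x,y$. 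This is where LeBrun's original argument (via the self-dual Weyl tensor rather than a direct curvature computation) is more efficient, but your brute-force route will also work. Your remark that (\ref{eq:compatibility}) does not over-determine the system is correct: once (\ref{eq:W}) holds, (\ref{eq:compatibility}) is equivalent to (\ref{eq:twistedtoda}).
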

The constant can be assumed to be one by scaling $\partial_{t}$ properly. When $k=1$, we have $s_g=\xi$ and the Hamiltonian Killing field $\partial_{t}$ is exactly the vector field $\mathcal{K}$ associated to the Hermitian non-K\"ahler Ricci-flat metric $h=\xi^{-2}g$.
One can perform the transformation
\begin{align}
  \varrho=\frac{1}{\xi}, && V=\xi^2W, && u=v-4\log\xi  \label{eq:ansatz transformation}
\end{align}
and the above ansatz becomes
\begin{theorem}[Local ansatz, second version]\label{thm:local ansatz version two}
  After preforming the transformations \eqref{eq:ansatz transformation}, equations in Theorem \ref{thm:localansatz} with $k=1$ become
  \begin{equation}\label{eq:ansatz two V}
    V=-12\varrho+6\varrho^2u_{\varrho}
  \end{equation}
  and the usual $SU(\infty)$ Toda equation
  \begin{equation}\label{eq:ansatz two Toda}
    (e^u)_{\varrho\varrho}+u_{xx}+u_{yy}=0.
  \end{equation}
  The metric $g$ and $h$ become
  \begin{equation}
    h=\varrho^2g=V(d\varrho^2+e^u(dx^2+dy^2))+V^{-1}\eta^2.
  \end{equation}
\end{theorem}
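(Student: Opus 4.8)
The statement is an explicit change of variables, so the plan is simply to substitute the transformations \eqref{eq:ansatz transformation} into the equations of Theorem \ref{thm:localansatz} with $k=1$ and simplify, keeping careful track of the non-affine substitution $\varrho = 1/\xi$. First I would record the chain rule: since $x,y$ are untouched and $d\varrho/d\xi = -\varrho^2$, one has $\partial_\xi = -\varrho^2\,\partial_\varrho$, $\partial_\xi^2 = \varrho^2\partial_\varrho(\varrho^2\partial_\varrho)$, and $d\xi^2 = \varrho^{-4}\,d\varrho^2$, while $\partial_x,\partial_y$ are unchanged. The connection one-form $\eta$, the Killing field $\partial_{t}$, and the transverse coordinates $(x,y)$ are unaffected, because the conformal rescaling $h=\xi^{-2}g$ preserves the orthogonal splitting that defines $\eta$; only the radial variable and the conformal weight change. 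From $v = u - 4\log\varrho$ we get $e^v = \varrho^{-4}e^u$ and $v_{xx}+v_{yy} = u_{xx}+u_{yy}$.

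Next I would deduce \eqref{eq:ansatz two V}: writing $v_\xi = u_\xi + 4\varrho = -\varrho^2 u_\varrho + 4\varrho$ and substituting into $W = 12\xi^{-3} - 6\xi^{-2} v_\xi$ gives $W = -12\varrho^3 + 6\varrho^4 u_\varrho$, hence $V = \xi^2 W = \varrho^{-2}W = -12\varrho + 6\varrho^2 u_\varrho$. Then I would transform the twisted Toda equation \eqref{eq:twistedtoda}. The key computation is that applying $\partial_\xi = -\varrho^2\partial_\varrho$ twice to $e^v = \varrho^{-4}e^u$ produces $(e^v)_{\xi\xi} = (e^u)_{\varrho\varrho} - 6\varrho^{-1}(e^u)_\varrho + 12\varrho^{-2}e^u$, while on the right-hand side $-\xi W e^v = -\varrho^{-3}V e^u$, and inserting the formula for $V$ just obtained gives $-\varrho^{-3}V e^u = 12\varrho^{-2}e^u - 6\varrho^{-1}(e^u)_\varrho$. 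Thus the twisting term cancels exactly the two lower-order terms created by the conformal weight $\varrho^{-4}$, and \eqref{eq:twistedtoda} collapses to the untwisted $SU(\infty)$ Toda equation \eqref{eq:ansatz two Toda}. Finally, substituting $W=\varrho^2 V$, $e^v=\varrho^{-4}e^u$, $d\xi^2=\varrho^{-4}d\varrho^2$ into $h = \xi^{-2}\bigl(W\,d\xi^2 + W^{-1}\eta^2 + We^v(dx^2+dy^2)\bigr) = \varrho^2\bigl(\cdots\bigr)$, the powers of $\varrho$ cancel term by term and give $h = V(d\varrho^2 + e^u(dx^2+dy^2)) + V^{-1}\eta^2$.

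There is no conceptual obstacle here; the only points demanding care are, first, that the Toda equation is not transformed in isolation -- the cancellation of the twisting term requires substituting the explicit expression \eqref{eq:ansatz two V} for $V$ (equivalently, using \eqref{eq:W}) -- and second, that one must use the full second-order chain rule $\partial_\xi^2 = \varrho^2\partial_\varrho(\varrho^2\partial_\varrho)$ rather than naively replacing $\partial_\xi$ by $-\varrho^2\partial_\varrho$ at the level of second derivatives, since the extra term coming from differentiating $\varrho^2$ is precisely what generates the $-6\varrho^{-1}(e^u)_\varrho + 12\varrho^{-2}e^u$ above. For completeness one can also verify that the integrability condition \eqref{eq:deta} and the compatibility equation \eqref{eq:compatibility} transform consistently under \eqref{eq:ansatz transformation}, though these are not part of the asserted conclusion.
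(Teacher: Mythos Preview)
Your proposal is correct and is exactly the direct substitution the paper has in mind; the paper itself states Theorem \ref{thm:local ansatz version two} without proof as a straightforward consequence of the transformations \eqref{eq:ansatz transformation}, and your computation fills in precisely those routine details. The two subtle points you flag---that the cancellation of the twisting term requires using the explicit formula \eqref{eq:ansatz two V} for $V$, and that the full second-order chain rule $\partial_\xi^2 = \varrho^2\partial_\varrho(\varrho^2\partial_\varrho)$ must be applied---are the only places where a careless calculation could go wrong, and you handle both correctly.
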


\subsection{Asymptotic models}
\label{subsec:define asymptotic models}

In this subsection, we introduce the asymptotic models---ALE, ALF-$A$, AF, skewed special Kasner, and $\text{ALH}^*$---that appear in Theorem \ref{thm:main classification ends nonrealizable}.

\begin{definition}\label{def:ale}
  A gravitational instanton $(M,h)$ is ALE (asymptotically locally Euclidean) with rate $\delta_0>0$, if:
  \begin{itemize}
    \item Outside a compact set $K$, the manifold $M\setminus K$ diffeomorphically is $(\mathbb{R}^4/\Gamma)\setminus\overline{B_N(0)}$ with a diffeomorphism $\Phi$, where $\Gamma\subset SO(4)$ is a finite subgroup acting freely on $S^3$.
    \item The metric $h$ satisfies
    $$\left|\nabla_h^k\left(h-\Phi^*g_{\mathbb{R}^4/\Gamma}\right)\right|_h=O(\rho^{-\delta_0-k})$$
    for any $k\geq0$,
    where $\rho$ denotes the distance function to the origin on $\mathbb{R}^4/\Gamma$.
    \end{itemize}
\end{definition}
Note that in \cite{me}, it has already been proved that for a Hermitian non-K\"ahler ALE gravitational instanton, it is asymptotic to $\mathbb{R}^4/\Gamma$, with only very special $\Gamma$. For a Euclidean quotient $\mathbb{R}^4/\Gamma$, the conformal metric $g=\frac{1}{\rho^{4}_{\Gamma}}g_{\mathbb{R}^4/\Gamma}$ is again flat, where $\rho_{\Gamma}$ is the distance function on $\mathbb{R}^4/\Gamma$ from the origin. Only those $\mathbb{R}^4/\Gamma$, where $(\mathbb{R}^4/\Gamma)\setminus {0}$ equipped with the conformal flat metric $\frac{1}{\rho_{\Gamma}^4}g_{\mathbb{R}^4/\Gamma}$ is isometric to a $(\mathbb{C}^2/\Upsilon)\setminus\{0\}$ with standard the flat metric, where $\Upsilon\subset U(2)$, can serve as asymptotic models for Hermitian non-K\"ahler ALE gravitational instantons. 
In other words, the asymptotic model is $(\mathbb{C}^2/\Upsilon)\setminus\{0\}$ with $\Upsilon\subset U(2)$, but equipped with the flat metric $\frac{1}{\rho_{\Upsilon}^4}g_{\mathbb{C}^2/\Upsilon}$. Notably, for a Hermitian non-K\"ahler ALE gravitational instanton the complex structure is asymptotic to the standard complex structure of $\mathbb{C}^2/\Upsilon$.

\begin{definition}\label{def:alf}
A gravitational instanton $(M,h)$ is ALF (asymptotically locally flat) with rate $\delta_0>0$, if: 
\begin{itemize}
\item Outside a compact set $K$, the manifold $M\setminus K$ diffeomorphically is $(N,\infty)\times L$, where $L$ is the total space of an $S^1$-fibration over $S^2$ or $\mathbb{RP}^2$. So $M\setminus K$ diffeomorphically is the total space of an $S^1$-fibration $\pi$ over $\mathbb{R}^3$ or $\mathbb{R}^3/\{\pm id\}$ minus a compact ball $\overline{B_{N}(0)}$.
\item The one-form $\gamma$ is a connection one-form for the $S^1$-fibration $L$.
\item The metric $h$ satisfies
$$\left|\nabla_h^k\left(h-\pi^*g_{\mathbb{R}^3}-\gamma^2\right)\right|_h=O(\rho^{-\delta_0-k})$$
for any $k\geq0$,
where $\rho$ denotes the standard distance function on the base $\mathbb{R}^3$ or  $\mathbb{R}^3/\{\pm id\}$.
\end{itemize}
We denote the generator of the $S^1$-action on $L$ by $\partial_t$.
\end{definition}
We say that an ALF gravitational instanton is \textit{ALF-$A$} if the base of $\pi$ is $\mathbb{R}^3\setminus\overline{B_N(0)}$. If the base of the fibration $\pi$ is $\left(\mathbb{R}^3/\{\pm id\}\right)\setminus \overline{B_N(0)}$, then we say the gravitational instanton is \textit{ALF-$D$}. There are more asymptotic models when the topology of the end is trivial, namely diffeomorphic to $(\mathbb{R}^3\setminus \overline{B_N(0)})\times S^1$. 
\begin{definition}\label{def:af}
A gravitational instanton $(M,h)$ is AF (asymptotically flat) with rate $\delta_0>0$ if: 
\begin{itemize}
\item Outside a compact subset $K\subset M$, there exists a diffeomorphism $\Phi:M\setminus K\to \mathfrak{M}_{\mathfrak{a},\mathfrak{b}}\setminus \overline{B_N(0)}$ from $M\setminus K$ to a model space $\mathfrak{M}_{\mathfrak{a},\mathfrak{b}}$ outside a compact ball $\overline{B_N(0)}$  with $0\leq\mathfrak{a}<2\pi$ and $\mathfrak{b}\neq0$.
\item The model space $(\mathfrak{M}_{\mathfrak{a},\mathfrak{b}},h_{\mathfrak{a},\mathfrak{b}})$ is the quotient of $(\mathbb{R}^3\times\mathbb{R}^1,h_{\mathbb{R}^3\times\mathbb{R}^1})$ by the following relation:
$$(\rho,\alpha,\beta,t)\sim(\rho,\alpha+\mathfrak{a},\beta,t+\mathfrak{b}).$$ 
Here we are using the spherical coordinate $(\rho,\alpha,\beta)$ for $\mathbb{R}^3$ with $0\leq\alpha<2\pi$, $0\leq\beta\leq\pi$, and $\rho$ is the distance function on $\mathbb{R}^3$. 
\item The metric $h$ is asymptotic to the quotient metric $h_{\mathfrak{a},\mathfrak{b}}$ on the model space:
$$|\nabla_{h}^k(h-\Phi^*h_{\mathfrak{a},\mathfrak{b}})|_{h}=O(\rho^{-\delta_0-k})$$
for any $k\geq0$.
\end{itemize}
For the model space the quotient map will be denoted by $\pi:\mathbb{R}^3\times\mathbb{R}^1\to\mathfrak{M}_{\mathfrak{a},\mathfrak{b}}$.
An AF gravitational instanton is said to be $\mathrm{AF}_{\mathfrak{a},\mathfrak{b}}$ if it is asymptotic to the model space $(\mathfrak{M}_{\mathfrak{a},\mathfrak{b}},h_{\mathfrak{a},\mathfrak{b}})$.
\end{definition}

When $\mathfrak{a}=0$, the above definition reduces to a special case of ALF-$A$. The Kerr family \cite{gibbons} and Chen-Teo family \cite{chenteo} of gravitational instantons are examples of AF gravitational instantons, which are not hyperk\"ahler. We call a gravitational instanton \textit{rational} AF, if it is $\text{AF}_{\mathfrak{a},\mathfrak{b}}$ with $\mathfrak{a}/2\pi=m/n$ rational, and \textit{irrational} AF, if it is $\text{AF}_{\mathfrak{a},\mathfrak{b}}$ with $\mathfrak{a}/2\pi$ irrational.
There are indeed more asymptotic models that cannot be filled in as Hermitian non-K\"ahler gravitational instantons. They are asymptotic models in the sense that there are Hermitian non-K\"ahler Ricci-flat metrics asymptotic to them at infinity while being incomplete inside, and they cannot be realized in the sense that there is no Hermitian non-K\"ahler gravitational instanton asymptotic to them. 

\begin{definition}\label{def:Kasner}
  Over $(-\infty,-N)\times\mathbb{R}^3$ where $\varrho\in(-\infty,-N)$ and $(x,y,t)$ parametrizes $\mathbb{R}^3$, define 
  \begin{align}
    h_{\text{Kasner}}&=-6\varrho\left(d\varrho^2-\varrho(dx^2+dy^2)\right)-\frac{1}{6\varrho}dt^2,\label{eq:Kasner0}\\
    h_{\text{ALH}^*}&=-12\varrho\left(d\varrho^2+dx^2+dy^2\right)-\frac{1}{12\varrho}(dt-12xdy)^2.\label{eq:ALH*}
  \end{align}
\end{definition}
For any lattice $\Lambda \subset \mathbb{R}^3$, we further define $h_{\text{Kasner},\Lambda}$ as the quotient metric of $h_{\text{Kasner}}$ on $(-\infty, -N) \times (\mathbb{R}^3 / \Lambda)$. The asymptotic models $h_{\text{Kasner},\Lambda}$ will be referred to as \textit{skewed special Kasner models}. The name Kasner is from general relativity. There is a family of Kasner metrics, with the one mentioned above $h_{\text{Kasner}}$ being Type II. For clarity, we refer to this specific Kasner metric as \textit{special Kasner}. The later metric $h_{\text{ALH}^*}$ is invariant under the Heisenberg group action by identifying the Heisenberg group $\mathcal{H}\simeq\mathbb{R}^3$ as 
\begin{equation}
  \mathcal{H}=\left\{\left(\begin{matrix}1 & x & \frac{t}{12}\\ 0 & 1 & y\\ 0 & 0 & 1\end{matrix}\right)\mid x,y,t\in\mathbb{R}\right\}.
\end{equation}
For any cocompact lattice $\Gamma \subset \mathcal{H}$, consider the quotient metric $h_{\text{ALH}^*-\Gamma}$ on $(-\infty, N) \times (\Gamma \setminus \mathcal{H})$. These asymptotic models will be referred to as \textit{$\text{ALH}^*$ models}.

The skewed special Kasner models and $\text{ALH}^*$ models are Hermitian non-K\"ahler Ricci-flat, as they correspond to the solutions $u = \log(-\varrho)$ and $u = 0$ to \eqref{eq:ansatz two Toda}. Note that the asymptotic models $h_{\text{ALH}^*-\Gamma}$ are precisely the $\text{ALH}^*$ asymptotic models for hyperk\"ahler gravitational instantons. Here, like the reversed Eguchi-Hanson and the reversed Taub-NUT, under the orientation opposite to their hyperk\"ahler orientation, $\text{ALH}^*$ models are Hermitian non-K\"ahler. It is crucial to notice that one cannot take further finite quotient as it breaks the $S^1$-action generated by $\partial_t$, where $\mathcal{K}=-\partial_t$ is the extremal vector field. That is, the fiber over $(-\infty,-N)$ is a nilpotent manifold and cannot be an infranilmanifold. Conformal extremal K\"ahler metrics are
\begin{align}
  g_{\text{Kasner},\Lambda}&=\frac{1}{\varrho^2}h_{\text{Kasner},\Lambda}=\frac{1}{\varrho^2}\left(-6\varrho d\varrho^2-\frac{1}{6\varrho}dt^2\right)+6(dx^2+dy^2),\\
  g_{\text{ALH}^*-\Gamma}&=\frac{1}{\varrho^2}h_{\text{Kasner},\Lambda}=\frac{1}{\varrho^2}\left(-12\varrho d\varrho^2-\frac{1}{12\varrho}(dt-12xdy)^2\right)-\frac{12}{\varrho}(dx^2+dy^2).
\end{align}
Scalar curvatures of these K\"ahler metrics are both $s_g=\varrho<0$, which explains why there should not be Hermitian non-K\"ahler gravitational instantons asymptotic to them.

\section{Collapsing geometry and classification of ends}
\label{sec:collapsegeometry}

In this section we classify all possible asymptotic cones as well as ends. Consider a $d$-dimensional asymptotic cone $(M_\infty,d_\infty,p_\infty)$ of $(M,h,p)$, which is realized as the limit of some sequence $(M,h_j,p)$ with $h_j=r_j^{-2}h$ and $r_j\to\infty$. In the Hermitian non-K\"ahler case, the smooth region $\mathcal{G}$ might be smaller than the regular region $\mathcal{R}$. In the following we study the geometric structures on the regular region.

Suppose $q_\infty$ is a point in the regular region $\mathcal{R}$. Then we can find a sequence of points $q_j\in (M,h_j,p)$ converging to $q_\infty$. There exists a small $\delta>0$, such that $B_\infty\coloneqq B_\delta(q_\infty)\subset\mathcal{R}$, and the sequence of universal covers $\widetilde{B_{j}}$ of $B_j\coloneqq B_\delta(q_j)$ is non-collapsed, which is a consequence of Cheeger-Fukaya-Gromov's theory. Denote the deck transformation group of the universal covering map $\pi_j:\widetilde{B_j}\to B_j$ by $G_j$. Then we have the following equivariant convergence picture:

$$
\begin{tikzcd}
{(\widetilde{B_j},\widetilde{h_j},G_j,\widetilde{q_j})} \arrow[d, "\pi_j"'] \arrow[rrrr, "\text{$equivariant$ $GH$-$convergence$}"] &  &  &  & {(\widetilde{B_\infty},\widetilde{h_\infty},G_\infty,\widetilde{q_\infty})} \arrow[d, "\pi_\infty"] \\
{(B_{j},h_j)} \arrow[rrrr, "GH-convergence"']                                                                                         &  &  &  & {(B_\infty,h_\infty).}                                                                              
\end{tikzcd}
$$
Here, $G_\infty$ is a subgroup of the isometry group $\mathrm{Iso}(\widetilde{B_\infty},\widetilde{h_\infty})$ of $\widetilde{B_\infty}$ and $B_\infty=\widetilde{B_\infty}/G_\infty$.
Moreover, $G_\infty$ is a finite extension of its identity component $G_\infty^0$, which is a nilpotent Lie group, and it acts properly and faithfully on $\widetilde{B_\infty}$ (Theorem 2.1 in \cite{naber}). 
Since $\widetilde{h_j}$ is Ricci-flat, the standard regularity theory about non-collapsed limit of Einstein metrics shows that the pointed convergence $(\widetilde{B_j},\widetilde{h_j},\widetilde{q_j})\to (\widetilde{B_\infty},\widetilde{h_\infty},\widetilde{q_\infty})$ actually holds in  $C^\infty$ topology.  This picture will be implicitly used frequently. We shall study the geometry of each non-collapsed $(\widetilde{B_\infty},\widetilde{h_\infty},G_\infty)$, which for simplicity we shall refer to as the \textit{local geometry of the asymptotic cone}.

\subsection{Limit rescaled conformal factor}\label{subsec:limit rescaled conformal factor}

To capture the information about the K\"ahler metric $g$ in the process of taking asymptotic cones of $(M,h,p)$, we need to rescale the conformal factor $\lambda^{1/3}$ and take its limit at the same time. Since the scalar curvature of $g$ is $s_g=\lambda^{1/3}$ and the scalar curvature of $h$ is $s_h=0$, by computing the scalar curvature $s_g$ using the conformal change, we have the equation
\begin{equation}\label{eq:conformal change of scalar}
6\Delta_h\lambda^{1/3}+\lambda^{4/3}=0.
\end{equation}
As above we take $q_\infty\in\mathcal{R}\subset M_\infty$ and pick a sequence of points $q_j\in (M,h_j)$ with $q_j\to q_\infty$. Renormalize $\lambda^{1/3}$ as $\lambda^{1/3}_j\coloneqq \lambda^{-1/3}(q_j)\lambda^{1/3}$. It will turn out later that the choice of $q_\infty\in\mathcal{R}$ does not essentially affect the rescaled limit. 
After the renormalization, the equation that $\lambda^{1/3}_j$ satisfies is
\begin{equation}\label{eq:normalizedequation}
6\Delta_{h_j}\lambda^{1/3}_j+r_j^{2}\lambda(q_j)\lambda^{4/3}_j=0.
\end{equation}
By applying the Cheng-Yau gradient estimate to $\lambda_j^{1/3}$ on the rescaled Ricci-flat Riemannian manifold $(M,h_j)$, we have the following proposition.
\begin{proposition}\label{prop:chengyau}
On any ball $B_{a}(x_j,h_j)\subset (M,h_j)$ with radius $a$, we have the inequality
\begin{equation}\label{eq:gradient}
\frac{|\nabla_{h_j}\lambda_j^{1/3}|_{h_j}}{\lambda_j^{1/3}}\leq C\frac{a^2}{a^2-d_j(\cdot,x_j)^2}\max\left\{r_j^2\sup_{B_a(x_j,h_j)}\lambda,\ \frac{1}{a}\right\}.
\end{equation}
Here $d_j(\cdot,x_j)$ refers to the distance function to $x_j$ under the rescaled metric $h_j=r_j^{-2}h$ and $C$ is a universal constant.
\end{proposition}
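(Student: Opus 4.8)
The plan is to derive \eqref{eq:gradient} directly from the Cheng--Yau gradient estimate applied to the positive function $\lambda_j^{1/3}$ on the rescaled Ricci-flat manifold $(M,h_j)$. Recall the Cheng--Yau estimate: if $f>0$ satisfies $\Delta f = \phi$ on a geodesic ball $B_a(x)$ in an $n$-manifold with $\mathrm{Ric}\ge -(n-1)K$, then one controls $|\nabla \log f|$ on $B_{a}(x)$ in terms of $a$, $K$, and the ``size'' of the inhomogeneity. Here $\mathrm{Ric}_{h_j}=0$, so $K=0$, and from \eqref{eq:normalizedequation} we have $\Delta_{h_j}\lambda_j^{1/3} = -\tfrac16 r_j^2\lambda(q_j)\lambda_j^{4/3}$, i.e.\ $\Delta_{h_j}\log\lambda_j^{1/3}$ and $|\nabla_{h_j}\log\lambda_j^{1/3}|^2$ are coupled by a Bochner-type identity with a manageable zeroth-order term. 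The upshot should be exactly an estimate of the schematic form
$$
\frac{|\nabla_{h_j}\lambda_j^{1/3}|_{h_j}}{\lambda_j^{1/3}}(y)\ \le\ C\,\frac{a^2}{a^2-d_j(y,x_j)^2}\,\max\Bigl\{\, \sup_{B_a(x_j,h_j)} \bigl(r_j^2\lambda(q_j)\lambda_j^{1/3}\bigr)^{1/2},\ \tfrac1a\,\Bigr\},
$$
and since $\lambda_j^{1/3}=\lambda^{-1/3}(q_j)\lambda^{1/3}$, the combination $r_j^2\lambda(q_j)\lambda_j^{1/3} = r_j^2 \lambda(q_j)^{2/3}\lambda^{1/3} = r_j^2\lambda^{2/3}(q_j)\cdot\lambda_j^{1/3}$ simplifies; in any case $r_j^2\lambda(q_j)\lambda_j^{1/3}(y) = r_j^2\lambda^{1/3}(y)\lambda^{2/3}(q_j)^{?}$ and one checks that $\bigl(r_j^2\lambda(q_j)\lambda_j^{1/3}\bigr)=r_j^2\lambda(q_j)^{2/3}\lambda(y)^{1/3}\le r_j^2\sup_{B_a}\lambda$ after absorbing $\lambda(q_j)^{2/3}\lambda(y)^{1/3}\le\sup_{B_a}\lambda$, which yields the stated right-hand side with $r_j^2\sup_{B_a(x_j,h_j)}\lambda$.

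The key steps, in order, are: (i) record the Bochner/maximum-principle proof of Cheng--Yau for $\Delta f = \phi f^{\gamma}$ with $\gamma>1$ and $\phi\ge 0$, or simply cite it, noting that the relevant hypothesis $\mathrm{Ric}\ge 0$ holds since $h_j$ is Ricci-flat; (ii) substitute $f=\lambda_j^{1/3}$, $\gamma=4$, $\phi = -\tfrac16 r_j^2\lambda(q_j)$ — here the sign is important: the inhomogeneity is \emph{non-positive} because $\Delta_{h_j}\lambda_j^{1/3}\le 0$, which is the favorable sign for the upper gradient bound (it is the superharmonic-type situation, exactly as for the Cheng--Yau estimate on harmonic functions, here made even easier by the sign); (iii) track the constant through the localization: the cutoff argument on $B_a(x_j,h_j)$ produces the factor $\tfrac{a^2}{a^2-d_j(\cdot,x_j)^2}$ and the $\max\{\cdot,\tfrac1a\}$ comes from balancing the zeroth-order term against the cutoff's own gradient contribution $\sim \tfrac1a$; (iv) simplify the zeroth-order contribution to the form $r_j^2\sup_{B_a(x_j,h_j)}\lambda$ using the definition of $\lambda_j^{1/3}$ and crude bounds.

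I expect the main (minor) obstacle to be bookkeeping the exponent and getting the zeroth-order term into precisely the advertised form $r_j^2\sup_{B_a}\lambda$ rather than something like $\bigl(r_j^2\sup_{B_a}\lambda\bigr)^{1/2}\cdot(\text{something})$; this is a dimensional-analysis check — $r_j^2\lambda$ has units of (length)$^{-2}$ under $h_j$, so it is the right scale-invariant quantity to appear alongside $\tfrac1a$, and any half-power would have the wrong dimension, so the clean statement is forced. A secondary point of care is that the Cheng--Yau estimate is usually quoted for harmonic functions or for $\Delta f\ge -Af$; one must use the version valid for the quasilinear equation $6\Delta_{h_j} f + c_j f^4 = 0$ with $c_j = r_j^2\lambda(q_j)>0$ (equivalently, apply the estimate to $\log f$, whose Laplacian is $\Delta\log f = \tfrac{\Delta f}{f} - |\nabla\log f|^2 = -\tfrac{c_j}{6}f^3 - |\nabla\log f|^2$, so that $\Delta\log f + |\nabla\log f|^2 \le 0$, which feeds directly into the Bochner argument). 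Everything else is standard and I would relegate it to a citation of Cheng--Yau together with the Ricci-flatness of $h_j$.
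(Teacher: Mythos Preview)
Your proposal is correct and follows essentially the same approach as the paper: apply the Cheng--Yau gradient estimate to the positive function $\lambda_j^{1/3}$ on the Ricci-flat manifold $(M,h_j)$, using the PDE \eqref{eq:normalizedequation} to control the inhomogeneity. The paper's execution is slightly cleaner than your sketch: rather than redoing the Bochner argument on $\log f$, it simply verifies the two hypotheses of Cheng--Yau's Theorem~6, namely $\Delta_{h_j}\lambda_j^{1/3}<0$ and $|\nabla_{h_j}(\Delta_{h_j}\lambda_j^{1/3})|_{h_j}\le \tfrac{2}{3}\,r_j^2\bigl(\sup_{B_a}\lambda\bigr)\,|\nabla_{h_j}\lambda_j^{1/3}|_{h_j}$ (using $\lambda(q_j)\lambda_j=\lambda$), and then cites the theorem directly; this bypasses the exponent bookkeeping you flagged as a minor obstacle.
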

\begin{proof}
We have the following inequalities on $B_a(x_j,h_j)$
\begin{align*}
|\nabla_{h_j}(\Delta_{h_j}\lambda_j^{1/3})|_{h_j}&=\frac23r_j^{2}\lambda(q_j)\lambda_j|\nabla_{h_j}\lambda_j^{1/3}|_{h_j}\leq \frac23r_j^{2}\lambda(q_j)\left(\sup_{B_a(x_j,h_j)}\lambda_j\right) |\nabla_{h_j}\lambda_j^{1/3}|_{h_j}\\
&=\frac23r_j^{2}\left(\sup_{B_a(x_j,h_j)}\lambda\right) |\nabla_{h_j}\lambda_j^{1/3}|_{h_j},\\
\Delta_{h_j}\lambda_j^{1/3}&=-\frac16r_j^{2}\lambda(q_j)\lambda^{4/3}_j<0.
\end{align*}
Notice that the rescaled metric $h_j=r_j^{-2}h$ is again Ricci-flat.
It is a direct application of Theorem 6 in \cite{chengyau} to the strictly positive function $\lambda^{1/3}_j$ to get (\ref{eq:gradient}).
\end{proof}

Hence, if we take $B_a(x_j,h_j)$ as the ball $B_a(q_j,h_j)$ with fixed definite radius $a$ so that the singularity point $p_\infty\notin B_a(q_\infty,h_\infty)$, we will have $r_j^{2}\sup_{B_a(q_j,h_j)}\lambda\leq C$ by the curvature decay for $j$ large. Therefore, by taking $j$ large enough, inequality (\ref{eq:gradient}) becomes
\begin{equation}\label{eq:gradientilarge}
{|\nabla_{h_j}\lambda_j^{1/3}|_{h_j}}\leq C{\lambda_j^{1/3}}
\end{equation}
with $C$ depending on $a$.
This gives us the following Harnack inequality on a single ball.
\begin{proposition}\label{prop:localharnack}
For $j>j_0$ with $j_0$ depending on the choice of $q_\infty$ and $a$, on $B_{a/2}(q_j,h_j)$, there is a constant $C$ depending on $a$ such that
\begin{equation}
\sup_{B_{a/2}(q_j,h_j)}\lambda^{1/3}_j\leq C\inf_{B_{a/2}(q_j,h_j)}\lambda^{1/3}_j.
\end{equation}
\end{proposition}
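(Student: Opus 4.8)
The plan is to derive the local Harnack inequality on $B_{a/2}(q_j,h_j)$ directly from the gradient bound \eqref{eq:gradientilarge} by integrating along minimizing geodesics. First I would fix $j_0$ so large that, for all $j>j_0$, the point $p_\infty\notin B_a(q_\infty,h_\infty)$ guarantees via the Cheeger--Tian $\epsilon$-regularity curvature decay that $r_j^2\sup_{B_a(q_j,h_j)}\lambda\leq C$; this is exactly the hypothesis under which Proposition \ref{prop:chengyau} specializes to \eqref{eq:gradientilarge}, so for $j>j_0$ we have the estimate $|\nabla_{h_j}\lambda_j^{1/3}|_{h_j}\leq C\lambda_j^{1/3}$ on $B_a(q_j,h_j)$, with $C=C(a)$.

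Next, since $\lambda_j^{1/3}>0$ everywhere, \eqref{eq:gradientilarge} is equivalent to the bound $|\nabla_{h_j}\log\lambda_j^{1/3}|_{h_j}\leq C$ on $B_a(q_j,h_j)$. Given any two points $x,y\in B_{a/2}(q_j,h_j)$, the ball $B_{a/2}(q_j,h_j)$ is contained in $B_a(q_j,h_j)$, and a minimizing geodesic of $(M,h_j)$ joining $x$ to $y$ — whose length is $d_j(x,y)<a$ — stays inside $B_a(q_j,h_j)$ (any point on it lies within distance $<a$ of $q_j$). Integrating the derivative of $\log\lambda_j^{1/3}$ along this geodesic gives
\begin{equation}
\bigl|\log\lambda_j^{1/3}(x)-\log\lambda_j^{1/3}(y)\bigr|\leq C\, d_j(x,y)\leq Ca.
\end{equation}
Exponentiating and taking the supremum over $x$ and the infimum over $y$ in $B_{a/2}(q_j,h_j)$ yields
\begin{equation}
\sup_{B_{a/2}(q_j,h_j)}\lambda_j^{1/3}\leq e^{Ca}\inf_{B_{a/2}(q_j,h_j)}\lambda_j^{1/3},
\end{equation}
which is the claimed inequality with the constant $e^{Ca}$ depending only on $a$ (and absorbing the earlier $C$).

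I do not expect any genuine obstacle here: the only subtlety is bookkeeping on the constants and on the choice of $j_0$, namely making sure that the curvature-decay input really does make $r_j^2\sup_{B_a(q_j,h_j)}\lambda$ uniformly bounded once $j$ is large — this is where the hypothesis $p_\infty\notin B_a(q_\infty,h_\infty)$ and the Cheeger--Tian bound $|Rm_h|\leq C\rho^{-2}$ enter. One could alternatively phrase the argument as a standard consequence of the Cheng--Yau gradient estimate together with the Bishop--Gromov volume comparison (which applies since $h_j$ is Ricci-flat), but the geodesic-integration route above is the most transparent and avoids invoking anything beyond what is already in place. A minor point worth stating explicitly is that $B_{a/2}(q_j,h_j)$ need not be geodesically convex; this is why I route geodesics through the larger ball $B_a(q_j,h_j)$, on which the gradient bound \eqref{eq:gradientilarge} holds.
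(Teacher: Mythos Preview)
Your proof is correct and is exactly the argument the paper has in mind: the paper simply states that the gradient bound \eqref{eq:gradientilarge} ``gives us the following Harnack inequality on a single ball,'' and your integration of $|\nabla_{h_j}\log\lambda_j^{1/3}|_{h_j}\leq C$ along minimizing geodesics (routed through the larger ball $B_a(q_j,h_j)$) is the standard way to make that implication explicit. Your bookkeeping on $j_0$ and the constants, and your observation about geodesic convexity, are all fine.
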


With the Harnack inequality, we can conclude that the renormalized conformal factor $\lambda^{1/3}_j$ is bounded from both above and below on $B_{a/2}(q_j,h_j)$, since $\lambda^{1/3}_j(q_j)=1$. Harnack inequality over other balls that are not centered at $q_j$ with definite radius and definite positive distance to the base point $p$ can be proved similarly, and we can therefore conclude the following global Harnack inequality simply  by  patching balls together.
\begin{proposition}\label{prop:harnack}
For any compact set $K\subset\subset\mathcal{R}\subset M_\infty$ and any sequence of $K_j\subset (M,h_j)$ that converges to ${K}$, the following Harnack inequality for $\lambda^{1/3}_j$ on $K_j$ uniformly holds when $j$ is large enough
\begin{equation}\label{eq:harnacklarge}
\sup_{K_j}\lambda^{1/3}_j\leq C_K\inf_{K_j}\lambda^{1/3}_j
\end{equation}
\end{proposition}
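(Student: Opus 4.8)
The plan is to obtain \eqref{eq:harnacklarge} by patching together finitely many copies of the single-ball estimate of Proposition~\ref{prop:localharnack}, exploiting that a Harnack inequality of the form $\sup\le C\inf$ is scale-invariant and hence unaffected by how $\lambda^{1/3}_j$ is normalized; in particular the local estimate holds for $\lambda^{1/3}_j$ on every ball of a definite radius lying at a definite positive $h_j$-distance from $p$ once $j$ is large, with constants depending only on that radius and distance. Since $K$ is compact, contained in the open set $\mathcal{R}$, and avoids $p_\infty$, I would first fix a connected compact set $\widehat{K}$ with $K\subset\widehat{K}\subset\subset\mathcal{R}$ (using that $\mathcal{R}=M_\infty\setminus\{p_\infty\}$ is connected) and build the ball cover below on $\widehat{K}$ rather than on $K$, so that its nerve will be connected. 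Choose $a>0$ small enough that, for every $x_\infty$ in a fixed neighborhood of $\widehat{K}$, the ball $B_a(x_\infty)$ is contained in $\mathcal{R}$ and stays a definite distance from $p_\infty$, so that the curvature decay $|Rm_h|_h\le C\rho^{-2}$ forces $r_j^2\sup_{B_a(x_j,h_j)}\lambda\le C$ for $j$ large, exactly as in the paragraph preceding Proposition~\ref{prop:localharnack}. Cover $\widehat{K}$ by finitely many balls $B_{a/8}(x^{(1)}_\infty),\dots,B_{a/8}(x^{(m)}_\infty)$ with $x^{(l)}_\infty\in\widehat{K}$; connectedness of $\widehat{K}$ makes the nerve of this cover connected, so any two of these balls are joined by a chain of at most $m$ consecutive overlapping ones.

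Next I would fix sequences $x^{(l)}_j\to x^{(l)}_\infty$ in $(M,h_j)$ and apply Proposition~\ref{prop:localharnack} at each of the finitely many base points $x^{(l)}_\infty$ (all with the same radius $a$): this produces a threshold $j_0$ and a constant $C_0=C_0(a)$, both independent of $j$, so that for $j>j_0$
\begin{equation*}
\sup_{B_{a/2}(x^{(l)}_j,h_j)}\lambda^{1/3}_j\ \le\ C_0\inf_{B_{a/2}(x^{(l)}_j,h_j)}\lambda^{1/3}_j,\qquad l=1,\dots,m.
\end{equation*}
For $j$ large the Gromov--Hausdorff convergence $K_j\to K$ puts $K_j$ inside the $a/8$-neighborhood of $\widehat{K}$, hence $K_j\subset\bigcup_l B_{a/4}(x^{(l)}_j,h_j)$; and if $B_{a/8}(x^{(k)}_\infty)\cap B_{a/8}(x^{(l)}_\infty)\ne\emptyset$ then $d_\infty(x^{(k)}_\infty,x^{(l)}_\infty)<a/4$, so $d_j(x^{(k)}_j,x^{(l)}_j)<a/2$ for $j$ large and the balls $B_{a/2}(x^{(k)}_j,h_j)$ and $B_{a/2}(x^{(l)}_j,h_j)$ overlap (they share $x^{(k)}_j$). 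Now given $z_j,w_j\in K_j$, pick balls of the cover containing them, join these by a chain of at most $m$ overlapping balls, and apply the displayed inequality successively along the chain, estimating at a common point of each consecutive pair; this yields $\lambda^{1/3}_j(z_j)\le C_0^{\,m+1}\lambda^{1/3}_j(w_j)$, and taking the supremum over $z_j$ and the infimum over $w_j$ gives \eqref{eq:harnacklarge} with $C_K=C_0^{\,m+1}$.

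I do not anticipate a genuine obstacle: all relevant quantities ($a$, the number $m$ of balls, the local constant $C_0$, the threshold $j_0$) are controlled because there are only finitely many balls and $K$ keeps a definite positive distance from $p_\infty$, while the facts that $K_j$ is eventually covered and that the $j$-level balls inherit the overlap pattern of the limit balls are routine consequences of Gromov--Hausdorff convergence. The only structural input is the connectedness of $\mathcal{R}$, used to replace $K$ by a connected $\widehat{K}$ when choosing the cover; alternatively one may simply observe that it is enough to prove \eqref{eq:harnacklarge} for connected $K$.
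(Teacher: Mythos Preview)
Your proposal is correct and follows essentially the same approach as the paper, which simply says the local Harnack inequality ``over other balls that are not centered at $q_j$ with definite radius and definite positive distance to the base point $p$ can be proved similarly,'' and then concludes by ``patching balls together.'' You have supplied the details the paper omits---the scale-invariance of the Harnack ratio, the finite cover, the connectedness needed to chain overlapping balls, and the transfer of the cover through Gromov--Hausdorff convergence---all of which are exactly what the paper's one-line sketch intends.
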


Therefore, for any such sequence of $K_j$,  we have uniform upper and lower bounds on $\lambda^{1/3}_j$ that do not depend on $j$. Now the standard elliptic theory can be applied to the equation (\ref{eq:normalizedequation}) to derive bounds on derivatives of $\lambda^{1/3}_j$. Passing to certain subsequence of $(M_j,h_j,p)$, we can make sense of $\lambda^{1/3}_j$ smoothly converging to a limit function on the regular region $\mathcal{R}$ as follows.

\begin{definition}
For a rescaled sequence $(M,h_j,p)$ that converges to an asymptotic cone $(M_\infty,d_\infty,p_\infty)$, we say a sequence of smooth functions $f_j$ on $(M,h_j,p)$ converges smoothly to a limit function $f_\infty$ on the regular region $\mathcal{R}$, if for any $B_\delta(q_\infty)\subset\subset\mathcal{R}$ and any sequence of balls $B_\delta(q_j)\subset(M,h_j)$ converging to $B_\delta(q_\infty)$ such that the sequence of universal covers  $\widetilde{B_\delta(q_j)}$ is non-collapsed and converges to $\widetilde{B_\infty}$, the pullback sequence of functions $\pi_j^*f_j$ converges smoothly to $\pi_\infty^*f_\infty$.
\end{definition}

\begin{proposition}
By passing to a suitable subsequence, $\lambda^{1/3}_j$ converges smoothly to a limit function $\lambda^{1/3}_\infty$ on the regular region $\mathcal{R}$ of the asymptotic cone $M_\infty$. The limit rescaled conformal factor $\lambda^{1/3}_\infty$ is nowhere vanishing.
\end{proposition}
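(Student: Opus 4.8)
The plan is to combine the local Harnack inequality of Proposition \ref{prop:harnack} with the standard elliptic regularity already available on the non-collapsed covers, and then chase a uniform lower bound through the limit. Recall that on each ball $B_\delta(q_\infty) \subset\subset \mathcal{R}$ the sequence of universal covers $\pi_j : \widetilde{B_\delta(q_j)} \to B_\delta(q_j)$ is non-collapsed with $C^\infty$-Cheeger–Gromov convergence $\widetilde{B_\delta(q_j)} \to \widetilde{B_\infty}$. Pulling back, $\pi_j^* \lambda_j^{1/3}$ solves the lifted version of \eqref{eq:normalizedequation}, namely $6\Delta_{\widetilde{h_j}}(\pi_j^*\lambda_j^{1/3}) + r_j^2\lambda(q_j)(\pi_j^*\lambda_j^{1/3})^{4/3} = 0$, where the coefficient $r_j^2\lambda(q_j)$ is bounded (indeed $\to 0$ if $q_\infty$ is a collapsed direction, but in any case bounded by the curvature decay). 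The Harnack inequality \eqref{eq:harnacklarge} gives uniform two-sided bounds $0 < c_K \le \pi_j^*\lambda_j^{1/3} \le C_K$ on compact subsets once the balls are pulled back — these bounds pass to covers since the normalization $\lambda_j^{1/3}(q_j) = 1$ is preserved and the covering is a local isometry. Feeding these into interior Schauder/$L^p$ estimates for the uniformly elliptic equation on $(\widetilde{B_\delta(q_j)},\widetilde{h_j})$, whose metrics converge in $C^\infty$, yields uniform $C^{k,\alpha}$ bounds on $\pi_j^*\lambda_j^{1/3}$ on slightly smaller balls.

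Next I would extract a convergent subsequence. By Arzelà–Ascoli applied ball-by-ball, together with a diagonal argument over an exhaustion of $\mathcal{R}$ by such balls $B_\delta(q_\infty)$, we obtain a subsequence along which $\pi_j^*\lambda_j^{1/3}$ converges in $C^{k,\alpha}_{loc}$ on each $\widetilde{B_\infty}$ to a limit. The key compatibility point is that these local limits glue: on overlaps the ball-wise limits are forced to agree because the covering data and the functions $\pi_j^*\lambda_j^{1/3}$ are determined by $\lambda_j^{1/3}$ downstairs, so transition maps between the covers are compatible in the limit. This defines $\lambda^{1/3}_\infty$ as a smooth function on $\mathcal{R}$ in the sense of the definition just stated, and it satisfies the limiting equation (with coefficient $\lim r_j^2\lambda(q_j)$, possibly zero) on each $\widetilde{B_\infty}$.

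For the nonvanishing claim, the point is that the lower Harnack bound is uniform: on each compact $K \subset\subset \mathcal{R}$ we have $\inf_{K_j}\lambda_j^{1/3} \ge C_K^{-1}\sup_{K_j}\lambda_j^{1/3} \ge C_K^{-1}$ since some point of $K_j$ carries a value close to $1$ (indeed, if $q_j \in K_j$ one can take the constant so that $\lambda_j^{1/3}(q_j)=1$; for a general compact set one patches to $q_\infty$ through a chain of balls, as in the derivation of Proposition \ref{prop:harnack}, picking up a bounded product of Harnack constants). Passing to the limit, $\lambda^{1/3}_\infty \ge C_K^{-1} > 0$ on $K$, hence $\lambda^{1/3}_\infty$ is strictly positive everywhere on $\mathcal{R}$. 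Strictly speaking one should note that the chain of balls connecting an arbitrary point of $\mathcal{R}$ to $q_\infty$ has finite length inside $\mathcal{R} = M_\infty \setminus \{p_\infty\}$, which is where the bound could a priori degenerate; but for a fixed compact $K$ this is automatic.

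The main obstacle I expect is not any single estimate but the bookkeeping of the covers: one must check that the locally defined limits $\lambda^{1/3}_\infty$ on the various $\widetilde{B_\infty}$ are genuinely independent of the choices (of $q_\infty$, $\delta$, and the chosen lifts $\widetilde{q_j}$) and patch to a well-defined object on $\mathcal{R}$, and that the smooth-convergence notion is stable under these choices. This is a compatibility/uniqueness argument rather than a hard analytic one, and it is essentially forced once one observes that everything upstairs is the pullback of the single sequence $\lambda_j^{1/3}$ downstairs; the later remark that "the choice of $q_\infty \in \mathcal{R}$ does not essentially affect the rescaled limit" is exactly what makes this go through.
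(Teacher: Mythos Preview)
Your proposal is correct and follows essentially the same approach as the paper: uniform two-sided bounds from the Harnack inequality, standard elliptic (Schauder) estimates on the non-collapsed covers, a diagonalization/Arzel\`a--Ascoli argument for convergence, and nonvanishing from the lower Harnack bound. The paper's own proof is a terse three-sentence version of exactly this, so your only addition is the explicit discussion of gluing the local limits across covers, which the paper leaves implicit.
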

\begin{proof}
For any ball $B_\delta(q_j)\subset(M,h_j,p)$, with definite size and definite positive distance to the base point $p$ under the rescaled metric $h_j$, the standard Schauder theory can be applied to the equation (\ref{eq:normalizedequation}) on the cover $\widetilde{B_\delta(q_j)}$. It is now a simple application of the diagonalization procedure to ensure convergence. The non-vanishing property follows from the Harnack inequality.
\end{proof}

We call the limit $\lambda^{1/3}_\infty$ as a  \textit{limit rescaled conformal factor}. The choice of the point $q_\infty\in\mathcal{R}$ to renormalize $\lambda^{1/3}$ only affects $\lambda^{1/3}_\infty$ up to multiplication by a constant, but clearly $\lambda^{1/3}_\infty$ depends on the specific asymptotic cone we take. Even if we fix an asymptotic cone, we may still need to pass to a subsequence to take a limit $\lambda^{1/3}_\infty$. Note that it is possible that $\lambda^{1/3}_\infty$ is trivial in the sense that $\lambda^{1/3}_\infty\equiv1$, however we will later prove it cannot happen. As a consequence of the smooth convergence of the rescaled conformal factor, we have K\"ahler metric $g_j:=\lambda_j^{2/3}h_j$ pulled back to each $\widetilde{B_j}$  converge smoothly to a limit K\"ahler metric $\widetilde{g_\infty}=\lambda_\infty^{2/3}\widetilde{h_\infty}$ on $\widetilde{B_\infty}$.

\subsection{Analysis of asymptotic cones}
\label{subsec:analysis tangent cone}

We investigate collapsed asymptotic cones of Hermitian non-K\"ahler gravitational instantons. All Hermitian non-K\"ahler gravitational instantons in this subsection are assumed to be collapsed at infinity.
\begin{proposition}
  The extremal vector field $\mathcal{K}$ has no zeros outside a large compact set $K$. Therefore the moment map $s_g=\lambda^{1/3}$ of $\mathcal{K}$ has no critical points outside $K$.
\end{proposition}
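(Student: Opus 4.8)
The plan is to first dispose of the second sentence and then establish the first by contradiction. \textbf{Reduction.} Since $\mathcal K = J\nabla_g s_g$ and $J$ is an isomorphism of $TM$, one has $\mathcal K(q)=0$ if and only if $ds_g|_q=0$; so the two assertions are equivalent and it suffices to show that the (closed) zero set $Z(\mathcal K)$ is bounded, i.e. to rule out a sequence $q_i$ of zeros with $\rho(q_i)\to\infty$.

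\textbf{Rescaling at a putative far-out zero.} Assume $\mathcal K(q_i)=0$ with $r_i:=\rho(q_i)\to\infty$. Consider the asymptotic cone arising from $(M,h_j,p)$ with $h_j=r_j^{-2}h$; then $q_i\to q_\infty$ with $d_\infty(p_\infty,q_\infty)=1$, so $q_\infty\in\mathcal R$ and the analysis of Section \ref{subsec:limit rescaled conformal factor} applies there. Passing to the equivariant limit of local universal covers, $(\widetilde{B_\delta(q_i)},\widetilde{h_i},G_i,\widetilde q_i)\to(\widetilde B_\infty,\widetilde h_\infty,G_\infty,\widetilde q_\infty)$ in the $C^\infty$ topology by the regularity theory for Ricci-flat metrics, where $G_\infty^0$ is a positive-dimensional nilpotent Lie group of isometries because the end collapses. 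By Section \ref{subsec:limit rescaled conformal factor} the renormalized conformal factors converge smoothly to a nowhere-vanishing $\lambda_\infty^{1/3}$, the metric $\widetilde g_\infty:=\lambda_\infty^{2/3}\widetilde h_\infty$ is Kähler, and a suitable positive rescaling of $\mathcal K$ — which has the same zeros as $\mathcal K$ and is the extremal vector field of the renormalized Kähler metric — converges smoothly to $\widetilde{\mathcal K}_\infty=J_\infty\nabla_{\widetilde g_\infty}\lambda_\infty^{1/3}$. From $\mathcal K(q_i)=0$ we get $\widetilde{\mathcal K}_\infty(\widetilde q_\infty)=0$; and since $\mathcal K$ is canonical, hence $G_i$-invariant, the field $\widetilde{\mathcal K}_\infty$ is $G_\infty$-invariant, so it commutes with $\mathfrak g_\infty$ and in fact vanishes along the entire positive-dimensional orbit $G_\infty\cdot\widetilde q_\infty$.

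\textbf{Extracting a contradiction.} On $\widetilde B_\infty\setminus Z$, where $Z:=\{\widetilde{\mathcal K}_\infty=0\}$, write $\widetilde g_\infty$ in the local ansatz of Theorem \ref{thm:localansatz} (or \ref{thm:local ansatz version two}) with $\xi=s_{\widetilde g_\infty}$ a positive multiple of $\lambda_\infty^{1/3}$ and $W=12\xi^{-3}-6v_\xi\xi^{-2}$. The value $\xi_0$ of $\xi$ along $Z$ is finite and positive; for $\widetilde h_\infty$ to close up smoothly along $Z$ (where the $\mathcal K$-orbits shrink) one needs $W\to+\infty$, hence $v_\xi\to-\infty$, as $\xi\to\xi_0$, and then the twisted $SU(\infty)$ Toda equation forces $e^v$ to blow up along $Z$, so that the transverse metric $We^v(dx^2+dy^2)$ degenerates in a manner incompatible with $Z$ being a smooth totally geodesic submanifold carrying a positive-dimensional $G_\infty$-orbit. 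Equivalently, passing \eqref{eq:conformal change of scalar} to the limit gives $6\Delta_{\widetilde h_\infty}\lambda_\infty^{1/3}+c_0\lambda_\infty^{4/3}=0$ with $c_0=\lim r_i^2\lambda(q_i)\ge 0$: when $c_0>0$, $\lambda_\infty^{1/3}$ is superharmonic and cannot realize the interior minimum along $Z$ that the fixed-point structure would force. This yields the desired contradiction, so $Z(\mathcal K)$ lies in some compact $K$.

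\textbf{Main obstacle.} The heart of the matter — and the only genuinely delicate point — is this last step. Zeros of $\mathcal K$ are completely innocuous in the interior (they are the ``nuts'' and ``bolts'' of Taub-bolt, Kerr, and Chen-Teo), so the argument must actually exploit that we sit near infinity, namely that $s_g=\lambda^{1/3}\to0$ there and that the end collapses. The subtle case is $c_0=0$, i.e. $\lambda(q_i)=o(r_i^{-2})$: then the $r_i$-rescaled limit can degenerate to a flat or hyperkähler space with $\widetilde{\mathcal K}_\infty\equiv0$, and no contradiction is visible at that scale. Resolving it requires either rescaling instead at the curvature scale $\lambda(q_i)^{-1/2}$ to produce a genuinely non-flat conformally Kähler limit in which the local ansatz has teeth, or a more global input: $s_g^{-1}$ is a proper subharmonic exhaustion with no interior maxima whose critical points have even Morse index (by $J$-invariance of $\mathrm{Hess}\,s_g$ at critical points), and one combines finite topology of $M$ with the collapsing structure of the end to exclude saddle-type critical loci escaping to infinity.
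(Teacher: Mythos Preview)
Your rescaling setup is right, and you even write the sentence that would finish the isolated-zero case --- ``$\widetilde{\mathcal K}_\infty$ vanishes along the entire positive-dimensional orbit $G_\infty\!\cdot\widetilde q_\infty$'' --- but you then take a detour into the local ansatz and run into the $c_0=0$ wall you describe. The paper avoids that wall entirely by a different normalization and a different contradiction, and it first peels off the non-isolated case, which you do not treat.

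\textbf{Non-isolated zeros.} The paper first observes that no component of $Z(\mathcal K)$ can be unbounded (constant $s_g$ along it, $s_g\to 0$ at infinity, $s_g>0$: contradiction), so each component is compact. A non-isolated zero lies on a fixed holomorphic curve; from $6\Delta_h\lambda^{1/3}+\lambda^{4/3}=0$ and $J$-invariance of $\mathrm{Hess}\,s_g$ at the critical locus, each such curve is a local maximum of $s_g$. Two local maxima cannot be joined by a gradient trajectory of $-\nabla_g s_g$, so there is no sequence of fixed curves escaping to infinity. Your sketch only alludes to Morse-index considerations at the very end.

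\textbf{Isolated zeros: the missing normalization.} Your normalization ties the limit Killing field to $\lambda_\infty^{1/3}$, and when $r_i^2\lambda(q_i)\to 0$ this limit may be identically zero --- exactly the obstacle you flag. The paper instead uses that at an \emph{isolated} zero $q_i$ the skew endomorphism $\nabla_h\mathcal K|_{q_i}$ is nondegenerate and scale-invariant under $h\mapsto r_i^{-2}h$; normalizing by that (rather than by the conformal factor) produces a \emph{nontrivial} limit Killing field on $\widetilde B_\infty$ with an \emph{isolated} zero at $\widetilde q_\infty$. Now your own observation kicks in: $G_\infty$-invariance of the limit field forces it to vanish on $G_\infty^0\!\cdot\widetilde q_\infty$; isolatedness then forces the orbit to be a point, i.e.\ $\widetilde q_\infty$ has the whole $G_\infty^0$ as isotropy. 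That is impossible by the structure theory of the collapsed limit (no point has the full nilpotent group as isotropy). This is the paper's clean contradiction, and it never needs to know whether $c_0>0$ or $c_0=0$, nor to invoke the Toda ansatz or regularity of $Z$.

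So the genuine gap is twofold: you do not dispose of fixed curves, and for isolated zeros your conformal-factor normalization can degenerate. Replace it by the $\nabla\mathcal K$-normalization at the isolated zero and use your own ``vanishes along the orbit'' sentence to finish via the isotropy contradiction.
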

\begin{proof}
  First notice that there cannot be a connected component of the fixed point set of $\mathcal{K}$ that connects to infinity. Otherwise as $\lambda^{1/3}\to0$, we would have $\lambda^{1/3}\equiv0$ along this connected component. So each component of fixed point set must be compact. If a fixed point is non-isolated, as $\mathcal{K}$ is real holomorphic under $g$, the fixed point must be contained in a fixed holomorphic curve. Over each fixed curve $s_g$ is a constant. Next we show there cannot be a sequence of fixed holomorphic curves that go to infinity. If such a sequence existed, the generic flows of $-\nabla_g s_g$ would connect one fixed curve to another. This leads to a contradiction, as by \eqref{eq:conformal change of scalar}, each fixed curve should be a local maximum of $s_g$. However, along the flow of $-\nabla_gs_g$, the scalar curvature $s_g$ decreases.

  Now we prove there cannot be isolated fixed points going to infinity. Prove by contradiction. If $\mathcal{K}$ had isolated zeros $q_j$ with $q_j\to\infty$, then set $r_j=d_h(q_j,p)$ and suppose $(M,h_j,p)$ converges to an asymptotic cone $(M_\infty,d_\infty,p_\infty)$. Further assume $\delta$ is small enough such that the universal cover $\widetilde{B_j}$ of $B_j=B_{\delta}(q_j)\subset (M,h_j)$ is non-collapsed. Then the extremal vector field pulled back to $\widetilde{B_j}$, which we still denote by $\mathcal{K}$, converges to a limit Killing field, since it is a non-trivial Killing field with isolated zeros. The fixed point $\widetilde{q_\infty}$ is still an isolated fixed point of the limit Killing field. The identity component of the collapsing group $G_\infty^0$ must preserve $\widetilde{q_\infty}$, and this leads to a contradiction, as $\widetilde{q_\infty}$ would have the entire $G_\infty^0$ as its isotropy. This is not possible since there cannot be a point with the entire $G_\infty^0$ as isotropy (see for example Theorem 2.1 in \cite{naber}).
\end{proof}

Since $\lambda^{1/3}$ has no critical points on $M \setminus K$, the symplectic reduction can be applied over the entire end $M \setminus K$. A key feature of Hermitian non-K\"ahler gravitational instantons is that the ansatz from Theorems \ref{thm:localansatz} and \ref{thm:local ansatz version two} can be applied globally to the end. We now consider the following two cases.
\begin{enumerate}
  \item[($\clubsuit$)] $\mathcal{K}$ induces an $S^1$-action.
  \item[($\spadesuit$)] $\mathcal{K}$ only induces an $\mathbb{R}$-action. 
\end{enumerate}
We deal with ($\clubsuit$) and ($\spadesuit$) in Section \ref{subsubsec:induce S^1-action} and \ref{subsubsec:induce R-action} respectively.

\subsubsection{$\mathcal{K}$ induces an $S^1$-action}
\label{subsubsec:induce S^1-action}

Suppose $\mathcal{K}$ induces an $S^1$-action. The moment map over the end for $\mathcal{K}$ is $\xi=\lambda^{1/3}$ and each level hypersurface $L_{\xi_0}=\{\xi=\xi_0\}$ is compact as $\xi$ is decaying to zero. Let $\Sigma_{\xi_0}$ be the surface (orbifold if the $S^1$-action is only semi-free) obtained by employing the symplectic reduction at level $\xi_0$. Clearly all $\Sigma_{\xi_0}$ are biholomorphic (after forgetting the orbifold structure) by the flow of $\nabla_gs_g$, so we can drop the lower index and denote the surface by $\Sigma$. Diffeomorphically the end is $M\setminus K=(N_0,\infty)\times L$, where $L$ is the link of the end that is an $S^1$-bundle over $\Sigma$ (orbifold bundle when $\Sigma$ is an orbifold). Fixing a holomorphic coordinate $x+iy$ on $U\subset\Sigma$, the metric can be written as
\begin{equation}\label{eq:ansatz applied to write metric}
  h=\varrho^2g=V(d\varrho^2+e^u(dx^2+dy^2))+V^{-1}\eta^2.
\end{equation}
Here we are applying the ansatz Theorem \ref{thm:local ansatz version two}, so $\varrho=\xi^{-1}=\lambda^{-1/3}>0$. Note that $V^{-1}=|\mathcal{K}|^2_h$ is a globally well-defined function, but $u$ depends on the particular choice of $x+iy$. We have the following important observation.
\begin{lemma}\label{lem:geometric Toda equation}
  Let $K_{\Sigma}$ be the curvature of the metric $e^u(dx^2+dy^2)$ on $\Sigma$. Then the $SU(\infty)$ Toda equation $(e^u)_{\varrho\varrho}+u_{xx}+u_{yy}=0$ is equivalent to 
  $$\frac{1}{2}(e^u)_{\varrho\varrho}e^{-u}=K_\Sigma.$$
\end{lemma}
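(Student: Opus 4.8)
The statement is a purely local computation: it identifies the $SU(\infty)$ Toda equation with a relation between the Gauss curvature of the conformal factor $e^u$ and the $\varrho$-derivative of $e^u$.

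\medskip

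\textbf{Plan.} The key point is simply to compute the Gauss curvature $K_\Sigma$ of the metric $e^u(dx^2+dy^2)$ on the surface $\Sigma$. For a conformally flat metric $e^{2\phi}(dx^2+dy^2)$ in dimension two, the Gauss curvature is the standard formula $K=-e^{-2\phi}\Delta_0\phi$, where $\Delta_0=\partial_x^2+\partial_y^2$ is the flat Laplacian. Here the conformal factor is $e^u$, so $2\phi=u$, i.e.\ $\phi=u/2$, and therefore
\begin{equation*}
K_\Sigma=-e^{-u}\Delta_0(u/2)=-\tfrac12 e^{-u}(u_{xx}+u_{yy}).
\end{equation*}
Thus $u_{xx}+u_{yy}=-2e^u K_\Sigma$. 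Plugging this into the Toda equation $(e^u)_{\varrho\varrho}+u_{xx}+u_{yy}=0$ gives $(e^u)_{\varrho\varrho}=2e^u K_\Sigma$, which upon multiplying by $\tfrac12 e^{-u}$ is exactly $\tfrac12 (e^u)_{\varrho\varrho}e^{-u}=K_\Sigma$. The converse direction is obtained by reversing these algebraic steps, all of which are reversible. So the proof is essentially one line once the curvature formula is invoked.

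\medskip

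\textbf{Main obstacle.} There is no serious obstacle; the only thing to be slightly careful about is the normalization/sign convention in the Gauss curvature formula for a conformal metric $e^{2\phi}g_0$ in two dimensions, and the fact that here the conformal factor is written as $e^u$ rather than $e^{2u}$, so one must track the factor of $2$ correctly. One should also note that although $u$ depends on the choice of holomorphic coordinate $x+iy$ on $\Sigma$, the combination appearing in the statement is coordinate-independent: $K_\Sigma$ is intrinsic, and the left-hand side $\tfrac12(e^u)_{\varrho\varrho}e^{-u}$ transforms correctly because $e^u(dx^2+dy^2)$ is a globally defined metric on $\Sigma$ (the transition functions for $x+iy$ are holomorphic, so $e^u|dz|^2$ patches), and $\varrho=\lambda^{-1/3}$ is a globally defined function; hence the identity makes global sense on $\Sigma$, matching the global applicability of the ansatz over the end $M\setminus K$.
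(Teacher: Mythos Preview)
Your proof is correct and is exactly the direct calculation the paper alludes to: compute $K_\Sigma=-\tfrac12 e^{-u}(u_{xx}+u_{yy})$ from the standard formula for the Gauss curvature of a conformal metric, and substitute into the Toda equation. The paper's own proof consists of the two words ``Direct calculation,'' so your write-up is in fact more detailed than the original.
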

\begin{proof}
  Direct calculation.
\end{proof}

By Gauss-Bonnet, for the metric $e^u(dx^2+dy^2)$ on $\Sigma$ we have $\int_{\Sigma}K_{\Sigma}dvol_{\Sigma}=\frac{1}{2}\int_{\Sigma}(e^u)_{\varrho\varrho}dxdy=2\pi\chi(\Sigma)$ and
\begin{equation}\label{eq:integral e^u}
  \int_{\Sigma}e^udxdy=2\pi\chi(\Sigma)\varrho^2+a\varrho+b
\end{equation}
for some constants $a,b$. It is also direct to compute that 
\begin{align}
  \int_{\Sigma} Ve^u dxdy&=\int_{\Sigma} (-12\varrho+6\varrho^2 u_{\varrho})e^u dxdy=\int_{\Sigma} \varrho^4(\frac{6e^u}{\varrho^2})_{\varrho} dxdy\notag\\
  &=-6a\varrho^2-12b\varrho.\label{eq:integral Ve^u}
\end{align}
These imply $\chi(\Sigma)>0$ and $\Sigma$ holomorphically is the sphere $S^2$ since $\varrho>0$. Finally we can calculate the volume
\begin{lemma}\label{lem:volume lemma S^1}
  $\mathrm{Vol}(\{D_0\leq \varrho\leq D_1\})=-4\pi a(D_1^3-D_0^3)-12\pi b(D_1^2-D_0^2)$.
\end{lemma}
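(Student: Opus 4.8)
The plan is to compute the volume directly by integrating the volume form $dvol_h$ over the region $\{D_0 \le \varrho \le D_1\}$, using the explicit form of the metric from the ansatz \eqref{eq:ansatz applied to write metric}. First I would observe that from $h = V(d\varrho^2 + e^u(dx^2+dy^2)) + V^{-1}\eta^2$, the associated volume form is
\begin{equation*}
dvol_h = V e^u \, d\varrho \wedge dx \wedge dy \wedge \eta,
\end{equation*}
where the fiber coordinate $t$ dual to $\eta$ (via $\eta(\partial_t) = 1$, $\mathcal{K} = \partial_t$) runs over a circle; here I must fix the normalization of the period of $\partial_t$, which by the convention $k=1$ in Theorem \ref{thm:localansatz} makes $\mathcal{K}$ the extremal vector field and pins down the circle length (I would take this period to be $2\pi$, matching the earlier computations of $\int_\Sigma e^u\,dxdy$ which implicitly use this normalization so that Gauss--Bonnet gives $2\pi\chi(\Sigma)$).

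Next I would carry out the integration. Fubini lets me integrate first over the fiber circle (contributing the constant period), then over $\Sigma$ at fixed $\varrho$, then over $\varrho \in [D_0, D_1]$. The inner integral over $\Sigma$ is exactly $\int_\Sigma V e^u \, dx\,dy$, which was already computed in \eqref{eq:integral Ve^u} to be $-6a\varrho^2 - 12b\varrho$. Note this quantity is coordinate-independent even though $u$ depends on the chart, since $Ve^u\,dx\wedge dy = V\,dvol_\Sigma$ is globally defined; so patching charts over $\Sigma$ causes no difficulty. Then
\begin{equation*}
\mathrm{Vol}(\{D_0 \le \varrho \le D_1\}) = (\text{period}) \cdot \int_{D_0}^{D_1} \left( -6a\varrho^2 - 12 b \varrho \right) d\varrho = (\text{period}) \cdot \left( -2a(D_1^3 - D_0^3) - 6b(D_1^2 - D_0^2) \right),
\end{equation*}
and with period $2\pi$ this gives precisely $-4\pi a (D_1^3 - D_0^3) - 12\pi b(D_1^2 - D_0^2)$, as claimed.

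The only genuine subtlety — and the step I would be most careful about — is the normalization of the circle period of $\partial_t$ and the compatibility of that normalization with the constants $a, b$ appearing in \eqref{eq:integral e^u} and \eqref{eq:integral Ve^u}. The constants $a, b$ are defined through $\int_\Sigma e^u \, dx\,dy = 2\pi\chi(\Sigma)\varrho^2 + a\varrho + b$, which already bakes in a choice; I need the fiber length to be consistent with the $\varrho$-variable being genuinely $\xi^{-1} = \lambda^{-1/3}$ and with $\partial_t = \mathcal{K}$. Once this bookkeeping is fixed (and it is forced by $k=1$), the result is a routine application of Fubini and the antiderivative computation, with no analytic obstacle. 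I would also remark that the formula confirms $a < 0$ (and the sign of $b$) is needed for the volume to be positive and growing, consistent with the earlier deduction that $\chi(\Sigma) > 0$.
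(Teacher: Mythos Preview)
Your proof is correct and follows essentially the same approach as the paper: compute the volume form from the ansatz metric, apply Fubini to reduce to $2\pi\int_{D_0}^{D_1}\bigl(\int_\Sigma Ve^u\,dxdy\bigr)\,d\varrho$, and plug in \eqref{eq:integral Ve^u}. The paper's proof is a single line stating exactly this iterated integral; your additional care about the $2\pi$ period normalization and the coordinate-independence of $Ve^u\,dx\wedge dy$ is welcome but not required beyond what the paper assumes.
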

\begin{proof}
  Note that $\mathrm{Vol}(\{D_0\leq \varrho\leq D_1\})=\int_{D_0}^{D_1}\int_{0}^{2\pi}(\int_{\Sigma}Ve^u dxdy)\eta d\varrho$.
\end{proof}

Now consider an asymptotic cone $(M,h_j,p)\to(M_\infty,d_\infty,p_\infty)$ where the rescaled conformal factor converges $\lambda_j^{1/3}\to\lambda_\infty^{1/3}$, and denote $\varrho_j:=\lambda_j^{-1/3}$ which converges to its limit $\varrho_\infty:=\lambda_\infty^{-1/3}$. 
\begin{lemma}\label{lem:rescaled conformal cannot be trivial}
  The limit rescaled conformal factor $\lambda^{1/3}_\infty$ cannot be identically one.
\end{lemma}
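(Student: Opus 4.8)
The plan is to argue by contradiction: suppose $\lambda_\infty^{1/3}\equiv 1$, so that $\varrho_\infty$ is constant on the regular region $\mathcal{R}=M_\infty\setminus\{p_\infty\}$. The key point is that $\varrho=\lambda^{-1/3}$ is, after rescaling by $r_j$, essentially the renormalized moment map of the Killing field $\mathcal{K}$, and its oscillation controls the geometry of the end. Concretely, on the original end we have from Lemma \ref{lem:volume lemma S^1} that $\mathrm{Vol}(\{D_0\le\varrho\le D_1\})=-4\pi a(D_1^3-D_0^3)-12\pi b(D_1^2-D_0^2)$, a cubic (or quadratic) polynomial in the $\varrho$-values. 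If along the sequence $r_j\to\infty$ the renormalized function $\varrho_j=\lambda_j^{-1/3}$ converges to a constant on every fixed compact piece of $\mathcal{R}$, then in particular the diameter of each level hypersurface $L_{\varrho_0}$, measured in $h_j$, must be shrinking relative to $r_j$ faster than the radial extent — which will force the volume of the annular region $\{r_j\le\rho\le 2r_j\}$ to grow too slowly (or too quickly) compared with what the polynomial volume formula allows. I would make this precise by relating $\mathrm{Vol}_{h_j}(B_1(q_j))$, the renormalization constant in $d\nu_j$, to the volume formula and extracting a contradiction with the nondegeneracy of the limit measure $d\nu_\infty$ established via Cheeger-Colding-Fukaya.

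More carefully, here is the route I expect to work. First, recall that $\varrho>0$ is proper and exhausts the end, and $V^{-1}=|\mathcal{K}|_h^2$. The renormalization is $\lambda_j^{1/3}=\lambda^{-1/3}(q_j)\lambda^{1/3}$, hence $\varrho_j=\lambda^{1/3}(q_j)\varrho=\varrho/\varrho(q_j)$; but also $h_j=r_j^{-2}h$, so $d\varrho_j$ and $d_{h_j}$ are compatibly rescaled. The statement $\lambda_\infty^{1/3}\equiv 1$ means $\varrho_j\to 1$ uniformly on compacts of $\mathcal{R}$; equivalently, on the region of $M$ corresponding to $B_R(q_\infty)$ in $h_j$-distance, $\varrho$ stays within $o(1)$ of the single value $\varrho(q_j)$. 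I would then observe that the subset of $M$ with $h$-distance to $q_j$ at most $Rr_j$ is contained in some fixed annulus $\{\varrho(q_j)(1-o(1))\le\varrho\le\varrho(q_j)(1+o(1))\}$, so by Lemma \ref{lem:volume lemma S^1} its $h$-volume is $O(\varrho(q_j)^2\cdot\varrho(q_j)\cdot o(1))$ when $a\ne 0$, and I compare this against the lower volume bound coming from the fact that $(M,h_j,p)$ converges to a $d$-dimensional limit containing a ball $B_R(q_\infty)$ of definite measure: $\mathrm{Vol}_{h_j}(B_R(q_j))\ge c(R)\,\mathrm{Vol}_{h_j}(B_1(q_j))$ by Bishop-Gromov, while $\mathrm{Vol}_{h_j}(B_R(q_j))/\mathrm{Vol}_{h_j}(B_1(p))=\nu_j(B_R(q_j))\to\nu_\infty(B_R(q_\infty))>0$. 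Balancing the powers of $r_j$ and $\varrho(q_j)$ in these two estimates produces the contradiction, since the "width" $o(1)$ in the $\varrho$-direction cannot be absorbed. The case $a=0$ (where the volume is only quadratic, $-12\pi b\varrho$) is handled the same way with one fewer power of $\varrho$.

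Alternatively — and this may be cleaner — I would argue directly on the limit: if $\varrho_\infty\equiv 1$ then the limit K\"ahler metric $\widetilde{g_\infty}=\lambda_\infty^{2/3}\widetilde{h_\infty}=\widetilde{h_\infty}$ on each local cover, so $\widetilde{h_\infty}$ is itself K\"ahler with respect to the limit complex structure, and its scalar curvature, being the limit of $r_j^2\lambda(q_j)\cdot$ (scalar curvature of $g_j$), vanishes by \eqref{eq:normalizedequation} since $r_j^2\lambda(q_j)=r_j^2\lambda^{1/3}(q_j)\cdot\lambda^{2/3}(q_j)$, and $r_j^2\lambda^{1/3}(q_j)\to 0$ because $\lambda^{1/3}(q_j)\le C\rho(q_j)^{-2}=Cr_j^{-2}$ by the curvature decay $|Rm_h|\le C\rho^{-2}$. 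So $\widetilde{g_\infty}$ would be Ricci-flat K\"ahler; but then, pulling back the global relation \eqref{eq:conformal change of scalar} and tracking the scaling, one sees the limit of $\lambda_j^{4/3}$ (which is $\equiv 1$) must also be $0$, a contradiction. The main obstacle I anticipate is making the scaling bookkeeping fully rigorous — in particular confirming that $r_j^2\lambda^{1/3}(q_j)\to 0$ rather than staying bounded away from zero, which rests on $q_j$ going to infinity at rate comparable to $r_j$ and on the Cheng-Yau/Harnack control of $\lambda^{1/3}$ from Proposition \ref{prop:harnack}; once that limit is secured, passing \eqref{eq:normalizedequation} and \eqref{eq:conformal change of scalar} to the limit to force $\lambda_\infty^{4/3}\equiv 0$ is immediate and yields the contradiction with $\lambda_\infty^{1/3}\equiv 1$.
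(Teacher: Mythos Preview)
Your first approach is correct and is essentially the paper's proof. The paper streamlines it by working with the annulus $A_j=B_2(p)\setminus\overline{B_1(p)}$ in $(M,h_j)$, taking $q_j\in A_j$: if $\varrho_\infty\equiv 1$ then $\sup_{A_j}\varrho-\inf_{A_j}\varrho\ll\varrho(q_j)$, so Lemma \ref{lem:volume lemma S^1} gives $\mathrm{Vol}_{h_j}(A_j)/\mathrm{Vol}_{h_j}(B_1(p))\to 0$, contradicting the positivity of the renormalized limit measure on $A_\infty$. Your version with balls around $q_j$, Bishop--Gromov, and the Cheeger--Colding limit measure implements the same mechanism.

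Your alternative route, however, does not work as written. The claimed bound $\lambda^{1/3}(q_j)\le C\rho(q_j)^{-2}$ is wrong: the curvature decay $|Rm_h|\le C\rho^{-2}$ gives $\lambda\le C\rho^{-2}$, hence only $\lambda^{1/3}\le C\rho^{-2/3}$, so $r_j^{2}\lambda^{1/3}(q_j)$ in fact blows up like $r_j^{4/3}$. More seriously, even with the correct scaling the argument does not close. Passing \eqref{eq:normalizedequation} to the limit under the hypothesis $\lambda_j^{1/3}\to 1$ gives $\Delta_{\widetilde{h_\infty}}(1)=0$, which is vacuous, and forces only $r_j^{2}\lambda(q_j)\to 0$. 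To conclude $\lambda_\infty^{4/3}\equiv 0$ from the limit equation you would need $r_j^{2}\lambda(q_j)$ bounded away from zero, which is exactly the opposite of what the contradiction hypothesis produces. So the second route yields no contradiction; stick with the volume argument.
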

\begin{proof}
  Consider annuli $A_j=B_2(p)\setminus \overline{B_1(p)}\subset (M,h_j,p)$ that converges to the corresponding annulus in the asymptotic cone $A_\infty$. Suppose $\lambda^{1/3}$ is renormalized at $q_j\in A_j$. If $\varrho_\infty=\lambda^{-1/3}_\infty\equiv1$, then $\varrho(q_j)=\lambda^{-1/3}(q_j)$ goes to infinity while $\sup_{A_j}\varrho-\inf_{A_j}\varrho\ll \lambda^{-1/3}(q_j)$. By Lemma \ref{lem:volume lemma S^1}, one concludes the renormalized volume (see Section \ref{subsec:measure})
  $$\frac{\mathrm{Vol}_{h_j}(A_j)}{\mathrm{Vol}_{h_j}(B_1)}\to0,$$
  which is a contradiction.
\end{proof}

Hence the limit rescaled conformal factor is always non-trivial. It follows that in each non-collapsed limit $\widetilde{B_\infty}$ the vector field $\nabla_{\widetilde{h_\infty}}\lambda^{-1/3}_{\infty}$ is non-trivial as well. Let $\mathcal{K}_j:=-J\nabla_{h_j}\lambda_{j}^{-1/3}$ which after pulling back to $\widetilde{B_j}$ converges to $\mathcal{K}_\infty:=-J\nabla_{\widetilde{h_\infty}}\lambda^{-1/3}_\infty$ on $\widetilde{B_\infty}$ and 
\begin{align}
V_j:=|\nabla_{h_j}\lambda^{-1/3}_j|_{h_j}^{-2}=\lambda^{-2/3}(q_j)r_j^{-2}V\to V_\infty:=|\nabla_{\widetilde{h_\infty}}\lambda^{-1/3}_\infty|^{-2}_{\widetilde{h_\infty}}.
\end{align}
Back to the local ansatz applied to the end. After passing to $h_j$ the metric becomes
\begin{align}\label{eq:rescaled metric with ansatz}
h_j=r_j^{-2}V(d\varrho^2+e^u(dx^2+dy^2))+r_j^{-2}V^{-1}\eta^2=V_j(d\varrho_j^2+\lambda^{2/3}(q_j)e^u(dx^2+dy^2))+r_j^{-4}\lambda^{-2/3}(q_j)V_j^{-1}\eta^2.
\end{align}

\begin{lemma}\label{lem:tangent cone 3-dim S^1}
  The function $u_j:=u+\log\lambda^{2/3}(q_j)$ converges smoothly to a limit $u_\infty$. Any collapsed asymptotic cone must be 3-dimensional where the collapsing is along the $S^1$-action by $\mathcal{K}$. 
\end{lemma}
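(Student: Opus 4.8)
The plan is to analyze the rescaled metric \eqref{eq:rescaled metric with ansatz} directly, tracking which terms collapse. Write the rescaled metric as
\[
h_j=V_j\bigl(d\varrho_j^2+e^{u_j}(dx^2+dy^2)\bigr)+r_j^{-4}\lambda^{-2/3}(q_j)V_j^{-1}\eta^2,
\]
where $u_j:=u+\log\lambda^{2/3}(q_j)$ and $V_j=\lambda^{-2/3}(q_j)r_j^{-2}V$. We already know $V_j\to V_\infty$ smoothly and $\varrho_j\to\varrho_\infty$ smoothly on the regular region, with $\varrho_\infty$ non-constant by Lemma \ref{lem:rescaled conformal cannot be trivial}. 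First I would argue that the $\eta$-direction collapses: the coefficient $r_j^{-4}\lambda^{-2/3}(q_j)V_j^{-1}=r_j^{-2}V^{-1}=|\mathcal K_j|^2_{h_j}$ must tend to zero, because otherwise the sequence $(M,h_j,p)$ would be non-collapsed along a definite-size $S^1$-orbit near $q_\infty$, forcing the asymptotic cone to be $4$-dimensional; but the asymptotic cone is assumed collapsed. Hence $|\mathcal K_j|_{h_j}\to0$ uniformly on compact subsets of $\mathcal R$, so the $S^1$-orbits of $\mathcal K$ shrink and the collapsing direction is precisely the $\mathcal K$-circle.

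Next I would establish smooth convergence of $u_j$. Since $h_j$ has uniformly bounded geometry on each $\widetilde{B_j}$ (bounded curvature in the non-collapsed cover by Cheeger-Tian $\epsilon$-regularity, and uniform injectivity radius lower bound from non-collapsedness), and since $V_j,\varrho_j$ converge smoothly, the two-dimensional metric $e^{u_j}(dx^2+dy^2)$ appearing in the first factor of $h_j$ is controlled by the ambient $h_j$: indeed $e^{u_j}(dx^2+dy^2)=V_j^{-1}(h_j-V_jd\varrho_j^2-|\mathcal K_j|^2\eta^2)$, which converges smoothly once we know $h_j$, $V_j$, $\varrho_j$ and the $1$-form $\eta$ (equivalently the connection) converge. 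The convergence of $\eta$ follows from \eqref{eq:deta}: $d\eta$ is expressed through $V$ (equivalently $W$) and $u$, and one bootstraps. Alternatively, and more cleanly, one invokes the $SU(\infty)$ Toda equation \eqref{eq:ansatz two Toda}: rewritten via Lemma \ref{lem:geometric Toda equation} it reads $\tfrac12(e^{u_j})_{\varrho_j\varrho_j}e^{-u_j}=K_{\Sigma,j}$, a uniformly elliptic equation for $u_j$ (after also using \eqref{eq:ansatz two V} to express $u_\varrho$ in terms of the convergent quantities $V_j,\varrho_j$) with coefficients controlled on the non-collapsed covers; standard Schauder estimates plus a diagonal argument give smooth subsequential convergence $u_j\to u_\infty$. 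The limit quantities $V_\infty,\varrho_\infty,u_\infty$ then satisfy the limiting ansatz, and the limit metric on $\widetilde{B_\infty}$ is the $3$-dimensional metric $V_\infty(d\varrho_\infty^2+e^{u_\infty}(dx^2+dy^2))$, with the $G_\infty^0$-collapsing being exactly the residual $\mathbb R$ (covering the shrunk $S^1$) generated by $\mathcal K$.

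Finally I would conclude dimension. The limit $\widetilde{B_\infty}$ is covered by the smooth $3$-manifold $\{V_\infty(d\varrho_\infty^2+e^{u_\infty}(dx^2+dy^2))\}$; since $G_\infty$ acts properly and faithfully with $\dim G_\infty^0\ge1$ (the orbits of $\mathcal K$ in $\widetilde{B_j}$ are non-collapsing circles whose lifts have definite length lower bound in $\widetilde{h_j}$, so $G_\infty^0$ is at least one-dimensional), and since a collapsed asymptotic cone has dimension strictly less than $4$, the only possibility is $\dim \widetilde{B_\infty}=3$ and $\dim G_\infty^0=1$ with $G_\infty^0\cong\mathbb R$ acting by the flow of (the lift of) $\mathcal K$; thus the asymptotic cone $M_\infty$ is $3$-dimensional and the collapse is along the $S^1$-action generated by $\mathcal K$. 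The main obstacle I anticipate is the smooth convergence of $u_j$: one must ensure uniform ellipticity and uniform coefficient bounds for the Toda equation on the non-collapsed covers uniformly in $j$, which requires combining the curvature decay from Cheeger-Tian, the Harnack-type control on $\varrho_j$ from Section \ref{subsec:limit rescaled conformal factor}, and a careful choice of the holomorphic coordinate $x+iy$ (scaled appropriately with $j$) so that the $2$-dimensional reduced metric has bounded geometry rather than degenerating; everything else is either a compactness/collapse dichotomy or a direct computation.
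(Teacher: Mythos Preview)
Your argument has a genuine gap at the step where you claim smooth convergence of $u_j$. All of your proposed mechanisms control $u_j$ from above but not from below: writing $e^{u_j}(dx^2+dy^2)=V_j^{-1}(h_j-V_jd\varrho_j^2-|\mathcal K_j|_{h_j}^2\eta^2)$ is perfectly consistent with the $\Sigma$-factor shrinking to zero, and the Toda equation is \emph{not} uniformly elliptic in $u_j$ without an a priori lower bound on $e^{u_j}$, which is precisely the issue. Your acknowledged fix of rescaling the holomorphic coordinate $x+iy$ with $j$ would at best show that $u+C_j$ converges for \emph{some} constants $C_j$, but the lemma asserts convergence for the specific normalization $C_j=\log\lambda^{2/3}(q_j)$; nothing in your outline pins this down. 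Concretely, you have not excluded the scenario in which both the $\mathcal K$-circle \emph{and} the symplectic reduction $\Sigma$ collapse, giving a 1- or 2-dimensional asymptotic cone.

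The paper closes this gap by a topological obstruction you did not invoke. Just before the lemma it is shown from the integral formula \eqref{eq:integral e^u} that $\chi(\Sigma)>0$, so $\Sigma\cong S^2$. If $u_j\to-\infty$ then $\Sigma$ would be a collapsed fiber in the Cheeger--Fukaya--Gromov picture; but collapsed fibers are infranilmanifolds and hence have vanishing Euler characteristic, contradicting $\chi(\Sigma)>0$. This is the missing lower bound. The upper bound ($u_j\not\to+\infty$) follows, as you essentially observe, from the fact that the compact level sets $\Sigma$ sit inside balls of bounded $h_j$-diameter. With both bounds in hand one gets $u_j\to u_\infty$, whence the $\varrho$- and $\Sigma$-directions survive and the only collapsed direction is the $S^1$ generated by $\mathcal K$; the 3-dimensionality of the asymptotic cone follows. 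Your first paragraph's order of argument (collapse of $\eta$ first, then $u_j$ convergence) can be made to work, but only once this topological input is supplied.
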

\begin{proof}
  As $\varrho_j \to \varrho_\infty$ smoothly, on each $\widetilde{B_j}$, the metric obtained by performing symplectic reduction on $\widetilde{g_j}$ with the Hamiltonian Killing field $\mathcal{K}_j$ at different levels of $\lambda_j^{-1/3}$ converges to the metric obtained by performing symplectic reduction on $\widetilde{g_\infty}$ with $\mathcal{K}_\infty$ at the corresponding level of $\lambda_\infty^{-1/3}$. This particularly shows there are constants $C_j$ such that $u+C_j$ converges. To see $u+\log\lambda^{2/3}(q_j)$ converges, first notice that $u+\log\lambda^{2/3}(q_j)\not\to\infty$, since otherwise the diameter of symplectic reductions would be unbounded and contradict with the convergence to the asymptotic cone. It suffices to rule out the case where $u + \log \lambda^{2/3}(q_j) \to -\infty$. If this were true, the symplectic reduction $\Sigma$ would collapse in $\widetilde{B_\infty}$, which is impossible, as $\chi(\Sigma) > 0$ would contradict the smooth convergence $\widetilde{B_j} \to \widetilde{B_\infty}$. According to Cheeger-Fukaya-Gromov's theory, collapsed fibers must be infranilmanifolds. 
  The smooth convergence of $u+\log\lambda^{2/3}(q_j)$ shows the only collapsed direction is $\mathcal{K}$.
\end{proof}

Now we use the equation $V=-12\varrho+6\varrho^2u_\varrho$. The rescaled $V_j$ satisfies
$$V_j=\lambda^{-1}(q_j)r_j^{-2}(-12\varrho_j+6\varrho_j^2u_{\varrho_j}).$$
Although $\varrho_\infty$ may have critical points, we can always work away from these, and our subsequent argument will ultimately show that $\varrho_\infty$ has no critical points. There are two situations to consider, owing to the smooth convergence $V_j \to V_\infty$, with the important observation that $V_\infty \not\equiv 0$.
\begin{enumerate}
  \item[(1)] $\lambda^{-1}(q_j)r_j^{-2}\to constant$ and $-12\varrho_\infty+6\varrho_\infty^2 \partial_{\varrho_\infty}u_{\infty}$ is a non-trivial function.
  \item[(2)] $\lambda^{-1}(q_j)r_j^{-2}\to \infty$ and $-12\varrho_\infty+6\varrho_\infty^2 \partial_{\varrho_\infty}u_{\infty}\equiv0$.
\end{enumerate}
In (1) the local geometry of the asymptotic cone is Type II in the sense that the Ricci-flat metric $\widetilde{h_\infty}$ on each $\widetilde{B_\infty}$ is Type II. As for (2) the local geometry is Type I. Luckily we have

\begin{lemma}\label{lem:equation for u_jnfty}
  The case (1) above cannot happen. Hence $\lambda^{-1}(q_j)r_j^{-2}\to \infty$ and $-12\varrho_\infty+6\varrho_\infty^2 \partial_{\varrho_\infty}u_{\infty}\equiv0$.
\end{lemma}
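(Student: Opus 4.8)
My plan is to rule out case (1) by confronting the exact global identity \eqref{eq:integral Ve^u}, namely $\int_\Sigma Ve^u\,dxdy=-6a\varrho^2-12b\varrho$ over the whole end, with the smooth convergence of the ansatz data established in Lemma \ref{lem:tangent cone 3-dim S^1}. First I would fix the renormalization point $q_\infty\in\mathcal R$ with $q_j\to q_\infty$ as in Section \ref{subsec:limit rescaled conformal factor} and pass to a subsequence along which $r_j^{-2}\lambda^{-1}(q_j)$ converges in $[0,\infty]$. Since $q_j$ sits at $h$-distance comparable to $r_j\to\infty$ from $p$, the curvature decay $|Rm_h|_h\leq C\rho^{-2}$ forces $\lambda(q_j)=2\sqrt{6}\,|W^+|_h(q_j)\to 0$.

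The key step is to transport \eqref{eq:integral Ve^u} to the rescaled picture. From $e^{u_j}=\lambda^{2/3}(q_j)e^u$ and $V_j=\lambda^{-2/3}(q_j)r_j^{-2}V$ one obtains the clean relation $V_je^{u_j}=r_j^{-2}Ve^u$, and substituting $\varrho=\lambda^{-1/3}(q_j)t$ with $t=\varrho_j$ into \eqref{eq:integral Ve^u} yields
\begin{align*}
  \int_\Sigma V_je^{u_j}\,dxdy
  &=-6a\,t^{2}\big(r_j^{-2}\lambda^{-1}(q_j)\big)\lambda^{1/3}(q_j)\\
  &\qquad -12b\,t\,\big(r_j^{-2}\lambda^{-1}(q_j)\big)\lambda^{2/3}(q_j).
\end{align*}
The left-hand side is exactly the area of the symplectic-reduction surface $\Sigma$ inside $(M,h_j)$ with its induced metric $V_je^{u_j}(dx^2+dy^2)$. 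Assume now case (1): $r_j^{-2}\lambda^{-1}(q_j)\to c<\infty$ (necessarily $c>0$, since $c=0$ would give $V_j\to 0$, contradicting $V_\infty\not\equiv 0$). As $\lambda(q_j)\to 0$, the factors $\lambda^{1/3}(q_j)$ and $\lambda^{2/3}(q_j)$ tend to $0$, so the displayed right-hand side tends to $0$; that is, the surfaces $\{\varrho_j=t\}$ collapse in $(M,h_j)$. But by Lemma \ref{lem:tangent cone 3-dim S^1} the only collapsing direction is the circle generated by $\mathcal K$, and $\Sigma\cong S^2$, so $\chi(\Sigma)=2>0$ prevents $\Sigma$ from collapsing; moreover $\varrho_j\to\varrho_\infty$, $u_j\to u_\infty$, $V_j\to V_\infty$ all converge smoothly, so these slices converge smoothly to the (compact) limit surface $\{\varrho_\infty=t\}\subset M_\infty$ equipped with the positive-definite metric $V_\infty e^{u_\infty}(dx^2+dy^2)$, whence $\int_\Sigma V_je^{u_j}\,dxdy\to\int_{\{\varrho_\infty=t\}}V_\infty e^{u_\infty}\,dxdy>0$. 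This contradiction forces $r_j^{-2}\lambda^{-1}(q_j)\to\infty$; and then $V_j=\big(r_j^{-2}\lambda^{-1}(q_j)\big)\big(-12\varrho_j+6\varrho_j^2\partial_{\varrho_j}u_j\big)\to V_\infty$ finite forces $-12\varrho_\infty+6\varrho_\infty^2\partial_{\varrho_\infty}u_\infty\equiv 0$, which is exactly the assertion and excludes case (1).

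The step I expect to be the main obstacle is the convergence argument above: promoting the local, small-ball $C^\infty$ convergence of $(\varrho_j,u_j,V_j)$ to genuine convergence of the \emph{global} surface integral $\int_\Sigma V_je^{u_j}\,dxdy$, and in particular to the strict positivity of its limit. This is where $\chi(\Sigma)>0$ is indispensable: one must check that the slices $\{\varrho_j=t\}$ are $2$-spheres of uniformly bounded geometry --- their intrinsic curvature controlled by the ambient bound $|Rm_{h_j}|_h\leq C$ away from $p$ together with the smooth convergence of the ansatz data and Lemma \ref{lem:geometric Toda equation} --- so that no area escapes to infinity and the noncollapsed limit is again a smooth $2$-sphere of positive area, over which the areas pass to the limit. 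Everything else is bookkeeping: the substitutions \eqref{eq:ansatz transformation}, the scaling of $\nabla_{h_j}\lambda_j^{-1/3}$ under $h_j=r_j^{-2}h$, and the elementary fact $\lambda(q_j)\to 0$.
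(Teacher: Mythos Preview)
Your argument is correct and is essentially the same as the paper's: both derive a contradiction by confronting the exact integral identity $\int_\Sigma Ve^u\,dxdy=-6a\varrho^2-12b\varrho$ (or its $\varrho$-integrated form, Lemma \ref{lem:volume lemma S^1}) with the non-collapsing of the $\Sigma$-direction established in Lemma \ref{lem:tangent cone 3-dim S^1}, together with $\lambda(q_j)\to 0$. The only packaging difference is that you work with the slice area $\int_\Sigma V_je^{u_j}\,dxdy$ at a fixed regular level $\varrho_j=t$, whereas the paper compares the full four-dimensional annulus volume $\mathrm{Vol}_{h_j}(A_j)$ computed two ways; your version is slightly more direct, while the paper's avoids choosing a specific $t$ and hence the minor bookkeeping about critical points of $\varrho_\infty$.
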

\begin{proof}
  If we were in (1), then there would be a simple contradiction in the volume of the annuli $A_j$. Lemma \ref{lem:tangent cone 3-dim S^1} gives the volume $\mathrm{Vol}_{h_j}(A_j)\sim r_j^{-2}\lambda^{-1/3}(q_j)\sim r_j^{-4/3}$. However, Lemma \ref{lem:volume lemma S^1} implies $\mathrm{Vol}_{h_j}(A_j)\sim r_j^{-4}(-4\pi ar_j^{2}-12\pi br_j^{4/3})$, which is a contradiction. 
\end{proof}

From $-12\varrho_\infty+6\varrho_\infty^2 \partial_{\varrho_\infty}u_{\infty}\equiv0$ we conclude $u_\infty=2\log\varrho_\infty+\varpi(x,y)$. One can pass the $SU(\infty)$ Toda equation to the limit as well which gives 
$$\partial^2_{\varrho_\infty}(e^{u_\infty})+\partial^2_{x}u_\infty+\partial^2_{y}u_\infty=0.$$
Substituting $u_\infty=2\log\varrho_\infty+\varpi(x,y)$ to the above we see the curvature of $e^{\varpi(x,y)}(dx^2+dy^2)$ is one. Therefore the metric $e^{u_j-2\log\varrho_j}(dx^2+dy^2)$ on $\Sigma$ is converging to the round metric $g_{S^2}$ on $S^2$, or an orbifold metric on $S^2$ with curvature one. Consequently, any collapsed asymptotic cone is 3-dimensional, equipped with the metric 
\begin{equation}\label{eq:metric on tangent cone}
  h_\infty=V_\infty(d\varrho_\infty^2+\varrho_\infty^2 g_{S^2/\mathbb{Z}_k}).
\end{equation}
Notice that here $\mathbb{Z}_k$ acts on $S^2$ as rotations. So from now on we replace the distance function $d_\infty$ in the tuple $(M_\infty,d_\infty,p_\infty)$ by the metric $h_\infty$.

\begin{lemma}\label{lem:structure of tangent cone for S^1}
  When $\mathcal{K}$ induces an $S^1$-action, the local geometry of the asymptotic cone $(M_\infty,h_\infty,p_\infty)$ is flat, in the sense that each non-collapsed limit $(\widetilde{B_\infty},\widetilde{h_\infty})$ is flat. Any such Hermitian non-K\"ahler gravitational instanton has unique asymptotic cone $\mathbb{R}^3/\mathbb{Z}_k$, with $\mathbb{Z}_k$ being the isotropy group of the $S^1$-action.
\end{lemma}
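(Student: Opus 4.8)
The plan is to reduce the whole statement to a single assertion: that $V_\infty$ is a positive constant. Granting that, $d\widetilde\eta=0$ so each non-collapsed local limit $\widetilde{h_\infty}$ is flat, \eqref{eq:metric on tangent cone} becomes the flat metric on $\mathbb{R}^3/\mathbb{Z}_k$, and uniqueness of the asymptotic cone follows from an invariance argument.

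First I collect what is already available. By Lemma~\ref{lem:equation for u_jnfty} we are in case (2): on each $\widetilde{B_\infty}$ the metric $\widetilde{h_\infty}$ is Type~I, hence anti-self-dual Ricci-flat --- locally hyperk\"ahler --- and $u_\infty=2\log\varrho_\infty+\varpi(x,y)$ with $e^{\varpi}(dx^2+dy^2)$ of constant curvature $1$. Thus the three-dimensional base $d\varrho_\infty^2+e^{u_\infty}(dx^2+dy^2)=d\varrho_\infty^2+\varrho_\infty^2 g_{S^2}$ of the ansatz shape $\widetilde{h_\infty}=V_\infty(d\varrho_\infty^2+\varrho_\infty^2 g_{S^2})+V_\infty^{-1}\widetilde\eta^2$, obtained from \eqref{eq:rescaled metric with ansatz} in the limit, is \emph{flat} $\mathbb{R}^3$ with radial coordinate $\varrho_\infty$, and $\widetilde{h_\infty}$ still carries the Killing field $\mathcal{K}_\infty=-J\nabla_{\widetilde{h_\infty}}\varrho_\infty$.

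The crux is the constancy of $V_\infty$. One cannot read it off from \eqref{eq:ansatz two V}: that identity belongs to the \emph{Hermitian} non-K\"ahler ansatz with finite constant $k$ and degenerates in case (2) --- in $V_j=\lambda^{-1}(q_j)r_j^{-2}(-12\varrho_j+6\varrho_j^2 u_{\varrho_j})$ the prefactor blows up while $-12\varrho_\infty+6\varrho_\infty^2\partial_{\varrho_\infty}u_\infty\equiv0$, so $V_\infty$ is only the limiting value of an indeterminate product. What survives is that $\widetilde{h_\infty}$ is a Ricci-flat metric of Gibbons--Hawking type over a flat base, whence $V_\infty$ is a positive \emph{harmonic} function on $\mathbb{R}^3$ with $d\widetilde\eta=\pm\ast_{\mathbb{R}^3}dV_\infty$. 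I would then bring in two global constraints. First, the asymptotic cone of a Ricci-flat manifold is a metric cone (Cheeger--Colding), so $h_\infty$ is scale-invariant, forcing $V_\infty$ homogeneous in $\varrho_\infty$; by harmonicity and positivity (a positive homogeneous harmonic function on $\mathbb{R}^3$, or on $\mathbb{R}^3/\mathbb{Z}_k$, has degree $0$ or $-1$) this leaves only $V_\infty\equiv\mathrm{const}$ or $V_\infty=\mathrm{const}/\varrho_\infty$. Second, a renormalized-measure comparison rules out the latter: in the collapsed case the coefficient $a$ in \eqref{eq:integral e^u}--\eqref{eq:integral Ve^u} is nonzero, for otherwise Lemma~\ref{lem:volume lemma S^1} forces Euclidean growth $\mathrm{Vol}_h(B_R)\sim R^4$, contradicting collapse; hence the end has genuinely cubic volume growth in $\varrho$, and tracing this through the measured Gromov--Hausdorff convergence makes the limit measure of the annular region $\{D_0\le\varrho_\infty\le D_1\}$ proportional to $D_1^3-D_0^3$, which against the density formula $d\nu_\infty\propto V_\infty^{-1/2}\,dvol_{h_\infty}\propto V_\infty\,dvol_{\mathrm{flat}}$ (the $\mathcal{K}_\infty$-fibre has length $\propto V_\infty^{-1/2}$) forces $V_\infty\equiv\mathrm{const}$. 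Then $d\widetilde\eta=0$, so $\widetilde\eta$ is locally $dt$ and $\widetilde{h_\infty}=V_\infty(d\varrho_\infty^2+\varrho_\infty^2 g_{S^2}+dt^2)$ is flat; in particular $\nabla_{\widetilde{h_\infty}}\varrho_\infty=V_\infty^{-1}\partial_{\varrho_\infty}$ is nowhere zero, which retroactively discharges the working hypothesis that $\varrho_\infty$ has no critical points.

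For the cone itself, $V_\infty$ constant turns \eqref{eq:metric on tangent cone} into $h_\infty=V_\infty(d\varrho_\infty^2+\varrho_\infty^2 g_{S^2/\mathbb{Z}_k})$, i.e.\ after rescaling $\varrho_\infty$ the flat cone $\mathbb{R}^3/\mathbb{Z}_k$ with $\mathbb{Z}_k$ acting by rotation about an axis. Independence of the blow-down sequence $r_j\to\infty$ holds because $k$ is the order of the isotropy of the $S^1$-action generated by $\mathcal{K}$ along the end --- read off from the orbifold $S^1$-bundle structure of $L\to\Sigma=S^2$ (cf.\ Lemma~\ref{lem:tangent cone 3-dim S^1}), hence a topological invariant of $(M,h)$ --- and the flat cone is scale-invariant, so every asymptotic cone equals $\mathbb{R}^3/\mathbb{Z}_k$. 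I expect the determination of $V_\infty$ to be the main obstacle: it is not governed by the Hermitian ansatz, which degenerates in this limit, so one must isolate the surviving hyperk\"ahler-over-flat-base structure and then combine it with the global metric-cone and renormalized-measure constraints to exclude the competing model $V_\infty\propto\varrho_\infty^{-1}$, which yields a flat $\widetilde{h_\infty}$ but the wrong, singular cone. The remaining work --- passing the relevant equations to the limit and the bookkeeping identifying $k$ --- is routine.
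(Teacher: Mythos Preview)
Your proposal is essentially correct and reaches the same two-step conclusion as the paper (reduce to $V_\infty\in\{c,\ c/\varrho_\infty\}$, then exclude the latter by volume growth), but the route to each step differs, and two of your shortcuts need more justification.

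\textbf{How the paper argues.} The paper does \emph{not} invoke Gibbons--Hawking. Since the local geometry is Type~I, $\widetilde g_\infty=\varrho_\infty^{-2}\widetilde h_\infty$ is scalar-flat K\"ahler, and the paper applies LeBrun's scalar-flat K\"ahler ansatz: with $v_\infty=2\log\xi_\infty+\varpi$, the compatibility equation $(W_\infty e^{v_\infty})_{\xi_\infty\xi_\infty}+(W_\infty)_{xx}+(W_\infty)_{yy}=0$ becomes $\partial_{\xi_\infty}^2(W_\infty\xi_\infty^2)+\Delta_{S^2}W_\infty=0$. A separation-of-variables expansion in spherical harmonics gives $V_\infty=\sum(c_{\lambda_1}\varrho_\infty^{-\lambda_1-2}+c_{\lambda_2}\varrho_\infty^{-\lambda_2-2})f_\lambda$. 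The paper then uses the \emph{integral ratio} $\int_\Sigma Ve^u/\int_\Sigma e^u=(-6a\varrho^2-12b\varrho)/(2\pi\chi\varrho^2+a\varrho+b)$, which is bounded as $\varrho\to\infty$, to kill all $\lambda\neq0$ modes, leaving only $V_\infty=c+c'\varrho_\infty^{-1}$; the $\varrho_\infty^{-1}$ term is then ruled out by volume growth exactly as you do.

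\textbf{What you do differently, and where it needs care.} (a) Your claim that $\widetilde h_\infty$ is ``of Gibbons--Hawking type, whence $V_\infty$ harmonic'' is not automatic: a Killing field on a hyperk\"ahler $4$-manifold need not be tri-holomorphic, and here $\mathcal{K}_\infty$ is only known to be holomorphic for the single complex structure $J_\infty$ inherited from the limit. The harmonicity of $V_\infty$ is nonetheless correct: the equation $(W_\infty e^{v_\infty})_{\xi_\infty\xi_\infty}+\Delta W_\infty=0$ is just $d(d\eta_\infty)=0$ (see \eqref{eq:compatibility}), which passes to the limit and, after the change of variables, \emph{is} $\Delta_{\mathbb{R}^3}V_\infty=0$. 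So your conclusion stands, but the justification should go through the surviving compatibility equation rather than an unproven tri-holomorphicity. (b) Your step ``metric cone $\Rightarrow$ $V_\infty$ homogeneous in $\varrho_\infty$'' presumes that the cone dilation acts as $\varrho_\infty\mapsto s\varrho_\infty$. A~priori $\varrho_\infty$ is defined as a limit of rescaled $\lambda^{-1/3}$ and is not yet identified with the cone radial coordinate; establishing this identification is precisely what the corollary following the lemma records \emph{after} the proof. The paper sidesteps this circularity by using the integral ratio formula, which constrains the spherical-harmonic content of $V_\infty$ directly without appealing to Cheeger--Colding.

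In short: your conceptual outline (harmonic $V_\infty$ over flat $\mathbb{R}^3$, then homogeneity, then positivity) is the right picture and is arguably cleaner, but the paper's route via the LeBrun ansatz and the integral ratio \eqref{eq:ratio between integrals} is self-contained and avoids both the tri-holomorphicity question and the premature identification of $\varrho_\infty$ with the cone radius.
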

\begin{proof}
  By \eqref{eq:rescaled metric with ansatz} and Lemma \ref{lem:tangent cone 3-dim S^1}, the metric $\widetilde{h_\infty}=V_\infty(d\varrho_\infty^2+\varrho^2_\infty g_{S^2})+V_\infty^{-1}\eta_\infty^2$, where the group $G_\infty=\mathbb{R}$.  The key ingredient is that, since the local geometry of the asymptotic cone is Type I, we have $W^+_{\widetilde{h_\infty}}\equiv0$.  Consequently, the conformal metric $\widetilde{g_\infty} = \frac{1}{\varrho_\infty^2} \widetilde{h_\infty}$ becomes K\"ahler scalar-flat, with the complex structure defined by $J_\infty: d\varrho_\infty \to V_\infty^{-1} \eta_\infty$ and the standard complex structure on $S^2/\mathbb{Z}_k$. The ansatz from Theorem \ref{thm:localansatz} simplifies when $\widetilde{g_\infty}$ is K\"ahler scalar-flat. One can write 
  $$\widetilde{g_\infty}={W_\infty}d\xi_\infty^2+W_\infty^{-1}\eta^2+{W_\infty}e^{{v_\infty}}(dx^2+dy^2)$$
  where $\partial^2_{\xi_\infty}(e^{{v}_\infty})+\partial^2_{x}{v}_{\infty}+\partial^2_{y}{v}_{\infty}=0$ and $\partial^2_{\xi_\infty}({W}_\infty e^{{v}_\infty})+\partial^2_{x}{W}_{\infty}+\partial^2_{y}{W}_{\infty}=0$ by the LeBrun ansatz for K\"ahler scalar-flat metrics with $\mathbb{R}$-symmetry, and $W_\infty,\xi_\infty,v_\infty$ are related to $V_\infty,\varrho_\infty,u_\infty$ as in \eqref{eq:ansatz transformation}. Computing the change of scalar curvature from $\widetilde{g_\infty}$ to $\widetilde{h_\infty}=\xi_\infty^{-2}\widetilde{g_\infty}$ we conclude
  $$\partial_{\xi_\infty}{v_\infty}=\frac{2}{\xi_\infty}.$$
  This forces ${v}_\infty=2\log\xi_\infty+\varpi(x,y)$, which coincides with $u_\infty=2\log\varrho_\infty+\varpi(x,y)$. 

  Now we turn to the equation 
  \begin{equation}\label{eq:linearization of Toda}
    \partial_{\xi_\infty}^2(W_\infty e^{v_\infty})+\partial_x^2W_\infty+\partial_y^2W_\infty=0.
  \end{equation}
  With $v_\infty=2\log\xi_\infty+\varpi(x,y)$ where $e^{\varpi}(dx^2+dy^2)$ is the round metric $g_{S^2}$, \eqref{eq:linearization of Toda} can be rewritten as 
  \begin{equation}\label{eq:linearization of Toda separating}
    \partial_{\xi_\infty}^2(W_\infty\xi_\infty^2)+\Delta_{S^2}W_\infty=0.
  \end{equation}
  Write $W_\infty=\sum w_\lambda f_\lambda$, where $f_\lambda$ is an eigenfunction with eigenvalue $\lambda$ for $\Delta_{S^2}$, $\{f_\lambda\}$ forms an orthogonal basis for $L^2(S^2)$, and $w_\lambda$ are functions of $\xi_\infty$. By separating the variables, \eqref{eq:linearization of Toda separating} reduces to ODEs
  $$\partial_{\xi_\infty}^2(w_\lambda \xi_\infty^{2})+\lambda w_\lambda=0,$$
  where solutions are $w_\lambda=c_{\lambda_1}\xi_{\infty}^{\lambda_1}+c_{\lambda_2}\xi_{\infty}^{\lambda_2}$. Here $c_{\lambda_1},c_{\lambda_2}$ are arbitrary constants, and for each eigenvalue $\lambda$, the numbers $\lambda_1,\lambda_2$ are the two roots for the following equation of $\mu$ 
  $$(\mu+2)(\mu+1)+\lambda=0.$$
  Note that the spherical eigenvalues are $\lambda=-k(k+1)$ with $k\in\mathbb{Z}_{\geq0}$. Translating to the metric $h_\infty$, we have 
  \begin{equation}\label{eq:V_infty separating}
    V_\infty=\sum (c_{\lambda_1}\varrho_\infty^{-\lambda_1-2}+c_{\lambda_2}\varrho_\infty^{-\lambda_2-2})f_\lambda
  \end{equation}
  and
  $$h_\infty=V_\infty(d\varrho^2_\infty+\varrho_\infty^2g_{S^2/\mathbb{Z}_k})+V_\infty^{-1}\eta^2_\infty.$$

  Finally we claim $V_\infty=c$ for a constant $c>0$. That is, in \eqref{eq:V_infty separating}, only the $\lambda=0$ term appears. Moreover, when $\lambda=0$ where we have $\lambda_1=-1,\lambda_2=-2$, the constants $c_{\lambda_1}=0,c_{\lambda_2}=c$. To see this, consider the ratio between the integral formulas \eqref{eq:integral e^u} and \eqref{eq:integral Ve^u} 
  \begin{equation}\label{eq:ratio between integrals}
    \frac{\int_\Sigma Ve^udxdy}{\int_\Sigma e^udxdy}=\frac{-6a\varrho^2-12b\varrho}{2\pi\chi(\Sigma)\varrho^2+a\varrho+b}.
  \end{equation}
  The growth order of \eqref{eq:ratio between integrals} clearly implies only the constant term and the $\varrho^{-1}$ term could appear in \eqref{eq:V_infty separating}. This precisely means only the $\lambda=0$ term appears. If $a<0$, then $V_\infty=c$. However, if $a=0$, we obtain $V_\infty=c\varrho^{{-1}}$. If the latter case was true, then  we would have $V\sim\frac{1}{\varrho}$. By \eqref{eq:ansatz applied to write metric} and Lemma \ref{lem:volume lemma S^1}, the gravitational instanton would have maximal volume growth, which contradicts the assumption that the gravitational instanton is collapsed.
\end{proof}

As an obvious corollary, setting $\rho_\infty$ to be the distance function to the cone vertex in the asymptotic cone we have

\begin{corollary}
  In the asymptotic cone $V_\infty\equiv constant$ and $\varrho_\infty$ is a constant multiple of the distance function $\rho_\infty$.
\end{corollary}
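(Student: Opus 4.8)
The plan is to extract the statement almost immediately from Lemma~\ref{lem:structure of tangent cone for S^1}, which already carries out the hard analysis (the LeBrun ansatz for K\"ahler scalar-flat metrics, separation of variables on $S^2$, and the volume-growth obstruction that kills the $\varrho^{-1}$ term). That lemma establishes two things we will use as black boxes: in each non-collapsed local limit $(\widetilde{B_\infty},\widetilde{h_\infty})$ one has $V_\infty \equiv c$ for some positive constant $c$, and the asymptotic cone carries the metric \eqref{eq:metric on tangent cone}, namely $h_\infty = V_\infty(d\varrho_\infty^2 + \varrho_\infty^2\, g_{S^2/\mathbb{Z}_k})$.

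First I would promote ``$V_\infty$ is constant on each local piece'' to ``$V_\infty$ is globally constant on $\mathcal{R}$''. The point is that $V_\infty$ is a globally well-defined function on the regular region $\mathcal{R}=M_\infty\setminus\{p_\infty\}$: the functions $V_j^{-1}=|\nabla_{h_j}\lambda_j^{-1/3}|_{h_j}^2$ are globally defined on $(M,h_j)$, and by the smooth convergence of the rescaled conformal factor they converge on each $\widetilde{B_\infty}$ to $V_\infty^{-1}=|\nabla_{\widetilde{h_\infty}}\lambda_\infty^{-1/3}|^2$, which descends to $\mathcal{R}$. Since $\mathcal{R}$ is connected (it is the complement of the cone vertex, whose link $S^2/\mathbb{Z}_k$ is connected) and Lemma~\ref{lem:structure of tangent cone for S^1} forces $V_\infty$ to be locally constant on $\mathcal{R}$, we conclude $V_\infty\equiv c>0$ globally.

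Then on $\mathcal{R}$ the metric reads $h_\infty=c\bigl(d\varrho_\infty^2+\varrho_\infty^2\,g_{S^2/\mathbb{Z}_k}\bigr)$. Setting $\rho_\infty:=\sqrt{c}\,\varrho_\infty$ turns this into $d\rho_\infty^2+\rho_\infty^2\,g_{S^2/\mathbb{Z}_k}$, the standard flat metric on $\mathbb{R}^3/\mathbb{Z}_k$, for which integration along a radial geodesic identifies $\rho_\infty$ with the distance to the vertex $p_\infty$. Hence $\varrho_\infty=c^{-1/2}\rho_\infty$ is a constant multiple of the distance function, as claimed. There is no genuine obstacle here: all the work is in the preceding lemma, and the only step worth spelling out is the connectedness argument that upgrades local constancy of $V_\infty$ to global constancy.
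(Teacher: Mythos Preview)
Your proposal is correct and matches the paper's intent: the paper states this as ``an obvious corollary'' of Lemma~\ref{lem:structure of tangent cone for S^1} without giving a proof, and you have filled in exactly the expected details. One minor remark: the connectedness step you spell out is in fact redundant, since the separation-of-variables argument and the use of the integral ratio \eqref{eq:ratio between integrals} in the proof of Lemma~\ref{lem:structure of tangent cone for S^1} are carried out globally on the $3$-dimensional asymptotic cone (with $V_\infty$ expanded in spherical harmonics on the full $S^2$), not merely on a single local piece $\widetilde{B_\infty}$---so $V_\infty\equiv c$ is already global. This does not affect the correctness of your write-up.
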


\subsubsection{$\mathcal{K}$ only induces an $\mathbb{R}$-action}
\label{subsubsec:induce R-action}

We now repeat the arguments from Section \ref{subsubsec:induce S^1-action} for the case where $\mathcal{K}$ induces only an $\mathbb{R}$-action. Since $\mathcal{K}$ cannot flow points to infinity, we can take the closure of the $\mathbb{R}$-action in the isometry group, which results in an isometric $T^k$-action that is also Hamiltonian. The end remains diffeomorphic to $(N_0, \infty) \times L$, with the link $L$ admitting the $T^k$-action. We can again apply the symplectic reduction for $\mathcal{K}$, but only over suitably small open subsets of $L$ where the foliation induced by the $\mathbb{R}$-action is trivialized. Locally, the metric can still be written as in \eqref{eq:ansatz applied to write metric}.

\begin{lemma}
  The closure cannot be a $T^3$-action. Hence the closure is a $T^2$-action.
\end{lemma}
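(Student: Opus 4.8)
The plan is to argue by contradiction and dimension count. Suppose the closure of the $\mathbb{R}$-action generated by $\mathcal{K}$ inside $\mathrm{Iso}(M,h)$ is a torus $T^3$ acting effectively on the end $M\setminus K$. Since $\dim M = 4$, a $T^3$-action on a $4$-manifold is very rigid: the principal orbits are $3$-dimensional, so the orbit space of the end is $1$-dimensional, and $h$ is \emph{cohomogeneity one} on the end. First I would record that $\mathcal{K}$, being the extremal vector field, is a nonzero element of the Lie algebra $\mathfrak{t}^3$, and that $s_g = \lambda^{1/3}$ is the Hamiltonian for $\mathcal{K}$ under the K\"ahler metric $g$; since $s_g \to 0$ at infinity and $s_g > 0$ everywhere, the level sets $\{s_g = \xi_0\}$ are compact and foliate the end, so the $1$-dimensional orbit space is parametrized (up to the endpoints) by $\xi = \lambda^{1/3} \in (0,\xi_{\max}]$. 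Thus the link $L$ is a principal $T^3$-orbit, i.e.\ $L \cong T^3$, and the end is diffeomorphic to $(N_0,\infty) \times T^3$ with the flat $T^3$-fibration.

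Next I would exploit the local ansatz of Theorem~\ref{thm:local ansatz version two}, which now applies \emph{globally} on the end with the functions $V, u$ all being $T^2$-invariant (in fact $T^3$-invariant under the hypothesis), hence functions of $\varrho$ alone on the symplectic reduction. The symplectic reduction $\Sigma$ of $\mathcal{K}$ carries the remaining $T^2 = T^3/S^1_{\mathcal{K}}$-action, so $\Sigma$ is a $2$-torus with a $T^2$-action, forcing $\Sigma = T^2$ flat and $e^u(dx^2+dy^2)$ to have zero curvature: $K_\Sigma \equiv 0$. By Lemma~\ref{lem:geometric Toda equation} this gives $(e^u)_{\varrho\varrho} = 0$, so $e^u$ is \emph{affine} in $\varrho$, say $e^u = p\varrho + q$. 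But then the analogue of \eqref{eq:integral e^u}--\eqref{eq:integral Ve^u} shows $\int_\Sigma e^u\,dx\,dy = \mathrm{area}(T^2)(p\varrho+q)$ grows at most linearly in $\varrho$, and plugging $V = -12\varrho + 6\varrho^2 u_\varrho$ with $e^u$ affine into the volume integral (as in Lemma~\ref{lem:volume lemma S^1}, replacing $S^2$ by $T^2$) gives a precise polynomial-in-$\varrho$ volume growth for $\{D_0 \le \varrho \le D_1\}$. The point is to show this forces a contradiction.

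The contradiction should come from combining two facts: positivity of $s_g$ forces $\varrho > 0$, and the $SU(\infty)$ Toda equation together with $K_\Sigma \equiv 0$ is very restrictive. With $e^u = p\varrho + q$ affine and $\varrho > 0$, completeness at infinity ($\varrho \to \infty$) requires $e^u > 0$ for all large $\varrho$, so $p \ge 0$. I would then compute $V = -12\varrho + 6\varrho^2 u_\varrho = -12\varrho + 6\varrho^2 \frac{p}{p\varrho+q}$; for $V^{-1} = |\mathcal{K}|_h^2 > 0$ we need $V > 0$, i.e.\ $-12(p\varrho+q) + 6\varrho p > 0$, i.e.\ $-6p\varrho - 12 q > 0$ for all large $\varrho$ — impossible if $p \ge 0$ unless $p = 0$ and $q < 0$, but $q < 0$ contradicts $e^u = q > 0$. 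Hence no affine $e^u$ works, so $\Sigma$ cannot be flat, so $\Sigma \ne T^2$, ruling out the residual $T^2$ on $\Sigma$ and therefore the $T^3$-action.

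The main obstacle I anticipate is making the reduction to "$V$ and $u$ depend only on $\varrho$" genuinely rigorous: a priori the $\mathbb{R}$-action's closure being $T^3$ only gives $T^3$-invariance, and one must check that the symplectic quotient by $\mathcal{K}$ is globally a smooth $2$-torus (no orbifold points, since the residual $T^2$ acts on $\Sigma$ with $\Sigma$ closed and $2$-dimensional — it must be $T^2$ or $S^2$, and $S^2$ with an effective $T^2$-action having only two fixed circles is impossible for a genuinely $2$-torus factor, so $\Sigma = T^2$), and that the ansatz coordinates $(\varrho, x, y, t)$ can be chosen globally. One should also double-check the borderline sign analysis for $V$ near $\varrho \to \infty$ versus the possibility $\chi(\Sigma) = 0$ changing the leading term in \eqref{eq:integral e^u}; this is exactly where the $\chi(\Sigma) = 0$ case (absent in Section~\ref{subsubsec:induce S^1-action}) enters and must be handled separately. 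Everything else is routine ODE analysis and the volume-growth bookkeeping already developed in Lemmas~\ref{lem:volume lemma S^1} and~\ref{lem:rescaled conformal cannot be trivial}.
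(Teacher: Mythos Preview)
Your approach is essentially the same as the paper's: assume the closure is $T^3$, observe that $u$ and $V$ then depend only on $\varrho$, reduce the $SU(\infty)$ Toda equation to $(e^u)_{\varrho\varrho}=0$, and derive a contradiction from $V=-12\varrho+6\varrho^2 u_\varrho$. The paper's proof simply says ``one can explicitly write down the metric $h$ and verify that no smooth, complete Hermitian non-K\"ahler gravitational instanton can exist,'' whereas you carry out this verification in detail via the sign analysis on $e^u=p\varrho+q$ and $V$; your computation is correct and is exactly what the paper has in mind.

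One minor correction: your phrasing ``the symplectic reduction $\Sigma$ of $\mathcal{K}$ carries the remaining $T^2=T^3/S^1_{\mathcal{K}}$'' is not quite right, since in this case $\mathcal{K}$ generates a dense $\mathbb{R}$-line in $T^3$, not a closed $S^1$, so there is no global quotient $\Sigma$. The clean way (which the paper uses) is simply to note that $u$ and $V$, being $T^3$-invariant, depend only on $\varrho$; then the local ansatz on any open set where the $\mathcal{K}$-foliation is trivialized already gives the ODE $(e^u)_{\varrho\varrho}=0$, and your positivity argument goes through unchanged. You anticipated this issue in your final paragraph, so this is just a matter of rephrasing rather than a gap.
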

\begin{proof}
  If the closure were $T^3$, then the link $L$ would exactly be the $T^3$-orbit and in \eqref{eq:ansatz applied to write metric} the functions $u,V$ are both $T^3$-invariant. The $SU(\infty)$ Toda equation reduces to $(e^u)_{\varrho\varrho}=0$. By combining this with $V = -12\varrho + 6\varrho^2 u{\varrho}$, one can explicitly write down the metric $h$ and verify that no smooth, complete Hermitian non-K\"ahler gravitational instanton can exist in this setting.
\end{proof}

Hence the gravitational instanton is always toric in this setting. The most important lemma in Section \ref{subsubsec:induce S^1-action} is Lemma \ref{lem:volume lemma S^1}, which is a global consequence of the $SU(\infty)$ Toda equation. However, the proof of Lemma \ref{lem:volume lemma S^1} does not carry over here as we cannot do symplectic reduction globally. In the following we prove Lemma \ref{lem:volume lemma S^1} in the $\mathbb{R}$-action setting using toric geometry instead. First, we write down the toric metric locally in symplectic coordinate and compare it to Theorem \ref{thm:local ansatz version two}. Pick $\partial_{\phi_1}$ as the Hamiltonian Killing field $\mathcal{K}$ and $\partial_{\phi_2}$ as another Hamiltonian Killing field in $\mathrm{Lie}(T^2)$. Let $x_\alpha$ be the moment map of $\partial_{\phi_\alpha}$, hence $x_1=\xi$ is the moment map for $\mathcal{K}$. The K\"ahler metric $g=\xi^2h$ is given by
\begin{equation}\label{eq:toric Kahler}
  g=g_{\alpha\beta}dx_\alpha dx_\beta+g^{\alpha\beta}d\phi_\alpha d\phi_\beta
\end{equation}
where $g_{\alpha\beta}=\mathrm{Hess}f(x_1,x_2)$ and $(g^{\alpha\beta})=(g_{\alpha\beta})^{-1}$ with $f$ being the symplectic potential function. The Hermitian non-K\"ahler Ricci-flat metric is
\begin{equation}\label{eq:toric Ricci flat}
  h=\frac{1}{x_1^2}(g_{\alpha\beta}dx_\alpha dx_\beta+g^{\alpha\beta}d\phi_\alpha d\phi_\beta).
\end{equation}
Relate \eqref{eq:toric Ricci flat} to the ansatz Theorem \ref{thm:local ansatz version two} as follows. The metric on the level hypersurface of $x_1$ is $\frac{1}{x_2^2}(g_{22}dx_2^2+g^{\alpha\beta}d\phi_\alpha d\phi_\beta)$, and after further taking the quotient by $\partial_{\phi_1}$ is 
\begin{equation}\label{eq:metric on quotient toric ansatz}
\frac{1}{x_1^2}\left(g_{22}dx_2^2+\left(g^{22}-\frac{(g^{12})^2}{g^{11}}\right)d\phi_2^2\right).
\end{equation}
The function $V^{-1}=|\partial_{\phi_1}|_h^2=\frac{1}{x_1^2}g^{11}$, so comparing the metric \eqref{eq:metric on quotient toric ansatz} with $Ve^{u}(dx^2+dy^2)$ we have
$$e^u(dx^2+dy^2)=\frac{1}{x_1^4}(g^{11}g_{22}dx_2^2+\mathrm{det}(g^{\alpha\beta})d\phi_2^2).$$
Recall that $x+iy$ is any holomorphic coordinate we choose on the local quotient. Hence, we can simply pick $y$ as the parameter $\phi_2$. Since $x$ is the harmonic conjugate of $y$, parameters $x,x_2$ are related by 
\begin{equation}\label{eq:relation x x_2}
  \frac{dx}{dx_2}=\sqrt{\frac{g^{11}g_{22}}{\mathrm{det}(g^{\alpha\beta})}}.
\end{equation}
The function $e^u=\frac{1}{x_1^4}\mathrm{det}(g^{\alpha\beta})$. Furthermore, as $y={\phi_2}$ parametrizes the other symmetry the $SU(\infty)$ Toda equation reduces to
$$(e^u)_{\varrho\varrho}+u_{xx}=0.$$
Back to the setting of end. Link $L$ being a 3-manifold admitting a $T^2$-action, diffeomorphically it must be a lens space with the standard $T^2$-action, $S^2\times S^1$ with the standard $T^2$-action, or $T^3$ with a standard $T^2$-action. For the lens space case we pass to the universal cover $S^3$ to simplify the situation. Except the last case, on a suitably far away $T^2$-invariant end $M\setminus K$, there are two connected components of points that have isotropy. We proceed to calculate the integral $\int_{L}e^udxdy\eta$. Note that although $x,y$ are only local coordinates and the function $u$ depends on the choice of $x,y$, the 3-form $e^udxdy\eta$ indeed is globally well-defined on $L$.

If the link $L$ is $S^1\times T^2$, then without loss of generality we pick $\partial_{\phi_2}\in\mathrm{Lie}(T^2)$ to be an integral primitive direction that corresponds to an $S^1\subset T^2$. Fix $\varrho=\varrho_0$ and write $L$ as $[0,1]\times T^2/_\sim$, so that the moment map $x_2$ is well-defined over $[0,1]\times T^2$ and parametrizes $[0,1]$. Now that $x$  related to $x_2$ via \eqref{eq:relation x x_2} also parametrizes $[0,1]$, we assume the range of $x$ is $[a_1,a_2]$. Then
\begin{align*}
  \int_{L}(e^u)_{\varrho\varrho}dxdy\eta=-4\pi^2\int_{a_1}^{a_2}u_{xx}dx=-4\pi^2(u_x(a_2)-u_x(a_1))=0
\end{align*}
since $a_1,a_2$ correspond to the same point in $S^1$. As a result we conclude 
$$\int_Le^udxdy\eta=a\varrho+b$$
for some constant $a,b$. Using $V=-12\varrho+6\varrho^2u_\varrho$ we also have 
$$\int_{L}Ve^udxdy\eta=-6a\varrho^2-12b\varrho.$$
Recall $\varrho>0$. This leads to a contradiction, as both $\int_Le^udxdy\eta$ and $\int_{L}Ve^udxdy\eta$ are positive. Therefore this case cannot happen.

So we focus on the case that $L$ is $S^3$ or $S^2\times S^1$, and in both cases the $T^2$-action is the standard one. In a sufficiently far away $T^2$-invariant end $M\setminus K$, there are two components of points with isotropy, which correspond to the two component of boundaries in the momentum polytope $\Delta_{M\setminus K}$ of the end. Label these two boundaries as $P_1,P_2$. For simplicity we pick $\partial_{\phi_2}$ such that $x_2$ is the affine function on the polytope that is the distance function to $P_1$. The other moment map for $\mathcal{K}=\partial_{\phi_1}$ is $x_1=\varrho^{-1}$. Fix $\varrho=\varrho_0$, then this level hypersurface $L_0\simeq L$ at $\varrho_0$ is the preimage of the segment $x_1=\varrho^{-1}_0$ in the polytope $\Delta_{M\setminus K}$. The other moment map $x_2$ can be used to parametrize $x_1=\varrho^{-1}_0$ in the polytope $\Delta_{M\setminus K}$ and let us suppose the range of $x$ is $[0,a_2]$, where $x=0$ corresponds to the boundary $P_1$ and $a_2$ corresponds to $P_2$. From the reduced $SU(\infty)$ Toda equation,
$$\int_L(e^u)_{\varrho\varrho}dxdy\eta=-4\pi^2(u_x(a_2)-u_x(0)).$$
We now use the boundary behavior of the potential function $f$ to calculate the value of above. Near $P_1$, the smoothness condition of the toric K\"ahler metric $g$ is 
\begin{equation}\label{eq:smooth condition toric}
f(x_1,x_2)=\frac{1}{2}x_2\log x_2+f_S(x_1,x_2)
\end{equation}
where $f_S$ is a smooth function across the boundary $P_1$. The determinant of $\mathrm{Hess}f$ satisfies
$$\mathrm{det}(\mathrm{Hess}f)=\mathrm{det}(g_{\alpha\beta})=\frac{1}{\delta_S(x_1,x_2)\cdot x_2}$$
where $\delta_S$ is a smooth strictly positive function across $P_1$. Via \eqref{eq:relation x x_2} and $u=\log(\frac{1}{x_1^4}\mathrm{det}(g^{\alpha\beta}))$, near $P_1$ we have
\begin{align}
  u_x&=u_{x_2}\frac{dx_2}{dx}=\frac{1}{\mathrm{det}(g^{\alpha\beta})}\partial_{x_2}\mathrm{det}(g^{\alpha\beta})\sqrt{\frac{\mathrm{det}(g^{\alpha\beta})}{g^{11}g_{22}}}\notag\\
  &=\partial_{x_2}(\delta_S\cdot x_2)\frac{1}{g^{11}}\notag\\
  &\sim\frac{\delta_S}{g^{11}}.
\end{align}
A more detailed computation based on \eqref{eq:smooth condition toric} shows $u_x\sim\frac{\delta_S}{g^{11}}\sim2$ near $P_1$. One can repeat the above calculation  near the other boundary $P_2$ to conclude $u_x(a_2)=-2$. We therefore have the following lemma parallel to \eqref{eq:integral e^u} and \eqref{eq:integral Ve^u}.
\begin{lemma}
  There are constants $a,b$ such that
  \begin{equation}
    \int_Le^udxdy\eta=16\pi^2(\frac{1}{2}\varrho^2+a\varrho+b)
  \end{equation}
  and 
  \begin{equation}
    \int_LVe^udxdyd\eta=-96\pi^2a\varrho^2-192\pi^2b\varrho.
  \end{equation}
\end{lemma}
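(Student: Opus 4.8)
The plan is to obtain both identities by integrating the reduced $SU(\infty)$ Toda equation $(e^u)_{\varrho\varrho}+u_{xx}=0$ over the link $L$ at each fixed level of $\varrho$, differentiating twice under the integral sign in $\varrho$, and then integrating back; the formula for $\int_L Ve^u\,dx\,dy\,\eta$ will then follow algebraically from the first formula after rewriting $V=-12\varrho+6\varrho^2u_\varrho$ in divergence form.

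First I would set $\Phi(\varrho):=\int_L e^u\,dx\,dy\,\eta$, which is well defined because $e^u\,dx\wedge dy\wedge\eta$ is a globally defined $3$-form on $L$. The crucial step is the identity $\Phi''(\varrho)=\int_L(e^u)_{\varrho\varrho}\,dx\,dy\,\eta$. Granting this and combining it with the computation already recorded above, $\int_L(e^u)_{\varrho\varrho}\,dx\,dy\,\eta=-4\pi^2\big(u_x(a_2)-u_x(0)\big)$, together with the boundary limits $u_x\to 2$ at $P_1$ and $u_x\to -2$ at $P_2$ obtained from the toric smoothness condition \eqref{eq:smooth condition toric}, one gets $\Phi''(\varrho)\equiv -4\pi^2(-2-2)=16\pi^2$. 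Integrating twice in $\varrho$ gives $\Phi(\varrho)=8\pi^2\varrho^2+a'\varrho+b'$ for integration constants $a',b'$, which is the first claimed formula with $a:=a'/(16\pi^2)$ and $b:=b'/(16\pi^2)$.

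For the second identity, $V=-12\varrho+6\varrho^2u_\varrho$ yields the pointwise identity $Ve^u=6\varrho^2(e^u)_\varrho-12\varrho e^u=6\varrho^4\,\partial_\varrho\!\big(\varrho^{-2}e^u\big)$. Integrating over $L$ at level $\varrho$, pulling the level-constant factor $\varrho^4$ and then the $\varrho$-derivative outside the integral (the same interchange as before), and inserting the first formula, I obtain $\int_L Ve^u\,dx\,dy\,\eta=6\varrho^4\,\partial_\varrho\!\big(\varrho^{-2}\Phi(\varrho)\big)=6\varrho^4\,\partial_\varrho\!\big(16\pi^2(\tfrac12+a\varrho^{-1}+b\varrho^{-2})\big)=-96\pi^2 a\varrho^2-192\pi^2 b\varrho$, exactly as claimed; up to the interchange of derivative and integral, this is the same manipulation that produced \eqref{eq:integral Ve^u} from \eqref{eq:integral e^u} in the $S^1$-case.

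The step I expect to be the main obstacle is legitimizing the interchange of $\partial_\varrho$ with $\int_L$ — that is, showing that no boundary contribution arises from the two edges $P_1,P_2$ of the momentum polytope $\Delta_{M\setminus K}$, across which $e^u$ degenerates and at which the transverse coordinate $x$ reaches the ends of its range, a range that itself moves with $\varrho$. I would handle this in the same way as the derivation of $u_x\sim 2$ near $P_1$: the smoothness condition \eqref{eq:smooth condition toric} forces $\det(g^{\alpha\beta})$ to equal a positive smooth function times the affine distance to the edge near each of $P_1,P_2$, so $e^u=x_1^{-4}\det(g^{\alpha\beta})$ vanishes to first order there, which in the coordinate $x$ becomes exponential decay of $e^u$ towards the ends of the $x$-interval, and the same smooth expansion bounds every $\varrho$-derivative of $e^u$ uniformly near the edges. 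Consequently all integrals in the argument converge absolutely, the moving-endpoint term vanishes because the integrand vanishes there, the two interchanges of $\partial_\varrho$ and $\int_L$ are valid, and the lemma follows. Integrating $\int_L Ve^u\,dx\,dy\,\eta$ in $\varrho$ afterwards recovers the volume formula for $\{D_0\le\varrho\le D_1\}$ in the $\mathbb{R}$-action case, paralleling Lemma \ref{lem:volume lemma S^1}.
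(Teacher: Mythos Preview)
Your proof is correct and follows exactly the route the paper takes (which is left largely implicit there): the paper computes $\int_L(e^u)_{\varrho\varrho}\,dx\,dy\,\eta=16\pi^2$ from the boundary limits of $u_x$ and then integrates twice, and derives the second formula from $Ve^u=\varrho^4\,\partial_\varrho(6\varrho^{-2}e^u)$ just as you do. One small clarification on your discussion of the interchange: the holomorphic coordinate $x$ on the local quotient is the same for all $\varrho$ and its range is actually $(-\infty,\infty)$ --- from \eqref{eq:relation x x_2} and \eqref{eq:smooth condition toric} one finds $dx/dx_2\sim 1/(2x_2)$ near $P_1$, so the paper's ``$[0,a_2]$'' is an abuse of notation for limits at $\pm\infty$ --- hence there is no moving-endpoint contribution at all, and the interchange of $\partial_\varrho$ with $\int_L$ is justified solely by the exponential decay of $e^u$ in $x$ that you correctly identify.
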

So the volume of the gravitational instanton can be explicitly calculated in terms of $\varrho$, which is parallel to Lemma \ref{lem:volume lemma S^1}.
\begin{lemma}\label{lem:volume T2}
  $\mathrm{Vol}(D_0\leq\varrho\leq D_1)=-32\pi^2a(D_1^3-D_0^3)-96\pi^2b(D_0^2-D_1^2)$.
\end{lemma}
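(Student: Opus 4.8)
The plan is to mimic the computation already carried out in Lemma \ref{lem:volume lemma S^1} in the $S^1$-action case, now using the integral formulas just established for the $T^2$-toric case. First I would write the volume of the region $\{D_0\leq\varrho\leq D_1\}$ as an iterated integral over the level hypersurfaces $L_{\varrho}$,
\begin{equation*}
  \mathrm{Vol}(D_0\leq\varrho\leq D_1)=\int_{D_0}^{D_1}\left(\int_{L}Ve^u\,dxdy\,\eta\right)d\varrho,
\end{equation*}
exactly as in the proof of Lemma \ref{lem:volume lemma S^1}, the point being that $h=V(d\varrho^2+e^u(dx^2+dy^2))+V^{-1}\eta^2$ so that the Riemannian volume form splits as $d\varrho$ wedged with $Ve^u\,dxdy\,\eta$ (the latter being the volume form of the hypersurface $\{\varrho=\mathrm{const}\}$, since the induced metric there is $Ve^u(dx^2+dy^2)+V^{-1}\eta^2$ with determinant $(Ve^u)^2\cdot V^{-2}e^{-u}\cdot\ldots$ — in any case the product of the fiber and base densities gives $Ve^u$ up to the standard normalization).

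The second step is purely a substitution: by the preceding lemma $\int_LVe^udxdy\,\eta=-96\pi^2a\varrho^2-192\pi^2b\varrho$, so
\begin{equation*}
  \mathrm{Vol}(D_0\leq\varrho\leq D_1)=\int_{D_0}^{D_1}\left(-96\pi^2a\varrho^2-192\pi^2b\varrho\right)d\varrho=-32\pi^2a(D_1^3-D_0^3)-96\pi^2b(D_1^2-D_0^2).
\end{equation*}
Here one must be a little careful about the sign convention in the stated conclusion, which writes the second term as $-96\pi^2 b(D_0^2-D_1^2)$; this is the same as $+96\pi^2 b(D_1^2-D_0^2)$, so I would double-check the integration to match the signs in the statement (and, if there is a discrepancy, it is a typo to be corrected rather than a genuine obstruction). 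I would also note that $V$ and $u$ are only defined on local charts of $L$, but $Ve^u\,dxdy\,\eta$ is a globally defined $3$-form on $L$ (as already remarked in the text just before the lemma), so the integral $\int_L$ makes sense and the fundamental-theorem-of-calculus argument in $\varrho$ is legitimate globally.

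The proof is essentially mechanical, so there is no real ``main obstacle''; the only thing requiring genuine care is the identification of the volume element, namely verifying that the measure on $L_{\varrho_0}$ appearing in the coarea-type decomposition is precisely $Ve^u\,dxdy\,\eta$ (equivalently, that $|\nabla\varrho|_h=V^{-1/2}$ and $dvol_h=d\varrho\wedge dvol_{L_{\varrho}}$ with $dvol_{L_\varrho}=(Ve^u)\,dxdy\,\eta$ after the relevant normalization of $\eta$). This is a direct local computation from the ansatz in Theorem \ref{thm:local ansatz version two}, identical in structure to the one implicitly used in Lemma \ref{lem:volume lemma S^1}, so I would simply reference that proof and indicate the only change is replacing the $S^1$-fiber integral $\int_0^{2\pi}(\cdot)\eta$ over $\Sigma=S^2$ by the integral over the link $L$, using the new boundary-behavior computation $u_x(0)=2$, $u_x(a_2)=-2$ that produced the coefficients above.
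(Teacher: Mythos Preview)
Your proposal is correct and follows exactly the paper's approach: the paper simply remarks that the volume computation is parallel to Lemma \ref{lem:volume lemma S^1}, i.e.\ write $\mathrm{Vol}=\int_{D_0}^{D_1}\left(\int_L Ve^u\,dxdy\,\eta\right)d\varrho$ and substitute the preceding integral formula. Your observation about the sign of the $b$-term is also on point: the integration yields $-96\pi^2 b(D_1^2-D_0^2)$, so the $(D_0^2-D_1^2)$ in the stated formula is indeed a typo.
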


The rest part of the proof goes almost the same, as Lemma \ref{lem:rescaled conformal cannot be trivial}-\ref{lem:equation for u_jnfty} are all essentially local. A small but important difference lies in that the collapsing now is along the $\mathbb{R}$-action, therefore the entire $T^2$-action is collapsed. The asymptotic cone is 2-dimensional, and it is easy to see that the asymptotic cone is the half-plane $\mathbb{H}$ by quotient $T^2$. We summarize as the following lemma, comparing to Lemma \ref{lem:structure of tangent cone for S^1}.
\begin{lemma}
  When $\mathcal{K}$ only induces an $\mathbb{R}$-action, the local geometry of the asymptotic cone $(M_\infty,h_\infty,p_\infty)$ is flat. Any such Hermitian non-K\"ahler gravitational instanton has unique asymptotic cone $\mathbb{H}$.
\end{lemma}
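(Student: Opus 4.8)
\emph{Proof strategy.} The plan is to rerun the proof of Lemma~\ref{lem:structure of tangent cone for S^1} almost verbatim, with the global integral identities just obtained in the $T^2$-setting (Lemma~\ref{lem:volume T2} and the displayed lemma immediately preceding it) playing the role that \eqref{eq:integral e^u}, \eqref{eq:integral Ve^u} and Lemma~\ref{lem:volume lemma S^1} played in the $S^1$-case. As already noted, the content of Lemmas~\ref{lem:rescaled conformal cannot be trivial}--\ref{lem:equation for u_jnfty} is essentially local; their only global input is a volume comparison on the annuli $A_j$, now supplied by Lemma~\ref{lem:volume T2}. One only has to keep in mind that here the collapsing is along the whole $T^2$, so that in the equivariant picture $G_\infty=\mathbb{R}^2$, while the symplectic reduction by $\mathcal{K}_\infty$ (one of the two commuting limit Killing fields) still makes sense on each $\widetilde{B_\infty}$ and the reduced Toda equation $(e^u)_{\varrho\varrho}+u_{xx}=0$ applies because $y$ is a Killing direction. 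In this way one again gets $\lambda^{-1}(q_j)r_j^{-2}\to\infty$, the identity $-12\varrho_\infty+6\varrho_\infty^2\,\partial_{\varrho_\infty}u_\infty\equiv0$, hence $u_\infty=2\log\varrho_\infty+\varpi(x)$ with $e^{\varpi}(dx^2+dy^2)$ of curvature one by Lemma~\ref{lem:geometric Toda equation}, and the conclusion that the local geometry of the asymptotic cone is Type~I.

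Given this, I would copy the second half of the proof of Lemma~\ref{lem:structure of tangent cone for S^1}: since $W^+_{\widetilde{h_\infty}}\equiv0$, the conformal metric $\widetilde{g_\infty}=\varrho_\infty^{-2}\widetilde{h_\infty}$ is scalar-flat K\"ahler, so LeBrun's ansatz for scalar-flat K\"ahler metrics applies; comparing scalar curvatures under $\widetilde{h_\infty}=\xi_\infty^{-2}\widetilde{g_\infty}$ forces $v_\infty=2\log\xi_\infty+\varpi$, and the remaining linear equation separates into the ODEs $\partial_{\xi_\infty}^2(w_\lambda\xi_\infty^2)+\lambda w_\lambda=0$ with solutions $w_\lambda=c_{\lambda_1}\xi_\infty^{\lambda_1}+c_{\lambda_2}\xi_\infty^{\lambda_2}$, $(\mu+2)(\mu+1)+\lambda=0$. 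The ratio of the two global integral formulas in the $T^2$-setting,
$$\frac{\int_L Ve^u\,dxdy\,\eta}{\int_L e^u\,dxdy\,\eta}=\frac{-96\pi^2a\varrho^2-192\pi^2b\varrho}{16\pi^2\big(\tfrac{1}{2}\varrho^2+a\varrho+b\big)},$$
is $O(1)$ as $\varrho\to\infty$, which forces only the constant and the $\varrho^{-1}$ terms to occur in the expansion of $V_\infty$, i.e.\ $V_\infty=c$ or $V_\infty=c\varrho^{-1}$; the latter would make $V\sim\varrho^{-1}$ and hence, by Lemma~\ref{lem:volume T2}, give maximal volume growth, contradicting that the gravitational instanton collapses. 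Therefore $V_\infty\equiv c>0$ is constant, and since the reduction carries a curvature-one metric this says $\widetilde{h_\infty}$ is flat, which is the asserted flatness of the local geometry. For uniqueness, $V_\infty$ is this fixed constant in every asymptotic cone and $\varrho_\infty$ is a constant multiple of the distance function, so quotienting the flat $\widetilde{h_\infty}$ by the $\mathbb{R}^2$-action always yields the same flat half-plane, with boundary the image of the isotropy locus of the $T^2$-action; hence the asymptotic cone is uniquely $\mathbb{H}$.

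I expect the main obstacle to be bookkeeping rather than anything conceptual: one must check that Lemmas~\ref{lem:rescaled conformal cannot be trivial}--\ref{lem:equation for u_jnfty} really do not secretly use that the collapsing is along an $S^1$ rather than a $T^2$, and in particular that the annulus volume estimate excluding both $\lambda_j^{1/3}\to\mathrm{const}$ and the Type~II local model goes through unchanged with the $\mathbb{R}$-case volume formula. A second point needing care is the separation-of-variables step: in the $\mathbb{R}$-case the ``surface'' in the reduction is only globally defined on the universal cover $\widetilde{B_\infty}$, where it is the curvature-one metric with $y$ a Killing direction, so the spectral decomposition to use is the one on that curvature-one surface, and one should confirm it is consistent with the global integral formulas over the link $L$.
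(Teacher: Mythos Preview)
Your proposal is correct and follows essentially the same approach as the paper. The paper's own proof is extremely terse---it simply asserts that Lemmas~\ref{lem:rescaled conformal cannot be trivial}--\ref{lem:equation for u_jnfty} carry over because they are ``essentially local,'' notes that the key difference is that collapsing now occurs along the full $T^2$ (since the $\mathbb{R}$-orbit is dense in $T^2$), and concludes that the asymptotic cone is the 2-dimensional half-plane $\mathbb{H}$ obtained by quotienting by $T^2$; your write-up is in fact more detailed than the paper's, and the bookkeeping concerns you flag are exactly the points the paper glosses over.
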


Similarly, as a corollary we have
\begin{corollary}
  In the asymptotic cone $V_\infty\equiv constant$ and $\varrho_\infty$ is a constant multiple of $\rho_\infty$.
\end{corollary}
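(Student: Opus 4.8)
The plan is to carry out the argument of Lemma~\ref{lem:structure of tangent cone for S^1} and its corollary, adapted \emph{mutatis mutandis} to the $\mathbb{R}$-action setting. First I would invoke the lemma just established: each non-collapsed limit $(\widetilde{B_\infty},\widetilde{h_\infty})$ occurring in the asymptotic cone is flat, so in particular $W^+_{\widetilde{h_\infty}}\equiv0$ and the conformal rescaling $\widetilde{g_\infty}=\varrho_\infty^{-2}\widetilde{h_\infty}$ is scalar-flat K\"ahler. Since here the collapse is along the whole $T^2$, both $V_\infty$ and $u_\infty$ descend to functions of $(\varrho_\infty,x)$ only, so I would put $\widetilde{g_\infty}$ into the LeBrun scalar-flat ansatz with two commuting Killing fields, the structure equations reducing to $\partial^2_{\xi_\infty}(e^{v_\infty})+\partial^2_x v_\infty=0$ and $\partial^2_{\xi_\infty}(W_\infty e^{v_\infty})+\partial^2_x W_\infty=0$. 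Computing the conformal change of scalar curvature from $\widetilde{g_\infty}$ to $\widetilde{h_\infty}=\xi_\infty^{-2}\widetilde{g_\infty}$ gives $\partial_{\xi_\infty}v_\infty=2/\xi_\infty$, hence $v_\infty=2\log\xi_\infty+\varpi(x)$, which agrees with $u_\infty=2\log\varrho_\infty+\varpi(x)$; substituting into the reduced $SU(\infty)$ Toda equation $(e^u)_{\varrho\varrho}+u_{xx}=0$ forces $\varpi''=-2e^{\varpi}$, and together with the boundary values $u_x\to\pm2$ at the two isotropy loci $P_1,P_2$ computed above this pins $\varpi$ down uniquely (up to translation in $x$), identifying $(\mathbb{R},e^{\varpi}\,dx^2)$ with the length-$\pi$ interval underlying the circle reduction of the round $S^2$.

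Next I would separate variables in the $W_\infty$-equation, which with $v_\infty=2\log\xi_\infty+\varpi(x)$ becomes $\partial^2_{\xi_\infty}(W_\infty\xi_\infty^2)+e^{-\varpi}\partial^2_x W_\infty=0$, where $e^{-\varpi}\partial^2_x$ is the Laplacian of $(\mathbb{R},e^{\varpi}\,dx^2)$ equipped with the boundary behavior dictated by smoothness of the toric K\"ahler metric at $P_1,P_2$; its spectrum agrees with that of $\Delta_{S^2}$ on rotation-invariant functions, namely $\{-\ell(\ell+1):\ell\in\mathbb{Z}_{\geq0}\}$. Exactly as in Lemma~\ref{lem:structure of tangent cone for S^1}, this gives an expansion $V_\infty=\sum_\ell(c_{\ell,1}\varrho_\infty^{-\mu_{\ell,1}-2}+c_{\ell,2}\varrho_\infty^{-\mu_{\ell,2}-2})f_\ell$, where $\mu_{\ell,1},\mu_{\ell,2}$ solve $(\mu+2)(\mu+1)-\ell(\ell+1)=0$. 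Then, as in the $S^1$ case, I would compare the growth order of the ratio $\frac{\int_L Ve^u\,dxdy\,\eta}{\int_L e^u\,dxdy\,\eta}=\frac{-96\pi^2a\varrho^2-192\pi^2b\varrho}{16\pi^2(\tfrac12\varrho^2+a\varrho+b)}$ furnished by Lemma~\ref{lem:volume T2}, which leaves room only for the constant and $\varrho^{-1}$ modes of $V_\infty$, i.e. only $\ell=0$; if $a<0$ this forces $V_\infty\equiv c$ for a positive constant $c$, while the alternative $a=0$ would give $V_\infty\sim\varrho^{-1}$, hence $V\sim\varrho^{-1}$ on the end, so by \eqref{eq:ansatz applied to write metric} and Lemma~\ref{lem:volume T2} the instanton would have maximal volume growth, contradicting the standing collapsing assumption. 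Finally, with $V_\infty\equiv c$ and $u_\infty=2\log\varrho_\infty+\varpi(x)$ one gets $h_\infty=c\,(d\varrho_\infty^2+\varrho_\infty^2 e^{\varpi}\,dx^2)$, a flat metric cone over a circle of circumference $\pi$ (the half-plane $\mathbb{H}$), whose radial variable $\sqrt{c}\,\varrho_\infty$ is the distance $\rho_\infty$ to the vertex; hence $\varrho_\infty$ is a constant multiple of $\rho_\infty$, as claimed.

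The step I expect to be the main obstacle is extracting the correct boundary behavior of the separated eigenfunctions $f_\ell$ at $P_1$ and $P_2$ from the toric smoothness condition \eqref{eq:smooth condition toric}, so that the relevant one-dimensional operator genuinely has the Legendre spectrum $\{-\ell(\ell+1)\}$ and the non-constant modes are truly excluded rather than merely suppressed by the growth comparison. Everything else — the scalar-flat reduction, the conformal scalar-curvature computation, the separation of variables, the growth-order comparison, and the maximal-volume-growth exclusion — runs parallel to the $S^1$-action case essentially verbatim; in particular, since Lemmas \ref{lem:rescaled conformal cannot be trivial}--\ref{lem:equation for u_jnfty} are local they apply unchanged, with the only structural difference being that the collapse is now along the full $T^2$ rather than a single $S^1$.
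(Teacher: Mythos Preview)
Your proposal is correct and follows precisely the route the paper intends: the paper states this corollary with no proof beyond ``Similarly, as a corollary we have,'' referring back to the $S^1$-action argument of Lemma~\ref{lem:structure of tangent cone for S^1}, and you have carried out that adaptation in full, including the scalar-flat reduction, the conformal scalar-curvature identity forcing $v_\infty=2\log\xi_\infty+\varpi$, the separation of variables, the growth comparison against the integral ratio now supplied by Lemma~\ref{lem:volume T2}, and the exclusion of $a=0$ via maximal volume growth. Your self-flagged concern about the boundary spectrum at $P_1,P_2$ is not a genuine obstacle: the growth-order comparison with $\frac{\int_L Ve^u\,dxdy\,\eta}{\int_L e^u\,dxdy\,\eta}$ already forces $V_\infty$ to have at most the constant and $\varrho_\infty^{-1}$ behaviors, independent of any fine spectral decomposition, so only the $\ell=0$ ODE analysis is actually needed.
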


\ 

In our previous work \cite{me} it was proved that in the non-collapsed situation the asymptotic cone can only be $\mathbb{C}^2/\Gamma$ with $\Gamma\subset U(2)$. Hence, including the non-collapsed situation, till now we have proved
\begin{theorem}[Classification of asymptotic cones]
  Any Hermitian non-K\"ahler gravitational instanton has unique asymptotic cone. The asymptotic cone can only be
  \begin{itemize}
    \item $\mathbb{R}^4/\Gamma$ in the non-collapsed situation.
    \item $\mathbb{R}^3/\mathbb{Z}_k$ or $\mathbb{H}$ in the collapsed situation.
  \end{itemize}
\end{theorem}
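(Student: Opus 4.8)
The statement is a synthesis of the case analysis already carried out, so the plan is mainly to verify that the relevant case distinctions are intrinsic to $(M,h)$ and then to invoke the structure results proved above. First I would record the trichotomy. By the curvature decay $|Rm_h|=O(\rho^{-2})$ and Bishop--Gromov, the normalized volume $\mathrm{Vol}_h(B_\rho(p))\,\rho^{-4}$ is monotone non-increasing and hence has a limit, so $(M,h)$ is either non-collapsed (Euclidean volume growth) or collapsed at infinity, and this alternative does not depend on the rescaling sequence. In the collapsed case, since $\mathcal{K}$ cannot flow points to infinity, its orbit closure in $\mathrm{Iso}(M,h)$ is a fixed compact torus, which by the lemmas of Sections~\ref{subsubsec:induce S^1-action}--\ref{subsubsec:induce R-action} is either $S^1$ (case ($\clubsuit$)) or $T^2$ (case ($\spadesuit$)); which one occurs is again intrinsic. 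Thus exactly one of the three alternatives holds.

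Next I would dispatch the three cases by quoting the corresponding structure statements. For the non-collapsed case I would cite \cite{me}, where it is shown that $(M,h)$ is ALE with unique asymptotic cone $\mathbb{R}^4/\Gamma\cong\mathbb{C}^2/\Upsilon$ for one of the admissible finite subgroups $\Gamma\subset SO(4)$. In case ($\clubsuit$) I would apply Lemma~\ref{lem:structure of tangent cone for S^1}: every asymptotic cone is $3$-dimensional with metric $V_\infty\big(d\varrho_\infty^2+\varrho_\infty^2\,g_{S^2/\mathbb{Z}_k}\big)$ and $V_\infty$ a positive constant, hence isometric to the flat cone $\mathbb{R}^3/\mathbb{Z}_k$ with $\mathbb{Z}_k$ acting by rotation about an axis (the order $k$ being that of the isotropy of the $S^1$-action), and in particular independent of the chosen sequence $r_j\to\infty$. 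In case ($\spadesuit$) I would instead apply the parallel lemma at the end of Section~\ref{subsubsec:induce R-action}: the cone is $2$-dimensional and, after collapsing the whole $T^2$, equals the half-plane $\mathbb{H}$, again uniquely.

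The only genuinely substantial point --- where I expect the real work to be concentrated --- is the uniqueness assertion, since a priori different sequences $r_j\to\infty$ could produce different Gromov--Hausdorff limits. In each collapsed case this reduces to the rigidity $V_\infty\equiv\mathrm{const}$, which itself rests on two ingredients already established above: passing the $SU(\infty)$ Toda equation and the LeBrun ansatz to each non-collapsed local limit and separating variables over $S^2/\mathbb{Z}_k$ (respectively, over the momentum interval in the toric description), so that $V_\infty$ is a sum of explicit power modes; and the \emph{global} integral identities \eqref{eq:integral e^u}--\eqref{eq:integral Ve^u} (respectively Lemma~\ref{lem:volume T2}), whose polynomial growth in $\varrho$ eliminates all nonconstant modes, with the residual $\varrho^{-1}$ mode excluded by comparing the volume formula of Lemma~\ref{lem:volume lemma S^1} against the assumed sub-Euclidean volume growth. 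A secondary subtlety, confined to case ($\spadesuit$), is that symplectic reduction by $\mathcal{K}$ is only available locally, so the volume identity has to be recovered through toric geometry --- ruling out $L\cong S^1\times T^2$ and passing to the universal cover in the lens-space case --- exactly as done in Section~\ref{subsubsec:induce R-action}. Granting these, assembling the three cases yields the theorem.
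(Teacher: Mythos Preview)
Your proposal is correct and follows essentially the same approach as the paper: the theorem is indeed a synthesis statement, and you have correctly identified the three-way case split (non-collapsed via \cite{me}; collapsed with $\mathcal{K}$ inducing an $S^1$-action via Lemma~\ref{lem:structure of tangent cone for S^1}; collapsed with $\mathcal{K}$ inducing only an $\mathbb{R}$-action via the parallel lemma at the end of Section~\ref{subsubsec:induce R-action}) together with the key mechanism for uniqueness, namely $V_\infty\equiv\text{const}$ coming from the separation of variables and the global integral identities. One minor remark: Bishop--Gromov alone (from Ricci-flatness) already gives the monotonicity of $\mathrm{Vol}_h(B_\rho(p))\rho^{-4}$, so the curvature decay is not needed for that particular step.
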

In the former case the collapsing happens along $S^1$-orbits while in the later case the collapsing happens along $T^2$-orbits. An immediate consequence is the curvature of any Hermitian non-K\"ahler gravitational instanton is decaying as $o(\rho^{-2})$, where $\rho$ is the distance function to a chosen base point. Any 4-manifold with only one end (it can have more ends but we stay with one end for simplicity) is said to be $\mathcal{AF}$ if its curvature decays as $o(\rho^{-2})$ and has unique asymptotic cone that is a metric cone. It was proved by Petrunin-Tuschmann \cite{petrunin} that for an $\mathcal{AF}$ 4-manifold whose end is simply-connected, asymptotic cone can only be $\mathbb{R}^4,\mathbb{R}^3$, or the half-plane $\mathbb{H}$, and they further conjectured $\mathbb{H}$ cannot be realized.  Recently, in our work \cite{ls} the conjecture is confirmed. 

\begin{theorem}[\cite{ls}]
  For an $\mathcal{AF}$ 4-manifold that is simply-connected at infinity, its asymptotic cone can only be $\mathbb{R}^4$ or $\mathbb{R}^3$.
\end{theorem}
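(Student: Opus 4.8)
\emph{Proof strategy for the cited result \cite{ls}.} By Petrunin--Tuschmann \cite{petrunin} the asymptotic cone of such an $M$ is already known to be one of $\mathbb{R}^4$, $\mathbb{R}^3$, or the flat half-plane $\mathbb{H}$, so the entire content is to exclude $\mathbb{H}$; one argues by contradiction and assumes the (unique) asymptotic cone is $\mathbb{H}=[0,\infty)\times\mathbb{R}$. Since the curvature decays as $o(\rho^{-2})$, every rescaled sequence $(M,r_j^{-2}h,p)$ collapses with bounded curvature and converges to $\mathbb{H}$, which is a metric cone over a segment of length $\pi$ with vertex at its corner; writing the end as $L\times(N,\infty)$ for a closed $3$-manifold $L$, the rescaled level sets $L$ then collapse with bounded curvature to that segment. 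Note $\pi_1(L)=1$ because $M$ is simply-connected at infinity.

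\emph{Local structure of the collapse near $\partial\mathbb{H}$.} Away from the corner one runs Cheeger--Fukaya--Gromov theory: the end carries a nilpotent Killing structure whose orbit space is an annular piece of $\mathbb{H}$, with local universal covers converging smoothly to open subsets of flat $\mathbb{R}^4$. Over the interior $\mathbb{H}^{\circ}$ the orbits are $2$-dimensional flat tori. The crucial analysis is along the boundary ray $\partial\mathbb{H}$ minus the corner: there the quotient acquires a boundary, and the only way a quotient of an open subset of flat $\mathbb{R}^4$ by a nilpotent Lie group produces a $2$-dimensional quotient-with-boundary is the model $\mathbb{R}^4/(SO(2)\times\mathbb{R})$, with $SO(2)$ rotating a $2$-plane, $\mathbb{R}$ translating a transverse axis, and the remaining coordinate free. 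Thus, near $\partial\mathbb{H}$ the collapse is driven by a \emph{skew rotation} in the sense of Example \ref{exam:gromovrotation}: one of the two collapsing circles is a rotational $SO(2)$-direction, and the local fundamental groups $G_j$ must contain elements whose rotation angle is incommensurable with $2\pi$ --- for otherwise, after a finite cover, the rotational $SO(2)$ would not survive in $G_\infty^0$ and the limit near $\partial\mathbb{H}$ would be $3$-dimensional. (The naive non-skew alternative, an honest $T^2$-action degenerating to the interval, is ruled out by a Gauss--Bonnet-type obstruction on the transverse $2$-disk.)

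\emph{Topological contradiction.} The nilpotent Killing structure has a central circle direction; in the model it is the $\mathbb{R}$-translation factor, which closes up to an honest circle $\gamma$ in the end. The plan is to show $[\gamma]$ has infinite order in $\pi_1$ of the end. On a generic fiber the $SO(2)$-orbit $[c]$ bounds a disk, so $[c]=0$ in $\pi_1$; but because the rotation angle is incommensurable, the flat $T^2$-structure over $\mathbb{H}^{\circ}$ near $\partial\mathbb{H}$ is twisted exactly as in $\mathfrak{M}_{\mathfrak{a},\mathfrak{b}}$, so that killing $[c]$ does not kill $[\gamma]$. Concretely, the relevant sub-end is finitely covered by a piece of $\mathfrak{M}_{\mathfrak{a},\mathfrak{b}}\cong\mathbb{R}^3\times S^1$, which deformation retracts onto its $S^1$-factor $[\gamma]$; hence $\pi_1$ of the end surjects onto $\mathbb{Z}$, contradicting $\pi_1(L)=1$.

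The main obstacle is the local analysis near $\partial\mathbb{H}$ together with the verification that the twist is a genuine topological invariant of the end: only the decay $o(\rho^{-2})$ is available --- no uniform rate and no a priori warped-product normal form --- so one cannot compare the end to $\mathfrak{M}_{\mathfrak{a},\mathfrak{b}}$ on the nose, and must instead run the full Cheeger--Fukaya--Gromov machinery, control the nilpotent Killing structure and its rotation number along the boundary ray, and dispose of the various degenerate configurations, much in the spirit of the collapsing analysis carried out in Section \ref{sec:collapsegeometry}.
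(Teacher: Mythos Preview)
The paper does not prove this theorem; it is stated as a citation of the separate work \cite{ls} and is used as a black box. There is therefore no proof in the present paper to compare your proposal against.

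That said, your outline is a reasonable sketch of the strategy one expects for such a result: reduce via Petrunin--Tuschmann to excluding $\mathbb{H}$, then argue that a 2-dimensional collapse with boundary forces, near $\partial\mathbb{H}$, the skew-rotation model $\mathfrak{M}_{\mathfrak{a},\mathfrak{b}}$ with irrational angle, and finally extract an infinite-order loop in $\pi_1$ of the end. The honest assessment you give at the end is apt: the hard content is precisely the local analysis along $\partial\mathbb{H}$ with only $o(\rho^{-2})$ decay and no a priori normal form, and your sketch does not carry this out. In particular, the step ``the only way a quotient of an open subset of flat $\mathbb{R}^4$ by a nilpotent Lie group produces a 2-dimensional quotient-with-boundary is $\mathbb{R}^4/(SO(2)\times\mathbb{R})$'' and the subsequent claim that the rotation angle must be irrational both require substantial justification; these are exactly the places where the argument could fail if not handled carefully. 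Since the paper offers no details here, one must consult \cite{ls} directly to see how these points are actually resolved.
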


A particular consequence is, if the asymptotic cone of a Hermitian non-K\"ahler gravitational instanton is $\mathbb{R}^3/\mathbb{Z}_k$ with non-trivial $\mathbb{Z}_k$ or $\mathbb{H}$, then the end of the gravitational instanton is non-simply-connected. Therefore, from our previous argument, we have the link $L$ in these cases diffeomorphically can only be $S^2\times S^1$, because it cannot be lens spaces, whose finite cover will be simply-connected. The action induced by $\mathcal{K}$ is the rotation on $S^1$ skewed by a multiple of the standard rotation on $S^2$. When it is a rational multiple, $\mathcal{K}$ induces an $S^1$-action, and we end with asymptotic cone $\mathbb{R}^3/\mathbb{Z}_k$. When it is an irrational multiple, $\mathcal{K}$ induces an $\mathbb{R}$-action, and asymptotic cone is $\mathbb{H}$. In the later case although the asymptotic cone is flat, the renormalized volume density function is proportional to the distance function to $\partial\mathbb{H}$. Finally, notice that the constants $a$ in Lemma \ref{lem:volume lemma S^1} and \ref{lem:volume T2} are non-zero, as in the asymptotic cone $\varrho_\infty$ is a multiple of the distance function and the growth of the renormalized limit measure on the asymptotic cone is cubic.

\begin{remark}
Recall our notation for ALE asymptotic models in and below Definition \ref{def:ale}.
It was proved in our previous paper \cite{me} that there is no Hermitian non-K\"ahler ALE gravitational instanton with structure group $\Upsilon$ in $SU(2)$, except the Eguchi-Hanson metric with reversed orientation.  
A crucial part of the proof is that the K\"ahler metric $g$ associated to a Hermitian non-K\"ahler ALE gravitational instanton can be naturally compactified to a K\"ahler orbifold $(\widehat{M},\widehat{g})$ by adding one orbifold point $q$.
In the ALE setting, there is no collapsing at infinity, and we can similarly consider the limit rescaled conformal factor $\lambda^{1/3}_\infty$. The limit metric $h_\infty$ is just the standard flat metric on the asymptotic cone $\mathbb{R}^4/\Gamma$ (it is proved that the structure group $\Upsilon\subset U(2)$) and $\lambda^{1/3}_\infty$ is globally defined on $\mathbb{R}^4/\Gamma$ outside the origin. 
In this situation the limit rescaled conformal factor $\lambda^{1/3}_\infty$ is a constant multiple of $1/\rho^2$. The limit K\"ahler metric ${g_\infty}=\lambda^{2/3}_\infty h_\infty=1/\rho^4 h_\infty$, which is flat and incomplete on the asymptotic cone.  The quotient $\mathbb{C}^2/\Upsilon$ with the flat K\"ahler metric $g_\infty$ is exactly the asymptotic cone of the compactified K\"ahler orbifold $(\widehat{M},\widehat{g})$ at the orbifold point $q$.
\end{remark}

We end this subsection with a remark on the local geometry of the limit K\"ahler metric $\widetilde{g_\infty}$. The limit rescaled conformal factor $\varrho_\infty^{-1}$ is always a constant multiple of $\rho_\infty^{-1}$ and $V_\infty\equiv constant$, therefore after the conformal change, we obtain
\begin{align}\label{eq:split kahler metric}
\widetilde{g_\infty}&=\frac{k^2}{\rho_\infty^2}(d\rho_\infty^2+\rho_\infty^2g_{S^2})+\frac{k^2}{\rho_\infty^2}\eta_\infty^2\\
&=\frac{k^2}{\rho_\infty^2}(d\rho_\infty^2+\eta_\infty^2)+k^2g_{S^2},\notag
\end{align}
which splits as the product of a sphere with a cusp.

\subsection{Classification of ends}
\label{subsec:classification of ends}

We proceed to improve the previous classification of asymptotic cones to a classification of ends. By saying that a gravitational instanton has a specific kind of asymptotic geometry, we mean it is asymptotic to the corresponding asymptotic model with rate $O(\rho^{-\delta_0})$ for some $\delta_0>0$.
\begin{theorem}[Classification of ends]\label{thm:classification of ends}
  For a Hermitian non-K\"ahler gravitational instanton $(M,h)$ it must have one of the following asymptotic geometry
  \begin{itemize}
    \item ALE with asymptotic cone $\mathbb{C}^2/\Gamma$ where $\Gamma\subset U(2)$.
    \item ALF-$A$ with asymptotic cone $\mathbb{R}^3$.
    \item Rational AF with asymptotic cone $\mathbb{R}^3/\mathbb{Z}_k$ or irrational AF with asymptotic cone right-half-plane $\mathbb{H}$. 
  \end{itemize}
\end{theorem}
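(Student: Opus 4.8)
The plan is to first dispatch the non-collapsed case by citation and then reduce the collapsed cases to a decay estimate for the solution of the $SU(\infty)$ Toda equation. If the asymptotic cone is $\mathbb{R}^4/\Gamma$, the statement (ALE, with $\Gamma$ conjugate into $U(2)$) is exactly the content of \cite{me}. So I would assume the gravitational instanton is collapsed; by the previous subsection its \emph{unique} asymptotic cone is then $\mathbb{R}^3/\mathbb{Z}_k$, with $\mathcal{K}$ inducing an $S^1$-action of isotropy $\mathbb{Z}_k$, or the half-plane $\mathbb{H}$, with $\mathcal{K}$ inducing only an $\mathbb{R}$-action, and the ansatz of Theorem \ref{thm:local ansatz version two} holds on the end, with $\varrho=\lambda^{-1/3}$, $V$, $\eta$ globally defined and $u$, $(x,y)$ living on holomorphic charts of $\Sigma = S^2/\mathbb{Z}_k$:
\[
h = V\,(d\varrho^2 + e^u(dx^2+dy^2)) + V^{-1}\eta^2,\qquad V = -12\varrho + 6\varrho^2 u_\varrho>0 .
\]
Everything then comes down to upgrading the \emph{qualitative} convergence of the rescaled data to the flat model --- which the previous subsection gives for free, since the asymptotic cone and the limit rescaled conformal factor are unique --- to the \emph{quantitative} statement $u - (2\log\varrho + \varpi) = O'(\varrho^{-\delta_0})$ for some $\delta_0>0$, where $e^{\varpi}(dx^2+dy^2)$ is the constant-curvature-one (orbifold) metric on $\Sigma$ isolated there; from this the asymptotics of $V$, $\eta$ and hence of $h$ drop out, and one reads off ALF-$A$ when $k=1$, rational AF when $k\ge2$, and --- after repeating the argument $T^2$-equivariantly as in Section \ref{subsubsec:induce R-action} --- irrational AF in the $\mathbb{H}$ case.

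For the decay estimate I would argue as follows. Uniqueness of the asymptotic cone gives $u(\varrho,\cdot) - 2\log\varrho \to \varpi$ in $C^\infty(\Sigma)$ as $\varrho\to\infty$, not merely along subsequences; so $w \coloneqq u - 2\log\varrho - \varpi$ is uniformly bounded on the end with all its derivatives. Setting $\tau=\log\varrho$ and using conformal covariance of the Laplacian on $\Sigma$, the Toda equation reads $w_{\tau\tau} + 3w_\tau + \Delta_\Sigma w + 2w = Q(w,\nabla w)$ with $Q$ at least quadratic in $(w,\nabla w)$. Expanding $w=\sum_\ell w_\ell(\tau)f_\ell$ into eigenfunctions of $\Delta_\Sigma$ (eigenvalue $-\ell(\ell+1)$, among the $\mathbb{Z}_k$-invariant spherical harmonics) decouples the linearization into the Euler-type equations $w_{\ell,\tau\tau} + 3w_{\ell,\tau} + (2-\ell(\ell+1))w_\ell = 0$, with indicial roots $\mu = \ell-1$ and $\mu = -(\ell+2)$: for $\ell=0$ both are negative ($-1,-2$), for $\ell\ge2$ one is $\ge1$ and the other $\le-4$, and for $\ell=1$ they are $0$ and $-3$. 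Uniform boundedness kills the growing modes ($\ell\ge2$), and a routine mode-by-mode ODE bootstrap --- integrating against the decaying indicial solution and using that $Q$ is quadratic and the lower modes already decay --- yields $w_\ell = O(\varrho^{-1})$ for $\ell=0$ and $O(\varrho^{-4})$ for $\ell\ge2$. The only obstruction is the $\ell=1$ mode with root $0$; but this is precisely the direction tangent to the one-parameter family of constant-curvature-one, $\mathbb{Z}_k$-invariant conformal metrics on $\Sigma$ (the M\"obius dilations fixing the cone points), and the previous subsection already tells us $e^u(dx^2+dy^2)/\varrho^2$ converges to one definite such metric. Re-choosing $\varpi$ to be the logarithm of the conformal factor of that limit makes the $\ell=1$ mode of the new $w$ tend to zero, and its ODE then gives $w_1 = O(\varrho^{-2})$. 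Altogether $w = O(\varrho^{-1})$, and Schauder estimates on unit $\tau$-intervals promote this to $w = O'(\varrho^{-1})$.

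Granting $w = O'(\varrho^{-1})$, one gets $V = 6\varrho^2 w_\varrho = c + O'(\varrho^{-1})$, where $c$ is $-6$ times the $\varrho^{-1}$-coefficient of the $\ell=0$ part of $w$, and $c>0$ because the corresponding constant $a$ in Lemma \ref{lem:volume lemma S^1} is nonzero. Hence $V d\varrho^2 + V e^u(dx^2+dy^2) = c\,(d\varrho^2 + \varrho^2 g_{S^2/\mathbb{Z}_k}) + O'(\rho^{-1})$, which after rescaling by $c$ is the flat metric on $(\mathbb{R}^3/\mathbb{Z}_k)\setminus\overline{B_N(0)}$, while $V^{-1}\eta^2 = c^{-1}\eta_0^2 + O'(\rho^{-1})$ for the standard connection one-form $\eta_0$, since $d\eta$ is a first-order expression in $(V,u)$ and converges to $d\eta_0$ at the same rate. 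This exhibits the end as ALF-$A$ when $k=1$, and as rational AF when $k\ge2$, the $\mathbb{Z}_k$-isotropy giving $\mathfrak{a}/2\pi=m/n$ in the model $\mathfrak{M}_{\mathfrak{a},\mathfrak{b}}$. In the $\mathbb{R}$-action case the same scheme runs on the toric chart of Section \ref{subsubsec:induce R-action}, where the Toda equation reduces to $(e^u)_{\varrho\varrho}+u_{xx}=0$ with $y$ a circle coordinate, the spectral analysis being that of an interval with the boundary behaviour $u_x\to\pm2$ computed there; the asymptotic cone $\mathbb{H}$ together with its cubic renormalized-measure density pins the end down as irrational AF.

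The main obstacle is this quantitative improvement. The qualitative convergence is cheap (uniqueness of the asymptotic cone plus Cheeger--Fukaya--Gromov theory), but converting it into a polynomial rate forces the mode analysis of the $SU(\infty)$ Toda equation above, and in particular the careful treatment of the non-decaying $\ell=1$ mode --- recognizing it as a M\"obius gauge freedom and re-normalizing the model metric accordingly --- together with the extra bookkeeping needed when $\mathcal{K}$ induces only an $\mathbb{R}$-action and no global symplectic reduction is available.
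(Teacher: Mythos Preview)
Your overall strategy---linearize the $SU(\infty)$ Toda equation about $u=2\log\varrho+\varpi$, expand in spherical harmonics, and read off decay from the indicial roots---is the same as the paper's, and your computation of the roots $\mu=\ell-1,\,-(\ell+2)$ matches exactly. Two organizational differences are worth noting.

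First, the paper does not go directly from $w=o(1)$ into the mode analysis. Instead, it uses the global integral identity \eqref{eq:ratio between integrals} one more time (this is Lemma~\ref{lem:constant k0}): the ratio $\int_\Sigma Ve^u\,dxdy\big/\int_\Sigma e^u\,dxdy$ equals $(-6a\varrho^2-12b\varrho)/(4\pi\varrho^2+a\varrho+b)$, so $V\to k_0^2\coloneqq-\frac{3a}{2\pi}$. Since $V=6\varrho^2 w_\varrho$, this pins down the $\varrho^{-1}$-coefficient of the $\ell=0$ part of $w$, and the paper then subtracts $-\tfrac{k_0^2}{6\varrho}$ before doing the ODE analysis, so the remainder $\varepsilon$ already satisfies $\partial_\varrho^k\varepsilon=o(\varrho^{-k-1})$. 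With that a~priori decay the $\ell=1$ center mode never enters: all homogeneous solutions incompatible with $\varepsilon=o(\varrho^{-1})$ are automatically excluded, and the ODE step is genuinely routine, yielding $\varepsilon=O(\varrho^{-1-\delta_0})$ for every $\delta_0<1$. Your ``routine mode-by-mode bootstrap'' from $w=o(1)$ alone is not quite routine: the $\mu=0$ root at $\ell=1$ makes the origin a (three-dimensional) center manifold of equilibria, and turning $w\to0$ into $w=O(\varrho^{-1})$ then really is a center-manifold/normal-hyperbolicity statement (your M\"obius re-gauging is the correct geometric interpretation of it). It works, but the integral-formula shortcut is cleaner and avoids this entirely.

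Second, rather than running the toric/$\mathbb{R}$-action case separately, the paper first performs an unwrapping reduction: when the asymptotic cone is $\mathbb{R}^3/\mathbb{Z}_k$ or $\mathbb{H}$, pass to the universal cover of the end and re-quotient so that $\mathcal{K}$ generates a free $S^1$-action with asymptotic cone $\mathbb{R}^3$, call the resulting end $(M_0,h_0)$, and prove \emph{that} end is ALF-$A$. All three cases then follow from a single ALF analysis, with the $\mathbb{Z}_k$-orbifold and $\mathbb{H}$ statements read off by undoing the unwrapping.
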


The rest part of this subsection is devoted to proving this theorem. It suffices to study the case of non-maximal volume growth by our previous work on the ALE case \cite{me}. Pick a $\mathcal{K}$-invariant end $(M\setminus K,h)$. The following reduction is helpful.
\begin{enumerate}
  \item[(1)] If the asymptotic cone is $\mathbb{R}^3$, then we are in the case that the $S^1$-action is free and we can take the quotient of $M\setminus K$ by $S^1$ directly. Set $(M_0,h_0)$ to be the end $(M\setminus K,h)$. Denote the quotient by $S^1$ of the end as $(M_\flat,h_\flat)$.
  \item[(2)] If the asymptotic cone is $\mathbb{R}^3/\mathbb{Z}_k$ with non-trivial $\mathbb{Z}_k$ or $\mathbb{H}$, then the end diffeomorphically is $(N_0,\infty)\times S^2\times S^1$ where the action induced by $\mathcal{K}$ is a skewed rotation. Take the universal cover of $M\setminus K$ followed by a $\mathbb{Z}$-quotient with $\mathbb{Z}$ contained in the $\mathbb{R}$-action generated by $\mathcal{K}$, and denote the resulting end as $(M_0,h_0)$. Asymptotic cone of $(M_0,h_0)$ is $\mathbb{R}^3$, as the previous unwrapping operation turns the collapsing direction $\mathcal{K}$ into an integral $S^1$. Action induced by $\mathcal{K}$ in $(M_0,h_0)$ is a free $S^1$-action. The further quotient of $(M_0,h_0)$ by $S^1$ is denoted by $(M_\flat,h_\flat)$.
\end{enumerate}
Thus to prove Theorem \ref{thm:classification of ends} we only need to show $(M_0,h_0)$ above is ALF. We work with $(M_0,h_0)$ without mentioning it in the following of this subsection.

\begin{lemma}\label{lem:constant k0}
  There is a constant $k_0>0$ such that $V\to k_0^2$ as we approach infinity.
\end{lemma}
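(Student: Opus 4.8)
The plan is to extract, from the classification of the asymptotic cone and the integral formulas already established, enough control on the globally defined function $V^{-1}=|\mathcal{K}|_h^2$ to show it converges to a positive constant at infinity. Working on $(M_0,h_0)$, where $\mathcal{K}$ generates a free $S^1$-action and the symplectic reduction $\Sigma\cong S^2$ can be performed globally over the end, I would first revisit the integral identities \eqref{eq:integral e^u} and \eqref{eq:integral Ve^u}, together with Lemma \ref{lem:volume lemma S^1}. Since the asymptotic cone is $\mathbb{R}^3$ and the renormalized limit measure has cubic growth in the distance $\rho_\infty$ (noted at the end of Section \ref{subsec:analysis tangent cone}), the constant $a$ in those formulas is nonzero, and in fact $a<0$, as in the proof of Lemma \ref{lem:structure of tangent cone for S^1}. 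Set $k_0^2=-6a>0$ (up to the overall normalization). The ratio \eqref{eq:ratio between integrals} then shows that the \emph{average} of $V$ over $\Sigma$ with respect to $e^u\,dx\,dy$ satisfies $\fint_\Sigma V\,e^u\,dx\,dy\to k_0^2$ as $\varrho\to\infty$.

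The substantive step is to upgrade this integral convergence to pointwise convergence $V\to k_0^2$. Here I would use the structure theory from Section \ref{sec:collapsegeometry}: for any sequence $r_j\to\infty$, the rescaled manifolds $(M_0,h_j,p)$ converge to the flat cone $\mathbb{R}^3$, and by the Corollary following Lemma \ref{lem:structure of tangent cone for S^1} the rescaled $V_j$ converge to a \emph{constant} $V_\infty$ in the limit; moreover $\varrho_\infty$ is a constant multiple of $\rho_\infty$. Tracking the normalization constants in $V_j=\lambda^{-2/3}(q_j)r_j^{-2}V$ and matching the cubic volume growth in Lemma \ref{lem:volume lemma S^1} against the flat model pins down $V_\infty$ to be precisely the constant $k_0^2$ determined by $a$, independently of the sequence $r_j$ and the point $q_j$. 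Since this holds for \emph{every} sequence $r_j\to\infty$ with a uniform limit, a standard contradiction/compactness argument (if $V$ did not converge to $k_0^2$, one could choose points $q_j\to\infty$ with $|V(q_j)/(\lambda^{-2/3}(q_j)\cdot\text{dist}^2)-k_0^2|$ bounded below, rescale, and reach a contradiction with the uniqueness of the limit) yields $V\to k_0^2$ pointwise over the end.

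The main obstacle I anticipate is the bookkeeping of the various conformal and rescaling normalizations: $V$, $V_j$, $\varrho$, $\varrho_j$, $\lambda^{1/3}$, and the relation $\varrho_\infty=c\,\rho_\infty$ all carry constants that must be reconciled so that the limit constant coming from the integral formula (via $a$) agrees with the one coming from the smooth convergence of $V_j$ to $V_\infty$. A secondary technical point is ensuring the convergence is \emph{uniform} over $\Sigma\cong S^2$ and not merely in an averaged sense — this is where one genuinely needs that the $SU(\infty)$ Toda solution $u$ has the asymptotics $u\sim 2\log\varrho_\infty+\varpi(x,y)$ with $e^\varpi(dx^2+dy^2)$ the round metric, so that the fiber $\Sigma$ does not degenerate and elliptic estimates on the Toda equation propagate the integral bound to a sup bound. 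Once $V\to k_0^2$ is established, the decay of $V-k_0^2$ (needed for the ALF conclusion) will presumably follow in subsequent lemmas from the $O'(\rho^{-\tau})$-type estimates and the structure of \eqref{eq:V_infty separating}.
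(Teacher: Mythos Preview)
Your proposal is correct and takes essentially the same approach as the paper: combine the fact that $V_\infty\equiv\text{constant}$ in every asymptotic cone (so $V$ is nearly constant on each dyadic scale, via the smooth convergence $\widetilde{B_j}\to\widetilde{B_\infty}$) with the integral ratio \eqref{eq:ratio between integrals}, which pins down that constant as $-3a/(2\pi)$. The paper's proof is just a terse version of your argument---it takes $r_j=2^j$, notes $C_jV\to1$, and reads off the limit from the ratio formula directly, so the normalization bookkeeping you anticipate is sidestepped rather than carried out.
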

\begin{proof}
  Consider a specific asymptotic cone $(M,h_j,p)$ with $h_j=2^{-2j}h$. From the fact that $V_\infty\equiv constant$ we know along with the non-collapsed convergence $\widetilde{B_j}\to\widetilde{B_\infty}$ there are constants $C_j$ such that $C_jV\to1$. Now observe by the integral formulas \eqref{eq:integral e^u}-\eqref{eq:integral Ve^u} the ratio 
  $$\frac{\int_{\Sigma}Ve^u dxdy}{\int_{\Sigma}e^u dxdy}=\frac{-6a\varrho^2-12b\varrho}{4\pi\varrho^2+a\varrho+b}.$$
  Hence it is clear from the above formula that $V\to-\frac{3a}{2\pi}$ as we approach infinity. Note that $a<0$.
\end{proof}

The equation $V=-12\varrho+6\varrho^2u_{\varrho}$ now gives $u_{\varrho}=\frac{2}{\varrho}+\frac{k_0^2}{6\varrho^2}+o(\varrho^{-2})$. Thus $u=2\log\varrho+\varpi(x,y)-\frac{k_0^2}{6\varrho}+o(\varrho^{-1})$. Here again the metric $e^{\varpi(x,y)}(dx^2+dy^2)$ is the round metric $g_{S^2}$. Let $\varepsilon$ denote the error term 
$$\varepsilon:=u-2\log\varrho-\varpi(x,y)+\frac{k_0^2}{6\varrho}.$$ 
It is direct to see $\partial_{\varrho}^k\varepsilon=o(\varrho^{-k-1})$ by considering higher derivatives of $u$. Replacing $u$ by $2\log\varrho+\varpi-\frac{k_0^2}{6\varrho}+\varepsilon$ in the $SU(\infty)$ Toda equation we get
\begin{equation}\label{eq:expand spherical harmonics error}
\Delta_{S^2}\varepsilon+\partial_\varrho^2(\varrho^2\varepsilon)+O(\varrho^{-2})=0.
\end{equation}
Write $\varepsilon=\sum\varepsilon_\lambda f_{\lambda}$ with $f_\lambda$ being an eigenfunction of $S^2$ with eigenvalue $\lambda$ such that $\{f_\lambda\}$ forms an orthogonal basis of $L^2(S^2)$ and $O(\varrho^{-2})=\sum a_\lambda f_\lambda$. Then \eqref{eq:expand spherical harmonics error} becomes
$$\sum(\partial_\varrho^2(\varrho^2\varepsilon_\lambda)+\lambda\varepsilon_\lambda+a_\lambda)f_\lambda=0$$ 
and hence 
$$\partial_\varrho^2(\varrho^2\varepsilon_\lambda)+\lambda\varepsilon_\lambda+a_\lambda=0.$$
Note the eigenvalues of $\Delta_{S^2}$ are given by $-k(k+1)$ with $k\geq0$.
It is now a standard ODE analysis to see that $\varepsilon=O(\varrho^{-1-\delta_0})$ for any $\delta_0<1$. In conclusion $u=2\log\varrho+\varpi-\frac{k_0^2}{6\varrho}+O(\varrho^{-1-\delta_0})$ and $V=k_0^2+O(\varrho^{-\delta_0})$ for any $\delta_0<1$.

Together with our previous result \cite{me}, this confirms Theorem \ref{thm:classification of ends}. Particularly, for the choice of ALF/AF coordinate in Definition \ref{def:alf} and \ref{def:af}, by our discussion we can arrange $\mathcal{K}=-\partial_t$ and $\eta$ asymptotic to the connection form $\gamma$. We choose to put a negative sign before $\partial_t$ since it turns out to be more natural later. By scaling the metric suitably we can take the constant $k_0=1$ in Lemma \ref{lem:constant k0}. The asymptotic rate $\delta_0$ to the ALF/AF model can be chosen as any number that is close to but remains less than one. So for the rest of the paper, we can work under the assumption that the decay order $\delta_0$ in the ALF/AF assumption is a positive number that is very close to one.

Finally we have
\begin{theorem}\label{thm:expansion}
For a Hermitian non-K\"ahler ALF/AF gravitational instanton $(M,h)$ with a decay rate of $\delta_0$, the conformal factor $\lambda^{1/3}$ has an asymptotic expansion 
$$\lambda^{1/3}=\frac{1}{\rho}+O'(\rho^{-1-\delta_0}).$$ 
\end{theorem}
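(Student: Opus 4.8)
The plan is to read off the expansion from the global form of the ansatz on the end, together with the asymptotics of $u$ and $V$ already established in the classification of ends; the only genuine task is to compare the function $\varrho=\lambda^{-1/3}$ with the intrinsic distance function $\rho$.

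Recall that on the $\mathcal{K}$-invariant end the metric has the exact form $h=V\bigl(d\varrho^2+e^u(dx^2+dy^2)\bigr)+V^{-1}\eta^2$ of Theorem \ref{thm:local ansatz version two}, that $\lambda^{1/3}=\varrho^{-1}$, and that after the normalization of Lemma \ref{lem:constant k0} (with $k_0=1$) one has $V=1+O(\varrho^{-\delta_0})$ and $u=2\log\varrho+\varpi(x,y)-\tfrac1{6\varrho}+O(\varrho^{-1-\delta_0})$ for any $\delta_0<1$. Since $W^+$, hence $\lambda$, never vanishes and $\lambda^{1/3}\to 0$ at infinity, $\varrho$ is a proper function on $M$ with compact sublevel sets $\{\varrho\le c\}$; moreover the block-diagonal form above gives $\nabla_h\varrho=V^{-1}\partial_\varrho$, so $|\nabla_h\varrho|_h=V^{-1/2}=1+O(\varrho^{-\delta_0})$.

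First I would prove $|\varrho-\rho|=O(\rho^{1-\delta_0})$. Fixing a large constant $\varrho_*$, the downward gradient flow line of $\varrho$ from a far point $q$ reaches $\Sigma_*:=\{\varrho=\varrho_*\}$ and, being a reparametrized $\partial_\varrho$-curve, has $h$-length $\int_{\varrho_*}^{\varrho(q)}V^{1/2}\,d\varrho=\varrho(q)+O(\varrho(q)^{1-\delta_0})$; concatenating with a bounded path from $\Sigma_*$ to the base point gives $\rho(q)\le\varrho(q)+O(\varrho(q)^{1-\delta_0})$. Combined with the trivial Lipschitz bound $\varrho\le C\rho+C'$, this yields $c_1\rho\le\varrho\le c_2\rho$ on the end. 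For the reverse inequality I would take a minimizing $h$-geodesic $\sigma$ from the base point to $q$ parametrized by arclength; since $\{\varrho\le\varrho_*\}$ is compact, $\sigma$ meets it only on a bounded time interval, and for later times $\varrho(\sigma(s))\ge s-O(s^{1-\delta_0})\ge s/2$ by the inequality just proved. Integrating $\tfrac{d}{ds}\varrho(\sigma(s))=\langle\nabla_h\varrho,\sigma'\rangle\le|\nabla_h\varrho|_h\le 1+C\varrho(\sigma(s))^{-\delta_0}$ then gives $\varrho(q)\le\rho(q)+O(\rho(q)^{1-\delta_0})$, completing the comparison. Consequently
\[
\lambda^{1/3}-\frac1\rho=\frac1\varrho-\frac1\rho=\frac{\rho-\varrho}{\varrho\rho}=O(\rho^{-1-\delta_0}),
\]
which is the $C^0$ statement.

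Finally, to upgrade this to the $O'$-estimate I would apply standard interior Schauder estimates to equation \eqref{eq:conformal change of scalar}, written for $\psi:=\lambda^{1/3}-\rho^{-1}$ as $6\Delta_h\psi=-\lambda^{4/3}-6\Delta_h(\rho^{-1})$. Here $\lambda^{4/3}=O(\rho^{-4})$, and $\Delta_h(\rho^{-1})=O'(\rho^{-3-\delta_0})$, which follows from the standard ALF/AF expansion $\Delta_h\rho=\tfrac2\rho+O'(\rho^{-1-\delta_0})$ (the $2\rho^{-3}$ coming from $f''$ cancels the $-2\rho^{-3}$ coming from $f'\Delta_h\rho$ when $f(\rho)=\rho^{-1}$). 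So the right-hand side is $O(\rho^{-3-\delta_0})$; combining this with the $C^0$ bound $|\psi|=O(\rho^{-1-\delta_0})$ and running the estimates on balls of radius comparable to $\rho$ — after passing to the non-collapsed local covers $\widetilde{B_\infty}$, where the rescaled metrics have uniformly bounded geometry, so that the Schauder constants are uniform — yields $|\nabla_h^k\psi|_h=O(\rho^{-1-\delta_0-k})$ for all $k$, i.e. $\lambda^{1/3}=\rho^{-1}+O'(\rho^{-1-\delta_0})$.

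The main (and essentially only) obstacle is the comparison $|\varrho-\rho|=O(\rho^{1-\delta_0})$ with the sharp rate: one must avoid circularity by first proving the clean one-sided bound through the explicit $\partial_\varrho$-flow line and only then bootstrapping to the other side using compactness of the sublevel sets of $\varrho$ and the comparability $\varrho\asymp\rho$. Everything else is a direct consequence of the ansatz together with the asymptotics of $V$ and $u$ already in hand.
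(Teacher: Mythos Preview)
Your proposal is correct and follows essentially the same route as the paper: the paper's proof is the one-line observation that since $\varrho=\lambda^{-1/3}$ and $h=V(d\varrho^2+e^u(dx^2+dy^2))+V^{-1}\eta^2$ with $V=1+O(\varrho^{-\delta_0})$, the claim ``clearly follows''; you have simply unpacked what ``clearly follows'' means by carefully comparing $\varrho$ with $\rho$ via the $\partial_\varrho$-curves and minimizing geodesics. The one minor difference is in upgrading to the $O'$ estimate: you invoke Schauder theory for \eqref{eq:conformal change of scalar} on scaled balls, whereas the paper implicitly relies on the derivative control on $V$ and $u$ already produced by the ODE analysis in Section~\ref{subsec:classification of ends}, from which $|\nabla_h^k\varrho|_h$ can be read off directly from the explicit metric. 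Both routes are valid and equally short once the asymptotics of $V$ and $u$ are in hand.
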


\begin{proof}
Recall by definition $\varrho=\lambda^{-1/3}$. From the expansion $V=1+O(\varrho^{-1-\delta_0})$ and $h=V(d\varrho^2+e^u(dx^2+dy^2))+V^{-1}\eta^2$ this clearly follows.
\end{proof}

\begin{remark}\label{rm:complete}
From the expansion, we in particular know that the real holomorphic vector field $J\mathcal{K}$ is complete, in the sense its flow exists on the entire $(-\infty,+\infty)$ beginning at any point, since $J\mathcal{K}=\nabla_h\lambda^{-1/3}$.
\end{remark}

\subsection{Asymptotic models that cannot be filled in}
\label{subsec:not realizable models}

Despite Theorem \ref{thm:classification of ends}, there are additional asymptotic models that cannot be filled in as Hermitian non-K\"ahler gravitational instantons, as discussed in Section \ref{subsec:define asymptotic models}. We now take a detour to classify all such asymptotic models.

\begin{theorem}\label{thm:asymptotic nonrealizable in detail}
  Given a Hermitian non-K\"ahler Ricci-flat metric on an end $(M,h)$ that is complete at infinity, though possibly incomplete inside, and has finite $\int |Rm|^2$, it must be asymptotic to one of the following asymptotic models with a polynomial rate at infinity: ALE, ALF-$A$, AF, skewed special Kasner, $\text{ALH}^*$.
\end{theorem}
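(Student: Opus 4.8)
The plan is to re-run the collapsing analysis of Section~\ref{sec:collapsegeometry} over the end alone, dropping the global hypothesis of completeness on the interior; it is precisely the loss of that hypothesis that lets the skewed special Kasner and $\text{ALH}^*$ models appear. First I would record curvature decay: since the end is complete at infinity with $\int|Rm|^2<\infty$, the energy on annuli $A(r/2,2r)$ tends to $0$, so the $\epsilon$-regularity theorem of Cheeger--Tian \cite{cheegertian} gives $|Rm_h|_h\le C\rho^{-2}$ near infinity, and---after taking one asymptotic cone and noting it is a metric cone, exactly as in Section~\ref{subsec:analysis tangent cone}---the improved bound $|\nabla_h^k Rm_h|_h=o(\rho^{-2-k})$. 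Consequently the rescaled ends $(M\setminus K, r_j^{-2}h)$ have locally bounded geometry away from the base point, Cheeger--Fukaya--Gromov theory \cite{cfg} applies, and on the regular region one obtains the equivariant non-collapsed covers $\widetilde{B_\infty}$ together with a limit rescaled conformal factor $\lambda_\infty^{1/3}$ solving the limiting conformal-change equation (degenerating to Laplace along the collapsed directions), just as in Section~\ref{subsec:limit rescaled conformal factor}.

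Next I would split into the non-collapsed and collapsed cases. If some asymptotic cone is $4$-dimensional, then by the Bando--Kasue--Nakajima circle of ideas (finite energy, Ricci-flat) the end is ALE with flat cone $\mathbb{R}^4/\Gamma$, and the identification of the asymptotic complex structure together with the restrictions on $\Gamma$ is the content of \cite{me}; this gives the ALE case. In the collapsed case I would first re-establish, using only control at infinity, that $\mathcal{K}$ has no zeros outside a compact set: a non-compact component of $\mathrm{Fix}(\mathcal{K})$ reaching infinity forces $\lambda^{1/3}$ to be locally constant there; a sequence of isolated zeros escaping to infinity contradicts the isotropy structure of $G_\infty^0$ in a non-collapsed rescaling (Theorem~2.1 in \cite{naber}); and a sequence of fixed curves going out is excluded because each would have to be a local maximum of $s_g$ while $s_g$ decreases along $-\nabla_g s_g$ on the connected end. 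Hence symplectic reduction by $\mathcal{K}$ is valid on all of $M\setminus K$ and the second-version local ansatz (Theorem~\ref{thm:local ansatz version two}) holds globally; the one genuinely new feature is that we can no longer conclude $\varrho=\lambda^{-1/3}>0$, since the maximum-principle argument forcing $s_g>0$ used interior completeness.

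Then I would extract the integral identities. Via Lemma~\ref{lem:geometric Toda equation} and Gauss--Bonnet, the link $L$ (or the reduced surface $\Sigma$ when the $\mathcal{K}$-foliation closes up) yields $\int e^u\,dx\,dy\,\eta = 2\pi\chi\,\varrho^2+a\varrho+b$ and $\int Ve^u\,dx\,dy\,\eta = -6a\varrho^2-12b\varrho$, with $\chi=\chi(\Sigma)$ when $\Sigma$ is a closed surface and $\chi=0$ when the collapsing fiber lies over a flat $2$-torus or is $T^3$; a volume formula follows as in Lemmas~\ref{lem:volume lemma S^1} and~\ref{lem:volume T2}. Positivity of these integrals, together with $V=-12\varrho+6\varrho^2u_\varrho>0$, then splits the collapsed case in two. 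When $\chi>0$ one is forced into $\varrho>0$ and $\Sigma\cong S^2$, which is exactly the gravitational-instanton situation of Theorem~\ref{thm:classification of ends}: the separation-of-variables and mode-by-mode ODE analysis of Sections~\ref{subsec:analysis tangent cone}--\ref{subsec:classification of ends} gives a $3$-dimensional flat cone $\mathbb{R}^3/\mathbb{Z}_k$ or $\mathbb{H}$, i.e.\ ALF-$A$ or AF, with polynomial decay rate $\delta_0<1$. When $\chi=0$, positivity of $\int e^u$ forces $\varrho<0$, and the only global solutions compatible with the geometry are $u\equiv0$, matching $h_{\text{ALH}^*}$ in \eqref{eq:ALH*} (the $\text{ALH}^*$ model, with decay faster than any polynomial), or $u=\log(-\varrho)$, matching $h_{\text{Kasner}}$ in \eqref{eq:Kasner0} (the skewed special Kasner model); the rate is then obtained by the same linearization of the $SU(\infty)$ Toda equation. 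In every case the asymptotic cone is one of those listed in Theorem~\ref{thm:main classification ends nonrealizable}, and uniqueness follows from the explicit forms of $V_\infty$ and $\varrho_\infty$.

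The hard part will be Steps three and four above: once interior completeness is gone one loses the sign of $s_g$, so the two $\varrho<0$ families must be produced and pinned down purely from the global $SU(\infty)$ Toda equation and the integral identities, and one must re-verify---with only asymptotic control---both that $\mathcal{K}$ is zero-free near infinity and that the link $L$ can only be an $S^1$-bundle over $S^2$, a lens space, $S^2\times S^1$, a nilmanifold over $T^2$, or $T^3$, so that the ansatz is genuinely global; the ODE analysis producing the polynomial rates, and the check that the $\text{ALH}^*$ rate is superpolynomial, are then routine.
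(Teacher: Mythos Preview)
Your approach is essentially the paper's, and the overall architecture is right, but there is one logical slip and one genuine gap worth flagging.

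The slip: you claim that $\chi>0$ forces $\varrho>0$. This is false. The integral identities $\int e^u = 2\pi\chi\varrho^2+a\varrho+b>0$ and $\int Ve^u=-6a\varrho^2-12b\varrho>0$ are perfectly compatible with $\chi>0$ and $\varrho\to-\infty$ (take $a<0$), and the paper shows that this case still yields ALF-$A$ or AF by exactly the argument of Section~\ref{subsec:classification of ends}. What \emph{is} true is the contrapositive you use later: $\chi=0$ forces $\varrho<0$. The paper's cleaner dichotomy is on the sign of $s_g$, which is nonvanishing and hence constant in sign on the connected end: when $s_g>0$ the analysis of Sections~\ref{subsec:analysis tangent cone}--\ref{subsec:classification of ends} applies verbatim (it used only $s_g>0$, not interior completeness); when $s_g<0$ one has $\varrho<0$ and then splits on $\chi(\Sigma)$.

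The gap: in the $\chi=0$ case you jump directly to ``the only global solutions compatible with the geometry are $u\equiv 0$ or $u=\log(-\varrho)$''. The key missing step is that here the local geometry of the asymptotic cone is \emph{Type II, not Type I}: if it were Type I the argument of Lemma~\ref{lem:structure of tangent cone for S^1} would force the reduced metric on $\Sigma$ to have curvature one, contradicting $\Sigma\cong T^2$. Being Type II means $\lambda^{-1}(q_j)r_j^{-2}\to\text{const}\ne0$, hence the $\mathcal{K}$-circle is also collapsed and the asymptotic cone is $1$-dimensional. Only then are $u_\infty,V_\infty$ functions of $\varrho_\infty$ alone (they are $G_\infty$-invariant with $G_\infty$ acting transitively on the link), reducing Toda to the ODE $(e^{u_\infty})_{\varrho_\infty\varrho_\infty}=0$; its solutions $e^{u_\infty}=A\varrho_\infty+B$, combined with the integral identities, pin down $(A>0,B=0)$ or $(A=0,B>0)$, i.e.\ special Kasner or $\text{ALH}^*$. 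The polynomial (resp.\ superpolynomial) rates then come from an iteration using $\|u-\overline{u_{T^2}}\|_{C^k}$ controlled by $\Delta_{T^2}u=-(e^u)_{\varrho\varrho}$, not from the spherical-harmonic separation you invoke.
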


We prove this theorem in this subsection. For an end incomplete inside, the scalar curvature $s_g$ can be either positive or negative. In Section \ref{subsec:analysis tangent cone}-\ref{subsec:classification of ends}, we only used the condition that the scalar curvature $s_g$ is positive, rather than requiring that the metric be complete inside (which would imply $s_g>0$). Therefore when $s_g>0$ our conclusion is the same as Theorem \ref{thm:classification of ends}, meaning that $h$ is asymptotic to an ALE, ALF-$A$, AF model. Therefore we turn to the case where $s_g<0$, which will be assumed in the rest of this subsection.

There are two situations again:
\begin{enumerate}
  \item[($\clubsuit$)] $\mathcal{K}$ induces an $S^1$-action.
  \item[($\spadesuit$)] $\mathcal{K}$ only induces an $\mathbb{R}$-action.  
\end{enumerate}

\subsubsection{$\mathcal{K}$ induces an $S^1$-action}
\label{subsubsec:nonrealizable S^1 case}

We briefly recall our argument in Section \ref{subsubsec:induce S^1-action}. Apply ansatz Theorem \ref{thm:local ansatz version two} to write the metric on the end $(-\infty,-N)\times L$ as 
\begin{equation}\label{eq:ansatz nonrealizable h}
h=\varrho^2g=V(d\varrho^2+e^u(dx^2+dy^2))+V^{-1}\eta^2.
\end{equation}
Label the surface arises from symplectic reduction as $\Sigma$. Here, $x+iy$ is a holomorphic coordinate over an affine open subset $U\subset\Sigma$. Notice that this time $\varrho=\xi^{-1}=-\lambda^{-1/3}<0$. There are again constants $a,b$ such that
\begin{align}
  \int_\Sigma e^u dxdy=2\pi\chi(\Sigma)\varrho^2+a\varrho+b,\label{eq:integral u not realizable}\\
  \int_\Sigma Ve^u dxdy=-6a\varrho^2-12b\varrho.\label{eq:integral Vu not realizable}
\end{align}
For an arbitrary asymptotic cone $(M,h_j,p)\to(M_\infty,d_\infty,p_\infty)$ and $q_j\to q_\infty\neq p_\infty$, set $\lambda^{1/3}_j:=\lambda^{-1/3}(q_j)\lambda^{1/3}$, $\mathcal{K}_j=-J\nabla_{h_j}\lambda^{-1/3}_j$, and $V_j:=|\nabla_{h_j}\lambda_j^{-1/3}|_{h_j}^{-2}=\lambda^{-2/3}(q_j)r_j^{-2}V$. Hence the rescaled metric $h_j$ is rewritten as
\begin{equation}
  h_j=r_j^{-2}V(d\varrho^2+e^u(dx^2+dy^2))+r_j^{-2}V^{-1}\eta^2=V_j(d\varrho_j^2+\lambda^{2/3}(q_j)e^u(dx^2+dy^2))+r_j^{-4}\lambda^{-2/3}(q_j)V_j^{-1}\eta^2.
\end{equation}
The rescaled conformal factor $\lambda_j^{1/3}$ converges smoothly to a limit rescaled conformal factor $\lambda_\infty^{1/3}$ up to subsequences, so $\varrho_j:=-\lambda_j^{-1/3}$ converges smoothly to $\varrho_\infty:=-\lambda_\infty^{-1/3}$ as well. By definition, $V_j$ converges smoothly to a limit $V_\infty$. Lemma \ref{lem:volume lemma S^1}-\ref{lem:rescaled conformal cannot be trivial} still hold by the exactly same proof so $\lambda_\infty^{1/3}$ is not identically a constant.

If $\chi(\Sigma)>0$, then Lemma \ref{lem:tangent cone 3-dim S^1}-\ref{lem:structure of tangent cone for S^1} also remain valid with the completely same proof. One can then continue with the same argument in Section \ref{subsec:classification of ends} to conclude the asymptotic model is ALF-$A$ or AF. Notice that $\chi(\Sigma)$ cannot be negative by \eqref{eq:integral u not realizable}.

Thus we focus on the case $\chi(\Sigma)=0$. If $\chi(\Sigma)=0$, then $\Sigma$ is a torus $T^2$. For simplicity suppose $x+iy$ parametrizes $\mathbb{C}$ and identify $T^2$ as $\mathbb{C}/\langle1,\alpha+i\beta\rangle$ in \eqref{eq:ansatz nonrealizable h}.  One needs to replace Lemma \ref{lem:tangent cone 3-dim S^1} by the following lemma. An important difference lies in now the symplectic reduction $T^2$ is collapsed, while in Lemma \ref{lem:tangent cone 3-dim S^1}, $\Sigma$ is non-collapsed.

\begin{lemma}\label{lem:nonrealizable u convergence}
  The function $u+\log\lambda^{2/3}(q_j)\to-\infty$. Therefore the symplectic reduction $T^2$ is collapsed in the asymptotic cone in the sense that for $(M,h_j,p)$, the diameter of $T^2$ is going to zero.
\end{lemma}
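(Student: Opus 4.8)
The plan is to argue by contradiction: if the symplectic reduction $\Sigma\cong T^2$ does not collapse in the asymptotic cone, then running the local analysis of Section~\ref{subsubsec:induce S^1-action} forces $\Sigma$ to carry a metric of constant Gauss curvature $+1$, which is impossible on a torus. Recall from the rescaled ansatz (as in Section~\ref{subsubsec:induce S^1-action}) that the induced $h_j$-metric on the reduction $\Sigma$ is $V_je^{u_j}(dx^2+dy^2)$, where $u_j:=u+\log\lambda^{2/3}(q_j)$ and $V_j\to V_\infty$ smoothly with $V_\infty$ a genuine positive function (the analogue of Lemma~\ref{lem:rescaled conformal cannot be trivial} holds here verbatim). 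First I would exclude $u_j\to+\infty$: exactly as in Lemma~\ref{lem:tangent cone 3-dim S^1}, if $u_j\to+\infty$ the $h_j$-diameter of $\Sigma$ blows up, contradicting that $(M,h_j,p)$ converges in the pointed Gromov--Hausdorff topology to a locally compact space.

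I would then suppose, for contradiction, that $u_j$ does not tend to $-\infty$. Passing to a subsequence, $u_j$ is then bounded --- above by the previous step, below by assumption --- so by the elliptic estimates for the $SU(\infty)$ Toda equation~\eqref{eq:ansatz two Toda} used throughout Section~\ref{subsubsec:induce S^1-action} it converges smoothly to a finite limit $u_\infty$, and $\Sigma\cong T^2$ does not collapse. At this point Lemmas~\ref{lem:equation for u_jnfty}--\ref{lem:structure of tangent cone for S^1} go through with no change, since the hypothesis $\chi(\Sigma)>0$ enters their proofs essentially only to preclude the collapse of $\Sigma$, which we have now excluded; in particular the volume comparison of Lemma~\ref{lem:equation for u_jnfty} --- which uses only Lemma~\ref{lem:volume lemma S^1} (whose formula $\int_\Sigma Ve^u\,dx\,dy=-6a\varrho^2-12b\varrho$ is independent of $\chi(\Sigma)$), together with the observation that $a$ and $b$ cannot both vanish because $\int_\Sigma Ve^u\,dx\,dy$ and $\int_\Sigma e^u\,dx\,dy$ are positive --- still rules out the ``Type II local geometry'' alternative and forces $-12\varrho_\infty+6\varrho_\infty^2\,\partial_{\varrho_\infty}u_\infty\equiv0$. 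Integrating gives $u_\infty=2\log(-\varrho_\infty)+\varpi(x,y)$, and substituting this into the limiting Toda equation yields $2e^{\varpi}+\partial_x^2\varpi+\partial_y^2\varpi=0$, i.e.\ $e^{\varpi}(dx^2+dy^2)$ has constant Gauss curvature $+1$; since each $e^{u_j}(dx^2+dy^2)$ is a constant multiple of the globally defined metric $e^u(dx^2+dy^2)$ on $\Sigma$, its smooth limit $e^{u_\infty}(dx^2+dy^2)=\varrho_\infty^2\,e^{\varpi}(dx^2+dy^2)$ is again a metric on the closed surface $\Sigma\cong T^2$.

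This is the contradiction: by Gauss--Bonnet, $0=2\pi\chi(T^2)=\int_{T^2}K\,dvol=\int_{T^2}e^{\varpi}\,dx\,dy>0$. Hence $u_j\to-\infty$, and since the $h_j$-diameter of $\Sigma$ is comparable to $\bigl(\sup_\Sigma V_je^{u_j}\bigr)^{1/2}$ with $V_j$ bounded, the symplectic reduction $T^2$ collapses in the asymptotic cone, which is the assertion. I expect the main obstacle to be the bookkeeping in the second paragraph: one must re-examine the two estimates of $\mathrm{Vol}_{h_j}(A_j)$ entering Lemma~\ref{lem:equation for u_jnfty} (the one coming from Lemma~\ref{lem:volume lemma S^1} and the one coming from the $3$-dimensional structure of the limit) and verify that with $\chi(\Sigma)=0$ they still yield incompatible powers of $r_j$ --- in particular that the $\mathcal{K}$-orbits collapse, so that the relevant limit is genuinely $3$-dimensional and the argument of Lemmas~\ref{lem:equation for u_jnfty}--\ref{lem:structure of tangent cone for S^1} is applicable as stated.
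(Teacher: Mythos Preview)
Your proposal is correct and follows essentially the same route as the paper: assume $u_j\not\to-\infty$ so that (after excluding $u_j\to+\infty$ by the diameter argument) it converges, then run Lemmas~\ref{lem:tangent cone 3-dim S^1}--\ref{lem:structure of tangent cone for S^1} verbatim to force the limit metric on the reduction to have constant curvature $+1$, contradicting $\chi(T^2)=0$ via Gauss--Bonnet. The paper is equally terse about the volume bookkeeping you flag at the end, so your caution there is appropriate but does not constitute a gap relative to the paper's own argument.
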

\begin{proof}
  If not, then due to the smooth convergence of the symplectic reduction along with the smooth non-collapsed convergence $\widetilde{B_j}\to\widetilde{B_\infty}$, the function $u+\log\lambda^{2/3}(q_j)$ would converge smoothly to a limit and the proof of Lemma \ref{lem:tangent cone 3-dim S^1}-\ref{lem:structure of tangent cone for S^1} goes through. As a consequence, the metric $\lambda^{2/3}(q_j)e^{u-2\log\varrho_j}(dx^2+dy^2)$ on the symplectic reduction $T^2$ would converge to a metric with curvature one, which is a contradiction.
\end{proof}

\begin{lemma}\label{lem:nonrealizable K collapse}
  The number $\lambda^{-1}(q_j)r_j^{-2}$ converges to a non-zero constant and local geometry of asymptotic cone is always Type II, which means the Ricci-flat metric $\widetilde{h_\infty}$ is Type II. The $S^1$-action induced by $\mathcal{K}$ is also collapsed in the asymptotic cone. That is, $r_j^{-4}\lambda^{-2/3}(q_j)V_j^{-1}\to0$. 
\end{lemma}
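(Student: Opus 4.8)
The plan is to run the dichotomy of Section~\ref{subsubsec:induce S^1-action} and then exclude its Type~I branch; this is the opposite of the $\chi(\Sigma)>0$ case, where it was the Type~II branch that got excluded. Set $c_j:=\lambda^{-1}(q_j)r_j^{-2}$. From the identity $V_j=c_j\bigl(-12\varrho_j+6\varrho_j^2u_{\varrho_j}\bigr)$, the smooth non-trivial convergence $V_j\to V_\infty\not\equiv 0$ (Lemma~\ref{lem:rescaled conformal cannot be trivial} applies verbatim), and the scaling $|W^+_{h_j}|_{h_j}=\tfrac{1}{2\sqrt 6}r_j^2\lambda(q_j)\lambda_j=\tfrac{1}{2\sqrt 6}c_j^{-1}\lambda_j$, one obtains, after passing to a subsequence along which $c_j\to c_0\in[0,\infty]$, exactly one of: \textbf{(1)} $c_0\in(0,\infty)$, $W^+_{\widetilde{h_\infty}}\ne 0$, and the local geometry is Type~II; or \textbf{(2)} $c_0=\infty$, $-12\varrho_\infty+6\varrho_\infty^2\partial_{\varrho_\infty}u_\infty\equiv 0$, $W^+_{\widetilde{h_\infty}}\equiv 0$, and the local geometry is Type~I. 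The case $c_0=0$ cannot occur: the Harnack inequality bounds $\lambda_j$ from above and below on $\widetilde{B_j}$, so $|W^+_{\widetilde{h_\infty}}|$ would be infinite, contradicting the uniform curvature bound on the non-collapsed covers.

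The heart of the argument is to rule out branch~\textbf{(2)}, which then forces~\textbf{(1)} together with the first two assertions. Assume~\textbf{(2)} holds. Then, exactly as in Lemma~\ref{lem:structure of tangent cone for S^1}, $\widetilde{g_\infty}=\xi_\infty^{-2}\widetilde{h_\infty}$ is a scalar-flat K\"ahler metric (a smooth limit of the K\"ahler metrics $\widetilde{g_j}$, whose scalar curvatures $-c_j^{-1}\lambda_j^{1/3}$ tend to $0$), so LeBrun's ansatz for scalar-flat K\"ahler metrics with the Hamiltonian Killing field $\mathcal{K}_\infty$ applies:
\begin{equation*}
\widetilde{g_\infty}=W_\infty\,d\xi_\infty^2+W_\infty^{-1}\eta_\infty^2+W_\infty e^{v_\infty}(dx^2+dy^2),\qquad (e^{v_\infty})_{\xi_\infty\xi_\infty}+v_{\infty,xx}+v_{\infty,yy}=0 .
\end{equation*}
Computing the conformal change of scalar curvature from $\widetilde{g_\infty}$ to the Ricci-flat $\widetilde{h_\infty}=\xi_\infty^{-2}\widetilde{g_\infty}$ forces $\partial_{\xi_\infty}v_\infty=2/\xi_\infty$, hence $v_\infty=2\log|\xi_\infty|+\varpi(x,y)$; feeding this back into the displayed equation gives $2e^{\varpi}+\varpi_{xx}+\varpi_{yy}=0$. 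Now the end is diffeomorphic to $(-\infty,-N)\times L$ with $L$ an $S^1$-bundle over $\Sigma=T^2$, so $\pi_1(L)$ surjects onto $\pi_1(T^2)\cong\mathbb{Z}^2$, and by Lemma~\ref{lem:nonrealizable u convergence} the torus $\Sigma$ collapses to a point in $(M,h_j,p)$; hence this $\mathbb{Z}^2$ is generated by loops of length tending to $0$, and $G_\infty$ contains a $2$-dimensional abelian subgroup which descends to the symplectic reduction of $\widetilde{B_\infty}$ and acts there by translations of the holomorphic coordinate $x+iy$. Since $\widetilde{g_\infty}$, $\xi_\infty$, and hence $v_\infty$ and $\varpi$ are $G_\infty$-invariant, $\varpi$ must be constant in $(x,y)$, so $2e^{\varpi}=-(\varpi_{xx}+\varpi_{yy})=0$, a contradiction. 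Therefore~\textbf{(1)} holds: $\lambda^{-1}(q_j)r_j^{-2}\to c_0\in(0,\infty)$ and $\widetilde{h_\infty}$ is Type~II.

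The collapse of the $\mathcal{K}$-orbit is then immediate from scaling: $c_j\to c_0\in(0,\infty)$ gives $\lambda^{-2/3}(q_j)=c_j^{2/3}r_j^{4/3}$, so the coefficient of $\eta^2$ in the rescaled metric \eqref{eq:rescaled metric with ansatz} satisfies
\begin{equation*}
r_j^{-4}\lambda^{-2/3}(q_j)V_j^{-1}=c_j^{2/3}\,r_j^{-8/3}\,V_j^{-1}\ \longrightarrow\ 0,
\end{equation*}
because $r_j\to\infty$ and $V_j\to V_\infty$, finite and nowhere vanishing away from the (negligible) critical set of $\varrho_\infty$. This is precisely the assertion that the $S^1$-action generated by $\mathcal{K}$ is collapsed in the asymptotic cone.

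I expect the main obstacle to be the identification, within branch~\textbf{(2)}, of the $2$-dimensional abelian subgroup of $G_\infty$ with translations of the symplectic reduction: one has to track how $\pi_1(\Sigma)\subset\pi_1(L)$ sits inside the collapsing group furnished by Cheeger--Fukaya--Gromov and check that, precisely because $\Sigma$ collapses to a point rather than to a curve, its limiting action on the reduction of $\widetilde{B_\infty}$ is by translations of the holomorphic coordinate, so that $\varpi$ is forced to be constant. This is the only point where the explicit ansatz coordinates must be matched against the abstract collapsing picture; everything else is a routine adaptation of Section~\ref{subsubsec:induce S^1-action} or elementary scaling.
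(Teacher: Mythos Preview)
Your proof is correct and follows the same strategy as the paper, but is considerably more explicit. The paper's proof is a one-liner: it says that if the local geometry were Type~I then ``the argument in Lemma~\ref{lem:structure of tangent cone for S^1} would go through, which gives a contradiction,'' and then reads off $\lambda^{-1}(q_j)r_j^{-2}\to\text{const}$ from $\lambda=2\sqrt{6}|W^+|_h$; it does not spell out the contradiction and does not separately argue the $S^1$-collapse. Your treatment supplies both: the curvature-one equation $2e^{\varpi}+\varpi_{xx}+\varpi_{yy}=0$ together with the $G_\infty$-invariance of $\varpi$ (forced by Lemma~\ref{lem:nonrealizable u convergence}, since the collapsed $T^2$ contributes a $2$-dimensional translation subgroup to $G_\infty$) is exactly the contradiction the paper has in mind, and your scaling computation $r_j^{-4}\lambda^{-2/3}(q_j)V_j^{-1}=c_j^{2/3}r_j^{-8/3}V_j^{-1}\to 0$ is the omitted verification of the last clause. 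The point you flag as the main obstacle is real but routine: $\varrho_\infty$ and $\widetilde{g_\infty}$ are $G_\infty$-invariant (as limits of $G_j$-invariant data), so $G_\infty$ descends to the symplectic reduction, and since the holomorphic coordinate on $\Sigma=\mathbb{C}/\Lambda$ is chosen so that $\Lambda$ acts by translations, the limiting action is by translations as well.
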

\begin{proof}
  If the local geometry of the asymptotic cone were Type I, then the K\"ahler metric $\widetilde{g_\infty}$ is K\"ahler scalar-flat and conformal to the Ricci-flat metric $\widetilde{h_\infty}$, and argument in Lemma \ref{lem:structure of tangent cone for S^1} would go through, which gives a contradiction. Hence $W^+_{\widetilde{h_\infty}}$ never vanishes, and $\lambda^{-1}(q_j)r_j^{-2}$ converges to a non-zero constant, since by definition $\lambda=2\sqrt{6}|W^+|_{h}$. 
\end{proof}

Summarizing Lemma \ref{lem:nonrealizable u convergence}-\ref{lem:nonrealizable K collapse}, any asymptotic cone must be 1-dimensional. By Cheeger-Fukaya-Gromov's theory on collapsing with bounded curvature, it is clear to see here that collapsing happens along the link $L$ that is an $S^1$-bundle over $T^2$, which is a nilpotent manifold. When the bundle is trivial, the group action by $G_\infty$ on $(\widetilde{B_\infty},\widetilde{h_\infty})$ is abstractly an action by the abelian $\mathbb{R}^3$. When the bundle is non-trivial, the group action is abstractly an action by the Heisenberg group $\mathcal{H}$. As we have seen collapsing happens along an $S^1$-bundle over $T^2$, the collapsed fibers cannot be infranilmanifolds.

Because of the smooth convergence of symplectic reduction there are constants $C_j$ such that $u+C_j$ converges smoothly to a limit $u_\infty$, with $C_j\gg \log\lambda^{2/3}(q_j)$. Set $u_j:=u+C_j$ and $x_j:=\lambda^{1/3}(q_j)e^{-\frac{1}{2}C_j}x,y_j:=\lambda^{1/3}(q_j)e^{-\frac{1}{2}C_j}y$. Then along with $\widetilde{B_j}\to\widetilde{B_\infty}$, $x_j,y_j$ converge to limits $x_\infty,y_\infty$ that parametrize symplectic reductions in $\widetilde{B_\infty}$. Non-collapsed metric $\widetilde{h_\infty}$ over $\widetilde{B_\infty}$ takes the form
\begin{equation}\label{eq:limit noncollapsed metric nonrealizable}
  \widetilde{h_\infty}=V_\infty(d\varrho_\infty^2+e^{u_\infty}(dx_\infty^2+dy_\infty^2))+V_\infty^{-1}\eta_\infty^2.
\end{equation}
The $SU(\infty)$ Toda equation in the limit now becomes 
\begin{equation}\label{eq:limit Toda nonrealizable}
  \partial_{\varrho_\infty}^2(e^{u_\infty})+\partial^2_{x_\infty}u_{\infty}+\partial^2_{y_\infty}u_{\infty}=0.  
\end{equation}
asymptotic cone is $M_\infty=(-\infty,0]$ equipped with the metric $h_\infty=V_\infty d\varrho_\infty^2$.

\begin{lemma}\label{lem:nonrealizable invariant V u}
  Functions $V_\infty$ and $u_\infty$ are invariant under $G_\infty$, therefore they are functions of $\varrho_\infty$ only.
\end{lemma}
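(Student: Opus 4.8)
The plan is to show that each quantity appearing in the local ansatz \eqref{eq:ansatz nonrealizable h} on the end --- namely $\varrho$, $V$, and $u$ --- is, after the rescaling, the pullback to $\widetilde{B_j}$ of a globally defined function on the end that is invariant under the flow of $\mathcal{K}$; such pullbacks are automatically invariant under the deck group $G_j$ of $\pi_j\colon\widetilde{B_j}\to B_j$. Passing to the equivariant $C^\infty$-limit then forces $\varrho_\infty$, $V_\infty$, $u_\infty$ to be $G_\infty$-invariant, and since the quotient $\widetilde{B_\infty}/G_\infty$ is an open piece of the one-dimensional asymptotic cone $M_\infty=(-\infty,0]$ on which $\varrho_\infty$ is a coordinate, every $G_\infty$-invariant function there is a function of $\varrho_\infty$ alone.

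First I would record that $\varrho=-\lambda^{-1/3}$ is globally defined on $M$ and $\mathcal{K}$-invariant, and that $V=|\mathcal{K}|_h^{-2}$ is globally defined on the end $M\setminus K$ and $\mathcal{K}$-invariant, so that the rescalings $\varrho_j=-\lambda_j^{-1/3}$ and $V_j=\lambda^{-2/3}(q_j)r_j^{-2}V$ pull back to $G_j$-invariant functions on $\widetilde{B_j}$. The subtle one is $u$, since a priori it is defined only relative to a holomorphic chart $x+iy$ on an affine piece of $\Sigma$. Here I would use that $\Sigma=\mathbb{C}/\langle1,\alpha+i\beta\rangle$ is a genus-one Riemann surface whose coordinate transitions are translations, so the flat model metric $|dz|^2$ descends to $\Sigma$; since the metric induced on each reduction $\Sigma_\varrho$ is $Ve^u|dz|^2$ in these coordinates and $V$ is already global, $e^u$ equals the induced metric on $\Sigma_\varrho$ divided by $V|dz|^2$, hence is a globally defined $\mathcal{K}$-invariant function on the end. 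Therefore $\pi_j^*u$, and with it $u_j=u+C_j$, is $G_j$-invariant on $\widetilde{B_j}$.

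For the limit, I would invoke that the convergence $(\widetilde{B_j},\widetilde{h_j},G_j,\widetilde{q_j})\to(\widetilde{B_\infty},\widetilde{h_\infty},G_\infty,\widetilde{q_\infty})$ is equivariant and, by the regularity for non-collapsed limits of Einstein metrics recalled in Section~\ref{subsec:limit rescaled conformal factor}, holds in the $C^\infty$ topology, together with the smooth convergences $\varrho_j\to\varrho_\infty$, $V_j\to V_\infty$, $u_j\to u_\infty$. Then for $g_\infty\in G_\infty$ with approximating $g_j\in G_j$ one has $g_\infty^*f_\infty=\lim_j g_j^*f_j=\lim_j f_j=f_\infty$ for $f\in\{\varrho,V,u\}$, so the three limits are $G_\infty$-invariant. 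Finally $\nabla_{\widetilde{h_\infty}}\varrho_\infty$ is nowhere vanishing --- otherwise $V_\infty=|\nabla_{\widetilde{h_\infty}}\varrho_\infty|^{-2}$ would fail to be the smooth function it is --- so $\varrho_\infty$ descends to a genuine coordinate on $\widetilde{B_\infty}/G_\infty\subset(-\infty,0]$ and every $G_\infty$-invariant function is a function of $\varrho_\infty$; this proves the lemma.

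I expect the only genuine obstacle to be the global well-definedness of $u$: one must verify that $u$ has no monodromy around the torus (or, in the nontrivial-bundle case, the nilmanifold) directions of the link $L$, which reduces precisely to the observation that the transition maps of $\Sigma=\mathbb{C}/\Lambda$ are translations and so preserve $|dz|^2$. Once this is in place, the remaining steps are a routine application of the equivariant smooth-convergence setup already established for the rescaled conformal factor.
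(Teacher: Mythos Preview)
Your proposal is correct and takes essentially the same approach as the paper. The paper's proof is much terser---it just notes that $V_\infty=|\partial_{\varrho_\infty}|^2_{\widetilde{h_\infty}}$ is $G_\infty$-invariant since $G_\infty$ acts by isometries, and that the collapse of the $T^2$ fibers together with the global torus coordinates $x,y$ forces $u_\infty$ to be $G_\infty$-invariant---but the mechanism is exactly the one you describe: each of $\varrho$, $V$, and $u$ is a globally defined $\mathcal{K}$-invariant function on the end (the last because $\Sigma=\mathbb{C}/\Lambda$ has translation transitions), hence their pullbacks are $G_j$-invariant, and equivariant smooth convergence passes this to $G_\infty$-invariance. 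Your explicit treatment of why $u$ has no monodromy is a useful addition.
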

\begin{proof}
  By definition $V_\infty=|\partial_{\varrho_\infty}|^2_{\widetilde{h_\infty}}$ clearly is $G_\infty$-invariant. As symplectic reductions $T^2$ are collapsed, by our choice of $x$ and $y$, $u_\infty$ is also $G_\infty$-invariant.
\end{proof}

Therefore \eqref{eq:limit Toda nonrealizable} reduces to $\partial_{\varrho_\infty}^2(e^{u_\infty})=0$, which implies there are constants $A,B$ such that $e^{u_\infty}=A\varrho_\infty+B$. It follows $V_\infty=-12\varrho_\infty+6\varrho_\infty^2\partial_{\varrho_\infty}(u_\infty)=-\frac{6A\varrho_\infty^2+12B\varrho_\infty}{A\varrho_\infty+B}$.

\begin{lemma}\label{lem:nonrealizable either kasner or ALH*}
  Either $A>0$ and $B=0$, or $A=0$ and $B>0$.
\end{lemma}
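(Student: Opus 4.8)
The plan is to pin down $A$ and $B$ by combining the positivity built into the integral formulas \eqref{eq:integral u not realizable}--\eqref{eq:integral Vu not realizable} (specialized to $\chi(T^2)=0$) with a careful tracking of how $\int_{T^2}e^u\,dx\,dy$ transforms under the rescaling that produces the asymptotic cone. First I would record the elementary constraints. Since $V>0$ and $e^u>0$ on the end, both $\int_{T^2}e^u\,dx\,dy=a\varrho+b$ and $\int_{T^2}Ve^u\,dx\,dy=-6a\varrho^2-12b\varrho$ are strictly positive there; as $\varrho<0$ and $\varrho\to-\infty$ at infinity, the second formula forces $a<0$, unless $a=0$ in which case the first forces $b>0$. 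On the cone side, $e^{u_\infty}=A\varrho_\infty+B>0$ on the range $\varrho_\infty\in(-\infty,0)$ forces $A\le 0$, $B\ge 0$, $(A,B)\neq(0,0)$, and one checks that $V_\infty>0$ is then automatic; so the only thing left to exclude is the mixed case $A<0$, $B>0$, and for that I would pass back to the end.

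The heart of the matter is to express $A$ and $B$ in terms of $a$ and $b$. Recall that in this setting $u_j=u+C_j$ converges smoothly to $u_\infty$ with $C_j\gg\log\lambda^{2/3}(q_j)$, that $x_j=\lambda^{1/3}(q_j)e^{-C_j/2}x$, $y_j=\lambda^{1/3}(q_j)e^{-C_j/2}y$, and that $\varrho=\lambda^{-1/3}(q_j)\varrho_j$ with $\varrho_j\to\varrho_\infty$. Fix a fundamental domain $F$ for the complex torus $T^2=\mathbb{C}/\langle 1,\alpha+i\beta\rangle$, so $|F|=\beta$ is independent of $j$ and of $\varrho$. Evaluating at the level where $\varrho_j=\varrho_\infty$, on one hand
\[
\int_F e^{u+C_j}\,dx\,dy=e^{C_j}\int_{T^2}e^u\,dx\,dy=e^{C_j}\big(a\,\lambda^{-1/3}(q_j)\varrho_\infty+b\big)=\big(a\,e^{C_j}\lambda^{-1/3}(q_j)\big)\varrho_\infty+b\,e^{C_j}.
\]
On the other hand, since $u+C_j\to u_\infty$ in $C^\infty_{\mathrm{loc}}$ on $\widetilde{B_\infty}$, since $u_\infty$ is $G_\infty$-invariant by Lemma \ref{lem:nonrealizable invariant V u} and so constant along the collapsing torus, and since the image of $F$ in the coordinates $(x_\infty,y_\infty)$ shrinks to a point (because $C_j\gg\log\lambda^{2/3}(q_j)$), the integrand converges uniformly on $F$ to the constant $e^{u_\infty}(\varrho_\infty)$, whence $\int_F e^{u+C_j}\,dx\,dy\to\beta\,e^{u_\infty}(\varrho_\infty)=\beta(A\varrho_\infty+B)$. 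Comparing the two sides at two distinct values of $\varrho_\infty$ shows that both limits exist and
\[
\beta A=\lim_{j\to\infty}a\,e^{C_j}\lambda^{-1/3}(q_j),\qquad \beta B=\lim_{j\to\infty}b\,e^{C_j}.
\]

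Finally I would split according to the dichotomy of the first step. If $a\neq 0$, then $e^{C_j}\lambda^{-1/3}(q_j)\to\beta A/a$ is finite, so $e^{C_j}=\big(e^{C_j}\lambda^{-1/3}(q_j)\big)\lambda^{1/3}(q_j)\to 0$ since $\lambda^{1/3}(q_j)\to 0$; hence $\beta B=\lim_j b\,e^{C_j}=0$, so $B=0$, and then $A\neq 0$ (otherwise $e^{u_\infty}\equiv 0$), that is, $e^{u_\infty}$ is a positive multiple of $-\varrho_\infty$ --- the skewed special Kasner solution. If instead $a=0$, then $\beta A=0$ gives $A=0$, and $\beta B=b\lim_j e^{C_j}$ with $\lim_j e^{C_j}>0$ (otherwise $e^{u_\infty}\equiv 0$ again), so $B>0$ --- the $\text{ALH}^*$ solution $e^{u_\infty}\equiv B$. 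As $a$ is either zero or nonzero, exactly one of $A$ and $B$ is nonzero, which is the assertion. The step I expect to be the main obstacle is justifying the uniform convergence $\int_F e^{u+C_j}\,dx\,dy\to\beta\,e^{u_\infty}(\varrho_\infty)$: one has to use simultaneously that the lift of $F$ sits in a subset of $\widetilde{B_\infty}$ collapsing to a point along the $(x_\infty,y_\infty)$-directions and that $u_\infty$ has no variation in those directions, so that the oscillation of $u+C_j$ over $F$ is swallowed by the $C^1$ part of the convergence on $\widetilde{B_\infty}$.
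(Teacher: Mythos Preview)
Your argument is correct and reaches the same conclusion as the paper, but through a slightly different computation. The paper works with $V_\infty$ rather than $e^{u_\infty}$: since $V_\infty$ depends only on $\varrho_\infty$ (Lemma~\ref{lem:nonrealizable invariant V u}), the ratio $\frac{\int_{T^2}Ve^u\,dxdy}{\int_{T^2}e^u\,dxdy}=\frac{-6a\varrho^2-12b\varrho}{a\varrho+b}$, after rescaling, must agree with $V_\infty(\varrho_\infty)=-\frac{6A\varrho_\infty^2+12B\varrho_\infty}{A\varrho_\infty+B}$. When $a<0$ the dominant behavior forces $V_\infty=-6\varrho_\infty$, hence $B=0$; when $a=0$ one gets $V_\infty=-12\varrho_\infty$, hence $A=0$. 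Using the ratio is slightly more economical because it is already a function of $\varrho$ alone and invariant under rescaling the $(x,y)$-directions, so no tracking of $C_j$ or of the fundamental-domain area $\beta$ is needed. Your direct computation of $\int_F e^{u_j}$ achieves the same thing and has the advantage of exhibiting $A,B$ explicitly as limits of $a\,e^{C_j}\lambda^{-1/3}(q_j)$ and $b\,e^{C_j}$; the uniform-convergence step you flag is also implicitly required in the paper's ratio argument and is handled exactly as you describe. Your positivity analysis correctly gives $A\le 0$ (consistent with the Kasner model $e^{u_\infty}=-\varrho_\infty$); the statement's ``$A>0$'' is a sign slip.
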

\begin{proof}
  Recall \eqref{eq:integral u not realizable}-\eqref{eq:integral Vu not realizable}. As $V_\infty$ is purely a function of $\varrho_\infty$, it is clear that if $a<0$ in \eqref{eq:integral u not realizable}-\eqref{eq:integral Vu not realizable}, then $V_\infty=-6\varrho_\infty$ and $A>0,B=0$.  If $a=0$ in \eqref{eq:integral u not realizable}-\eqref{eq:integral Vu not realizable}, then $V_\infty=-12\varrho_\infty$ and $A=0,B>0$. 
\end{proof}

One can determine $\eta_\infty$ via \eqref{eq:deta}.
\begin{itemize}
  \item When $a<0$, we have $A>0,B=0$, and \eqref{eq:deta} gives $d\eta_\infty=0$, and in a suitable gauge 
  $$\widetilde{h_\infty}=-6\varrho_\infty\left(d\varrho_\infty^2-\varrho_\infty(dx_\infty^2+dy_\infty^2)\right)-\frac{1}{6\varrho_\infty}dt^2.$$
  Hence we recover the special Kasner metric \eqref{eq:Kasner0},
  \item When $a=0$, we have $A=0,B>0$, and \eqref{eq:deta} gives $d\eta_\infty=-12dx_\infty dy_\infty$, and in a suitable gauge 
  $$\widetilde{h_\infty}=-12\varrho_\infty\left(d\varrho_\infty^2+dx_\infty^2+dy_\infty^2\right)-\frac{1}{12\varrho_\infty}(dt-12x_\infty dy_\infty)^2.$$
  Hence we recover the $\text{ALH}^*$ metric \eqref{eq:ALH*}.
\end{itemize}
A particular consequence of Lemma \ref{lem:nonrealizable either kasner or ALH*} is the uniqueness of asymptotic cone as well as the local geometry of it in this situation. In summary we have

\begin{proposition}
  For a Hermitian non-K\"ahler Ricci-flat end $(M,h)$ that has finite $\int|Rm_h|_h^2dvol_h$, when $s_g<0$ and $\mathcal{K}$ induces an $S^1$-action, it has unique asymptotic cone. Moreover, the local geometry of the asymptotic cone is determined by the value of $a$ in \eqref{eq:integral u not realizable}-\eqref{eq:integral Vu not realizable}.
\end{proposition}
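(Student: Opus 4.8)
\emph{Proof proposal.} The statement is a consolidation of the discussion of this subsection, so the plan is to package the two sub-cases $\chi(\Sigma)>0$ and $\chi(\Sigma)=0$ into a single uniqueness assertion, in the spirit of Lemma \ref{lem:structure of tangent cone for S^1}. I would first record that the symplectic quotient $\Sigma$, hence $\chi(\Sigma)$, and the coefficient $a$ of $\varrho$ in the (intrinsically defined) area function $\int_\Sigma e^u\,dx\,dy$ are invariants of the end $(M,h)$: by Lemma \ref{lem:geometric Toda equation} the second $\varrho$-derivative of $\int_\Sigma e^u\,dx\,dy$ equals $4\pi\chi(\Sigma)$, so $\int_\Sigma e^u\,dx\,dy = 2\pi\chi(\Sigma)\varrho^2+a\varrho+b$ with $a,b$ independent of any blow-down sequence $r_j\to\infty$.

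When $\chi(\Sigma)>0$, I would note that Lemmas \ref{lem:tangent cone 3-dim S^1}--\ref{lem:structure of tangent cone for S^1} and the argument of Section \ref{subsec:classification of ends} apply verbatim: the asymptotic model is ALF-$A$ or AF, which already possesses a unique asymptotic cone, and there $V_\infty$ is constant, pinned down by $a$ through Lemma \ref{lem:constant k0}. When $\chi(\Sigma)=0$, I would invoke Lemmas \ref{lem:nonrealizable u convergence}--\ref{lem:nonrealizable either kasner or ALH*}: for any sequence $r_j\to\infty$ the asymptotic cone is the $1$-dimensional space $M_\infty=(-\infty,0]$ with metric $V_\infty\,d\varrho_\infty^2$, the non-collapsed local model has the form \eqref{eq:limit noncollapsed metric nonrealizable} with $V_\infty,u_\infty$ functions of $\varrho_\infty$ alone, and $e^{u_\infty}=A\varrho_\infty+B$. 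By Lemma \ref{lem:nonrealizable either kasner or ALH*} exactly one of ($a<0$, giving $A>0,B=0$) or ($a=0$, giving $A=0,B>0$) occurs, selected by the intrinsic sign of $a$; equation \eqref{eq:deta} then determines $\eta_\infty$ up to gauge, and after fixing the scaling freedom in the normalizing constants $C_j$ (hence in $A$ and in $x_\infty,y_\infty$) one recovers the special Kasner metric \eqref{eq:Kasner0} when $a<0$ and the $\text{ALH}^*$ metric \eqref{eq:ALH*} when $a=0$.

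Since $\chi(\Sigma)$ and $a$ are intrinsic, the data $V_\infty$, $u_\infty$, $\eta_\infty$ (up to gauge), and hence the renormalized limit measure built from the associated fiber volumes, are the same for every blow-down sequence; this gives uniqueness of the pointed measured asymptotic cone and identifies its local geometry with the model selected by $a$. There is no genuine obstacle here --- the analytic content was already extracted in the cited lemmas and what remains is bookkeeping --- but two points merit a line of justification: (i) that $\varrho_\infty$ has no critical points in the limit, so \eqref{eq:limit noncollapsed metric nonrealizable} holds globally, which follows as in the $s_g>0$ case of Section \ref{subsec:classification of ends} once $V_\infty\not\equiv0$ and $e^{u_\infty}$ affine are known; and (ii) that the rescalings of the $C_j$ leave $\widetilde{h_\infty}$ invariant, so that the terms \emph{special Kasner} and $\text{ALH}^*$ are used unambiguously.
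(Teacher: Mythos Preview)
Your proposal is correct and takes essentially the same approach as the paper: the proposition is stated in the paper as a summary of the preceding discussion (Lemmas \ref{lem:nonrealizable u convergence}--\ref{lem:nonrealizable either kasner or ALH*} together with the explicit identification of the Kasner and $\text{ALH}^*$ models), and you have faithfully reproduced that consolidation, including the $\chi(\Sigma)>0$ sub-case which the paper also dispatches by reference to Section \ref{subsec:classification of ends}. Your additional remarks on the intrinsicness of $a$ and the two bookkeeping points (i)--(ii) are reasonable elaborations but not strictly required, since the paper treats the proposition as a direct corollary rather than giving a separate argument.
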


Next we improve the above to the existence of asymptotic models. 
\begin{lemma}
  $-\frac{A\varrho+B}{6A\varrho^2+12B\varrho}\cdot V\to1$ as we approach infinity.
\end{lemma}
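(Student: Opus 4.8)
The plan is to reduce the statement to the two integral identities \eqref{eq:integral u not realizable}--\eqref{eq:integral Vu not realizable} together with the structural facts about the asymptotic cones already obtained in this subsection. First note that $-\frac{A\varrho+B}{6A\varrho^2+12B\varrho}$ is exactly $V_\infty(\varrho)^{-1}$, where $V_\infty(\varrho):=-\frac{6A\varrho^2+12B\varrho}{A\varrho+B}$, and that by Lemma \ref{lem:nonrealizable either kasner or ALH*} this equals $-6\varrho$ when $B=0$ (the case $a<0$) and $-12\varrho$ when $A=0$ (the case $a=0$); so the assertion is equivalent to $V/V_\infty(\varrho)\to1$ as $\varrho\to-\infty$. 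Since $V=|\mathcal{K}|_h^{-2}$ is $S^1$-invariant, it descends to the symplectic reduction $\Sigma$, and by \eqref{eq:integral u not realizable}--\eqref{eq:integral Vu not realizable} the $e^u$-weighted mean of $V$ over the level set $\{\varrho=\varrho_0\}$ equals $\frac{\int_\Sigma Ve^u\,dxdy}{\int_\Sigma e^u\,dxdy}=\frac{-6a\varrho_0^2-12b\varrho_0}{a\varrho_0+b}$, which lies between $\inf_{\{\varrho=\varrho_0\}}V$ and $\sup_{\{\varrho=\varrho_0\}}V$ and, by a one-line computation, is asymptotic to $V_\infty(\varrho_0)$ as $\varrho_0\to-\infty$. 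So it is enough to show that the multiplicative oscillation of $V$ along the level sets is asymptotically trivial, i.e.\ $\frac{\sup_{\{\varrho=\varrho_0\}}V}{\inf_{\{\varrho=\varrho_0\}}V}\to1$ as $\varrho_0\to-\infty$; then every value of $V$ on $\{\varrho=\varrho_0\}$ is asymptotic to this mean, hence to $V_\infty(\varrho_0)$.

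I would prove the oscillation bound by contradiction through an asymptotic cone, mirroring the mechanism behind Lemma \ref{lem:constant k0}. If it failed there would be $\varrho_0^{(j)}\to-\infty$, a fixed $\epsilon_0>0$, and points $q_j,q_j'\in\{\varrho=\varrho_0^{(j)}\}$ with $V(q_j')/V(q_j)\ge1+\epsilon_0$ (the reciprocal case is symmetric). Rescale at $q_j$: put $r_j:=d_h(q_j,p)\to\infty$, $h_j:=r_j^{-2}h$, renormalize $\lambda^{1/3}$ at $q_j$ so that $\varrho_j(q_j)=-1$, and pass to a subsequence along which $(M,h_j,p)$ converges to an asymptotic cone. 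By what has been proved in this subsection that cone is the one-dimensional space $(-\infty,0]$ with metric $V_\infty(\varrho_\infty)\,d\varrho_\infty^2$, and the renormalized $V_j$ converges smoothly on the regular region to $V_\infty(\varrho_\infty)$; at the limit point $q_\infty$ of $q_j$ one has $\varrho_\infty(q_\infty)=-1$, so $V_\infty(q_\infty)\in\{6,12\}$ is finite and positive. Because the link $L$ is connected and contains $q_j$, the level set $\{\varrho=\varrho_0^{(j)}\}=\{\varrho_j=-1\}$ maps under the convergence into the single level $\{\varrho_\infty=-1\}=\{q_\infty\}$ of the cone, so its $h_j$-diameter tends to $0$; hence $q_j'\to q_\infty$ as well and $V_j(q_j')\to V_\infty(q_\infty)$. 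But then $1+\epsilon_0\le V(q_j')/V(q_j)=V_j(q_j')/V_j(q_j)\to1$, a contradiction.

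The step I expect to be the main obstacle is the claim that the level set $\{\varrho=\varrho_0^{(j)}\}$ collapses to a point in the rescaled metric: this is exactly where the Cheeger--Fukaya--Gromov description of the collapse and the one-dimensionality of every asymptotic cone are genuinely used, and one must rule out the possibility that a piece of this connected level set escapes toward infinity in $(M,h_j)$, which follows from connectedness together with the smooth convergence of $\varrho_j$ to the proper coordinate $\varrho_\infty$ on the cone (a point at fixed $h_j$-distance from $q_j$ on the level set would limit to a point at the same distance from $q_\infty$ on which $\varrho_\infty=-1$, forcing it to be $q_\infty$). A minor but necessary bookkeeping point is that, since $\lambda^{1/3}$ is renormalized at the base point $q_j$ of the rescaling, one always has $\varrho_\infty(q_\infty)=-1$, so $V_\infty(q_\infty)$ is a definite positive constant and the ratio argument is not vacuous.
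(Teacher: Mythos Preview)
Your proposal is correct and follows essentially the same approach as the paper: both combine the smooth convergence $V_j\to V_\infty(\varrho_\infty)$ on every non-collapsed limit with the integral ratio $\int_\Sigma Ve^u\,dxdy\big/\int_\Sigma e^u\,dxdy=-\frac{6a\varrho^2+12b\varrho}{a\varrho+b}$ from \eqref{eq:integral u not realizable}--\eqref{eq:integral Vu not realizable}. Your write-up is simply more explicit than the paper's terse two-sentence proof: you separate the argument into a weighted-mean step and an oscillation step, and you spell out the contradiction for the latter via the one-dimensionality of the asymptotic cone. The subtle point you flag---that the entire level set $\{\varrho_j=-1\}$ collapses to the single point $q_\infty$---is handled correctly by your connectedness argument: if the $h_j$-diameter did not tend to $0$, connectedness would produce points at a \emph{fixed} positive $h_j$-distance from $q_j$, which subconverge to a point of the cone at that same positive distance from $q_\infty$ but with $\varrho_\infty=-1$, contradicting that $\{\varrho_\infty=-1\}=\{q_\infty\}$.
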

\begin{proof}
  This is because we know after renormalizing $V$ to $V_j=\lambda^{-2/3}(q_j)r_j^{-2}V$ it converges to $V_\infty=-\frac{6A\varrho_\infty^2+12B\varrho_\infty}{A\varrho_\infty+B}$ and we have integral formulas \eqref{eq:integral u not realizable}-\eqref{eq:integral Vu not realizable}. The ratio $\int_\Sigma Ve^udxdy/\int_\Sigma e^udxdy=-\frac{6a\varrho^2+12b\varrho}{a\varrho+b}$.
\end{proof}

Since $V=-12\varrho+6\varrho^2u_\varrho$, we have $u_\varrho=\frac{1}{\varrho}+o(\varrho^{-1})$ when $a<0$, or $u_\varrho=o(\varrho^{-1})$ when $a=0$.

\begin{proposition}\label{prop:kasner asymptotic}
  When $a<0$, $(M,h)$ is asymptotic at a polynomial rate to a special Kasner metric where $\partial_t$ induces an $S^1$-action. That is, there is a lattice $\Lambda\subset\mathbb{R}^3$, such that $\partial_t$ induces an integral $S^1$ in $\mathbb{R}^3/\Lambda$ and $h=h_{\text{Kasner},\Lambda}+O(\rho^{-\delta_0})$ for some $\delta_0>0$.
\end{proposition}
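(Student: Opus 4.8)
The plan is to upgrade the leading-order information just recorded to a polynomial-rate expansion of the ansatz data $(V,u,\eta)$ on the end, and then to recognise the result as $h_{\text{Kasner},\Lambda}$. We are in case $(\clubsuit)$ with $s_g<0$, $\chi(\Sigma)=0$, so $\Sigma=T^2$ is a fixed complex torus and $h=V(d\varrho^2+e^u(dx^2+dy^2))+V^{-1}\eta^2$ with $\varrho=-\lambda^{-1/3}<0$. By the discussion preceding the proposition, $V=-6\varrho+o(|\varrho|)$ and $u_\varrho=\varrho^{-1}+o(|\varrho|^{-1})$; moreover the exact identity \eqref{eq:integral u not realizable} says the $\Sigma$-average $\overline{e^u}(\varrho):=|\Sigma|^{-1}\int_\Sigma e^u\,dx\,dy=|\Sigma|^{-1}(a\varrho+b)$ is \emph{affine} in $\varrho$, where $|\Sigma|:=\int_\Sigma dx\,dy$.

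The main step is to show the fiber-variation $\phi:=e^u-\overline{e^u}$ decays polynomially; it has $\int_\Sigma\phi\,dx\,dy=0$ for each $\varrho$, and $(e^u)_{\varrho\varrho}=\phi_{\varrho\varrho}$ since $\overline{e^u}$ is affine in $\varrho$. I would feed this into the $SU(\infty)$ Toda equation $(e^u)_{\varrho\varrho}+u_{xx}+u_{yy}=0$, linearise about the profile $\overline{e^u}$ (which grows linearly in $\varrho$), obtaining $\overline{e^u}\,\phi_{\varrho\varrho}+\Delta_\Sigma\phi\approx 0$, and expand $\phi=\sum_\ell\phi_\ell(\varrho)f_\ell$ in eigenfunctions of the flat Laplacian on $\Sigma$ ($\Delta_\Sigma f_\ell=\mu_\ell f_\ell$, with $\mu_\ell<0$ for $\ell\neq0$). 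Each nonzero mode then solves a modified-Bessel-type ODE with one exponentially growing and one exponentially decaying branch as $\varrho\to-\infty$; the growing branch is excluded by the a priori fact that the fiberwise oscillation of $u$ tends to $0$. This forces $\phi=O(e^{-c\sqrt{|\varrho|}})$ for some $c>0$, and elliptic regularity for the Toda equation upgrades this to all derivatives. Hence $u=\log\overline{e^u}+O(e^{-c\sqrt{|\varrho|}})$ and $u_\varrho=(\varrho+b/a)^{-1}+O(e^{-c\sqrt{|\varrho|}})$, so that $V=-12\varrho+6\varrho^2u_\varrho=-6(\varrho+b/a)+O(|\varrho|^{-1})$.

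To conclude, set $\tilde\varrho:=\varrho+b/a$, so that $\overline{e^u}$ is a constant multiple of $\tilde\varrho$ and $V=-6\tilde\varrho+O(|\tilde\varrho|^{-1})$. I would then read off $\eta$ from \eqref{eq:deta}. In the present case Section \ref{subsubsec:nonrealizable S^1 case} has already identified the local geometry of the asymptotic cone with the \emph{untwisted} special Kasner metric, whose collapsing group is $\mathbb{R}^3$ (the Heisenberg group does not act isometrically on $h_{\text{Kasner}}$), so the link $L$ must be a flat torus $\mathbb{R}^3/\Lambda$ and $\eta$ is, up to exponentially small error, a flat connection form $dt$. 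A constant rescaling of the torus coordinates $(x,y)$ normalises $\overline{e^u}$ to $-\tilde\varrho$ and pins down the lattice $\Lambda$; since $\mathcal{K}=\partial_t$ generates the $S^1$-action, $\Lambda$ meets the $t$-axis and $\partial_t$ descends to an integral circle. With $\tilde\varrho$ playing the role of the Kasner parameter, $h-h_{\text{Kasner},\Lambda}$ is then of relative size $O(|\tilde\varrho|^{-1})$ in the radial and fiber directions and exponentially small in the $\eta$-direction; since $\rho\asymp|\tilde\varrho|^{3/2}$ this gives $h=h_{\text{Kasner},\Lambda}+O(\rho^{-\delta_0})$ for some $\delta_0>0$, promoted to $O'(\rho^{-\delta_0})$ by elliptic regularity.

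I expect the main obstacle to be the a priori input used to discard the exponentially growing Bessel branch: one must know that the fiberwise variation of $e^u$ is already small \emph{before} linearising, and this non-circular input has to come from the collapsing theory — the $1$-dimensional asymptotic cone and the flatness of the rescaled fiber metric established in Section \ref{subsubsec:nonrealizable S^1 case} — rather than from the PDE alone. A secondary subtlety is checking that $\eta$ is asymptotically flat (no residual Heisenberg twist), which again relies on the local geometry of the asymptotic cone having been pinned down as the untwisted $h_{\text{Kasner}}$, on which only $\mathbb{R}^3$ acts by isometries.
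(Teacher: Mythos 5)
Your route is viable but genuinely different from the paper's. The paper never solves an ODE in $\varrho$ and works with the fiber average of $u$ itself rather than of $e^u$: from the smooth convergence of the rescaled data to the (unique) local model it first records the a priori radial bounds $\partial_\varrho^k\bigl(\partial_\varrho u-\tfrac1\varrho\bigr)=o(\varrho^{-k-1})$, which immediately give $(e^u)_{\varrho\varrho}=(u_{\varrho\varrho}+u_\varrho^2)e^u=o(\varrho^{-1})$; the Toda equation then reads $\Delta_{T^2}(u-\overline{u_{T^2}})=-(e^u)_{\varrho\varrho}$, a fiberwise Poisson equation with small right-hand side, and inverting $\Delta_{T^2}$ on mean-zero functions on the fixed flat torus yields $\|u-\overline{u_{T^2}}\|_{C^k(T^2)}=O(\varrho^{-1})$ in one stroke. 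The exact identity \eqref{eq:integral u not realizable} then pins down $\overline{u_{T^2}}$ and hence $V$. Your scheme instead treats $\varrho$ as an evolution variable, Fourier-decomposes $\phi=e^u-\overline{e^u}$, and runs a WKB/Bessel analysis; if completed it buys exponential decay of the fiber oscillation instead of the paper's $O(\varrho^{-1})$, at the cost of having to justify the linearisation and to handle infinitely many coupled modes. Your treatment of $\eta$ is actually more explicit than the paper's ``it is direct to derive the conclusion'': since $\int_{T^2}d\eta=-12b$ by \eqref{eq:deta} and the integral formulas, the triviality of the $S^1$-bundle over $T^2$ (equivalently $b=0$, hence a genuine lattice $\Lambda\subset\mathbb{R}^3$) is exactly what your Lie-theoretic observation — that the isometry group of the untwisted Kasner model contains no Heisenberg subgroup, so $G_\infty^0=\mathbb{R}^3$ — delivers.

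The one step you flag but do not fill is a real gap in your version: discarding the growing Bessel branch is immediate only for the decoupled \emph{linear} mode equations. The true equation is $\phi_{\varrho\varrho}+\Delta_\Sigma\log(\overline{e^u}+\phi)=0$, whose nonlinear remainder is quadratic in $\phi/\overline{e^u}$ and $|\nabla_\Sigma\phi|/\overline{e^u}$ and couples all modes; converting the a priori input $\mathrm{osc}_{T^2}u\to0$ (which is indeed available from Lemma \ref{lem:nonrealizable invariant V u} and the smooth convergence $\widetilde{B_j}\to\widetilde{B_\infty}$) into actual decay of $\phi$ requires a three-annulus/monotonicity or fixed-point argument in a weighted space, not just the two-branch dichotomy. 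This is standard but must be carried out; note that the paper's fiberwise elliptic estimate sidesteps it entirely, because the smallness of the source $(e^u)_{\varrho\varrho}$ is already supplied by the blow-down analysis before the Toda equation is ever used. If you do not want to prove the exponential rate, you can short-circuit your argument by borrowing exactly that observation: the radial derivative bounds inherited from the asymptotic-cone analysis already control $\phi_{\varrho\varrho}$, and then $\Delta_\Sigma$-inversion gives the polynomial decay of $\phi$ directly, which is all the proposition needs.
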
 
Note that for special Kasner metrics the distance function is comparable to $|\varrho|^{3/2}$.
\begin{proof}
  By considering the convergence of higher order derivatives of $u_j$ to $u_\infty$ one has $\partial_{\varrho}^{k}(\partial_\varrho u-\frac{1}{\varrho})=o(\varrho^{-k-1})$ for $k\geq0$. Denote the average of $u$ on symplectic reductions $T^2$ as $\overline{u_{T^2}}$, which is a function only depends on $\varrho$. The $SU(\infty)$ Toda equation gives 
  $$(u-\overline{u_{T^2}})_{xx}+(u-\overline{u_{T^2}})_{yy}=-(e^u)_{\varrho\varrho}.$$
  Since $\|u-\overline{u_{T^2}}\|_{H^{k+2}(T^2)}\leq C\|\Delta_{T^2}(u-\overline{u_{T^2}})\|_{H^k(T^2)}$, over each $T^2$ we obtain $\|u-\overline{u_{T^2}}\|_{H^k(T^2)}\leq C\frac{1}{\varrho}$ for any $k\geq0$, and Sobolev inequality then gives $\|u-\overline{u_{T^2}}\|_{C^k(T^2)}\leq C\frac{1}{\varrho}$. Now look at $\int_{T^2}e^udxdy=a\varrho+b$. Replace $u$ by $\overline{u_{T^2}}+O(\varrho^{-1})$ we get $\int_{T^2} e^{\overline{u_{T^2}}}dxdy=(a\varrho+b)e^{O(\varrho^{-1})}$. So we see $u=\log\varrho+a'+O(\varrho^{-1})$ for some constant $a'$. From this it is direct to derive the conclusion.
\end{proof}

\begin{proposition}\label{prop:ALH* asymptotic}
  When $a=0$, $(M,h)$ is asymptotic at a rate that is faster than any polynomial rate to a nilpotent $\text{ALH}^*$ metric. That is, there is a cocompact lattice $\Gamma\subset\mathcal{H}$, such that $h=h_{\text{ALH}^*-\Gamma}+O(\varrho^{-\delta_0})$ for any $\delta_0>0$.
\end{proposition}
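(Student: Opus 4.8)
The plan is to follow the scheme of the proof of Proposition \ref{prop:kasner asymptotic}, the new feature being that when $a=0$ the relevant model is a genuine (non-degenerate) cylindrical end, so the linearised equation has a spectral gap and the polynomial rate obtained there is upgraded to a faster-than-polynomial one. Write the metric as in \eqref{eq:ansatz nonrealizable h} on $(-\infty,-N)\times L$, with $L$ the $S^1$-bundle over $T^2=\mathbb{C}/\langle 1,\alpha+i\beta\rangle$, and set $\overline{u}(\varrho):=|T^2|^{-1}\int_{T^2}u\,dx\,dy$ and $\hat u:=u-\overline u$. First I would collect the inputs already at hand. The smooth convergence established before Lemma \ref{lem:nonrealizable either kasner or ALH*} gives $u+C_j\to u_\infty=\log B$ in $C^\infty$ with $u_\infty$ constant; since the rescaled symplectic reductions converge to a flat limit, the oscillation of $u$ over each torus tends to $0$, i.e. $\hat u\to0$ in $C^\infty(T^2)$ as $\varrho\to-\infty$. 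The preceding lemma gives $-V/(12\varrho)\to1$, hence $u_\varrho=o(\varrho^{-1})$ and, differentiating the $SU(\infty)$ Toda equation, $\partial_\varrho^{k+1}u=o(\varrho^{-1-k})$ for all $k$. Averaging the Toda equation over $T^2$ kills $u_{xx}+u_{yy}$, so $\partial_\varrho^2(\overline{e^u})=0$; here the hypothesis $a=0$ enters, forcing $\overline{e^u}=b/|T^2|$ to be a positive constant rather than growing linearly in $\varrho$ (with $b$ as in \eqref{eq:integral u not realizable}), and since $\hat u\to0$ it follows that $e^{\overline u}\to b/|T^2|>0$.

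Next I would subtract its $T^2$-average from the Toda equation, obtaining $\Delta_{T^2}\hat u=-\partial_\varrho^2(\widehat{e^u})$ with $\widehat{e^u}=e^{\overline u}\hat u+O(\hat u^2)$, which rearranges to
\[
\hat u_{\varrho\varrho}+e^{-\overline u}\,\Delta_{T^2}\hat u = E ,
\]
where $E$ gathers $2\overline{u_\varrho}\hat u_\varrho$, $(\overline{u_{\varrho\varrho}}+(\overline{u_\varrho})^2)\hat u$, and $\varrho$-derivatives of quadratic expressions in $(\hat u,\hat u_\varrho,\hat u_{\varrho\varrho})$; by the decay above these are $o(\varrho^{-1})$ times first-order quantities plus genuinely quadratic terms, hence lower order once $\hat u$ is uniformly small. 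This is a small perturbation of the constant-coefficient operator $\partial_\varrho^2+(b/|T^2|)^{-1}\Delta_{T^2}$ on the half-cylinder, and the flat Laplacian on $T^2$, restricted to mean-zero functions, has spectral gap $\mu_1>0$. The crucial step is then a standard cylindrical decay estimate: for $G(\varrho):=\|\hat u(\varrho,\cdot)\|_{L^2(T^2)}^2$ one differentiates twice, integrates by parts on $T^2$, invokes the Poincaré inequality with constant $\mu_1$, and absorbs $E$, to get $G''\ge c\,G$ for $\varrho$ sufficiently negative with $c$ any number below $\mu_1/(b/|T^2|)$; comparison with the decaying solution of $\psi''=c\psi$ then forces $G=O(e^{\sqrt c\,\varrho})$, and bootstrapping with elliptic estimates on $T^2$ together with interior estimates in the $\varrho$-direction promotes this to $\|\hat u\|_{C^k}=O(e^{\sqrt c\,\varrho})$ for every $k$. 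In particular $\hat u=O(\varrho^{-\delta_0})$ for every $\delta_0>0$.

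Once the fluctuation is controlled the average follows at once: $e^{\overline u}=(b/|T^2|)(1+O(e^{2\sqrt c\,\varrho}))$ and, differentiating this identity, $\overline{u_\varrho}=O(e^{2\sqrt c\,\varrho})$, so $u=\log(b/|T^2|)+O(e^{\sqrt c\,\varrho})$ together with all its derivatives. After rescaling $x,y$ (and correspondingly the lattice $\langle 1,\alpha+i\beta\rangle$ and the $S^1$-fibre) we may normalise $b/|T^2|=1$, so $e^u=1+O(e^{\sqrt c\,\varrho})$; then $V=-12\varrho+6\varrho^2u_\varrho=-12\varrho(1+O(e^{c'\varrho}))$ for any $c'<\sqrt c$, and from \eqref{eq:deta}, rewritten in the $(V,u,\varrho)$ variables and using that $V_x,V_y$ depend on $x,y$ only through $u$ and so are exponentially small, one gets $d\eta=-12\,dx\wedge dy+O(e^{c'\varrho})$. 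Since the limiting curvature $d\eta_\infty=-12\,dx_\infty\wedge dy_\infty$ is not exact on $T^2$ (see the computation before Lemma \ref{lem:nonrealizable either kasner or ALH*}), the $S^1$-bundle $L\to T^2$ is non-trivial, so $L$ is the Heisenberg nilmanifold $\Gamma\backslash\mathcal{H}$ for the cocompact lattice $\Gamma\subset\mathcal{H}$ determined by $\langle 1,\alpha+i\beta\rangle$ and its Chern number; in a trivialisation of the bundle over the cylinder one may write $\eta=dt-12x\,dy+O(e^{c'\varrho})$. Substituting these expansions into \eqref{eq:ansatz nonrealizable h} and comparing with $h_{\text{ALH}^*-\Gamma}$ from Definition \ref{def:Kasner}, the difference and all its derivatives are $O(e^{c'\varrho})=O(\varrho^{-\delta_0})$ for every $\delta_0>0$, which is the assertion.

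The hard part will be the second paragraph — establishing genuine faster-than-polynomial decay of the $T^2$-fluctuation. Conceptually this is exactly where the dichotomy with the Kasner case lives: when $a<0$ one has $e^{\overline u}\sim A\varrho$ decaying, so the ``mass'' $e^{-\overline u}\mu$ in the mode equations tends to $0$ and only polynomial decay survives, whereas when $a=0$ the quantity $e^{\overline u}$ tends to a positive constant and the spectral gap persists. The technical work is in controlling the error $E$ — in particular the $\varrho$-derivatives of the quadratic terms, which involve the top-order quantity $\hat u_{\varrho\varrho}$ — so that it is genuinely absorbable into the leading operator once $\hat u$ is small (this may require first extracting a crude polynomial rate from the linear part, or running the energy argument on a sum of higher Sobolev norms), and then in converting the scalar differential inequality into uniform $C^k$ decay via elliptic bootstrapping.
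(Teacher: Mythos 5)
Your argument is correct in outline but takes a genuinely different route from the paper. The paper's proof is a bare-hands iteration on the cross-sections: starting from $\|u-\overline{u_{T^2}}\|_{C^k(T^2)}=o(\varrho^{-1})$, it uses the constraint $\int_{T^2}e^u\,dxdy=b$ to write $u=a'+o(\varrho^{-s})$, feeds this back into the Toda equation to get $\Delta_{T^2}(u-\overline{u_{T^2}})=o(\varrho^{-s-2})$ (the gain of two coming from the fact that $\varrho$-derivatives improve the decay, as encoded in $\partial_\varrho^{k+1}u=o(\varrho^{-k-1})$), and then applies elliptic estimates on each fixed torus to conclude $\|u-\overline{u_{T^2}}\|_{C^k}=o(\varrho^{-s-2})$; iterating improves the rate by two at every step, which is exactly the "faster than any polynomial" statement and nothing more. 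You instead run a cylindrical spectral-gap energy argument, exploiting that when $a=0$ the conformal factor $e^{\overline u}$ tends to a positive constant so the linearized operator $\partial_\varrho^2+e^{-\overline u}\Delta_{T^2}$ has a genuine gap on mean-zero functions; the convexity inequality $G''\ge cG$ then yields \emph{exponential} decay, a strictly stronger conclusion, and your identification of where the dichotomy with the Kasner case lives (the effective mass degenerating when $a<0$) is exactly right. What the paper's route buys is that it avoids the delicate absorption of the error $E$ entirely: there is no energy functional and no need to control cross terms such as $\overline{u_\varrho}\hat u_\varphi$ (which in your scheme produce a $G'$ term and force a slightly generalized comparison) or the quadratic terms involving $\hat u\,\Delta_{T^2}\hat u$. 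What your route buys is the sharp exponential rate and a cleaner conceptual explanation of why the two cases $a<0$ and $a=0$ behave so differently. The remaining work you flag in your last paragraph (making the absorption of $E$ rigorous, possibly by running the energy estimate on a sum of Sobolev norms, and bootstrapping to $C^k$) is genuine but standard, and you are explicit about it; note also that your final paragraph supplies the reconstruction of $V$, $\eta$, and the Heisenberg lattice $\Gamma$ from the decay of $u$, a step the paper leaves implicit.
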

For $\text{ALH}^*-\Gamma$ metrics the distance function again is comparable to $|\varrho|^{3/2}$.
\begin{proof}
  This time we have $\partial_\varrho^{k+1}u=o(\varrho^{-k-1})$ for $k\geq0$. Again let $\overline{u_{T^2}}$ be the average of $u$ over symplectic reductions $T^2$. Repeating the proof of Proposition \ref{prop:kasner asymptotic} one has $\|u-\overline{u_{T^2}}\|_{C^k(T^2)}=o(\varrho^{-1})$. Beginning with $s=1$, we do the following iteration steps. By $\|u-\overline{u_{T^2}}\|_{C^k(T^2)}=o(\varrho^{-s})$ and $\int_{T^2}e^udxdy=b$, there is a constant $a'$ such that $u=a'+o(\varrho^{-s})$. Substitute into the $SU(\infty)$ Toda equation we have $\Delta_{T^2}(u-\overline{u_{T^2}})=o(\varrho^{-s-2})$. It follows $\|u-\overline{u_{T^2}}\|_{C^k(T^2)}=o(\varrho^{-s-2})$, so the initial decay rate is improved by two. Using $\int_{T^2}e^u dxdy=b$ one can see $u=a'+o(\varrho^{-s-2})$. This finishes the proof.
\end{proof}

Summarizing Proposition \ref{prop:kasner asymptotic}-\ref{prop:ALH* asymptotic} we have finished the proof of Theorem \ref{thm:asymptotic nonrealizable in detail} in the situation that $\mathcal{K}$ induces an $S^1$-action.

\subsubsection{$\mathcal{K}$ only induces an $\mathbb{R}$-action}

As in Section \ref{subsubsec:induce S^1-action}, $\mathcal{K}$ must induce a $T^k$-action. Diffeomorphically, the end $M$ is $(-\infty, -N) \times L$, where the link $L$ admits a $T^k$-action. The link $L$ can only be a lens space, $S^2 \times S^1$, or $T^3$. In the first two cases, the action is a $T^2$-action, while for $L \simeq T^3$, the action can be either a $T^2$-action or a $T^3$-action. 

When $L$ is a lens space or $S^2\times S^1$, the proof in Section \ref{subsubsec:induce S^1-action} directly applies, where the key point is the topology prevents the link to be entirely collapsed. The conclusion is $(M,h)$ is asymptotic to an AF model. 

When $L$ is $T^3$, one can consider its cover $\mathbb{R}^3$ and take a different quotient so that $\mathcal{K}$ induces an $S^1$-action in the new link $L' \simeq T^3$. This results in a new end $(M', h')$, which is a Hermitian non-K\"ahler Ricci-flat space with finite $\int |Rm|^2$ and where the extremal vector field induces an $S^1$-action. This case reduces to the situation discussed in Section \ref{subsubsec:induce S^1-action}. Since the link of $(M', h')$ is $T^3$, it is asymptotic to a special Kasner model. Therefore, the original end $(M, h)$ is asymptotic to a skewed special Kasner model. This completes the proof of Theorem \ref{thm:asymptotic nonrealizable in detail} for the $\mathbb{R}$-action case.

\section{Compactification}
\label{sec:compactification}

As we mentioned lots of times, we have the following two situations.
\begin{enumerate}
\item[($\clubsuit$)] $\mathcal{K}$ induces an $S^1$-action, in which case the gravitational instanton is ALF or rational AF, depending on whether the $S^1$-action is free. Asymptotic cone is $\mathbb{R}^3$ or $\mathbb{R}^3/\mathbb{Z}_k$ with non-trivial $\mathbb{Z}_k$.
\item[($\spadesuit$)] $\mathcal{K}$ only induces an $\mathbb{R}$-action. In this case, the closure is a $T^2$-action. The gravitational instanton $(M,h)$ is irrational AF, and is holomorphically isometrically toric automatically. Asymptotic cone is $\mathbb{H}$. We shall explain later that \cite{biquardgauduchon} can be applied directly, enabling the classification under this circumstance. 
\end{enumerate}
We therefore concentrate on Hermitian non-K\"ahler ALF/AF gravitational instantons where $\mathcal{K}$ induces $S^1$-action. The vector fields $\mathcal{K}$ and $J\mathcal{K}$ together induce a holomorphic $\mathbb{C^*}$-action.

\subsection{Local ansatz revisited} 
\label{subsec:localansatztogravitational}

We work under the assumption that the $S^1$-action induced by $\mathcal{K}=\partial_t$ is free on the end of the gravitational instanton. The semi-free case is similar and one only needs to replace the quotient by orbifolds.
We apply the ansatz Theorem \ref{thm:localansatz} this time. With the expansion for $\lambda^{1/3}$, the moment map $\xi$ asymptotically is 
\begin{equation}\label{eq:expansionxi}
\xi=\lambda^{1/3}=\frac{1}{\rho}+O'(\rho^{-1-\delta_0}).
\end{equation}
Employing symplectic reduction, we can put the metric $g$ on $M\setminus K$ into the form
\begin{equation}\label{eq:metricglocal}
Wd\xi^2+W^{-1}\eta^2+g_{\xi}.
\end{equation}
Here $g_\xi$ is the metric on $\Sigma=\mathbb{P}^1$ arising from the symplectic reduction. The Hermitian Ricci-flat metric $h$ is related to $g$ via $g=\lambda^{2/3}h$, so the metric $h$ is
\begin{equation}
h=\lambda^{-2/3}(Wd\xi^2+W^{-1}\eta^2+g_{\xi}).
\end{equation}
If we pick a holomorphic coordinate function $z=x+iy$ on $\Sigma$, then the complex structure $J$ is given by $J:d\xi\to -W^{-1}\eta$ and $J:dx\to-dy$. The metric $g$ then further takes the form 
$$g=Wd\xi^2+W^{-1}\eta^2+We^v(dx^2+dy^2)$$ 
with some function $v$. This is exactly ansatz Theorem \ref{thm:localansatz}. Let $p$ be the holomorphic map $p:M\setminus K\to \Sigma$, which is the complex quotient map. Suppose the moment map takes value $(0,\epsilon]$ on the end $M\setminus K$. Since we derived the asymptotic expansion for the conformal factor $\lambda^{1/3}$ and consequently the moment map $\xi$, the asymptotic behavior of functions $W$ and $v$ in the ansatz can be analyzed as follows, which are just some straightforward computations.

\begin{lemma}
The function $W^{-1}=|\partial_t|^2_g$ has the following asymptotic expansion:
\begin{equation}\label{eq:expansionw}
W^{-1}=\frac{1}{\rho^2}+O'(\rho^{-2-\delta_0}).
\end{equation}
\end{lemma}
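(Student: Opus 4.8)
The plan is to extract the asymptotic behavior of $W^{-1}=|\partial_t|^2_g$ directly from the conformal relation $g=\lambda^{2/3}h$ together with the already-established expansions for $h$ and $\lambda^{1/3}$. The key point is that $\partial_t=\mathcal{K}$, and we have two pieces of information about $\mathcal{K}$: first, from Theorem \ref{thm:expansion}, $\lambda^{1/3}=\rho^{-1}+O'(\rho^{-1-\delta_0})$, so the conformal factor $\lambda^{2/3}=\rho^{-2}+O'(\rho^{-2-\delta_0})$; second, from the classification of ends in Section \ref{subsec:classification of ends}, the metric $h$ on the end is ALF-$A$ (or AF), i.e. $h=\pi^*g_{\mathbb{R}^3}+\gamma^2+O'(\rho^{-\delta_0})$ where $\gamma$ is asymptotic to $\eta$ and $\mathcal{K}=-\partial_t$ generates the $S^1$-fiber direction with unit asymptotic length (recall we normalized $k_0=1$). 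Hence $|\partial_t|^2_h=1+O'(\rho^{-\delta_0})$.

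Now I would simply compute
\begin{equation*}
W^{-1}=|\partial_t|^2_g=\lambda^{2/3}|\partial_t|^2_h=\bigl(\rho^{-2}+O'(\rho^{-2-\delta_0})\bigr)\bigl(1+O'(\rho^{-\delta_0})\bigr)=\frac{1}{\rho^2}+O'(\rho^{-2-\delta_0}).
\end{equation*}
One should be slightly careful that the product of two $O'$-controlled quantities is again $O'$-controlled with the expected rate: this follows from the Leibniz rule for $\nabla_h$ and the definition of $O'(\rho^{-\tau})$ (all covariant derivatives of order $k$ decaying like $\rho^{-\tau-k}$), since differentiating a product spreads derivatives across the two factors and each term has the correct total order. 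The rate $\delta_0$ here is the ALF/AF decay rate, which by the discussion following Theorem \ref{thm:classification of ends} can be taken as close to $1$ as desired.

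Alternatively, and perhaps more in the spirit of the local ansatz, one can argue from the second version of the ansatz (Theorem \ref{thm:local ansatz version two}) and the classification expansions in Section \ref{subsec:classification of ends}: there we showed $V=k_0^2+O(\varrho^{-\delta_0})=1+O'(\rho^{-\delta_0})$ with $\varrho=\lambda^{-1/3}=\rho+O'(\rho^{1-\delta_0})$, and since $V^{-1}=|\mathcal{K}|^2_h$ while $W^{-1}=|\mathcal{K}|^2_g=\lambda^{2/3}|\mathcal{K}|^2_h=\varrho^{-2}V^{-1}$, we get $W^{-1}=\varrho^{-2}(1+O'(\rho^{-\delta_0}))=\rho^{-2}+O'(\rho^{-2-\delta_0})$ after substituting the expansion of $\varrho$ in terms of $\rho$. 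Either route is routine; there is no real obstacle, the only mild care needed is bookkeeping of the $O'$-notation under multiplication and under the change of radial variable between $\varrho$ and $\rho$, which is already implicit in Theorem \ref{thm:expansion}.
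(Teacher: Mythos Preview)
Your proposal is correct and takes essentially the same approach as the paper, which simply states that this and the companion lemma for $v$ ``are just some straightforward computations'' based on the already-established expansions for $\lambda^{1/3}$ and the ALF/AF model metric. Both routes you outline---via $W^{-1}=\lambda^{2/3}|\partial_t|_h^2$ with $|\partial_t|_h^2=1+O'(\rho^{-\delta_0})$, or via the transformation $W^{-1}=\varrho^{-2}V^{-1}$ with $V=1+O'(\rho^{-\delta_0})$---are exactly the intended straightforward checks.
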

Notice that $W$ does not depend on the choice of the holomorphic coordinate $z=x+iy$.

\begin{lemma}
If we pick coordinate function $z=x+iy$ on an affine open subset $U\subset\mathbb{P}^1$, then on the open set $p^{-1}(U)\subset M\setminus K$, the function $v$ has the asymptotic expansion
\begin{equation}\label{eq:asymptoticv}
v(\rho,x,y)=\varpi(x,y)-2\log\rho+O'(\rho^{-\delta_0})
\end{equation}
with some function $\varpi$ that only depends on $x$ and $y$. The function $\varpi$ satisfies the equation $\varpi_{xx}+\varpi_{yy}=-2e^\varpi$.
\end{lemma}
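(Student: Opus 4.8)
The plan is to deduce the expansion of $v$ from the expansion of the function $u$ of the second version of the ansatz (Theorem~\ref{thm:local ansatz version two}) that was established in Section~\ref{subsec:classification of ends}, and then to recognize the equation for $\varpi$ as the constant-curvature condition already recorded there.

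Recall that, after normalizing $k_0=1$ in Lemma~\ref{lem:constant k0} and the scaling fixed in Section~\ref{subsec:classification of ends}, the ODE analysis of the $SU(\infty)$ Toda equation there gave
$$u=2\log\varrho+\varpi(x,y)-\frac{1}{6\varrho}+O'(\varrho^{-1-\delta_0}),$$
where $\varrho=\lambda^{-1/3}=1/\xi$, the function $\varpi$ depends only on $(x,y)$, and $e^{\varpi}(dx^2+dy^2)$ is the round metric of Gauss curvature one on $\Sigma\cong\mathbb{P}^1$ (the $O'$-control on the $x,y$-derivatives, as opposed to the bound on $\varrho$-slices written there, follows by differentiating the Toda equation in the sphere directions and bootstrapping with elliptic estimates on $\Sigma$, exactly as the error $\varepsilon=\sum_\lambda\varepsilon_\lambda f_\lambda$ was handled). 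Since the substitution \eqref{eq:ansatz transformation} reads $u=v-4\log\xi=v+4\log\varrho$, we get
$$v=u-4\log\varrho=-2\log\varrho+\varpi-\frac{1}{6\varrho}+O'(\varrho^{-1-\delta_0}).$$

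Next I would trade $\varrho$ for the distance function $\rho$ by means of Theorem~\ref{thm:expansion}: from $\varrho^{-1}=\lambda^{1/3}=\rho^{-1}+O'(\rho^{-1-\delta_0})$ one obtains $\varrho=\rho+O'(\rho^{1-\delta_0})$, hence $\log\varrho=\log\rho+O'(\rho^{-\delta_0})$; since $\delta_0<1$, the terms $-\tfrac{1}{6\varrho}$ and $O'(\varrho^{-1-\delta_0})$ are both of order $O'(\rho^{-1})\subset O'(\rho^{-\delta_0})$ and are absorbed into the error, which produces exactly $v(\rho,x,y)=\varpi(x,y)-2\log\rho+O'(\rho^{-\delta_0})$. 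Finally, the relation $\varpi_{xx}+\varpi_{yy}=-2e^{\varpi}$ is precisely the statement that $e^{\varpi}(dx^2+dy^2)$ has constant Gauss curvature one, which was already established in Section~\ref{subsec:classification of ends}; if one prefers it can be re-derived directly by substituting $u=2\log\varrho+\varpi+O'(\varrho^{-1})$ into $(e^u)_{\varrho\varrho}+u_{xx}+u_{yy}=0$, noting that $(e^u)_{\varrho\varrho}\to 2e^{\varpi}$ and $u_{xx}+u_{yy}\to\varpi_{xx}+\varpi_{yy}$ as $\varrho\to\infty$ with $(x,y)$ fixed, and isolating the $\varrho$-independent part. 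The whole computation is a bookkeeping of the previous section's expansions; the only slightly delicate point, and the one I expect to require the most care, is upgrading the plain $O$-bounds recorded there to the $O'$-bounds used above, which is a routine elliptic bootstrap.
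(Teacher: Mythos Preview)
Your proposal is correct and follows exactly the approach the paper intends: the paper states the lemma without proof, remarking only that the asymptotic behavior of $W$ and $v$ ``are just some straightforward computations'' from the expansions of $\xi$, $u$, and $V$ already established in Section~\ref{subsec:classification of ends}. Your derivation via the substitution $v=u-4\log\varrho$ and the conversion $\varrho=\rho+O'(\rho^{1-\delta_0})$ is precisely that computation, and your identification of the equation for $\varpi$ with the curvature-one condition is what the paper also records.
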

Notice that the function $\varpi$ depends on the choice of the coordinate function $z=x+iy$. However, $e^{\varpi}(dx^2+dy^2)$ is a well-defined metric and has curvature one.

\subsection{Compactification of Hermitian non-K\"ahler ALF gravitational instantons}

\begin{theorem}\label{thm:compactifyalf}
For a Hermitian non-K\"ahler ALF gravitational instanton $(M,h)$,  the complex surface $M$ can be naturally compactified by adding a holomorphic $\mathbb{P}^1$ at infinity to a smooth compact complex surface. 
\end{theorem}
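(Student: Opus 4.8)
The plan is to realize the end $M\setminus K$ as a punctured disk‑bundle neighborhood of the zero section of a holomorphic line bundle over $\mathbb{P}^1$, and then fill that zero section in. First I would pin down the holomorphic geometry of the end. Since $\mathcal{K}$ generates a free $S^1$-action on $M\setminus K$ (the semi‑free case being handled verbatim with $\Sigma$ an orbifold) and $J\mathcal{K}=\nabla_h\lambda^{-1/3}$ is complete by Remark \ref{rm:complete}, the commuting real holomorphic fields $\mathcal{K}$ and $J\mathcal{K}$ integrate to a free holomorphic $\mathbb{C}^{*}$-action on a neighborhood of infinity. Its complex quotient coincides with the symplectic reduction $\Sigma$, which has already been identified with $\mathbb{P}^1$; hence the complex quotient map $p\colon M\setminus K\to\mathbb{P}^1$ is a principal $\mathbb{C}^{*}$-bundle, and $M\setminus K$ is biholomorphic to a $\mathbb{C}^{*}$-invariant open subset of $L^{\times}:=\mathrm{Tot}(L)\setminus(\text{zero section})$ for some holomorphic line bundle $L\to\mathbb{P}^1$. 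Because this subset is cut out by the $S^1$-moment map as $\{0<\xi\le\epsilon\}$ and $\xi$ is strictly monotone along the $\mathbb{C}^{*}$-orbits (it decreases along the $J\mathcal{K}$-flow), it is fibrewise of the form $\{q\in L^{\times}:\,|q|\in I\}$ with $I=(0,r)$ or $I=(r,\infty)$; the task is to decide which end of the fibres corresponds to infinity in $M$ and to check that a full punctured disk is swept out.

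For this I would use the asymptotic expansions. Working in the LeBrun ansatz of Theorem \ref{thm:localansatz} over the preimage of an affine chart $U\subset\mathbb{P}^1$, the fibrewise $(1,0)$-form is proportional to $d\xi+iW^{-1}\eta$, so an honest holomorphic fibre coordinate has modulus comparable to $e^{\psi(\xi)}$ with $\psi'(\xi)=W$ (the remaining freedom is a bounded holomorphic correction in the base variable, whose existence is supplied by the a priori holomorphic structure on $L^{\times}$). From $\xi=\rho^{-1}+O'(\rho^{-1-\delta_0})$ (equation \eqref{eq:expansionxi}) and $W^{-1}=\rho^{-2}+O'(\rho^{-2-\delta_0})$ (equation \eqref{eq:expansionw}) one gets $W\asymp\xi^{-2}$, whence $\int_0 W\,d\xi=\infty$ and, more precisely, $\psi(\xi)\asymp-\rho\to-\infty$ as $\xi\to0$. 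Thus the holomorphic fibre coordinate has modulus $\asymp e^{-\rho}$, which tends to $0$ as we run to infinity in $M$ and, since $\rho$ ranges over $(N,\infty)$, sweeps out an entire punctured disk. Therefore $M\setminus K$ is biholomorphic to a deleted disk‑bundle neighborhood of the zero section of $L$. Setting $D\cong\mathbb{P}^1$ equal to that zero section, define $\overline M:=M\sqcup D$; it is covered by the two open sets $M$ and a disk‑bundle neighborhood of $D$ with biholomorphic overlap, hence is a complex surface, smooth because $M$ is smooth and a disk bundle over $\mathbb{P}^1$ is smooth, and compact because $\overline M$ is the union of the compact set $\{\rho\le R\}\subset M$ with a precompact disk‑bundle neighborhood of $D$. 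Hausdorffness is automatic since $D$ is glued in at the single end, and the compactification is natural in that it uses only the intrinsic $\mathbb{C}^{*}$-action generated by $\mathcal{K},J\mathcal{K}$ and the intrinsic moment map $\xi=\lambda^{1/3}$.

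The main obstacle is making the biholomorphism of the first paragraph precise while retaining control near infinity: the explicit coordinate $e^{\psi(\xi)+it}$ is holomorphic only to leading order, with error controlled by the decaying terms $O'(\rho^{-\delta_0})$. I would avoid a direct $\bar\partial$-correction and instead take the genuine holomorphic structure for granted from the honest holomorphic $\mathbb{C}^{*}$-action and its honest quotient $\mathbb{P}^1$, using the expansions solely to (i) locate the zero section, i.e.\ to show $\int_0 W\,d\xi=\infty$, and (ii) verify that the fibrewise norm exhausts a full punctured disk. A secondary point is that the $\mathbb{C}^{*}$-invariant region really is a punctured disk bundle rather than a thinner annular region; this follows from the monotonicity of $\xi$ along the $J\mathcal{K}$-flow together with the fact that $\xi$ exhausts $(0,\epsilon]$ on the end. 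The degree of $L$, and hence the precise surface $\overline M$, is not needed for this statement, though it is determined by the curvature‑one equation $\varpi_{xx}+\varpi_{yy}=-2e^{\varpi}$ of equation \eqref{eq:asymptoticv}.
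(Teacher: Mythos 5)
Your proposal is correct in substance and rests on the same mechanism as the paper's proof: the commuting fields $\mathcal{K}$ and $J\mathcal{K}$ give a fibrewise $\mathbb{C}^*$-structure over the holomorphic quotient $\Sigma=\mathbb{P}^1$, and the single analytic input that makes the compactification possible is $\int_0 W\,d\xi=+\infty$ (from $W\asymp\xi^{-2}$), so the natural radial holomorphic coordinate $e^{-\int_\xi^\epsilon W\,d\xi}$ sweeps out a full punctured disk rather than an annulus with positive inner radius. The packaging differs: the paper changes gauge explicitly to $(\zeta,\varphi)$ with $\zeta=\int_\xi^\epsilon W\,d\xi$ and $\varphi=-t+\int_\xi^\epsilon\mathcal{Z}\,d\xi$, sets $w=e^{-\zeta-i\varphi}$, and verifies by direct computation that the coefficients of $J$ in the chart $(\mathrm{Re}\,w,\mathrm{Im}\,w,x,y)$ are independent of $w$ and hence extend smoothly across $\{w=0\}$, whereas you invoke the classification of principal $\mathbb{C}^*$-bundles over $\mathbb{P}^1$ and fill in the zero section of a line bundle $L$, which makes the extension of the complex structure automatic.

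The one place where you should be careful is the assertion, in your first paragraph, that $M\setminus K$ is biholomorphic to a $\mathbb{C}^*$-invariant open subset of $L^\times$. This is where essentially all of the work in the paper's proof lives: the $\mathbb{C}^*$-``action'' is only a local action on the end (the inward flow of $J\mathcal{K}$ exits $M\setminus K$), so producing the local holomorphic trivializations over the two charts of $\mathbb{P}^1$ --- equivalently, an honest holomorphic fibre coordinate --- requires exactly the gauge change above together with a bounded $\bar\partial$-correction $F$ on the base solving $2\bar\partial_\Sigma F=-i(\mathcal{X}+i\mathcal{Y})dx-(\mathcal{X}+i\mathcal{Y})dy$, which the paper carries out (the latter only in Proposition \ref{prop:metricbasicestimate}, since for the compactification itself a smooth extension of $J$ suffices). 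Your remark that the bounded holomorphic correction ``is supplied by the a priori holomorphic structure on $L^\times$'' is slightly circular as written, since that structure is what is being constructed; but once the trivialization step is supplied, the rest of your argument (monotonicity of $\xi$ along the $J\mathcal{K}$-flow, exhaustion of the punctured disk, compactness and smoothness of $\overline{M}$, and naturality via the $\mathbb{C}^*$-action) is sound.
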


\begin{proof}

We consider the complex structure on the open set $p^{-1}(U)$, where the $S^1$-bundle is trivialized and we use the coordinate system $(\xi,{t},x,y)$. 
On $p^{-1}(U)$, the connection one-form $\eta$ can be written as $d{t}+\mathcal{X}dx+\mathcal{Y}dy+\mathcal{Z}d\xi$, and the complex structure is given by
$$J:d\xi\to -W^{-1}\eta,\ dx\to-dy.$$
As $\eta$ is $\partial_{t}$-invariant, $\mathcal{X}(\xi,x,y),\mathcal{Y}(\xi,x,y),$ and $\mathcal{Z}(\xi,x,y)$ are functions of $\xi,x,y$ only.
The following two differential forms are of type $(1,0)$, which determine the complex structure on $p^{-1}(U)$ completely
$$d\xi+iW^{-1}\eta,\ dx+idy.$$
A straightforward calculation shows that in terms of the basis of vector fields  $\partial_\xi,\partial_{t},\partial_x,\partial_y$ under the system $(\xi,{t},x,y)$, the complex structure is given by
\begin{align}
J{\partial_\xi}&=W{\partial_{t}}-W^{-1}\mathcal{Z}\partial_\xi+W^{-1}\mathcal{Z}^2\partial_{t},\\
J{\partial_x}&={\partial_y}-W^{-1}\mathcal{X}{\partial_\xi}+(-\mathcal{Y}+W^{-1}\mathcal{X}\mathcal{Z}){\partial_{t}}.
\end{align}

\paragraph{\textbf{\underline{\underline{Step 1}}}}

In this step we choose a suitable gauge.

Recall $\xi\in(0,\epsilon]$. In the coordinate system $(\xi,{t},x,y)$, we introduce 
\begin{align}\label{eq:coorchangezeta}
\zeta&:=\int^\epsilon_{\xi}Wd\xi,\\
\varphi&:=\left(-{t}+\int^\epsilon_\xi\mathcal{Z}d\xi\right)\mod{2\pi}.\label{eq:coorchangephi}   
\end{align}
The new parameter $\zeta$ takes value in $[0,\infty)$ because of the asymptotic expansion for $W$. Next we consider the coordinate change $(\xi,{t},x,y)\to(\zeta,\varphi,x,y)$. It is very crucial to notice that $\partial_x,\partial_y$ are different vectors in the system $(\xi,{t},x,y)$ and $(\zeta,\varphi,x,y)$. In the following we will always specify which coordinate system we are using when we write $\partial_x,\partial_y$. After performing a computation, we have the following
\begin{equation}\label{eq:vectorrelation}
\partial_\xi=-W\partial_\zeta-\mathcal{Z}\partial_\varphi,\ \partial_{t}=-\partial_\varphi.
\end{equation}
In the revised gauge $(\zeta,\varphi,x,y)$, conducting calculations based on the coordinate transformation and utilizing equation (\ref{eq:deta}), we deduce that the complex structure in the revised gauge is
\begin{align}\label{eq:Jxy}
J\partial_x&=\partial_y+\mathcal{X}(\epsilon,x,y)\partial_\zeta+\mathcal{Y}(\epsilon,x,y)\partial_\varphi,\\
J\partial_\zeta&=\partial_\varphi.
\end{align}
Here, the functions $\mathcal{X}(\epsilon,x,y)$ and $\mathcal{Y}(\epsilon,x,y)$ denote the respective values of the functions $\mathcal{X}(\xi,x,y)$ and $\mathcal{Y}(\xi,x,y)$ evaluated at $\xi=\epsilon$. Equation (\ref{eq:Jxy}) does not include the variables $\zeta$ and $\varphi$.  This coincides with the fact that the complex structure $J$ remains invariant under the real holomorphic vector fields $\partial_\varphi$ and $\partial_\zeta=-J\partial_\varphi$.  The significance of adopting this gauge is that the complex structure now becomes standard along the $\partial_\zeta$ and $\partial_\varphi$ directions.

\paragraph{\textbf{\underline{\underline{Step 2}}}} With this step, we complete the process of compactifying the complex structure. 

Consider $w=\exp(-\zeta-i\varphi)$. From the perspective of the smooth structure, we compactify $p^{-1}(U)$ as $\overline{p^{-1}(U)}=p^{-1}(U)\sqcup U$, by claiming that $\mathrm{Re}\,w$, $\mathrm{Im}\,w$, $x$, and $y$ serve as smooth coordinate functions on $\overline{p^{-1}(U)}$. The equation $w=0$ defines the added $U$ at infinity. 
In the coordinate given by $(\mathrm{Re}\,w,\mathrm{Im}\,w,x,y)$, the complex structure reads as
\begin{align}
&J\partial_x=\partial_y+\mathcal{X}(\epsilon,x,y)(-\mathrm{Re}\, w\partial_{\mathrm{Re}\, w}-\mathrm{Im}\, w\partial_{\mathrm{Im}\, w})+\mathcal{Y}(\epsilon,x,y)(\mathrm{Im}\, w\partial_{\mathrm{Re}\, w}-\mathrm{Re}\, w\partial_{\mathrm{Im}\, w}),\label{eq:complexstructure2}\\
&J\partial_{\mathrm{Re}\, w}=\partial_{\mathrm{Im}\, w}\label{eq:complexstructure1}.
\end{align}
Now the complex structure $J$ on $p^{-1}(U)$ clearly can be smoothly extended to $\overline{p^{-1}(U)}$ with our chosen smooth structure on $\overline{p^{-1}(U)}$, since the coefficients of the complex structure in this gauge are all smooth functions of $(\mathrm{Re}\,w,\mathrm{Im}\,w,x,y)$.

Equations (\ref{eq:complexstructure2}) and (\ref{eq:complexstructure1}) show that along the added $\Sigma$, the complex structure is:
$$J\left|_\Sigma(\partial_x)\right.=\partial_y,\ J\left|_\Sigma(\partial_{\mathrm{Re}\,w})\right.=\partial_{\mathrm{Im}\,w}.$$
Consequently, the added real surface constitutes a holomorphic curve in $\overline{M}$. The holomorphic curve must be a $\mathbb{P}^1$ given that topologically it is $\Sigma=S^2$. The theorem is proved.
\end{proof}

\subsection{Positivity of the compactified surfaces for ALF gravitational instantons}\label{subsec:alfstructure}
We label the compactified surface as $\overline{M}$ and denote the added divisor $\mathbb{P}^1$ as $D$. 
Denote the hermitian metric induced by the K\"ahler metric $g$ on the anti-canonical bundle $-K_M$ as $g_{-K}$. Subsequently, the metric $e^{-2\log s_g}g_{-K}$ is also a hermitian metric on the line bundle $-K_M$. This particular hermitian metric holds significant importance due to its positive curvature, as initially observed by LeBrun \cite{lebrun95}.

\begin{proposition}\label{prop:metricbasicestimate}
The hermitian metric $e^{-2\log s_g}g_{-K}$ on the line bundle $-K_M$ extends to a singular hermitian metric on the line bundle $-(K_{\overline{M}}+D)$ over $\overline{M}$. As a singular hermitian metric, it has the following regularity
\begin{itemize}
\item It is smooth outiside $D$ and continuous across the divisor $D$, in the sense that if there is a local holomorphic trivialisation $\tau:-(K_{\overline{M}}+D)|_{V}\to V\times\mathbb{C}$ of $-(K_{\overline{M}}+D)$ on an open set $V$ near the divisor $D$, then $e^{-2\log s_g}g_{-K}(s,s)=|\tau(s)|^2e^{-2\phi}$ for any smooth section $s$, with a continuous potential function $\phi$.

\item The Lelong number $\nu(\phi,d)\coloneqq\liminf_{l\to d}\frac{\phi(l)}{\log|l-d|}=0$ for any $d\in D$.
\end{itemize}
\end{proposition}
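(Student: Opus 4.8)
The plan is to work in the holomorphic chart near $D$ produced in the proof of Theorem~\ref{thm:compactifyalf}. Recall that over $p^{-1}(U)$ one has holomorphic coordinates $z=x+iy$ on $U\subset\Sigma=\mathbb{P}^1$ and $w=\exp(-\zeta-i\varphi)$, where $\zeta=\int_{\xi}^{\epsilon}W\,d\xi'$ takes values in $[0,\infty)$, with $D=\{w=0\}$ and $\partial_w\wedge\partial_z$ a holomorphic frame of $-K_{\overline M}$ on $\overline{p^{-1}(U)}$. On $M$ the bundle $-(K_{\overline M}+D)$ is canonically identified with $-K_M$ by trivializing $\mathcal O(-D)$ with the defining section $s_D=w$, and under this identification a holomorphic frame $\epsilon$ of $-(K_{\overline M}+D)$ near $D$ corresponds to the section $w\,\partial_w\wedge\partial_z$ of $-K_M$. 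Since $e^{-2\log s_g}g_{-K}$ is manifestly smooth on $M$ (as $g$ is smooth and $s_g=\lambda^{1/3}>0$), proving the first two bullets reduces to showing that
$$\|\epsilon\|^2:=\bigl\|w\,\partial_w\wedge\partial_z\bigr\|^2_{e^{-2\log s_g}g_{-K}}$$
extends to a strictly positive continuous function on $\overline{p^{-1}(U)}$; the weight is then $\phi=-\tfrac12\log\|\epsilon\|^2$.

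First I would compute $\|\partial_w\wedge\partial_z\|^2_{g_{-K}}=\det(g_{j\bar k})$ in the coordinates $(w,z)$. From the ansatz $g=W\,d\xi^2+W^{-1}\eta^2+We^{v}(dx^2+dy^2)$ and $\omega_g=d\xi\wedge\eta+We^{v}dx\wedge dy$ one gets $dvol_g=We^{v}\,d\xi\wedge\eta\wedge dx\wedge dy$. Since $\eta\equiv dt$ modulo $d\xi,dx,dy$, and the coordinate change $(\xi,t)\mapsto(\zeta,\varphi)$ gives $d\xi\wedge dt\equiv W^{-1}d\zeta\wedge d\varphi$ modulo $dx,dy$, while $d\zeta\wedge d\varphi=|w|^{-2}\,dx_w\wedge dy_w$ with $w=x_w+iy_w$, a short computation yields $dvol_g=e^{v}\,d\zeta\wedge d\varphi\wedge dx\wedge dy$ and hence $\det(g_{j\bar k})=c\,e^{v}|w|^{-2}$ for a universal constant $c>0$. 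Therefore
$$\|\epsilon\|^2=|w|^2\,s_g^{-2}\det(g_{j\bar k})=c\,s_g^{-2}e^{v},$$
so the twist factor $|w|^2$ exactly cancels the $|w|^{-2}$ in $\det(g_{j\bar k})$.

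Next I would insert the expansions established earlier, $s_g=\lambda^{1/3}=\rho^{-1}+O'(\rho^{-1-\delta_0})$ from \eqref{eq:expansionxi} (equivalently Theorem~\ref{thm:expansion}) and $v=\varpi(x,y)-2\log\rho+O'(\rho^{-\delta_0})$ from \eqref{eq:asymptoticv}, which give $s_g^{-2}e^{v}=e^{\varpi(x,y)}\bigl(1+O'(\rho^{-\delta_0})\bigr)$, hence
$$\|\epsilon\|^2=c\,e^{\varpi(x,y)}\bigl(1+O'(\rho^{-\delta_0})\bigr),\qquad \phi=-\tfrac12\varpi(x,y)-\tfrac12\log c+O'(\rho^{-\delta_0}).$$
As $\zeta$ and $\rho$ are comparable, approaching $D$ (that is $w\to0$) is the same as $\rho\to\infty$, along which $O'(\rho^{-\delta_0})\to0$; and $\varpi$ depends only on $(x,y)$, which are smooth coordinates across $D$ ($z$ restricting to a holomorphic coordinate on $D$), so $e^{\varpi}$ is smooth and positive up to $D$. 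Thus $\|\epsilon\|^2$ is strictly positive and continuous up to $D$ and smooth off $D$, and $\phi$ is a continuous weight. The Lelong number statement is then immediate: for each $d\in D$ the weight $\phi$ is bounded near $d$, so $\phi(l)/\log|l-d|\to0$ as $l\to d$, i.e.\ $\nu(\phi,d)=0$. Finally, since $-(K_{\overline M}+D)$ and $e^{-2\log s_g}g_{-K}$ (viewed on $-K_M$ over $M$) are globally defined, one covers $D$ by finitely many such charts and the local continuous weights patch into a singular hermitian metric on all of $\overline M$.

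The main obstacle is the line-bundle bookkeeping of the first paragraph---correctly identifying the holomorphic frame of $-(K_{\overline M}+D)$ near $D$ and tracking the twist by $\mathcal O(-D)$---together with recognizing the two compensations that make the extension regular: the conformal blow-up $s_g^{-2}\sim\rho^{2}$ of the metric exactly offsets the $\rho^{-2}$ hidden in $e^{v}$, while the $|w|^2$ twist offsets the $|w|^{-2}$ in $\det(g_{j\bar k})$. Once these are in place the statement reduces to the coordinate change from the proof of Theorem~\ref{thm:compactifyalf} and the expansions \eqref{eq:expansionxi}, \eqref{eq:asymptoticv}; the only analytic point requiring care is that the error terms $O'(\rho^{-\delta_0})$, expressed in the $(w,z)$-coordinates, extend continuously by $0$ to $w=0$, which holds because $\rho$ is comparable to $-\log|w|$.
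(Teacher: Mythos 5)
Your proposal follows essentially the same route as the paper: pass to the chart $(w,z)$ from the compactification, identify $-(K_{\overline M}+D)$ with $-K_M$ away from $D$ via the defining section of $D$, compute the norm of the resulting frame using $dvol_g=We^v\,d\xi\wedge\eta\wedge dx\wedge dy$ and the coordinate change to $(\zeta,\varphi)$, and observe that the $|w|^2$ twist cancels the $|w|^{-2}$ in the volume density while $s_g^{-2}\sim\rho^2$ cancels the $\rho^{-2}$ in $e^v$, leaving $c\,e^{\varpi}(1+O'(\rho^{-\delta_0}))$, which is continuous, positive up to $D$, and has vanishing Lelong number.

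There is one genuine (though repairable) gap: you assert that $\partial_w\wedge\partial_z$ is a holomorphic frame of $-K_{\overline M}$ on $\overline{p^{-1}(U)}$, equivalently that $(w,z)$ is a holomorphic coordinate system and $w$ a holomorphic defining function for $D$. This is false in general: from the formulas \eqref{eq:complexstructure2}--\eqref{eq:complexstructure1} one computes $2\bar\partial w=w\bigl(-i(\mathcal X(\epsilon,x,y)+i\mathcal Y(\epsilon,x,y))dx-(\mathcal X(\epsilon,x,y)+i\mathcal Y(\epsilon,x,y))dy\bigr)$, which does not vanish unless the connection coefficients $\mathcal X,\mathcal Y$ vanish at $\xi=\epsilon$. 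The paper corrects this by solving $2\bar\partial_\Sigma F=-i(\mathcal X+i\mathcal Y)dx-(\mathcal X+i\mathcal Y)dy$ on $U$ and using $we^{-F}$ as the holomorphic defining function, so that $d(we^{-F})\wedge(dx+idy)$ is the holomorphic frame of $K_{\overline M}$. Since $F=F(x,y)$ is smooth and bounded on $U$, this only multiplies your weight by the smooth positive factor $e^{2\mathrm{Re}\,F}$, so all of your conclusions (continuity and positivity of $\|\epsilon\|^2$ up to $D$, hence vanishing of the Lelong number) survive unchanged; but as written your frame is not holomorphic and the computation of $\det(g_{j\bar k})$ in the coordinates $(w,z)$ is not literally meaningful until this correction is made.
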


\begin{proof}
We consider locally on $\overline{p^{-1}(U)}$. 
Equations (\ref{eq:complexstructure2}) and (\ref{eq:complexstructure1}) show that
\begin{align}
Jd\mathrm{Re}\,w=&-d\mathrm{Im}\,w+(-\mathcal{X}(\epsilon,x,y)\mathrm{Re}\,w+\mathcal{Y}(\epsilon,x,y)\mathrm{Im}\,w)dx\label{eq:complexstructure3}\\
&+(-\mathcal{X}(\epsilon,x,y)\mathrm{Im}\,w-\mathcal{Y}(\epsilon,x,y)\mathrm{Re}\,w)dy,\notag\\
Jdx=&-dy.\label{eq:complexstructure4}
\end{align}
A computation yields $2\bar\partial w=w((-i(\mathcal{X}(\epsilon,x,y)+i\mathcal{Y}(\epsilon,x,y))dx-(\mathcal{X}(\epsilon,x,y)+i\mathcal{Y}(\epsilon,x,y))dy)$.
The differential form $-i(\mathcal{X}(\epsilon,x,y)+i\mathcal{Y}(\epsilon,x,y))dx-(\mathcal{X}(\epsilon,x,y)+i\mathcal{Y}(\epsilon,x,y))dy$ is of type $(0,1)$ on $U\subset \Sigma$. Consequently, we can find a smooth function $F(x,y)$ defined on $U$, such that $2\bar\partial F=-i(\mathcal{X}(\epsilon,x,y)+i\mathcal{Y}(\epsilon,x,y))dx-(\mathcal{X}(\epsilon,x,y)+i\mathcal{Y}(\epsilon,x,y))dy$ over $U$. The modified function $w e^{-F(x,y)}$ has now become a holomorphic function over $\overline{p^{-1}(U)}$, serving as a holomorphic defining function for the divisor $D$ within $\overline{p^{-1}(U)}$.
The pair of holomorphic functions $we^{-F}$ and $x+iy$ give a holomorphic coordinate system for $\overline{p^{-1}(U)}$. 
Notably, the two-form $s\coloneqq d(we^{-F})\wedge(dx+idy)$ is of type $(2,0)$ and is a holomorphic section of $K_{\overline{M}}$ over $\overline{p^{-1}(U)}$. The two-form $\frac{1}{we^{-F}}s$ constitutes a holomorphic basis of $K_{\overline{M}}+D$ since $we^{-F}$ is a defining function of the divisor $D$.

It suffices to establish the extension of the hermitian metric $e^{2\log s_g}g_{K}$ on $K_{M}$ to a singular hermitian metric on $K_{\overline{M}}+D$, with the corresponding regularity properties.  
Therefore, our task reduces to computing the norm of the holomorphic section $\frac{1}{we^{-F}}s$ under the $e^{2\log s_g}g_{K}$ metric within $\overline{p^{-1}(U)}$.
Let us proceed with the calculations
\begin{align*}
e^{2\log s_g}g_{K}\left(\frac{1}{we^{-F}}s,\frac{1}{we^{-F}}s\right)
=&e^{2F}\frac{s_g^2}{|w|^2}g_K(s,s)=e^{2F}\frac{s_g^2}{|w|^2}\frac{s\wedge\overline{s}}{dvol_g}\\
=&\frac{s_g^2}{|w|^2}\frac{4d\mathrm{Re}\,w\wedge d\mathrm{Im}\,w\wedge dx\wedge dy}{We^vd\xi\wedge\eta\wedge dx\wedge dy}\\
=&\frac{s_g^2}{|w|^2}\frac{4|w|^2d\zeta\wedge d\varphi\wedge dx\wedge dy}{e^vd\zeta\wedge d\varphi\wedge dx\wedge dy}\\
=&4s_g^2e^{-v}.
\end{align*}
We have $s_g^2e^{-v}=e^{-\varpi+O'(\rho^{-\delta_0})}(1+O'(\rho^{-\delta_0}))$. Clearly $s_g^2e^{-v}$ can be continuously extended from $p^{-1}(U)$ to $\overline{p^{-1}(U)}$, which equals $e^{-\varpi}$ on the added $U$. The Lelong number vanishes as the potential function is continuous.
\end{proof}

The subsequent crucial insight stems from \cite{lebrun95}.
Given the fact that the hermitian metric $h$ is Ricci-flat and conformally related to $g$ through $g=s_g^2h$, a straightforward computation shows that the symmetric $(1,1)$-form $\Omega(\cdot,J\cdot)$ associated with the curvature form $\Omega$ of $e^{-2\log s_g}g_{-K}$ is
\begin{equation}\label{eq:curvatureform}
\Omega(\cdot,J\cdot)_{ab}=\frac{s_g}{6}g_{ab}+s_g^{-2}\left(|ds_g|^2g_{ab}-(ds_g)_a(ds_g)_b-(Jds_g)_a(Jds_g)_b\right).
\end{equation}
It follows that $\Omega(\cdot,J\cdot)$ is a positive symmetric $(1,1)$-form on $M$, owing to the evident positivity of 
\begin{align}
&\quad|ds_g|^2g_{ab}-(ds_g)_a(ds_g)_b-(Jds_g)_a(Jds_g)_b\\
=&\quad W^{-1}g-d\xi\otimes d\xi-(Jd\xi)\otimes(Jd\xi)\notag\\
=&\quad d\xi^2+W^{-2}\eta^2+e^v(dx^2+dy^2)-d\xi^2-W^{-2}\eta^2\notag\\
=&\quad e^v(dx^2+dy^2)\notag
\end{align}
as a symmetric $(1,1)$-form.
The positivity of $\Omega$ can be extended across the added divisor $D$ as follows.

\begin{proposition}\label{prop:positivecurrent}
On the compactified complex surface $\overline{M}$, there exists a smooth strictly positive $(1,1)$-form $\vartheta$ such that $\Omega\geq\vartheta$ in the sense of $(1,1)$-currents.
\end{proposition}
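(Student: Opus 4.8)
The plan is to combine the explicit curvature formula \eqref{eq:curvatureform} with the regularity of the extended hermitian metric from Proposition \ref{prop:metricbasicestimate}, and then use a standard gluing argument to upgrade the semi-positivity near $D$ to a \emph{strict} inequality globally. First I would recall that on $M$ itself the curvature form $\Omega$ of $e^{-2\log s_g}g_{-K}$ satisfies $\Omega(\cdot,J\cdot)>0$ pointwise, as established in the computation preceding the statement; moreover, from the ALF asymptotics (Theorem \ref{thm:expansion}, equations \eqref{eq:expansionxi}--\eqref{eq:asymptoticv}) one can read off that $\Omega$ \emph{decays} at infinity — indeed the $\frac{s_g}{6}g$ term is $O(\rho^{-1})\cdot g$ in the $h$-metric, and near $D$ in the coordinates $(we^{-F},x+iy)$ of Proposition \ref{prop:metricbasicestimate} the potential $-2\log s_g - \log g_{-K}$ extends continuously with vanishing Lelong number, so that $\Omega = dd^c\phi$ for a continuous $\phi$ which is smooth off $D$. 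The key point is that $\Omega$, viewed as a $(1,1)$-current on $\overline{M}$, is the curvature current of a \emph{continuous} metric on $-(K_{\overline M}+D)$, hence has no singular (mass) contribution along $D$: it is a closed positive current with continuous local potentials, i.e. $\Omega\geq 0$ on all of $\overline M$ in the sense of currents.

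Next I would produce the strict lower bound. Fix any smooth Kähler form $\omega_0$ on $\overline M$ (which exists since $\overline M$ is a smooth compact complex surface carrying a Kähler metric — e.g. take the Fubini–Study form of a projective embedding, or more directly, $\overline M$ is rational/Kähler by the compactification). Choose a small tubular neighborhood $N$ of $D$ on whose closure $\overline M\setminus N$ is compact and contained in $M$, where $\Omega(\cdot,J\cdot)$ is smooth and strictly positive, hence $\Omega \geq c_0\,\omega_0$ on $\overline M\setminus N$ for some $c_0>0$ by compactness. On $N$, $\Omega\geq 0$ as a current but may degenerate, so I would correct by a small multiple of $\omega_0$: set $\vartheta := \varepsilon\,\omega_0$ for $\varepsilon$ small enough that $\varepsilon < c_0$. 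Then on $\overline M\setminus N$ we have $\Omega \geq c_0\omega_0 \geq \varepsilon\omega_0 = \vartheta$ pointwise, and on $N$ we have $\Omega \geq 0 \geq \vartheta - \varepsilon\omega_0$... — this last step needs care, since $\Omega\geq 0$ does not immediately give $\Omega\geq \varepsilon\omega_0$. The clean way is a partition-of-unity/regularization argument: write $\Omega = dd^c\phi$ with $\phi$ continuous on $N$, smooth off $D$; since $\phi$ is continuous and $dd^c\phi\geq 0$, the function $\phi$ is plurisubharmonic on $N$, and on a slightly smaller neighborhood one has $\Omega = dd^c\phi \geq -C\omega_0$ is automatic while the true claim $\Omega\geq 0$ as currents is what was proved. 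To get the strict bound $\Omega\geq\vartheta$ as currents with $\vartheta$ smooth and strictly positive \emph{everywhere}, I would instead argue: $\Omega - \varepsilon\omega_0$ has continuous local potential $\phi - \varepsilon\psi_0$ (with $\omega_0 = dd^c\psi_0$ locally), which is psh on $\overline M\setminus N$ for $\varepsilon<c_0$; on $N$, near $D$, $\phi$ has vanishing Lelong number and is locally bounded, and since outside $D$ we only need $\Omega\geq\vartheta$ which might fail — here is where the real content lies.

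I expect the main obstacle to be exactly this: showing the semi-positive current $\Omega$ dominates a \emph{strictly} positive smooth form near $D$, where the pointwise positivity from \eqref{eq:curvatureform} degenerates (the $\frac{s_g}{6}g_{ab}$ term vanishes at infinity and the remaining term is only rank-2, precisely $e^v(dx^2+dy^2)$, which in the $h$-metric near $D$ scales like... it actually grows relative to $h$). The resolution I would pursue: rather than hoping $\Omega$ itself dominates $\vartheta$, I would show that a \emph{modification} $\Omega' = \Omega + dd^c\chi$ does, for a suitable smooth function $\chi$ supported near $D$, using that $-(K_{\overline M}+D)$ will be shown ample (the subsequent goal of the section); but since ampleness is proven \emph{using} this proposition, I must avoid circularity. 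The correct self-contained route: use the explicit local form. Near $D$, in coordinates $(w' ,z)=(we^{-F},x+iy)$, the computation in Proposition \ref{prop:metricbasicestimate} gives the potential $\phi = \tfrac12\log(4s_g^2 e^{-v}) = \tfrac12(-\varpi(x,y)) + O'(\rho^{-\delta_0})$, and $\rho\to\infty$ corresponds to $w'\to 0$; I would compute $dd^c\phi$ in these coordinates and verify directly that the $dz\wedge d\bar z$ component is $\tfrac12(-\varpi_{xx}-\varpi_{yy}) + \text{error} = e^\varpi + \text{error} > 0$ (using $\varpi_{xx}+\varpi_{yy}=-2e^\varpi$), while the mixed and $dw'\wedge d\bar w'$ components, though possibly small/degenerate, can be compensated: the upshot is $\Omega \geq \tfrac12 e^\varpi\, \tfrac{i}{2}dz\wedge d\bar z$ near $D$, and then adding $\varepsilon\,\tfrac{i}{2}dw'\wedge d\bar w'$ from a global Kähler form handles the fiber direction. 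Patching this local strict estimate near $D$ with the compact strict estimate on $\overline M\setminus N$ via a cutoff $\chi$, absorbing the cutoff error into the slack, produces the desired smooth strictly positive $\vartheta$ with $\Omega\geq\vartheta$ as currents. The technical care is entirely in controlling the $O'(\rho^{-\delta_0})$ error terms in $dd^c$, which decay and so are dominated by the honest positive terms once one is far enough out along $D$.
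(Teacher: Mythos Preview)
Your proposal contains a genuine gap rooted in a miscalculation of the asymptotic behaviour of $\Omega$ near $D$. You assert that ``the $\frac{s_g}{6}g_{ab}$ term vanishes at infinity'' and then spend the bulk of the argument trying to compensate for the apparent degeneracy in the $w$-direction by adding extrinsic positivity from an ambient K\"ahler form. This is exactly backwards. The comparison must be made in the compactification coordinates $(w,x,y)$, not relative to $g$ or $h$. Writing $\frac{\xi}{6}g=\frac{\xi}{6}\bigl(Wd\xi^2+W^{-1}\eta^2\bigr)+\frac{\xi}{6}g_\xi$ and using $\zeta=\int_\xi^\epsilon W\,d\xi\sim\rho$, $|w|=e^{-\zeta}$, one finds that $\frac{\xi}{6}\bigl(Wd\xi^2+W^{-1}\eta^2\bigr)\sim\frac{1}{|w|^2|\log|w||^3}\bigl(d\mathrm{Re}\,w^2+d\mathrm{Im}\,w^2\bigr)$, which \emph{blows up} as $w\to 0$. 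Thus $\Omega$ already dominates, pointwise on all of $M=\overline{M}\setminus D$, a form $\epsilon\bigl(\tfrac{1}{|w|^2|\log|w||^3}|dw|^2+dx^2+dy^2\bigr)\geq\vartheta$ for a fixed smooth strictly positive $\vartheta$ on $\overline{M}$. No patching, no borrowed K\"ahler form, no cutoff errors are needed.

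Once you have $\Omega\geq\vartheta$ pointwise on $\overline{M}\setminus D$, the extension across $D$ as currents is a one-line application of the removable singularity theorem for bounded quasi-psh functions: the local potential of $\Omega-\vartheta$ is bounded (indeed continuous, by Proposition~\ref{prop:metricbasicestimate}) and psh off $D$, hence extends across $D$ as a psh function, which gives $\Omega\geq\vartheta$ as currents. Your circuitous attempts---trying $\Omega\geq 0$ near $D$ and $\Omega\geq c_0\omega_0$ on the compact part, then realising these don't glue---are symptoms of having missed the actual estimate. You even flag this yourself (``$\Omega\geq 0$ does not immediately give $\Omega\geq\varepsilon\omega_0$''): the resolution is not a cleverer gluing but simply to redo the computation of $\frac{s_g}{6}g$ in the $w$-coordinate.
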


\begin{proof}
Due to the positivity of $\Omega$ on $M$, it is sufficient to verify this property near the divisor $D$. Consequently, we only need to consider $\Omega$ locally on each $\overline{p^{-1}(U)}$.

Using (\ref{eq:curvatureform}) along with the expansions for $W$ and $v$, we have that on $p^{-1}(U)$:
\begin{align*}
\Omega(\cdot,J\cdot)&=\frac{\xi}{6}g+\xi^{-2}\left(W^{-1}g-d\xi\otimes d\xi-(Jd\xi)\otimes(Jd\xi)\right)\\
&=\frac{\xi}{6}g+\xi^{-2}e^v(dx^2+dy^2)\\
&\geq\epsilon\frac{1}{|w|^2|\log|w||^3}\left(d\mathrm{Re}\,w^2+d\mathrm{Im}\,w^2\right)+\epsilon(dx^2+dy^2).
\end{align*}
From this, it is clear that there is a smooth strictly positive $(1,1)$-form $\vartheta$ on $\overline{M}$ such that $\Omega\geq\vartheta$ as smooth $(1,1)$-forms on $\overline{M}\setminus D$. The following claim finishes the proof.
\begin{claim}
As $(1,1)$-currents, $\Omega\geq\vartheta$ on $\overline{M}$.
\end{claim}
\begin{proof}
Locally the potential function of $\Omega-\vartheta$ is a bounded quasi-plurisubharmonic function, which therefore can be extended across the divisor $D$ (see Theorem 5.24 in \cite{demaillycomplex}). This implies $\Omega\geq\vartheta$ as $(1,1)$-currents. 
\end{proof}
\end{proof}

\begin{theorem}\label{thm:ampleness}
For a Hermitian non-K\"ahler ALF gravitational instanton, the line bundle $-(K_{\overline{M}}+D)$ on the compactified surface $\overline{M}$ is ample. In particular, $\overline{M}$ is an algebraic surface.
\end{theorem}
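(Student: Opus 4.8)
The plan is to combine the positivity statement from Proposition \ref{prop:positivecurrent} with the regularity and vanishing-Lelong-number statement from Proposition \ref{prop:metricbasicestimate}, and then invoke the Nakai--Moishezon criterion together with a Kawamata--Viehweg-type argument to upgrade numerical positivity of the $\mathbb{R}$-divisor class $-(K_{\overline{M}}+D)$ to ampleness. First I would observe that $\overline{M}$ is already known to be a compact complex surface (Theorem \ref{thm:compactifyalf}), so it suffices to exhibit a Kähler class and then verify Nakai--Moishezon. The $(1,1)$-current $\Omega$ represents $c_1(-(K_{\overline{M}}+D))$ and satisfies $\Omega \geq \vartheta$ for a smooth strictly positive $(1,1)$-form $\vartheta$; in particular $c_1(-(K_{\overline{M}}+D))$ contains a strictly positive closed current, hence is a \emph{big and nef} class (nefness because $\Omega - \vartheta$ has locally bounded potentials, so the cohomology class pairs non-negatively with every curve, and bigness because $\int_{\overline{M}} \Omega^2 \geq \int_{\overline{M}} \vartheta^2 > 0$). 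Thus $\overline{M}$ is Moishegon, hence projective (being a surface), which already gives algebraicity.

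For ampleness I would then run the Nakai--Moishezon criterion: I need $(-(K_{\overline{M}}+D))^2 > 0$, which follows from the inequality $\int \Omega^2 \geq \int \vartheta^2 > 0$ above, and $-(K_{\overline{M}}+D)\cdot C > 0$ for every irreducible curve $C \subset \overline{M}$. For curves $C \neq D$, intersecting with the current $\Omega$: since $\Omega$ restricted to a neighborhood of $C$ differs from the strictly positive form $\vartheta$ by a current with locally bounded (hence $L^1_{\mathrm{loc}}$) potential, and $C \not\subset D$ means $C$ meets the locus where $\Omega$ is genuinely smooth and $\geq \vartheta$, one gets $-(K_{\overline{M}}+D)\cdot C = \int_C \Omega \geq \int_C \vartheta > 0$. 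The only remaining case is $C = D$ itself, where one must compute $-(K_{\overline{M}}+D)\cdot D = -(K_{\overline{M}}+D)|_D \cdot D$; by adjunction $(K_{\overline{M}}+D)|_D = K_D$ and $D \cong \mathbb{P}^1$, so $-(K_{\overline{M}}+D)\cdot D = -\deg K_{\mathbb{P}^1} = 2 > 0$. This is where the precise geometry of the compactification — that $D$ is a smooth rational curve added transversally — is essential, and it is the cleanest of the cases.

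The main obstacle I anticipate is not any single intersection computation but rather making rigorous the passage from ``$\Omega$ is a positive current with a continuous local potential and vanishing Lelong numbers across $D$'' to ``$-(K_{\overline{M}}+D)\cdot C > 0$ for curves $C$ that are tangent to or pass through special points of $D$.'' The vanishing Lelong number from Proposition \ref{prop:metricbasicestimate} is exactly the input needed here: by Demailly's comparison of Lelong numbers with multiplicities, a curve through a point of $D$ contributes no negative ``mass defect,'' so the naive estimate $\int_C \Omega \geq \int_C \vartheta$ survives even when $C$ meets $D$. I would phrase this via the Siu decomposition of $\Omega$ into its singular part (supported on $D$, with Lelong number zero everywhere, hence the divisorial part is trivial) plus a closed positive current with zero Lelong numbers, and then apply the Nakai--Moishezon criterion in the form valid for $\mathbb{R}$-classes (Buchdahl / Lamari for surfaces, or simply the algebraic Nakai--Moishezon once projectivity is established). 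Once all curves are handled, Nakai--Moishezon yields that $-(K_{\overline{M}}+D)$ is ample, and projectivity of $\overline{M}$ is immediate.
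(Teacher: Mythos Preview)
Your approach is correct but takes a genuinely different route from the paper. The paper's proof is a single sentence: ``Direct application of Demailly's regularization theorem (Theorem 1.1 in \cite{demailly92}).'' Given that $\Omega$ has continuous local potentials (hence Lelong numbers identically zero) and satisfies $\Omega\geq\vartheta$ for a smooth strictly positive $(1,1)$-form $\vartheta$, Demailly's regularization produces smooth closed forms in the same cohomology class that remain bounded below by $\vartheta-\epsilon_k\omega$ with $\epsilon_k\to0$; for $k$ large this yields a smooth strictly positive representative of $c_1(-(K_{\overline{M}}+D))$, and Kodaira's embedding theorem finishes.

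Your Nakai--Moishezon argument is more hands-on. The adjunction step for $C=D$, giving $-(K_{\overline{M}}+D)\cdot D=-\deg K_{\mathbb{P}^1}=2$, is clean and makes transparent why the added divisor itself poses no obstruction---something the regularization proof hides inside the black box. Two small remarks: first, you do not actually need the Siu decomposition or the Lelong-number discussion, since Proposition \ref{prop:metricbasicestimate} already gives a \emph{continuous} potential, which is exactly what lets you restrict $\Omega$ to curves and define $\Omega^2$ via Bedford--Taylor; second, the phrase ``$\Omega-\vartheta$ has locally bounded potentials'' is imprecise because $\vartheta$ is not assumed closed---what you want is that $\Omega$ itself has continuous potential. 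The trade-off is that Demailly's theorem packages all of this current-theoretic bookkeeping into one citation, while your route exposes the geometry (especially the role of $D\cong\mathbb{P}^1$) more explicitly.
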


\begin{proof}
Direct application of Demailly's regularization theorem (Theorem 1.1 in \cite{demailly92}). 
\end{proof}

\begin{proposition}\label{prop:extendaction}
The holomorphic $\mathbb{C}^*$-action induced by $\partial_{t}$ and $J\partial_{t}$ on $M$ extends to $\overline{M}$, which fixes points in $D$.
\end{proposition}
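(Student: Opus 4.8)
The plan is to put the holomorphic extremal vector field $Z := \mathcal{K} - iJ\mathcal{K}$ into a local normal form near $D$ that visibly extends across $D$, and then to integrate it on the compact surface $\overline{M}$.

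First I would work in a chart $\overline{p^{-1}(U)}$ equipped with the holomorphic coordinates $(\tilde w, z)$ constructed in the proofs of Theorem \ref{thm:compactifyalf} and Proposition \ref{prop:metricbasicestimate}, where $\tilde w = w\,e^{-F(x,y)}$, $z = x+iy$, $w = \exp(-\zeta-i\varphi)$, and $D = \{\tilde w = 0\}$. From $\partial_t = -\partial_\varphi$ and $J\partial_\zeta = \partial_\varphi$ one reads off $\mathcal{K} = \partial_t = -\partial_\varphi$ and $J\mathcal{K} = \partial_\zeta$; in particular both $\mathcal{K}$ and $J\mathcal{K}$ annihilate $x$ and $y$ (hence $z$ and the function $F(x,y)$), while on $w$ they act by $\mathcal{K}(w) = iw$ and $J\mathcal{K}(w) = -w$. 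Therefore $Z(z)=0$ and $Z(\tilde w) = e^{-F}Z(w) = 2i\,\tilde w$, so in this chart
\[
Z = 2i\,\tilde w\,\partial_{\tilde w}.
\]
This expression is holomorphic and extends holomorphically across $\{\tilde w = 0\}$, where it vanishes identically. (If the $S^1$-action induced by $\mathcal{K}$ is only semi-free, the same computation goes through verbatim on the orbifold charts.)

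These charts cover a neighbourhood of $D$ in $\overline{M}$, and $Z$ is holomorphic on the dense open set $M = \overline{M}\setminus D$, so the local extensions agree on overlaps and patch to a global holomorphic vector field on the compact surface $\overline{M}$, still denoted $Z$, with $Z|_D \equiv 0$. Its real and imaginary parts $\mathcal{K}$ and $-J\mathcal{K}$ then extend to smooth (indeed real-holomorphic) vector fields on $\overline{M}$, which are complete because $\overline{M}$ is compact. The flow of $\mathcal{K}$ is $2\pi$-periodic on the dense set $M$, hence on all of $\overline{M}$, so it integrates to an $S^1$-action; combined with the flow of $J\mathcal{K}$ one obtains a holomorphic $\mathbb{C}^*$-action on $\overline{M}$ whose restriction to the invariant open set $M$ is the original $\mathbb{C}^*$-action, by uniqueness of integral curves. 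Since $Z$ vanishes identically along $D$, every point of $D$ is fixed by this $\mathbb{C}^*$-action, which is the assertion.

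I expect the only point requiring care to be the bookkeeping in the last paragraph: checking that the extended $\mathbb{C}$-flow of $Z$ genuinely descends to a $\mathbb{C}^*$-action (i.e.\ that the periodicity of the $\mathcal{K}$-flow survives the extension) and that it restricts on $M$ to the given action. This is not a serious obstacle, being handled by density of $M$ in $\overline{M}$, compactness of $\overline{M}$, and uniqueness of holomorphic continuation; the substance of the proposition is entirely contained in the local model $Z = 2i\,\tilde w\,\partial_{\tilde w}$, which follows at once from the gauge fixed in Section \ref{subsec:localansatztogravitational} and the proof of Theorem \ref{thm:compactifyalf}.
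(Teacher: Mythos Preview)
Your proposal is correct and follows essentially the same approach as the paper. The paper's own proof is a single sentence: in the compactification coordinate $(\mathrm{Re}\,w,\mathrm{Im}\,w,x,y)$ with $w=\exp(-\zeta-i\varphi)$, the action rotates $\varphi$ and shifts $\zeta$, hence acts on $w$ by scalar multiplication and fixes $w=0$. You carry out the same computation, but more explicitly at the level of the holomorphic vector field $Z=2i\,\tilde w\,\partial_{\tilde w}$ in the holomorphic chart $(\tilde w,z)$, and then spell out the patching and integration; this is simply a more detailed version of the paper's argument rather than a different route.
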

\begin{proof}
Within our compactification coordinate $(\mathrm{Re}\,w,\mathrm{Im}\,w,x,y)$, the action is rotating $\varphi$ and shifting $\zeta$, and where $w=\exp(-\zeta-i\varphi)$. 
\end{proof}

\begin{remark}
The compactification is natural in the sense that we are adding a holomorphic sphere $D$ that is fixed by the extended $\mathbb{C}^*$-action above.
\end{remark}

\subsection{Rational $\text{AF}$ gravitational instantons}
\label{subsec:compactificationrational}

In this subsection, we study Hermitian non-K\"ahler rational $\mathrm{AF}_{\mathfrak{a},\mathfrak{b}}$ gravitational instantons. 
For simplicity reason, we are going to take $\mathfrak{b}$ as $2\pi$ in this subsection.
We represent the equivalence class associated with $(\rho,\alpha,\beta,t)$ within the quotient by $[\rho,\alpha,\beta,t]$.
When $\mathfrak{a}/2\pi={m}/{n}$ is rational, there is the apparent finite covering map
\begin{align}
\Pi:\mathfrak{M}_{0,2\pi n}\to \mathfrak{M}_{\mathfrak{a},2\pi},&&[\rho,\alpha,\beta,t]\to[\rho,\alpha,\beta,t].
\end{align}
This is nothing but the unwrapping operation in (2) of Section \ref{subsec:classification of ends}. We can pull back the end $\mathfrak{M}_{\mathfrak{a},2\pi}\setminus\overline{B_N(0)}$, alongside the Hermitian Ricci-flat metric $h$, the K\"ahler metric $g$, and the complex structure $J$, onto the cover $\mathfrak{M}_{0,2\pi n}\setminus\overline{B_N(0)}$. Our previous treatment for Hermitian non-K\"ahler ALF gravitational instantons in Section \ref{subsec:localansatztogravitational}-\ref{subsec:alfstructure}  remains wholly applicable to the end $\mathfrak{M}_{0,2\pi n}\setminus\overline{B_N(0)}$, with the pulled-back complex structure $\Pi^*J$ and K\"ahler metric $\Pi^*g$. Thus

\begin{proposition}\label{prop:rationala2picompactification}
The end $\mathfrak{M}_{0,2\pi n}\setminus\overline{B_N(0)}$, along with the complex structure $\Pi^*J$, can be compactified at infinity, by introducing a holomorphic sphere $D'$ at infinity. This added curve $D'$ has trivial self-intersection.
\end{proposition}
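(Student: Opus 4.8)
The plan is to deduce Proposition \ref{prop:rationala2picompactification} essentially for free from the ALF case, by checking that the cover $\mathfrak{M}_{0,2\pi n}\setminus\overline{B_N(0)}$ is precisely an ALF end of the type already treated in Theorem \ref{thm:compactifyalf} and Section \ref{subsec:alfstructure}, and then computing the self-intersection of the new curve. First I would observe that pulling back via the finite covering $\Pi$ does not change the local structure of the end: the metric $\Pi^*h$ is still Hermitian non-K\"ahler Ricci-flat, the extremal vector field $\mathcal{K}=-\partial_t$ lifts to a vector field generating a \emph{free} $S^1$-action on $\mathfrak{M}_{0,2\pi n}\setminus\overline{B_N(0)}$ (this is exactly the point of the unwrapping, as in (2) of Section \ref{subsec:classification of ends}), and the asymptotic cone of the lifted end is $\mathbb{R}^3$. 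Hence the lifted end is a Hermitian non-K\"ahler ALF end with $\mathcal{K}$ inducing an $S^1$-action, and the expansions \eqref{eq:expansionxi}, \eqref{eq:expansionw}, \eqref{eq:asymptoticv} all hold verbatim upstairs. Therefore the construction in the proof of Theorem \ref{thm:compactifyalf} applies: on each $\overline{p^{-1}(U)}$ one introduces $w=\exp(-\zeta-i\varphi)$, extends the complex structure $\Pi^*J$ smoothly across $w=0$, and glues in the holomorphic sphere $D'\cong\mathbb{P}^1=\Sigma$ at infinity. This gives the compactification and the fact that $D'$ is a smooth holomorphic rational curve.

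Next I would compute the self-intersection $D'\cdot D'$. The cleanest way is to exhibit a holomorphic normal coordinate picture from Step 1--2 of the proof of Theorem \ref{thm:compactifyalf}: near $D'$ the surface is covered by charts with holomorphic coordinates $(we^{-F(x,y)},\, x+iy)$, where $we^{-F}$ is a holomorphic defining function for $D'$ (as produced in the proof of Proposition \ref{prop:metricbasicestimate}). The normal bundle $N_{D'}$ is thus trivialized on each chart by $\partial/\partial(we^{-F})$, and the transition functions between overlapping charts $U,U'\subset\Sigma$ come only from the change of the factor $e^{-F}$, i.e.\ from $e^{F_{U'}-F_U}$, which are nowhere-zero holomorphic functions on $U\cap U'$. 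The key observation is that the one-form $-i(\mathcal{X}(\epsilon,x,y)+i\mathcal{Y}(\epsilon,x,y))\,dx-(\mathcal{X}(\epsilon,x,y)+i\mathcal{Y}(\epsilon,x,y))\,dy$, of which $2\bar\partial F$ is a solution on each $U$, is \emph{globally defined} on $\Sigma$; indeed $d\eta$ restricted to the base is the global two-form $(We^v)_\xi\, dx\wedge dy$ evaluated at $\xi=\epsilon$, which in the ALF normalization has total integral zero over $\Sigma$ by \eqref{eq:compatibility} and the asymptotics — more precisely the connection one-form $\eta$ descends to a genuine (untwisted) $S^1$-connection on the $\xi=\epsilon$ level set because the $S^1$-fibration there is the trivial one (the asymptotic cone being $\mathbb{R}^3$, the link is $S^2\times S^1$). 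Consequently the Chern class of the normal bundle $N_{D'}$ equals the first Chern class of that trivial $S^1$-bundle restricted to $\Sigma=S^2$, which is zero. Hence $D'\cdot D' = \deg N_{D'} = 0$.

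The step I expect to be the main obstacle is making the self-intersection computation rigorous, i.e.\ pinning down that the $S^1$-fibration over the level hypersurface $\{\xi=\epsilon\}$ in the \emph{covered} end $\mathfrak{M}_{0,2\pi n}$ is topologically trivial and that the curvature two-form $d\eta|_{\{\xi=\epsilon\}}$ descends to an exact form on $\Sigma$. This is where the passage to the $n$-fold cover matters: downstairs, over $\mathfrak{M}_{\mathfrak{a},2\pi}$, the corresponding $S^1$-bundle has nonzero ``winding,'' but after unwrapping $t\mapsto t+2\pi n$ the monodromy is killed and the bundle becomes trivial, so $F$ can be chosen globally up to the nowhere-vanishing holomorphic transition factors, forcing $c_1(N_{D'})=0$. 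A clean alternative that sidesteps the explicit connection computation: note $D'$ is a fiber-type curve for the holomorphic map $p:\overline{M'}\to\Sigma=\mathbb{P}^1$ extended over infinity — more precisely $\overline{M'}$ maps holomorphically to $\mathbb{P}^1$ (the rational quotient) and $D'$ maps isomorphically, hence $D'$ is a section of a fibration only if\ldots; rather, the generic fiber of $p$ is a disc compactified to $\mathbb{P}^1$ meeting $D'$ transversally in one point, and one checks $D'$ itself is algebraically equivalent to a small perturbation disjoint from itself by flowing along the (now globally defined, real holomorphic) vector field $\partial_\zeta = -J\partial_\varphi$, whose time-$T$ flow moves $D'=\{\zeta=\infty\}$ off itself; since the flow is by biholomorphisms this shows $D'\cdot D'=0$. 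I would present whichever of these is shortest, likely the latter, citing Proposition \ref{prop:extendaction} for the extended action and the flow.
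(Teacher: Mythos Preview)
Your reduction to the ALF case is exactly what the paper has in mind: it simply says ``our previous treatment \ldots\ remains wholly applicable'' and states the proposition, so the compactification part of your argument is correct and aligns with the paper.

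For the self-intersection, your first approach is sound: the link of $\mathfrak{M}_{0,2\pi n}$ is $S^2\times S^1$ with the \emph{trivial} $S^1$-bundle structure (this is precisely the $\mathfrak{a}=0$ case of Definition~\ref{def:af}), and the compactification fills in each circle fiber with a disc, so a neighborhood of $D'$ is the trivial disc bundle over $\mathbb{P}^1$ and $\deg N_{D'}=0$. You can state this in one line without the digression about $(We^v)_\xi$ integrating to zero; the topology of the link already does all the work.

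Your second approach, however, contains a genuine error. The vector field $\partial_\zeta=-J\partial_\varphi=J\mathcal{K}$ is part of the holomorphic $\mathbb{C}^*$-action generated by $\mathcal{K}$ and $J\mathcal{K}$, and by the very Proposition~\ref{prop:extendaction} you cite, this action \emph{fixes $D'$ pointwise}. Concretely, in the compactification coordinate $w=\exp(-\zeta-i\varphi)$ one has $\partial_\zeta=-\mathrm{Re}\,w\,\partial_{\mathrm{Re}\,w}-\mathrm{Im}\,w\,\partial_{\mathrm{Im}\,w}$, which vanishes along $\{w=0\}=D'$. So the time-$T$ flow of $\partial_\zeta$ does not move $D'$ at all, and this argument cannot detect $D'\cdot D'$. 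Drop this alternative and keep the topological bundle argument.
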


Our focus is solely on the behavior of the covering space $\mathfrak{M}_{0,2\pi n}\setminus\overline{B_N(0)}$ at infinity. Denote the compactification of the end $\mathfrak{M}_{0,2\pi n}\setminus\overline{B_N(0)}$ as $M'$. 
We now at least have a good understanding about the complex structure of the gravitational instanton on the end. 

\begin{proposition}\label{prop:biholomorphic}
For a Hermitian non-K\"ahler rational $AF_{\mathfrak{a},2\pi}$ gravitational instanton with $\mathfrak{a}/2\pi=m/n$, a suitable end $M\setminus K$ of the gravitational instanton is naturally biholomorphic to the quotient $\mathbb{C}^*\times\mathbb{P}^1/_\sim$ by the following relation
$$\sim:z\times[x:y]\mapsto e^{2\pi i\frac{1}{n}}z\times[e^{2\pi i\frac{m}{n}}x:y].$$
The holomorphic $S^1$-action induced by the extremal vector field $\mathcal{K}$ corresponds to the $S^1$-action on the quotient $\mathbb{C}^*\times\mathbb{P}^1/_\sim$ descended from the rotation on $\mathbb{C}^*\times\mathbb{P}^1$
$$e^{i{t}}\cdot(z\times[x:y])=e^{-i{t}}z\times[x:y].$$
\end{proposition}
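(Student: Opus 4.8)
The plan is to upgrade the local compactification statement of Proposition \ref{prop:rationala2picompactification} to a global biholomorphism of the end, and then descend by the deck group of $\Pi$. First I would analyze the compactified end $M'$ of $\mathfrak{M}_{0,2\pi n}\setminus\overline{B_N(0)}$: by construction from Section \ref{subsec:localansatztogravitational}, the holomorphic quotient map $p$ exhibits the end as an $S^1$-bundle over $\Sigma=\mathbb{P}^1$ in the smooth category, and the $\mathbb{C}^*$-action generated by $\partial_t$ and $J\partial_t$ makes $M'$ a $\mathbb{C}^*$-space over $\mathbb{P}^1$. The added divisor $D'$ is a section fixed by the extended $\mathbb{C}^*$-action (Proposition \ref{prop:extendaction}), with trivial self-intersection. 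I would argue that, after possibly shrinking the end, $M'$ is biholomorphic to a neighborhood of the zero section in the total space of a holomorphic line bundle over $\mathbb{P}^1$ on which $\mathbb{C}^*$ acts by fiberwise scaling; triviality of the normal bundle $[D']^2=0$ forces this line bundle to be $\mathcal{O}_{\mathbb{P}^1}$, so the end is biholomorphic to $\mathbb{C}^*_{\le 1}\times\mathbb{P}^1$ (a punctured-disc-bundle, which is the same as $\mathbb{C}^*\times\mathbb{P}^1$ restricted to $|z|$ in a half-open annulus) with the rotation $e^{it}\cdot(z\times[x:y])=e^{-it}z\times[x:y]$, matching the normalization $\mathcal{K}=-\partial_t$ from Section \ref{subsec:classification of ends}.

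The key technical step is identifying the deck transformation of the covering $\Pi:\mathfrak{M}_{0,2\pi n}\to\mathfrak{M}_{\mathfrak{a},2\pi}$ in these holomorphic coordinates. The deck group is $\mathbb{Z}_n$, generated by $[\rho,\alpha,\beta,t]\mapsto[\rho,\alpha+\tfrac{2\pi}{n},\beta,t+\tfrac{2\pi m}{n}]$ (one checks this composed $n$ times returns to the identity in $\mathfrak{M}_{0,2\pi n}$ precisely because $n\cdot\tfrac{2\pi}{n}=2\pi$ is killed in the base $\mathfrak{a}=\tfrac{2\pi m}{n}$ while $n\cdot\tfrac{2\pi m}{n}=2\pi m$ is the $\mathfrak{b}=2\pi n$-shift). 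Under the coordinates $w=\exp(-\zeta-i\varphi)$ of Step 1–2 in the proof of Theorem \ref{thm:compactifyalf}, the angular variable $t$ (equivalently $\varphi$, with $\mathcal{K}=-\partial_t=\partial_\varphi$) becomes the argument of the $\mathbb{C}^*$-fiber coordinate $z$ (after relating $w$ and $z$ by the global structure), while the rotation $\alpha$ on the base $\mathbb{R}^3$ becomes rotation of the affine coordinate $x$ on $\Sigma=\mathbb{P}^1$ — this is because the round metric $e^\varpi(dx^2+dy^2)$ on $\Sigma$ has its rotational Killing field induced precisely by the $\alpha$-rotation of $\mathbb{R}^3\setminus\{0\}$ after the conformal change, a point I would make precise using the ALF asymptotics $h\sim\pi^*g_{\mathbb{R}^3}+\gamma^2$ and $g=\lambda^{2/3}h$ with $\lambda^{1/3}=\tfrac1\rho+O'(\rho^{-1-\delta_0})$ from Theorem \ref{thm:expansion}. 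Therefore the generator of $\mathbb{Z}_n$ sends $z\mapsto e^{2\pi i/n}z$ and $[x:y]\mapsto[e^{2\pi i m/n}x:y]$, which is exactly the relation $\sim$ in the statement. Finally, the quotient $(\mathbb{C}^*\times\mathbb{P}^1)/\sim$ is biholomorphic to the end $M\setminus K$ by the equivariance of $\Pi$ (it is holomorphic for $J$ and $\Pi^*J$), and the $S^1$-action induced by $\mathcal{K}$ descends from $e^{-it}z\times[x:y]$ as claimed.

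The main obstacle I anticipate is the global identification of $M'$ (the compactified cover's end) with $\mathbb{C}^*\times\mathbb{P}^1$, i.e. promoting the local trivializations $\overline{p^{-1}(U)}$ from the proof of Theorem \ref{thm:compactifyalf} to a global biholomorphism compatible with the $\mathbb{C}^*$-action. Locally the complex structure is pinned down by the forms $d\xi+iW^{-1}\eta$ and $dx+idy$ and the gauge change to $w=\exp(-\zeta-i\varphi)$, but patching over the two affine charts of $\Sigma=\mathbb{P}^1$ requires checking that the transition functions for the bundle $M'\to\mathbb{P}^1$ are holomorphic and $\mathbb{C}^*$-equivariant, hence classified by $H^1(\mathbb{P}^1,\mathcal{O}^*)\cong\mathbb{Z}$, and then that the integer is $0$ by $[D']^2=0$. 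Once this is in place, tracking how the explicit deck isometry acts on the holomorphic coordinates is a bookkeeping matter, handled by comparing $S^1$-weights: the $z$-rotation weight is fixed by $\mathcal{K}=\partial_\varphi$ being the residual Killing field, and the $x$-rotation weight $m/n$ is fixed by how much the base-rotation $\alpha\mapsto\alpha+2\pi/n$ advances the $t$-coordinate, namely by $2\pi m/n$.
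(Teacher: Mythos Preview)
Your approach is essentially the same as the paper's: compactify the cover's end, identify it with $\mathbb{C}^*\times\mathbb{P}^1$ using that $D'$ is a fixed section with $(D')^2=0$, read off the $\mathcal{K}$-action as rotation of $z$, and then descend by the deck group of $\Pi$. The paper carries this out in a single terse paragraph, while you (reasonably) spell out why $(D')^2=0$ forces the line bundle to be trivial and worry about patching the local charts; those additions are fine.

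There is, however, a bookkeeping error in your identification of the deck generator. The deck group of $\Pi:\mathfrak{M}_{0,2\pi n}\to\mathfrak{M}_{\mathfrak{a},2\pi}$ is generated by
\[
[\rho,\alpha,\beta,t]\longmapsto\bigl[\rho,\ \alpha+\tfrac{2\pi m}{n},\ \beta,\ t+2\pi\bigr],
\]
not by the map you wrote (with the $\tfrac{2\pi}{n}$ and $\tfrac{2\pi m}{n}$ shifts swapped). Indeed, your map does not descend to the identity on $\mathfrak{M}_{\mathfrak{a},2\pi}$, and composed $n$ times it shifts $t$ by $2\pi m$, which is not a multiple of the period $2\pi n$; your parenthetical check is therefore incorrect. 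With the correct generator, the $t$-shift by $2\pi$ becomes a $z$-rotation by $e^{2\pi i/n}$ (since on the cover the $t$-circle has length $2\pi n$), and the $\alpha$-shift by $2\pi m/n$ becomes the $[x:y]$-rotation by $e^{2\pi i m/n}$, matching the relation $\sim$ in the statement. So your final holomorphic formula is right, but the path to it needs this correction.
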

The action is chosen to be $e^{i{t}}\cdot(z\times[x:y])=e^{-i{t}}z\times[x:y]$ in order to be compatible with $\mathcal{K}=-\partial_t$.
\begin{proof}
The cover of a suitable end of the gravitational instanton is biholomorphic to $\mathbb{C}^*\times\mathbb{P}^1$. 
Because of Proposition \ref{prop:extendaction}, 
the way we compactify the cover at infinity is natural, in the sense that the added holomorphic sphere $D'$ is fixed by the extended $\mathbb{C}^*$-action. The pulled-back extremal vector field $\Pi^*\mathcal{K}$ corresponds to the rotation on the $\mathbb{C}^*$ factor
$$e^{i{t}}\cdot(z\times[x:y])=e^{-i{t}}z\times[x:y].$$
Using the covering map $\Pi$, we can now  recover the end holomorphically, and the end emerges as the quotient of $\mathbb{C}^*\times\mathbb{P}^1$ by the action
$$z\times[x:y]\mapsto e^{2\pi i\frac{1}{n}}z\times[e^{2\pi i\frac{m}{n}}x:y].$$ 
\end{proof}

\begin{proposition}\label{prop:rationala2pipositivity}
The hermitian metric $e^{-2\log s_g}g_{-K}$ on $-K_M$ restricted to the end $\mathfrak{M}_{\mathfrak{a},2\pi}\setminus\overline{B_N(0)}$ pulled back to the cover $\mathfrak{M}_{0,2\pi n}\setminus\overline{B_N(0)}$ can be extended to a singular hermitian metric on $-(K_{M'}+D')$, with the regularity properties described in Proposition \ref{prop:metricbasicestimate}.
\end{proposition}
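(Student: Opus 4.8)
The plan is to reduce the statement entirely to the ALF analysis already carried out in Section \ref{subsec:alfstructure}, applied to the pulled-back data on the covering end. Concretely, by Proposition \ref{prop:rationala2picompactification} the end $\mathfrak{M}_{0,2\pi n}\setminus\overline{B_N(0)}$ with complex structure $\Pi^*J$ admits a compactification $M'$ by adding a holomorphic sphere $D'$, and by construction this is nothing but the compactification produced by Step 1 and Step 2 of the proof of Theorem \ref{thm:compactifyalf}, since the covering map $\Pi$ turns $\mathcal{K}$ into a free $S^1$-action and all the local ansatz formulas \eqref{eq:metricglocal}--\eqref{eq:asymptoticv} are invariant under the covering (they are written in terms of the symplectic reduction $\Sigma=\mathbb{P}^1$ and the moment map $\xi$, both of which pull back). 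So the first step is simply to observe that the entire local picture on $\overline{p^{-1}(U)}$ — the gauge $(\zeta,\varphi,x,y)$, the holomorphic coordinates $we^{-F}$ and $x+iy$, and the holomorphic frame $\tfrac{1}{we^{-F}}s$ for $K_{M'}+D'$ — carries over verbatim to the cover.

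Next I would carry out the same norm computation as in Proposition \ref{prop:metricbasicestimate}, now for $\Pi^*(e^{-2\log s_g}g_{-K})$ against the frame $\tfrac{1}{we^{-F}}s$ of $-(K_{M'}+D')$. Since $\Pi$ is a local biholomorphism, pulling back the hermitian metric and the K\"ahler metric commutes with all the pointwise algebraic manipulations, and the computation terminates in exactly the same way at $4s_g^2e^{-v}$, which by \eqref{eq:asymptoticv} equals $e^{-\varpi+O'(\rho^{-\delta_0})}(1+O'(\rho^{-\delta_0}))$. Hence the extended metric is smooth off $D'$, continuous across $D'$ with continuous local potential $\phi$ (whose value on $D'$ is $e^{-\varpi}$), and has vanishing Lelong number everywhere on $D'$. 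This is precisely the regularity asserted in Proposition \ref{prop:metricbasicestimate}, so the proof is complete once the reduction is spelled out.

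The one point that deserves care — and is the only genuine obstacle — is to make sure the expansions \eqref{eq:expansionw} and \eqref{eq:asymptoticv} are legitimately available on the cover. These were derived in Section \ref{subsec:localansatztogravitational} from Theorem \ref{thm:expansion}, i.e.\ from the ALF/AF asymptotics of $h$. But $h$ on the original AF end is asymptotic to the model $h_{\mathfrak{a},2\pi}$ with rate $\delta_0$, and since $\Pi$ is a finite covering that is a local isometry, $\Pi^*h$ on $\mathfrak{M}_{0,2\pi n}\setminus\overline{B_N(0)}$ is asymptotic with the same rate $\delta_0$ to $\Pi^*h_{\mathfrak{a},2\pi}=h_{0,2\pi n}$, which is genuinely an ALF-$A$ model. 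Thus the covering end is a Hermitian non-K\"ahler Ricci-flat end with finite $\int|Rm|^2$ and ALF-$A$ asymptotics, so Theorem \ref{thm:expansion} and the lemmas of Section \ref{subsec:localansatztogravitational} apply to it word for word, delivering \eqref{eq:expansionw}--\eqref{eq:asymptoticv} for $\Pi^*g$. With this in hand the rest is a transcription of Proposition \ref{prop:metricbasicestimate}, and I would phrase the proof accordingly: reduce to the covering end, invoke the ALF analysis there, and note that the computation and regularity conclusions of Proposition \ref{prop:metricbasicestimate} hold unchanged.
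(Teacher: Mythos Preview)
Your proposal is correct and takes exactly the same approach as the paper: the paper's proof is the single sentence ``This is just Proposition \ref{prop:metricbasicestimate} applied to the end $\mathfrak{M}_{0,2\pi n}\setminus\overline{B_N(0)}$,'' and your write-up is a careful unpacking of precisely that reduction.
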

\begin{proof}
This is just Proposition \ref{prop:metricbasicestimate} applied to the end $\mathfrak{M}_{0,2\pi n}\setminus\overline{B_N(0)}$.
\end{proof}

\begin{theorem}\label{thm:finalcompactificationrational}
For a Hermitian non-K\"ahler rational ${AF}_{\mathfrak{a},2\pi}$ gravitational instanton $(M,h)$ with $\mathfrak{a}/2\pi=m/n$, the complex surface $M$ can be naturally compactified by adding a holomorphic $D=\mathbb{P}^1$ at infinity to a compact complex algebraic surface $\overline{M}$ with two orbifold points in the added divisor $D$. The $\mathbb{Q}$-line bundle $-(K_{\overline{M}}+D)$ is ample on the compactified surface $\overline{M}$.
\end{theorem}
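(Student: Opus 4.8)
The plan is to descend the structure established on the cover $M'$ (Proposition~\ref{prop:rationala2picompactification}) down to the quotient $\overline{M}$, being careful about the two fixed points of the $\mathbb{Z}_n$-action which will become orbifold points. First I would observe that the covering map $\Pi:\mathfrak{M}_{0,2\pi n}\setminus\overline{B_N(0)}\to\mathfrak{M}_{\mathfrak{a},2\pi}\setminus\overline{B_N(0)}$ is the quotient by a cyclic group $\mathbb{Z}_n$ acting by $z\times[x:y]\mapsto e^{2\pi i/n}z\times[e^{2\pi i m/n}x:y]$, and that this action extends to the compactified cover $M'$ with $D'$ invariant (by Proposition~\ref{prop:extendaction} applied to the cover, the added sphere $D'$ is fixed by the extended $\mathbb{C}^*$-action, hence in particular invariant under the $\mathbb{Z}_n$ inside it). The quotient $\overline{M}:=M'/\mathbb{Z}_n$ is then a compact complex surface compactifying $M=M'/\mathbb{Z}_n$ minus the compact set, with $D=D'/\mathbb{Z}_n\cong\mathbb{P}^1$ the added curve. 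The two fixed points of the $\mathbb{Z}_n$-action both lie on $D'$ (they are $\{z=0\}\times[1:0]$ and $\{z=0\}\times[0:1]$ in the model, where the rotations on the two factors have isolated common fixed loci intersecting $D'$); these descend to the two orbifold points of $\overline{M}$ on $D$, and away from them the quotient is smooth. Self-intersection: since $D'\cdot D'=0$ and $\Pi$ is degree $n$ but $D$ is covered with multiplicity one away from the two points, $D$ is a $\mathbb{Q}$-curve with $D\cdot D=0$ as well (this also matches the AF asymptotics and Lemma~\ref{lem:volume lemma S^1}).

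Next I would transport the hermitian metric. By Proposition~\ref{prop:rationala2pipositivity} the metric $e^{-2\log s_g}g_{-K}$, pulled back to the cover, extends to a singular hermitian metric on $-(K_{M'}+D')$ with continuous potential and vanishing Lelong numbers across $D'$. Because $e^{-2\log s_g}g_{-K}$ is defined $\mathbb{Z}_n$-invariantly downstairs (it is built from $g$ and $s_g$ on $M$, and $\Pi^*$ of it is what we extended), the extended metric on $-(K_{M'}+D')$ is automatically $\mathbb{Z}_n$-invariant, so it descends to a singular hermitian metric on the $\mathbb{Q}$-line bundle $-(K_{\overline{M}}+D)$ over the orbifold $\overline{M}$, retaining the same regularity: smooth outside $D$ and the orbifold points, continuous across $D$ in orbifold charts, with vanishing Lelong numbers. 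The curvature computation \eqref{eq:curvatureform} is conformally natural and $\mathbb{Z}_n$-invariant, so the positivity estimate from Proposition~\ref{prop:positivecurrent} descends: there is a smooth strictly positive $(1,1)$-form $\vartheta$ on $\overline{M}$ (in the orbifold sense) with $\Omega\geq\vartheta$ as currents, the extension across $D$ and across the orbifold points again following from the bounded-quasi-psh extension theorem (Theorem~5.24 in \cite{demaillycomplex}), applied in local uniformizing charts.

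Finally, ampleness of $-(K_{\overline{M}}+D)$ as a $\mathbb{Q}$-line bundle follows from Demailly's regularization theorem (Theorem~1.1 in \cite{demailly92}) exactly as in Theorem~\ref{thm:ampleness}: the existence of a singular hermitian metric whose curvature current dominates a Kähler form, combined with the vanishing of all Lelong numbers, upgrades to a genuine smooth metric of positive curvature after regularization, hence the $\mathbb{Q}$-line bundle is ample (one may work on a resolution of the two orbifold points, or equivalently in the orbifold category where Demailly's theorem still applies); Nakai--Moishezon then gives that $\overline{M}$ is projective, i.e. a compact complex algebraic surface with two orbifold points. I expect the main obstacle to be bookkeeping rather than conceptual: one must verify carefully that the $\mathbb{Z}_n$-action genuinely has only the two isolated fixed points claimed and that they lie on $D$, that $D'$ is indeed $\mathbb{Z}_n$-invariant so that the quotient of the added curve is again $\mathbb{P}^1$ rather than an orbifold quotient thereof, and that all the regularity statements (continuity of potentials, vanishing Lelong numbers, the current inequality $\Omega\geq\vartheta$) make sense and hold in orbifold charts around the two singular points — the quotient singularities are cyclic quotient singularities, so this is standard, but it needs to be stated. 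The positivity input itself is entirely inherited from Section~\ref{subsec:alfstructure} via the covering, so no new analytic estimate is required.
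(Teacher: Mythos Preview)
Your proposal is correct and follows essentially the same approach as the paper: both construct $\overline{M}$ by compactifying the cover and taking the $\mathbb{Z}_n$-quotient, identify the two orbifold points as the fixed points of the induced $\mathbb{Z}_n$-action on $D'$, and deduce ampleness of $-(K_{\overline{M}}+D)$ by descending the positivity from Proposition~\ref{prop:positivecurrent} and Theorem~\ref{thm:ampleness}. The paper's proof is terse (two sentences), whereas you spell out the descent and orbifold bookkeeping in detail; one minor slip is that the two fixed points lie at $\{z=\infty\}\times[1:0]$ and $\{z=\infty\}\times[0:1]$ rather than at $\{z=0\}$, since $D'$ is the added curve at infinity and $z=0$ is not in the end $\mathbb{C}^*\times\mathbb{P}^1$.
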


\begin{proof}
The positivity is clear from how we prove Proposition \ref{prop:positivecurrent} and Theorem \ref{thm:ampleness}.
The compactification $\overline{M}$ clearly has two orbifold points, explicitly given by $\infty\times[0:1]$ and $\infty\times[1:0]$ located in the divisor $D$, identifying the end as the quotient $\mathbb{C}^*\times\mathbb{P}^1/_\sim$ by the action $z\times[x:y]\mapsto e^{2\pi\frac{1}{n}}z\times[e^{2\pi i\frac{m}{n}}x:y]$.
\end{proof}

\begin{remark}
Similarly, the holomorphic $\mathbb{C}^*$-action induced by $\mathcal{K}$ and $J\mathcal{K}$ extends to the compactification $\overline{M}$, fixing the holomorphic sphere $D$. This explains the naturality of the compactification.
\end{remark}

\subsection{Structure of the compactified surfaces} 
\label{subsec:structure}
In the following, the pair $(\overline{M},D)$ always denotes the naturally compactified algebraic surface with the added divisor, arising from a Hermitian non-K\"ahler ALF gravitational instanton or a Hermitian non-K\"ahler rational AF gravitational instanton.

\subsubsection{Hermitian non-K\"ahler ALF gravitational instantons}

The main conclusions of this part are Corollary \ref{cor:alfpositive} and Corollary \ref{cor:ruled}.

\begin{proposition}
The complex surface $\overline{M}$ is rational.
\end{proposition}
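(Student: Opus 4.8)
The plan is to show that $\overline{M}$ is a smooth (or orbifold) projective rational surface by exhibiting it as birationally ruled over $\mathbb{P}^1$ and then ruling out irrationality. First I would record what we already know: by Theorem \ref{thm:ampleness} (resp. Theorem \ref{thm:finalcompactificationrational}) the $\mathbb{Q}$-line bundle $-(K_{\overline{M}}+D)$ is ample, so $-K_{\overline{M}}$ is big (it equals the ample class plus the effective divisor $D$). A projective surface with $-K_{\overline{M}}$ big has Kodaira dimension $-\infty$, hence is birational to a ruled surface $\mathbb{P}^1\times C$ for some smooth curve $C$; rationality is the assertion that $C=\mathbb{P}^1$, equivalently $q(\overline{M})=b_1(\overline{M})/2=0$. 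So the whole statement reduces to computing the irregularity.

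The key geometric input is the holomorphic map $p:M\setminus K\to\Sigma=\mathbb{P}^1$ coming from the symplectic reduction (Section \ref{subsec:localansatztogravitational}), together with the fact established in Theorem \ref{thm:compactifyalf} (resp. Theorem \ref{thm:finalcompactificationrational}) that the added divisor $D\cong\mathbb{P}^1$ sits at infinity and the extended $\mathbb{C}^*$-action fixes $D$ pointwise. The plan is: (i) show that $p$ extends to a holomorphic fibration $\bar p:\overline{M}\dashrightarrow\mathbb{P}^1$, using that near $D$ the compactification coordinates are $(we^{-F},x+iy)$ with $x+iy$ the coordinate on $\Sigma$, so $\bar p$ is literally the second coordinate near $D$ and restricts to $D\to\mathbb{P}^1$ with degree one; (ii) resolve indeterminacy if necessary to get an honest morphism $\bar p:\overline{M}'\to\mathbb{P}^1$ from a blow-up, whose general fiber is a rational curve (the closure of a $\mathbb{C}^*$-orbit-times-nothing, i.e.\ the $\{x+iy=\text{const}\}$ fibers, which are $\mathbb{C}^*\times\{pt\}$ compactified by one point of $D$ and whatever happens in the interior — here one uses that $M$ itself is built from $\mathbb{C}^*$-orbits over $\Sigma$). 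Once $\overline{M}$ admits a morphism onto $\mathbb{P}^1$ with rational general fiber, $\overline{M}$ is birationally ruled over $\mathbb{P}^1$, hence rational.

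Alternatively — and this may be cleaner to write — I would argue directly via the $\mathbb{C}^*$-action: $\overline{M}$ carries a nontrivial holomorphic $\mathbb{C}^*$-action (Proposition \ref{prop:extendaction}), and a smooth projective surface with Kodaira dimension $-\infty$ admitting a $\mathbb{C}^*$-action is either rational or a $\mathbb{P}^1$-bundle over a curve $C$ with the action in the fiber direction; in the latter case the base $C$ would be a quotient of the fixed locus, but the fixed locus of our action contains $D\cong\mathbb{P}^1$ and the interior fixed points (the zeros of $\mathcal{K}$ inside a compact set), all of which are rational or isolated, forcing $C=\mathbb{P}^1$. Either way the conclusion is rationality. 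The main obstacle I anticipate is not rationality per se but making precise the claim that the fibration $\bar p$ extends holomorphically across the interior of $M$ and that its fibers are rational — in the AF case one should genuinely use the explicit description of the end as $\mathbb{C}^*\times\mathbb{P}^1/\sim$ (Proposition \ref{prop:biholomorphic}) and the fact that the complex quotient $\Sigma$ is globally $\mathbb{P}^1$ (established via the Toda-equation integral identities in Section \ref{sec:collapsegeometry}), so that $M$ itself is a $\mathbb{C}^*$- (or $\mathbb{C}$-)fibration over $\mathbb{P}^1$; the compactification then just caps each fiber, and rationality of $\overline{M}$ follows from rationality of base and fiber. I expect the write-up to be short, citing the classification of surfaces with $\kappa=-\infty$ and the $\mathbb{C}^*$-action structure.
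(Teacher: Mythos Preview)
Your proposal is correct and arrives at rationality, but it is considerably more elaborate than the paper's argument. The paper's proof is two sentences: the closures of generic $\mathbb{C}^*$-orbits are holomorphic $\mathbb{P}^1$'s (each orbit is a $\mathbb{C}^*$ whose closure picks up one point of $D$ at infinity and one interior fixed point), and since these curves move in a family parametrized by $D\cong\mathbb{P}^1$ they have nonnegative self-intersection; blowing up the base locus yields a ruled surface over $\mathbb{P}^1$, so $\overline{M}$ is rational.

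The main difference is that the paper works directly with the globally defined $\mathbb{C}^*$-action on $\overline{M}$ (Proposition \ref{prop:extendaction}) rather than with the map $p$, which is only defined on the end $M\setminus K$. Your concern about extending $p$ holomorphically across the interior is exactly the technical obstacle the paper sidesteps: the orbit closures give the desired pencil of rational curves without ever needing to make sense of $p$ inside $K$. Your detour through $\kappa=-\infty$ via bigness of $-K_{\overline{M}}$ is correct but unnecessary, and your alternative (B) is essentially the paper's argument stated in a more structural way. The upshot is that the shortest route is the one you flag at the end: $M$ is swept out by $\mathbb{C}^*$-orbits over $\Sigma=\mathbb{P}^1$, compactification caps each fiber, and rationality of $\overline{M}$ follows immediately.
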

\begin{proof}
Generic orbits under the holomorphic $\mathbb{C}^*$-action form a family of holomorphic spheres with nonnegative self-intersections. By performing appropriate blow-ups of points within $\overline{M}\setminus D$, $\overline{M}$ is birational to a ruled surface over $D=\mathbb{P}^1$, thereby affirming the rationality of $\overline{M}$.
\end{proof}

Recall that a rational surface must be either $\mathbb{P}^1\times\mathbb{P}^1$, or have $\mathbb{P}^2$ as a minimal model, or have the Hirzebruch surface $H_k$ with $k\geq2$ as a minimal model.

\begin{corollary}\label{cor:intersection}
The surface $\overline{M}$ contains no rational curve with self-intersection $\leq-2$, except possibly the divisor $D$. Rational curves intersecting $D$ but different from $D$ have nonnegative self-intersections. 
\end{corollary}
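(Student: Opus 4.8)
The plan is to exploit the ampleness of $-(K_{\overline{M}}+D)$ from Theorem \ref{thm:ampleness} together with the adjunction formula, and to play off the positivity of $D$ itself. First I would record the key numerical input: since $-(K_{\overline{M}}+D)$ is ample, for any irreducible curve $C$ on $\overline{M}$ we have $(K_{\overline{M}}+D)\cdot C<0$, i.e. $K_{\overline{M}}\cdot C<-D\cdot C$. Now suppose $C$ is a rational curve with $C^2\leq-2$ and $C\neq D$. By adjunction, $K_{\overline{M}}\cdot C = -2 - C^2 \geq 0$. Combined with the ampleness inequality this forces $D\cdot C < -K_{\overline{M}}\cdot C \le 0$, so $D\cdot C<0$. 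Since $C\neq D$ and both are irreducible, $D\cdot C\geq 0$ (distinct irreducible curves meet non-negatively), a contradiction. Hence no such $C$ exists, which is exactly the first assertion.

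For the second assertion, let $C$ be a rational curve with $C\neq D$ but $C\cap D\neq\emptyset$, so $D\cdot C\geq 1$. Again by adjunction $K_{\overline{M}}\cdot C = -2-C^2$, and ampleness of $-(K_{\overline M}+D)$ gives $(K_{\overline{M}}+D)\cdot C<0$, i.e. $-2-C^2+D\cdot C<0$, so $C^2 > D\cdot C - 2 \geq -1$. Since $C^2$ is an integer, $C^2\geq 0$. This gives the nonnegativity of the self-intersection of rational curves meeting $D$ and different from $D$. (In the rational AF case one works with the $\mathbb{Q}$-line bundle $-(K_{\overline M}+D)$ and $\mathbb{Q}$-valued intersection numbers, noting the orbifold points of $\overline M$ lie on $D$; for a curve $C\neq D$ passing through an orbifold point the local intersection contributions $D\cdot C$ and the adjunction correction are still such that the integrality of the ambient Picard lattice away from $D$ gives the same conclusion. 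I would remark that it suffices to carry out the argument on the end's smooth locus since any rational curve meeting $D$ transversally away from the two orbifold points has honest integral self-intersection, and one checks the orbifold points do not affect the sign.)

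The main obstacle I anticipate is the rational AF case, where $\overline M$ has two orbifold points on $D$ and $-(K_{\overline M}+D)$ is only a $\mathbb{Q}$-Cartier divisor, so the intersection-theoretic inequalities become rational rather than integral and adjunction must be replaced by orbifold adjunction with fractional discrepancy terms. I would handle this by passing to the global smooth cover $M'$ of the end (the space appearing in Proposition \ref{prop:rationala2picompactification} and Theorem \ref{thm:finalcompactificationrational}), where the compactifying divisor $D'$ has trivial self-intersection and $-(K_{M'}+D')$ is an honest ample line bundle, running the adjunction argument there, and then descending: any rational curve on $\overline M$ pulls back to a (union of) rational curves on a resolution or on $M'$, and the self-intersection and intersection with $D$ can only be made more negative under such covers/resolutions by the standard behavior of these numbers, so the conclusion on $\overline M$ follows a fortiori. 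The ALF case itself is immediate from the two displayed computations above and should be stated first.
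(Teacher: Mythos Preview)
Your argument is correct and is essentially the paper's own proof: both use the adjunction formula $C^2=-2-K_{\overline{M}}\cdot C$ together with the ampleness of $-(K_{\overline{M}}+D)$ from Theorem~\ref{thm:ampleness}. The paper splits into the two cases $D\cdot C=0$ and $D\cdot C\geq 1$ and computes directly, whereas you phrase the first assertion as a proof by contradiction; the underlying inequalities are identical.

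One remark: this corollary sits in the ALF subsection and its proof begins with ``In the ALF case,'' so $\overline{M}$ is smooth here and $-(K_{\overline{M}}+D)$ is an honest ample line bundle. Your extended discussion of the rational AF orbifold case, passing to covers, and orbifold adjunction is therefore unnecessary for this particular statement (the paper handles the rational AF case separately, later invoking only the absence of $(-2)$-curves in $M$, which follows from the same positivity argument applied away from $D$). Your caution is not wrong, just superfluous for the statement as placed.
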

\begin{proof}
In the ALF case, the line bundle $-(K_{\overline{M}}+D)$ is ample. A straightforward application of the adjunction formula demonstrates that for a rational curve $C$ not intersecting $D$
$$C^2=-2-K_{\overline{M}}\cdot C=-2-(K_{\overline{M}}+D)\cdot C\geq-1.$$ 
For a rational curve $C$ intersecting $D$ yet distinct from $D$
$$C^2=-2-K_{\overline{M}}\cdot C\geq-1-(K_{\overline{M}}+D)\cdot C\geq0.$$
\end{proof}

\begin{corollary}\label{cor:alfpositive}
Suppose $(M,h)$ is a Hermitian non-K\"ahler ALF gravitational instanton.
If $D$ has nonnegative self-intersection $D^2\geq0$, then $\overline{M}$ is a del Pezzo surface. Moreover, the del Pezzo surface along with the divisor $(\overline{M},D)$, up to biholomorphisms, must be one of the following
\begin{itemize}
\item $(\mathbb{P}^2,D)$, where $D=\mathbb{P}^1$ is a hyperplane with self-intersection 1.
\item $(Bl_p\mathbb{P}^2,D)$, where $D=\mathbb{P}^1$ is a curve with self-intersection 1.
\item $(\mathbb{P}^1\times\mathbb{P}^1,D)$, where $D=0\times\mathbb{P}^1$ is a curve with self-intersection 0.
\end{itemize}
\end{corollary}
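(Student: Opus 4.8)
\medskip

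The plan is to combine the ampleness of $-(K_{\overline{M}}+D)$ with the classification of del Pezzo surfaces, using the numerical constraint $D^2 \geq 0$ together with Corollary \ref{cor:intersection} to pin down the pair $(\overline{M},D)$. First I would observe that ampleness of $-(K_{\overline{M}}+D)$ gives $-K_{\overline{M}} = -(K_{\overline{M}}+D) + D$, which is the sum of an ample class and an effective class with $D^2 \geq 0$; I want to upgrade this to show $-K_{\overline{M}}$ is ample, i.e. that $\overline{M}$ is a genuine del Pezzo surface. For this it suffices (by the Nakai--Moishezon criterion, or by Kleiman together with the structure of the Mori cone of a rational surface) to check $-K_{\overline{M}} \cdot C > 0$ for every irreducible curve $C$ and $K_{\overline{M}}^2 > 0$. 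The former splits into three cases: if $C = D$, then $-K_{\overline{M}} \cdot D = -(K_{\overline{M}}+D)\cdot D + D^2 > 0$ since the first term is positive by ampleness and $D^2 \geq 0$; if $C$ meets $D$ but $C \neq D$, then by the adjunction computation in Corollary \ref{cor:intersection} we have $C^2 \geq 0$, so $-K_{\overline{M}}\cdot C = 2 + C^2 - 2p_a(C) \geq 2 > 0$ once one notes $C$ is rational (every curve here is swept out by or dominated by $\mathbb{C}^*$-orbits, hence rational); if $C$ is disjoint from $D$, then again $C^2 \geq -1$ by Corollary \ref{cor:intersection} and $-K_{\overline{M}}\cdot C = -(K_{\overline{M}}+D)\cdot C + D\cdot C = -(K_{\overline{M}}+D)\cdot C > 0$ directly. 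The bound $K_{\overline{M}}^2 > 0$ then follows because a rational surface with $-K$ nef and $-K\cdot C > 0$ for all curves has $K^2 > 0$ — alternatively, one runs the argument on the minimal model and uses that blowing up drops $K^2$ by one while $\overline{M}$ cannot contain too many disjoint $(-1)$-curves given the ampleness bound $-(K_{\overline{M}}+D)\cdot E \geq 1$ on each such $E$.

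\medskip

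Next, having established that $\overline{M}$ is del Pezzo, I would enumerate the possibilities for the pair $(\overline{M},D)$ using $D^2 \geq 0$. By the classification recalled just before the corollary, $\overline{M}$ is $\mathbb{P}^2$, $\mathbb{P}^1\times\mathbb{P}^1$, or a blow-up of $\mathbb{P}^2$ at $k \leq 8$ points in general position. The divisor $D = \mathbb{P}^1$ is a smooth rational curve with $D^2 \geq 0$; by adjunction $D^2 = -2 - K_{\overline{M}}\cdot D$, and since $-K_{\overline{M}}\cdot D > 0$ (just shown) and $-(K_{\overline{M}}+D)\cdot D \geq 1$ we get $D^2 + 2 = -K_{\overline{M}}\cdot D = -(K_{\overline{M}}+D)\cdot D + D^2 \geq 1 + D^2$, which is automatic, so I instead bound $D^2$ from above: $-(K_{\overline{M}}+D)\cdot D \geq 1$ rearranges to $-K_{\overline{M}}\cdot D \geq 1 + D^2$, i.e. $2 + D^2 - 2\cdot 0 \geq \ldots$ — more cleanly, $-(K_{\overline{M}}+D)\cdot D = -K_{\overline{M}}\cdot D - D^2 = (2+D^2) - D^2 = 2$, wait: for a smooth rational curve $-K_{\overline{M}}\cdot D = 2 + D^2$, hence $-(K_{\overline{M}}+D)\cdot D = 2$, a constant! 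So the only constraint from ampleness at $D$ is automatically satisfied, and I must instead use ampleness against the \emph{other} curves together with $D^2 \geq 0$. The cleanest route: a smooth rational curve with $D^2 \geq 0$ on a del Pezzo surface of degree $d = K_{\overline{M}}^2$ moves in a linear system; pairing $D$ with $-K_{\overline{M}}$ gives $2 + D^2 \leq$ (something bounded by $d$-geometry), and one checks $D^2 \in \{0,1\}$ with the listed cases, distinguishing $D^2 = 1$ on $\mathbb{P}^2$ (hyperplane) versus on $Bl_p\mathbb{P}^2$ (the class $H$ pulled back, or $2E_0 - E_1$ type class) versus $D^2 = 0$ forcing a ruling, whose base must be $\mathbb{P}^1$, so $\overline{M} = \mathbb{P}^1\times\mathbb{P}^1$ with $D$ a fiber (a Hirzebruch surface $H_k$ with $k \geq 1$ is excluded since its negative section violates Corollary \ref{cor:intersection} unless it coincides with $D$, but $D^2 = 0 \neq -k$).

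\medskip

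Finally I would clean up the degree-of-freedom count: when $D^2 = 1$ the anticanonical degree and the requirement that $\overline{M}$ have no $(-2)$-curves (Corollary \ref{cor:intersection}) forces $\overline{M} \in \{\mathbb{P}^2, Bl_p\mathbb{P}^2\}$ — higher blow-ups $Bl_{p_1,\dots,p_k}\mathbb{P}^2$ with $k \geq 2$ either acquire a $(-2)$-curve or fail to contain a smooth rational curve of self-intersection $1$ disjoint-behaving correctly with $D$, and in any case an elementary Picard-lattice check rules them out; when $D^2 = 0$ the ruling argument gives exactly $(\mathbb{P}^1\times\mathbb{P}^1, 0\times\mathbb{P}^1)$. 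The main obstacle I anticipate is the bookkeeping in the second and third paragraphs: carefully deducing the upper bound on $D^2$ and excluding the del Pezzo surfaces of degree $\leq 7$ (other than $Bl_p\mathbb{P}^2$) using only Corollary \ref{cor:intersection} and the presence of the rigid or moving curve $D$ — this is where one must be careful that "no rational curve with self-intersection $\leq -2$ except possibly $D$" genuinely cuts down the list, since del Pezzo surfaces of low degree have many $(-1)$-curves but no $(-2)$-curves, so the exclusion must come from the interplay of $D$ with those $(-1)$-curves and the constraint that $D$ is disjoint from the interior geometry in the prescribed way.
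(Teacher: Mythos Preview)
Your argument that $\overline{M}$ is del Pezzo is correct and matches the paper's (terser) reasoning: since $-(K_{\overline{M}}+D)$ is ample and $D^2\geq 0$, one checks $-K_{\overline{M}}\cdot C>0$ for every curve $C$ and $(-K_{\overline{M}})^2>0$ directly, then invokes Nakai--Moishezon.

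The gap is in the second half. Your classification of the pair $(\overline{M},D)$ relies only on numerical data---Corollary~\ref{cor:intersection}, ampleness of $-(K_{\overline{M}}+D)$, and $D^2\geq 0$---and you yourself flag that excluding the lower-degree del Pezzo surfaces is the ``main obstacle.'' In fact these constraints alone do \emph{not} pin down the list. For example, $(\mathbb{P}^2,\text{smooth conic})$ satisfies everything: $D^2=4\geq 0$, $-(K_{\overline{M}}+D)=H$ is ample, and $\mathbb{P}^2$ has no negative curves at all, so Corollary~\ref{cor:intersection} is vacuous. Likewise $(\mathbb{P}^1\times\mathbb{P}^1,\text{diagonal})$ with $D^2=2$ and $-(K_{\overline{M}}+D)$ of class $(1,1)$ passes all your tests. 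Neither pair is on the target list, so a purely intersection-theoretic argument cannot close the proof.

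What you are missing is Proposition~\ref{prop:extendaction}: the holomorphic $\mathbb{C}^*$-action on $M$ extends to $\overline{M}$ and fixes $D$ \emph{pointwise}. The paper's proof uses this decisively. Since the $\mathbb{C}^*$-action descends to the minimal model and $D$ (having $D^2\geq 0$) cannot be contracted, its image in $\mathbb{P}^2$ (resp.\ $\mathbb{P}^1\times\mathbb{P}^1$) is a pointwise-fixed curve of a $\mathbb{C}^*$-action, hence a line (resp.\ a factor $\{pt\}\times\mathbb{P}^1$). Any blow-ups must occur at fixed points off $D$; for the action on $\mathbb{P}^2$ fixing a line, the only such fixed point is a single isolated point, so at most one blow-up is possible. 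This immediately yields the three pairs and rules out conics, diagonals, and all higher blow-ups.
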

\begin{proof}
Given the positivity, we deduce $-K_{\overline{M}} > D$. Intersecting with curves in $\overline{M}$ reveals that $-K_{\overline{M}}$ is ample (Nakai-Moishezon). As a result, $\overline{M}$ is a del Pezzo surface. Del Pezzo surface either has $\mathbb{P}^2$ as a minimal model or is exactly $\mathbb{P}^1\times\mathbb{P}^1$.

In the situation where the minimal model is $\mathbb{P}^2$, the holomorphic $\mathbb{C}^*$-action must originate from an action on $\mathbb{P}^2$. As the added divisor $D$ has nonnegative self-intersection, it cannot be contracted to a point within $\mathbb{P}^2$. Since $D$ is fixed by the $\mathbb{C}^*$-action, its image in $\mathbb{P}^2$ is a fixed curve. 
This leads to that the pairs $(\overline{M},D)$ in this case, up to biholomorphisms, must be $(\mathbb{P}^2,\mathbb{P}^1)$ with a hyperplane $\mathbb{P}^1$, or $(Bl_p\mathbb{P}^2,\mathbb{P}^1)$ with a $\mathbb{P}^1$ of self-intersection 1.

In the other case where $\overline{M}=\mathbb{P}^1\times\mathbb{P}^1$, since $D$ must be a fixed curve under the $\mathbb{C}^*$-action, the only plausible pair is $(\mathbb{P}^1\times\mathbb{P}^1,0\times\mathbb{P}^1)$.
\end{proof}

\begin{corollary}\label{cor:ruled}
Suppose $(M,h)$ is a Hermitian non-K\"ahler ALF gravitational instanton. 
If $D$ has negative self-intersection number $D^2=-k$, then $\overline{M}$ up to biholomorphisms is the Hirzebruch surface $H_{k}=\mathbb{P}(\mathcal{O}\oplus\mathcal{O}(k))$ and $D$ is the curve at infinity $C_\infty$.
\end{corollary}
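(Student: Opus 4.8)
The plan is to combine the already-established positivity with the structure theory of rational surfaces carrying a $\mathbb{C}^*$-action. We know from the preceding results that $\overline{M}$ is a rational surface containing no rational curve of self-intersection $\le -2$ other than possibly $D$ (Corollary \ref{cor:intersection}), and that $D^2 = -k < 0$. First I would observe that $D$ is an extremal rational curve on $\overline{M}$ with $D^2 = -k \le -2$ (if $k=1$ we would be in the realm of Corollary \ref{cor:alfpositive}, so we may assume $k \ge 2$), hence $D$ cannot be contracted to a smooth point and cannot lie in the interior of a linear system with positive-dimensional moving part. Running a minimal model program (equivalently, contracting $(-1)$-curves one at a time), every $(-1)$-curve we contract is by Corollary \ref{cor:intersection} disjoint from $D$ or equals $D$ — but $D^2 = -k \le -2$ so $D$ is never a $(-1)$-curve — hence it is disjoint from $D$; therefore $D$ survives every contraction with unchanged self-intersection $-k$. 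We end at a minimal rational surface containing a curve of self-intersection $-k \le -2$, which forces the minimal model to be the Hirzebruch surface $H_k$ with $D$ mapping to the negative section $C_\infty$ (it cannot be $\mathbb{P}^2$ or $\mathbb{P}^1\times\mathbb{P}^1$ or $H_0, H_1$, none of which contain such a curve).

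Next I would show that in fact no contractions were needed, i.e.\ $\overline{M} = H_k$ already. Suppose some interior $(-1)$-curve $E$ were contracted. The holomorphic $\mathbb{C}^*$-action on $\overline{M}$ descends to $H_k$ and fixes the image of $D = C_\infty$ pointwise; by the classification of $\mathbb{C}^*$-actions on Hirzebruch surfaces the action has exactly two fixed curves (the two sections $C_0$ and $C_\infty$) or two fixed curves and isolated fixed points, and the blown-down points must be $\mathbb{C}^*$-fixed. Here I would use the description of the end: the generic $\mathbb{C}^*$-orbit closures form a family of rational curves sweeping out $\overline{M}\setminus D$, exactly as in the ruled-surface picture, and $M = \overline{M}\setminus D$ is the total space of the gravitational instanton which is biholomorphic on the end to a model coming from the $SU(\infty)$-Toda ansatz with $\Sigma = \mathbb{P}^1$ the symplectic quotient. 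Concretely, the symplectic reduction exhibits a fibration of the end over $\Sigma = \mathbb{P}^1$ by $\mathbb{C}^*$'s (with the $S^1$ part the $\mathcal{K}$-action), so the end — hence $\overline{M}$ minus a compact set — is already the $\mathbb{C}^*$-bundle complement of $C_0 \cup C_\infty$ in $H_k$; the complex structure computation in Theorem \ref{thm:compactifyalf}, Step 2, identifies the compactification divisor $D$ with $C_\infty$. Thus $\overline{M}$ and $H_k$ agree away from a set of the smaller curve $C_0$, and since both are smooth projective surfaces with the same $D$ and the same generic ruling, any interior blow-up would produce a $(-1)$-curve that is either a fiber component or meets $C_0$; a fiber-component $(-1)$-curve would force a reducible fiber, contradicting that the generic and special orbit closures form an honest $\mathbb{P}^1$-bundle structure on $\overline{M}$ induced by the $\mathbb{C}^*$-action with $D$ a section. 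Hence $\overline{M} \cong H_k$ and $D = C_\infty$.

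A cleaner way to package the second paragraph, which I would prefer to write up, is: the $\mathbb{C}^*$-action together with the fact that $D$ is a section of the ruling gives a $\mathbb{P}^1$-bundle structure $\overline{M} \to \mathbb{P}^1$ (the Stein factorization of the map to the base $\Sigma$ of the symplectic reduction, or the Albanese-type argument using that a rational surface with a fixed section of a $\mathbb{C}^*$-action and no $(-1)$-curves in fibers is geometrically ruled), so $\overline{M} = H_m$ for some $m \ge 0$; then $D$ being a section with $D^2 = -k$ forces $m = k$ and $D = C_\infty$, since the sections of $H_m$ have self-intersections in $\{-m\} \cup \{m, m+2, m+4, \dots\}$ and the negative one is unique. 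The main obstacle is making rigorous the claim that $\overline{M}$ is \emph{already} geometrically ruled — i.e.\ ruling out interior blow-ups — rather than merely birationally ruled; this requires knowing that the $\mathbb{C}^*$-orbit closures (equivalently, the closures in $\overline{M}$ of the complex lines $\{x+iy = \text{const}\}$ in the Toda ansatz) form a base-point-free complete family with irreducible general member and no reducible members, which should follow from the explicit local form of the complex structure near $D$ in Theorem \ref{thm:compactifyalf} together with the properness of the moment map $\xi = \lambda^{1/3}$ on $M$ (so the map $p: M \to \Sigma = \mathbb{P}^1$ extends to a morphism $\overline{M} \to \mathbb{P}^1$ with the ansatz showing the fibers are irreducible $\mathbb{P}^1$'s). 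Once that morphism is in hand, $\overline{M} \to \mathbb{P}^1$ is a $\mathbb{P}^1$-bundle and the classification of such is immediate.
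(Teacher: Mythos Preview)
Your overall strategy---contract $(-1)$-curves to reach a minimal rational surface, identify it as $H_k$, then argue that no blow-ups were actually performed---is the same as the paper's. There are, however, two genuine gaps.

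First, the case $k=1$ is \emph{not} covered by Corollary \ref{cor:alfpositive}, which treats only $D^2\geq 0$. The statement of Corollary \ref{cor:ruled} includes $D^2=-1$, so you cannot simply defer it. The paper handles $k=1$ by noting that after contracting all $(-1)$-curves disjoint from $D$ one may further contract $D$ itself, landing on $\mathbb{P}^2$; hence $\overline{M}$ arises from $H_1$ with $D$ the proper transform of $C_\infty$, and the final step then applies uniformly to all $k\geq 1$.

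Second, your attempt to show that $\overline{M}=H_k$ exactly (rather than a blow-up) via the Toda ansatz, orbit closures, or a direct $\mathbb{P}^1$-bundle argument is both unnecessary and, as you yourself flag, not rigorous. The paper's argument is a clean second application of Corollary \ref{cor:intersection}: the $\mathbb{C}^*$-action on $\overline{M}$ descends to $H_k$ and fixes $C_\infty$ pointwise, so it is the standard fiberwise scaling with fixed locus $C_0\cup C_\infty$; any $\mathbb{C}^*$-equivariant blow-up away from $C_\infty$ must therefore be centered on $C_0$, and the proper transform of the fiber through that center becomes a $(-1)$-curve which still meets $D=C_\infty$. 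This directly contradicts Corollary \ref{cor:intersection}, which says rational curves meeting $D$ (other than $D$) have nonnegative self-intersection. No analytic input beyond the already-established positivity is needed.
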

\begin{proof}
According to our Corollary \ref{cor:intersection}, curves intersecting $D$ must have nonnegative self-intersections. Therefore, when we proceed to contract $(-1)$-curves (excluding $D$ when $D^2=-1$), the rational curve $D$ will remain unaffected during this process. In conclusion
\begin{itemize}
\item  When $k\geq2$, this process results in a minimal model that contains a curve with self-intersection $-k$, which is necessarily the Hirzebruch surface $H_k$, and the divisor $D$ corresponds to the proper transform of $C_\infty$.
\item When $k=1$, performing one more contraction by contracting the $(-1)$-curve $D$ results in a minimal model, which in this case is $\mathbb{P}^2$. Thus, for $k=1$, $\overline{M}$ originates from the blow-up of $H_1$, and the divisor $D$ corresponds to the proper transform of $C_\infty$.
\end{itemize}
Blow-ups are performed exclusively at points away from $C_\infty$ to derive $\overline{M}$ from $H_k$. The holomorphic $\mathbb{C}^*$-action on $\overline{M}$ originates from the standard ruling on $H_k$. Consequently, it can be deduced that $\overline{M}$ is indeed $H_k$, given that any blow-up within $H_k$ that preserves the $\mathbb{C}^*$-action would lead to curves with self-intersections $\leq-1$, which would intersect $D$ in contradiction to Corollary \ref{cor:intersection}.
\end{proof}

\subsubsection{Hermitian non-K\"ahler rational AF gravitational instantons}

The main conclusion of this part is Corollary \ref{cor:af}.

\begin{proposition}\label{prop:complexstructureinfinity}
For a Hermitian non-K\"ahler rational $AF_{\mathfrak{a},2\pi}$ gravitational instanton $(M,h)$ with $\mathfrak{a}/2\pi=m/n$, a suitable end $M\setminus K$ together with the holomorphic $S^1$-action induced by $\mathcal{K}$ is biholomorphic to:
\begin{equation}\label{eq:aftertransformation}
\text{$\mathbb{C}^*\times\mathbb{P}^1$, together with the $S^1$-action $e^{i{t}}\cdot (w\times[u:v])\mapsto e^{-in{t}}w\times[e^{im{t}}u:v]$}.
\end{equation}
Here, we use $w\times[u:v]$ to parametrize $\mathbb{C}^*\times\mathbb{P}^1$. 
\end{proposition}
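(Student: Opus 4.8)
The plan is to deduce this from Proposition~\ref{prop:biholomorphic}, which already identifies a suitable end $M\setminus K$, $S^1$-equivariantly, with the quotient $Q:=(\mathbb{C}^*\times\mathbb{P}^1)/\langle\gamma\rangle$, where (writing $\omega:=e^{2\pi i/n}$) the cyclic group is generated by $\gamma\cdot(z,[x:y])=(\omega z,[\omega^{m}x:y])$ and the $S^1$-action on $Q$ is the one descended from $e^{it}\cdot(z,[x:y])=(e^{-it}z,[x:y])$ on $\mathbb{C}^*\times\mathbb{P}^1$. So the only real content is to ``untwist'' this finite quotient back to a product $\mathbb{C}^*\times\mathbb{P}^1$ while keeping track of the group action. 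To this end I would introduce the explicit map
\[
\Psi\colon\mathbb{C}^*\times\mathbb{P}^1\longrightarrow\mathbb{C}^*\times\mathbb{P}^1,\qquad \Psi(z,[x:y]):=\big(z^{n},\,[z^{-m}x:y]\big),
\]
which is holomorphic (since $z$ never vanishes, $[z^{-m}x:y]$ is a well-defined fibrewise automorphism of $\mathbb{P}^1$) and which is an $n$-fold covering, being $z\mapsto z^{n}$ in the $\mathbb{C}^*$-factor.

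The next step is to check that $\Psi$ is precisely the quotient projection for $\langle\gamma\rangle$. On one hand $\Psi\circ\gamma=\Psi$, by the one-line computation $\big((\omega z)^{n},[(\omega z)^{-m}\omega^{m}x:y]\big)=\big(z^{n},[z^{-m}x:y]\big)$. On the other hand, the fibre of $\Psi$ over a point $(w,[a:b])$ consists exactly of the $n$ distinct points $(\zeta z_0,[\zeta^{m}z_0^{m}a:b])$ with $\zeta^{n}=1$, where $z_0$ is a fixed $n$-th root of $w$; these form a single $\langle\gamma\rangle$-orbit, and since $\gamma$ has order $n$ the orbit has exactly $n$ elements. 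Hence $\Psi$ realizes $\mathbb{C}^*\times\mathbb{P}^1\to Q$, and therefore descends to a biholomorphism $\bar\Psi\colon Q\to\mathbb{C}^*\times\mathbb{P}^1$. Composing $\bar\Psi$ with the identification of Proposition~\ref{prop:biholomorphic} exhibits a suitable end of $(M,h)$ as biholomorphic to $\mathbb{C}^*\times\mathbb{P}^1$.

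Finally I would transport the $S^1$-action through $\Psi$. The action $(z,[x:y])\mapsto(e^{-it}z,[x:y])$ commutes with $\gamma$, so it descends to $Q$ and equals the action of Proposition~\ref{prop:biholomorphic}; and
\[
\Psi\big(e^{-it}z,[x:y]\big)=\big((e^{-it}z)^{n},\,[(e^{-it}z)^{-m}x:y]\big)=\big(e^{-int}z^{n},\,[e^{imt}z^{-m}x:y]\big),
\]
so in the coordinates $w:=z^{n}$, $u:=z^{-m}x$, $v:=y$ on the target the descended $S^1$-action is $e^{it}\cdot(w\times[u:v])=e^{-int}w\times[e^{imt}u:v]$, which is exactly \eqref{eq:aftertransformation}. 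I do not expect a genuine obstacle in this argument: the one idea is the formula for $\Psi$ --- conceptually, $Q$ is the $\mathbb{P}^1$-bundle over $\mathbb{C}^*$ associated to the cyclic cover $z\mapsto z^{n}$ with monodromy $\mathrm{diag}(\omega^{m},1)\in PGL_2(\mathbb{C})$, and $\Psi$ is the trivialization furnished by the $PGL_2$-valued loop $z\mapsto\mathrm{diag}(z^{m},1)$ --- and the only point requiring care is bookkeeping the exponents and the sign of $t$ coming from $\mathcal{K}=-\partial_t$, which is what produces $e^{-int}$ rather than $e^{int}$.
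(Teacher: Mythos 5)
Your proposal is correct and follows essentially the same route as the paper: starting from the identification of the end with $(\mathbb{C}^*\times\mathbb{P}^1)/_\sim$ in Proposition \ref{prop:biholomorphic} and untwisting via the map $z\times[x:y]\mapsto z^n\times[z^{-m}x:y]$, which is exactly the biholomorphism the paper writes down. Your verification that the fibres of $\Psi$ are single $\langle\gamma\rangle$-orbits is a detail the paper leaves implicit, but the argument is the same.
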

\begin{proof}
Recall that holomorphically, a suitable end of such a gravitational instanton appears as the quotient  $\mathbb{C}^*\times\mathbb{P}^1/_\sim$ by the relation $\sim:z\times[x:y]\mapsto e^{2\pi i\frac{1}{n}}z\times[e^{2\pi i\frac{m}{n}}x:y]$. 
The $S^1$-action induced by $\mathcal{K}$ is given by the rotation on $z\in\mathbb{C}^*$ on the cover of the end $\mathbb{C}^*\times\mathbb{P}^1$
$$e^{i{t}}\cdot(z\times[x:y])= e^{-i{t}}z\times[x:y].$$
Now, the end of the gravitational instanton ${\mathbb{C}^*\times\mathbb{P}^1}/_{\sim}$ can be identified to the standard $\mathbb{C}^*\times\mathbb{P}^1$ by the following biholomorphism
\begin{align}
\mathbb{C}^*\times\mathbb{P}^1/_\sim \to \mathbb{C}^*\times\mathbb{P}^1 && z\times[x:y]/_\sim\mapsto w\times[u:v]\coloneqq z^n\times [z^{-m}x:y].
\end{align}
Under this identification, the end of the gravitational instanton holomorphically becomes $\mathbb{C}^*\times\mathbb{P}^1$, and the $S^1$-action induced by $\mathcal{K}$ becomes
$$e^{i{t}}\cdot (w\times[u:v])=e^{-in{t}}w\times[e^{im{t}}u:v].$$
\end{proof}

\begin{proposition}
Suppose $(M,h)$ is a Hermitian non-K\"ahler rational ${AF}_{\mathfrak{a},2\pi}$ gravitational instantons with $\mathfrak{a}/2\pi=m/n$. Then $M$ can be smoothly compactified by adding a holomorphic sphere $D^\sharp$ to a smooth compact complex surface $\overline{M}^\sharp$.
\end{proposition}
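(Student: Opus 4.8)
The statement to prove is that a Hermitian non-Kähler rational $\mathrm{AF}_{\mathfrak{a},2\pi}$ gravitational instanton $(M,h)$ with $\mathfrak{a}/2\pi = m/n$ admits a smooth compactification by adding a holomorphic sphere. Let me think about how to approach this.

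We've already established (Theorem \ref{thm:finalcompactificationrational}) that $M$ compactifies to $\overline{M}$, a compact complex algebraic surface with *two orbifold points* on the added divisor $D$. So we have a compactification already — the issue is it's singular, not smooth. The new claim is that there's a *different* compactification $\overline{M}^\sharp$ that's smooth.

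How would one get a smooth compactification? The orbifold points came from the fact that the end is $\mathbb{C}^*\times\mathbb{P}^1/\sim$ with $\sim: z\times[x:y] \mapsto e^{2\pi i/n} z \times [e^{2\pi i m/n} x : y]$, and the two $S^1$-fixed sections $\{[1:0]\}$ and $\{[0:1]\}$ of $\mathbb{C}^*\times\mathbb{P}^1$ descend to orbifold loci. To get a smooth compactification, I think the natural move is: *don't* add a fixed curve at infinity. Instead, use Proposition \ref{prop:complexstructureinfinity}, which reparametrizes the end as the *honest* $\mathbb{C}^*\times\mathbb{P}^1$ (via $w\times[u:v] = z^n \times [z^{-m}x:y]$), carrying the $S^1$-action $e^{it}\cdot(w\times[u:v]) = e^{-int}w\times[e^{imt}u:v]$. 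On this standard $\mathbb{C}^*\times\mathbb{P}^1$, we can simply compactify the $\mathbb{C}^*$ factor to $\mathbb{P}^1$ (adding the point $w=0$) and get $\mathbb{P}^1\times\mathbb{P}^1$ as a smooth model at infinity, with $D^\sharp = \{w=0\}\times\mathbb{P}^1$ a smooth holomorphic $\mathbb{P}^1$. The key point: this $D^\sharp$ is *not* a fixed curve of the $S^1$-action anymore (the action moves $[u:v]$), which is precisely why no orbifold points are forced. So the plan is:

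\emph{Step 1:} Invoke Proposition \ref{prop:complexstructureinfinity} to identify a neighborhood of infinity in $M$ biholomorphically with $(\{|w| > R\} \subset \mathbb{C}^*) \times \mathbb{P}^1$, a genuine product (no quotient). \emph{Step 2:} Glue in the divisor $w = 0$: form $\{|w|>R\}\cup\{w=0\} \subset \mathbb{P}^1$, so the end compactifies to $(\mathbb{P}^1\setminus \overline{B_R}) \times \mathbb{P}^1$ plus the slice $\{0\}\times\mathbb{P}^1$. Since the transition is just adding a point to an annulus in $\mathbb{C}^*$, the complex structure extends smoothly; set $\overline{M}^\sharp = M \cup D^\sharp$ where $D^\sharp \cong \{0\}\times\mathbb{P}^1$. \emph{Step 3:} Check $\overline{M}^\sharp$ is a compact complex surface (local charts near $D^\sharp$ are manifestly smooth complex charts $(w,[u:v])$), so it is a \emph{smooth} compact complex surface, and $D^\sharp$ is a holomorphic $\mathbb{P}^1$. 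This parallels the ALF construction in Theorem \ref{thm:compactifyalf} but applied to the already-unwrapped-and-reparametrized end rather than to the base of the $S^1$-fibration.

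The main obstacle — really the only subtle point — is verifying that the complex structure genuinely extends *holomorphically* across $D^\sharp$, i.e. that the biholomorphism of Proposition \ref{prop:complexstructureinfinity} is compatible with the standard complex structure near $w=0$ after adding that point; but this is immediate since on the standard $\mathbb{C}^*\times\mathbb{P}^1$ the coordinate $w$ extends to a holomorphic coordinate centered at $w=0$ on $\mathbb{P}^1$, and the $\mathbb{P}^1$-factor coordinate is untouched. One should also remark why the two compactifications $\overline{M}$ and $\overline{M}^\sharp$ differ: $\overline{M}$ is obtained by quotienting the $w$-disk-bundle picture equivariantly (forcing orbifold points at the two $S^1$-fixed points of $\mathbb{P}^1$ over $w=0$), whereas $\overline{M}^\sharp$ simply fills in $w=0$ before taking any quotient — there is no quotient left to take at infinity because the reparametrization in Proposition \ref{prop:complexstructureinfinity} has already absorbed it. I would close by noting that $D^\sharp$ has self-intersection zero in $\overline{M}^\sharp$ (it is a fiber of the other ruling of $\mathbb{P}^1\times\mathbb{P}^1$ near infinity) and that, unlike in the ALF case, this compactification is \emph{not} natural in the sense that $D^\sharp$ is not $\mathbb{C}^*$-invariant, which is the phenomenon alluded to in the introduction for irrational AF but here manifested even in the rational case as an alternative to the natural orbifold compactification.
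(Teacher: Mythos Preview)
Your approach is exactly the paper's: invoke Proposition \ref{prop:complexstructureinfinity} to identify the end with the honest $\mathbb{C}^*\times\mathbb{P}^1$ and then fill in one end of the $\mathbb{C}^*$-factor by a slice $\{\text{pt}\}\times\mathbb{P}^1$; the paper's proof is literally the one line ``Add $\infty\times\mathbb{P}^1$ to the end $\mathbb{C}^*\times\mathbb{P}^1$ that appears in Proposition \ref{prop:complexstructureinfinity}.'' The only slip is a direction inconsistency---you describe the neighborhood of infinity as $\{|w|>R\}\times\mathbb{P}^1$ but then glue in $\{w=0\}$, which is disconnected from that set; with the paper's conventions (cf.\ the orbifold points at $\infty\times[0:1]$, $\infty\times[1:0]$ in Theorem \ref{thm:finalcompactificationrational}) infinity of $M$ corresponds to $w\to\infty$, so the added divisor is $D^\sharp=\{\infty\}\times\mathbb{P}^1$.
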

\begin{proof}
Add $\infty\times\mathbb{P}^1$ to the end $\mathbb{C}^*\times\mathbb{P}^1$ that appears in Proposition \ref{prop:complexstructureinfinity}.
\end{proof}

\begin{remark}
This compactification is unnatural in the sense that the holomorphic sphere $D^\sharp$ at infinity is not fixed by the extended holomorphic $\mathbb{C}^*$-action induced by $\mathcal{K}$ and $J\mathcal{K}$.
\end{remark}

\begin{proposition}\label{prop:unnaturalcompactification}
The unnatural compactification $(\overline{M}^\sharp,D^\sharp)$ up to biholomorphisms can only be
\begin{itemize}
\item $(H_k,\mathbb{P}^1)$, where $H_k=\mathbb{P}(\mathcal{O}\oplus\mathcal{O}(k))$ is the Hirzebruch surface with $k\geq1$ and $\mathbb{P}^1$ is a ruling with trivial self-intersection.
\item $(Bl_pH_k,\mathbb{P}^1)$, where $p\in C_0$ or $C_\infty$ and $\mathbb{P}^1$ is a ruling with trivial self-intersection.
\end{itemize}
\end{proposition}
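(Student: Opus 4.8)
The plan is to realize $\overline{M}^\sharp$ as (a small blow-up of) a Hirzebruch surface by producing a $\mathbb{P}^1$-fibration from the product structure of the end in Proposition~\ref{prop:complexstructureinfinity}, running the relative minimal model program equivariantly for the holomorphic $\mathbb{C}^*$-action generated by $\mathcal{K},J\mathcal{K}$, and then cutting the number of blow-ups down to one by comparing with the natural (log del Pezzo) compactification of Theorem~\ref{thm:finalcompactificationrational}.

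First I would record that $\overline{M}^\sharp$ is a rational, hence projective, surface: generic orbits of the $\mathbb{C}^*$-action sweep out a positive-dimensional family of rational curves, exactly as in the proof for the ALF case. Next, the curves $\{w\}\times\mathbb{P}^1$, $w\in\mathbb{C}^*$, in the end $\mathbb{C}^*\times\mathbb{P}^1$ are disjoint members of $|D^\sharp|$; since $D^\sharp\cong\mathbb{P}^1$ and $(D^\sharp)^2=0$, adjunction gives $K_{\overline{M}^\sharp}\cdot D^\sharp=-2$, Serre duality gives $h^2(\mathcal{O}(D^\sharp))=h^0(K_{\overline{M}^\sharp}-D^\sharp)=0$ on a rational surface, and Riemann--Roch yields $h^0(\mathcal{O}(D^\sharp))\ge 2$. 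Having no fixed component and vanishing self-intersection, $|D^\sharp|$ is a base-point-free pencil defining a morphism $\pi\colon\overline{M}^\sharp\to\mathbb{P}^1$ with connected fibers, restricting on the end to the first projection; $D^\sharp=\pi^{-1}(\infty)$ is a smooth fiber, and because the end is a product, the only possibly reducible fiber lies over the remaining base point $w=0$, a point of $M$.

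I would then make the picture equivariant. The $\mathbb{C}^*$-action preserves $|D^\sharp|$ (it does so on the end by the explicit formula in Proposition~\ref{prop:complexstructureinfinity}), hence descends to a rotation of the base $\mathbb{P}^1$ fixing $w=0$ and $w=\infty$. Running the relative MMP of $\pi$ we contract $(-1)$-curves contained in fibers; each such curve is rigid, hence $\mathbb{C}^*$-invariant, so every contraction is equivariant and the process terminates at a $\mathbb{P}^1$-bundle over $\mathbb{P}^1$, i.e.\ a Hirzebruch surface $H_k$, carrying a $\mathbb{C}^*$-action that is a fiberwise rotation lying over the base rotation, with fixed locus the four corners $C_0\cap F_0$, $C_0\cap F_\infty$, $C_\infty\cap F_0$, $C_\infty\cap F_\infty$. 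Since $D^\sharp$ is an irreducible fiber it is never contracted and maps to the fiber $F_\infty$; moreover no blow-up occurred over $w=\infty$, since otherwise $\pi^{-1}(\infty)$ would be reducible rather than equal to the irreducible $D^\sharp$. Thus $\overline{M}^\sharp$ is $H_k$ blown up equivariantly finitely many times at $\mathbb{C}^*$-fixed points all lying over $w=0$, with $D^\sharp$ the strict transform of $F_\infty$, a ruling of trivial self-intersection.

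The remaining — and main — point is that at most one such blow-up occurs, and then necessarily at $C_0\cap F_0$ or $C_\infty\cap F_0$. Here I would invoke the natural compactification $\overline{M}$ of Theorem~\ref{thm:finalcompactificationrational}: its $\mathbb{Q}$-line bundle $-(K_{\overline{M}}+D)$ is ample, so exactly as in Corollary~\ref{cor:intersection} (using that $(K_{\overline{M}}+D)\cdot C$ is an integer for any smooth rational $C$ in the smooth locus disjoint from $D$) every such $C$ has $C^2\ge -1$; since a curve contained in $M$ has the same self-intersection in $\overline{M}$ and in $\overline{M}^\sharp$, the surface $\overline{M}^\sharp$ contains no $(-2)$-curve lying in $M$. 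The fiber $\pi^{-1}(0)$ is disjoint from $D^\sharp$, hence all of its components lie in $M$; but a direct check of the three possible equivariant blow-ups over $w=0$ following the first one (at the node of the resulting chain, or at either end) shows that any two blow-ups over $w=0$ produce a component of $\pi^{-1}(0)$ with self-intersection $\le -2$, a contradiction. Therefore $\pi^{-1}(0)$ has at most two components, so $\overline{M}^\sharp$ is $H_k$ or a single equivariant blow-up of $H_k$ whose centre, being a fixed point of $H_k$ over $w=0$, lies on $C_0$ or on $C_\infty$, and $D^\sharp$ remains a ruling of self-intersection zero; assembling the two cases gives the statement. I expect the comparison of self-intersections across the two compactifications, together with the bookkeeping identifying which fixed points remain available for the blow-up, to be the only genuinely delicate step; the rest is the standard equivariant structure theory of ruled rational surfaces.
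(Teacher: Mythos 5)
Your argument is essentially the paper's: realize $\overline{M}^\sharp$ as an iterated equivariant blow-up of a Hirzebruch surface with $D^\sharp$ a ruling of square zero, and then use the ampleness of $-(K_{\overline{M}}+D)$ on the \emph{natural} compactification to forbid $(-2)$-curves inside $M$ and hence cut the blow-ups down to one at a fixed point off $D^\sharp$; your route to the Hirzebruch structure (the base-point-free pencil $|D^\sharp|$ plus equivariant relative MMP) just fills in the paper's terser ``contract certain curves'' step. One point of your case analysis is incomplete: you assert the fixed locus of the descended $\mathbb{C}^*$-action on $H_k$ is ``the four corners,'' but since the lift of a nontrivial base rotation to $H_k$ may act trivially on the fiber over the second base fixed point, that fiber can be pointwise fixed (this is exactly the first alternative in the paper's second claim). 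This does not break your argument --- the check that any two blow-ups in that fiber create a $(-2)$-component still goes through, and a blow-up at a non-corner point of a pointwise-fixed fiber is carried to a blow-up at $C_0\cap F_0$ by a fiberwise translation of $H_k$ preserving $D^\sharp$ --- but the extra case should be stated. Also, your claim that the product structure of the end alone forces all reducible fibers over a single base point is not sufficient as written (the end only covers an annulus in the base); what actually forces it is the equivariance you establish afterwards, namely that the finite set of critical values is invariant under the nontrivial base $\mathbb{C}^*$-action and the fiber over the fixed point $\infty$ is the irreducible $D^\sharp$.
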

\begin{proof}
We begin with a claim.
\begin{claim}
The surface $\overline{M}^\sharp$ can be blown up from a Hirzebruch surface $H_k$.
\end{claim}
\begin{proof}
Because of the existence of the rational curve $D^\sharp$ with trivial self-intersection in $\overline{M}^\sharp$, by contracting certain curves inside $M$ if necessary, we will end with a ruled surface. The ruled surface has to be Hirzebruch, as $\overline{M}^\sharp$ carries the holomorphic $S^1$-action induced by $\mathcal{K}$, which obviously does not preserve general rational curves with trivial self-intersection in the class $[D^\sharp]$, as indicated by (\ref{eq:aftertransformation}).
\end{proof}
From the proof of the claim, we can also see that $D^\sharp$ is exactly a ruling $\mathbb{P}^1$ of the Hirzebruch surface $H_k$, which is not affected by blow-ups. 
The holomorphic vector fields $\mathcal{K}$ and $J\mathcal{K}$ together induce a holomorphic $\mathbb{C}^*$-action on $\overline{M}^\sharp$, which descends to the Hirzebruch surface. 

\begin{claim}
The descended holomorphic $\mathbb{C}^*$-action on $H_k$ either:
\begin{itemize}
\item The holomorphic $\mathbb{C}^*$-action has one fixed curve in $H_k\setminus D^\sharp$, which is a ruling $\mathbb{P}^1$.
\item The holomorphic $\mathbb{C}^*$-action has  two fixed points in $H_k\setminus D^\sharp$, lying in a same ruling $\mathbb{P}^1$.
\end{itemize}
\end{claim}
\begin{proof}
This suffices to understand the structure of holomorphic $\mathbb{C}^*$-actions on Hirzebruch surfaces, which can be done easily by contracting the $(-k)$-curve in $H_k$ to the weighted projective space $\mathbb{P}(1,1,k)$. From Proposition \ref{prop:complexstructureinfinity}, the descended holomorphic $\mathbb{C}^*$-action already has two fixed points in the ruling $D^\sharp$. Therefore, there  either is one fixed curve, or two fixed points lying in a same ruling $\mathbb{P}^1$, in $H_k\setminus D^\sharp$. 
\end{proof}

The surface $\overline{M}^\sharp$ is realized as an iterative blow-up of  $H_k$.
To make the $\mathbb{C}^*$-action and $D^\sharp$ survive, blow-ups can only be taken at the fixed points that are not in $D^\sharp$. In both of the cases in the above claim, only one blow-up is allowed, since the positivity of $-(K_{\overline{M}}+D)$ guarantees that there are no $(-2)$-curves inside $M$. The proposition is proved.
\end{proof}

Concluding Proposition \ref{prop:complexstructureinfinity}-\ref{prop:unnaturalcompactification}, we can see that the natural compactification $\overline{M}$ is a toric surface with two orbifold points in $D$. Since $(\overline{M}^\sharp,D^\sharp)$ is either $(H_k,\mathbb{P}^1)$ or $(Bl_pH_k,\mathbb{P}^1)$, the polytope $\mathbf{\Delta}_{\overline{M}}$ of $\overline{M}$ has four or five edges, containing two parallel edges. As $D$ has a neighborhood in $\overline{M}$, holomorphically given by $(\mathbb{C}^*\cup\{\infty\})\times\mathbb{P}^1/_\sim$, where the finite quotient is $\sim:z\times[x:y]\mapsto e^{-2\pi i\frac{1}{n}}z\times[e^{2\pi i\frac{m}{n}}x:y]$, it is straightforward to conclude that in terms of fans $\mathbf{F}_{\overline{M}}$, the toric surface $\overline{M}$ must be described by Figure \ref{possible1} or \ref{possible2}.

\begin{figure}[ht]
\centering
\begin{minipage}{.5\textwidth}
\centering
\begin{tikzpicture}
\path[draw](0,0)--(0,1.5) node[anchor=east]{$(0,1)$};
\path[draw](0,0)--(0,-1.5) node[anchor=east]{$(0,-1)$};
\path[draw](0,0)--(3/4,3/2) node[anchor=west]{$(1,k)$};
\path[draw](0,0)--(-3/2,-0.3) node[anchor=north east]{$(-n,-m)$};
\end{tikzpicture}
\caption{The fan $\mathbf{F}_{\overline{M}}$ if four edges.}
\label{possible1}
\end{minipage}%
\begin{minipage}{.5\textwidth}
\centering
\begin{tikzpicture}
\path[draw](0,0)--(0,3/2) node[anchor=east]{$(0,1)$};
\path[draw](0,0)--(0,-3/2) node[anchor=east]{$(0,-1)$};
\path[draw](0,0)--(3/4,3/2) node[anchor=west]{$(1,k)$};
\path[draw](0,0)--(3/4,3/4) node[anchor=west]{$(1,k-1)$};
\path[draw](0,0)--(-3/2,-0.3) node[anchor=north east]{$(-n,-m)$};
\end{tikzpicture}
\caption{The fan $\mathbf{F}_{\overline{M}}$ if five edges.}
\label{possible2}
\end{minipage}
\end{figure}
In the case that $\mathbf{\Delta}_{\overline{M}}$ has four edges, the ray with the direction $(-n,-m)$ corresponds to the divisor $D$. And the number $k$ corresponds to the index of the Hirzebruch surface $H_k$.
In the case that $\mathbf{\Delta}_{\overline{M}}$ has five edges, the situation is similar. 
With our explicit understanding about the possible structure of the toric surface $\overline{M}$ and the positivity of $-(K_{\overline{M}}+D)$, we can now give a more precise classification of the natural compactification $(\overline{M},D)$.

\begin{corollary}\label{cor:af}
Suppose $(M,h)$ is a rational Hermitian non-K\"ahler ${AF}_{\mathfrak{a},2\pi}$ gravitational instanton with $\mathfrak{a}/2\pi=m/n$. Then the natural compactification $(\overline{M},D)$ can only be
\begin{itemize}
\item $(H_{m,n},D)$, where $H_{m,n}$ is the toric complex surface described by the fan $\mathbf{F}_{m,n}$, and $D$ is the holomorphic sphere described by the ray with the direction $(-n,-m)$ in $\mathbf{F}_{m,n}$. 

\item $(Bl_pH_{m,n},D)$, where $Bl_pH_{m,n}$ is the toric complex surface described by the fan $Bl_p\mathbf{F}_{m,n}$, and $D$ is the holomorphic sphere described by the ray with the direction $(-n,-m)$ in $Bl_p\mathbf{F}_{m,n}$. The surface $Bl_pH_{m,n}$ is simply the blow-up of $H_{n,m}$ at a specific fixed point.
\end{itemize} 
\end{corollary}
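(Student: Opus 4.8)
The plan is to combine the classification of the unnatural compactification $(\overline{M}^\sharp, D^\sharp)$ from Proposition \ref{prop:unnaturalcompactification} with the explicit description of a neighborhood of $D$ in the natural compactification $\overline{M}$, and then impose the positivity of $-(K_{\overline{M}}+D)$ to narrow down to the two listed possibilities. First I would translate the statements about $(\overline{M}^\sharp, D^\sharp)$ into the language of toric fans. Since $(\overline{M}^\sharp,D^\sharp)$ is either $(H_k, \mathbb{P}^1)$ with a ruling $\mathbb{P}^1$, or $(Bl_p H_k, \mathbb{P}^1)$ with $p$ on $C_0$ or $C_\infty$, and since the holomorphic $\mathbb{C}^*$-action on $\overline{M}^\sharp$ descends to the standard ruling $\mathbb{C}^*$-action on $H_k$, the surface $\overline{M}^\sharp$ is toric; its fan has four rays (the $H_k$ case) or five rays (the one-blow-up case). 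The rulings $(0,1)$ and $(0,-1)$ are the two fiber directions, while $(1,k)$ and $(1,0)$ (or, after the allowed blow-up, $(1,k)$ and $(1,k-1)$ and $(1,0)$, suitably normalized) describe $C_\infty$ and $C_0$. This is the content of Figures \ref{possible1} and \ref{possible2}, which I would take as established.

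The key new input is the local structure of $D$ inside $\overline{M}$: Proposition \ref{prop:biholomorphic} (or its corollary in the form used just before the statement) shows that a neighborhood of $D$ in $\overline{M}$ is holomorphically $(\mathbb{C}^* \cup \{\infty\}) \times \mathbb{P}^1 /_\sim$ with $\sim : z \times [x:y] \mapsto e^{-2\pi i /n} z \times [e^{2\pi i m/n} x : y]$. From this one reads off directly that the ray corresponding to $D$ in the fan $\mathbf{F}_{\overline{M}}$ has primitive generator $(-n,-m)$ (up to the standard $SL(2,\mathbb{Z})$ normalization fixing the two fiber rays as $(0,\pm 1)$), and that the two orbifold points on $D$ correspond to the two two-dimensional cones adjacent to that ray. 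Thus $\mathbf{F}_{\overline{M}}$ is obtained from $\mathbf{F}_{\overline{M}^\sharp}$ by replacing the ray of $D^\sharp$ with the ray $(-n,-m)$; combining with the two fan pictures gives exactly $\mathbf{F}_{m,n}$ (four rays) or $Bl_p\mathbf{F}_{m,n}$ (five rays). I would then observe that, in the five-ray case, the blow-up point must be a fixed point of the $\mathbb{C}^*$-action lying on $C_0$ or $C_\infty$ away from $D$, which is precisely the "$Bl_p H_{m,n}$ is the blow-up of $H_{m,n}$ at a specific fixed point" clause; I would spell out which fixed point (the one dictated by the $(1,k-1)$ ray in Figure \ref{possible2}) and note there is essentially no further freedom because a second blow-up would, by Corollary \ref{cor:intersection}, create a $(-2)$-curve meeting $D$, contradicting ampleness of $-(K_{\overline{M}}+D)$.

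Finally I would confirm that $-(K_{\overline{M}}+D)$ is ample on each candidate — but this is already guaranteed by Theorem \ref{thm:finalcompactificationrational}, so strictly speaking it need not be re-proven; I would instead use ampleness to rule out spurious fans, namely to show that no additional rays can be inserted (each extra ray is an extra toric divisor, i.e. a torus-invariant curve; if its self-intersection dropped to $-2$ or below while meeting $D$ we would contradict Corollary \ref{cor:intersection}, and more than one blow-up is impossible for the same reason). The main obstacle I anticipate is purely bookkeeping: carefully pinning down the correct $SL(2,\mathbb{Z})$ normalization so that the ray of $D$ really is $(-n,-m)$ and not some $GL(2,\mathbb{Z})$-equivalent vector, and matching the orbifold structure at the two points $\infty \times [0:1]$, $\infty \times [1:0]$ of Theorem \ref{thm:finalcompactificationrational} with the two cones adjacent to the $(-n,-m)$ ray; once the conventions are fixed, the identification of $\overline{M}$ with $H_{m,n}$ or $Bl_p H_{m,n}$ is immediate from the fan. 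I would therefore present the proof as: (1) $\overline{M}$ is toric with a fan refining that of $\overline{M}^\sharp$ (from Proposition \ref{prop:unnaturalcompactification} and the local model of $D$); (2) the $D$-ray is $(-n,-m)$ and the adjacent cones give the stated orbifold points; (3) positivity of $-(K_{\overline{M}}+D)$ forbids further refinement beyond the single allowed blow-up; hence $(\overline{M},D)$ is $(H_{m,n},D)$ or $(Bl_p H_{m,n}, D)$ as claimed. \qed
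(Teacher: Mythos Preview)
Your outline has a genuine gap: you correctly arrive at the fans of Figures~\ref{possible1} and~\ref{possible2}, but those fans still carry the free parameter $k$ (the index of the Hirzebruch surface $H_k$ underlying $\overline{M}^\sharp$). The four-ray fan has rays $(0,1),(0,-1),(1,k),(-n,-m)$, and this coincides with $\mathbf{F}_{m,n}$ only when $k=0$. Your sentence ``combining with the two fan pictures gives exactly $\mathbf{F}_{m,n}$'' silently sets $k=0$ without justification. Replacing the $D^\sharp$-ray by $(-n,-m)$ does \emph{not} by itself determine $k$: for each $k\ge 0$ you get a genuinely different toric orbifold, and you must explain why only $k=0$ (up to $SL(2,\mathbb{Z})$ equivalence) survives. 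Your later use of ampleness only addresses inserting \emph{extra} rays, not the value of $k$ on the existing $(1,k)$ ray.

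The paper closes this gap by a direct intersection-number computation on the toric orbifold. With $-K_{\overline{M}}=D_1+D_2+D_3+D$ and the $\mathbb{Q}$-intersection numbers $D_1^2=\frac{-m+nk}{n}$, $D_3^2=\frac{m-nk}{n}$, $D_1\cdot D=D_3\cdot D=\frac{1}{n}$, the ampleness of $-(K_{\overline{M}}+D)$ forces
\[
-(K_{\overline{M}}+D)\cdot D_1=\frac{-m+kn}{n}+1>0,\qquad -(K_{\overline{M}}+D)\cdot D_3=1+\frac{m-kn}{n}>0,
\]
hence $(k-1)n<m<(k+1)n$. Since $0<m<n$, this yields $k\in\{0,1\}$; the $k=1$ fan is then shown to be $SL(2,\mathbb{Z})$-equivalent (after a reflection) to $\mathbf{F}_{n-m,n}$. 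You should insert this calculation (and its five-ray analogue) between your steps (2) and (3); Corollary~\ref{cor:intersection} alone is not sharp enough here because the relevant divisors are only $\mathbb{Q}$-Cartier and their self-intersections are fractional.
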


\begin{figure}[ht]
\centering
\begin{minipage}{.5\textwidth}
\centering
\begin{tikzpicture}
\path[draw](0,0)--(0,3/2) node[anchor=west]{$(0,1)$};
\path[draw](0,0)--(0,-3/2) node[anchor=west]{$(0,-1)$};
\path[draw](0,0)--(3/2,0) node[anchor=south]{$(1,0)$};
\path[draw](0,0)--(-3/2,-0.3) node[anchor=north east]{$(-n,-m)$};
\end{tikzpicture}
\caption{The fan $\mathbf{F}_{m,n}$ of $H_{m,n}$.}
\label{fanmn}
\end{minipage}%
\begin{minipage}{.5\textwidth}
\centering
\begin{tikzpicture}
\path[draw](0,0)--(0,3/2) node[anchor=east]{$(0,1)$};
\path[draw](0,0)--(1.414*3/4,1.414*3/4) node[anchor=south west]{$(1,1)$};
\path[draw](0,0)--(0,-3/2)  node[anchor=west]{$(0,-1)$};
\path[draw](0,0)--(3/2,0) node[anchor=north]{$(1,0)$};
\path[draw](0,0)--(-3/2,-0.3) node[anchor=north east]{$(-n,-m)$};
\end{tikzpicture}
\caption{The fan $Bl_p\mathbf{F}_{m,n}$ of $Bl_pH_{m,n}$.}
\label{blfanmn}
\end{minipage}
\end{figure}

\begin{proof}
It suffices to rule out all other possibilities in Figure \ref{possible1} and \ref{possible2}. It will be a direct calculation based on the positivity of $-(K_{\overline{M}}+D)$, and we will focus on the case that $\mathbf{F}_{\overline{M}}$ has four rays, since the case that $\mathbf{F}_{\overline{M}}$ has five rays is similar.

For the fan given by Figure \ref{possible1}, its polytope is
$$\centering
\begin{tikzpicture}
\path[draw](0,0)--node[anchor=north east]{$D_2$}(-4*3/4,2*3/4)node[anchor=east]{$(-k,1)$};
\path[draw](-4*3/4,2*3/4)--node[anchor=south]{$D_1$}(2*3/4,2*3/4);
\path[draw](2*3/4,2*3/4)--node[anchor=east]{$D$}(2.4*3/4,0);
\path[draw](0,0)--node[anchor=north]{$D_3$}(2.4*3/4,0);
\node at (2.55,1*3/4) {slope $-\frac{n}{m}$};
\end{tikzpicture}$$
Direct consideration shows that the divisors $D_1,D_3,D$ are merely $\mathbb{Q}$-Cartier. The intersection numbers are given by 
\begin{align*}
(D_1)^2=\frac{-m+nk}{n},&&(D_3)^2=\frac{m-nk}{n}, && D_1\cdot D=D_3\cdot D=\frac{1}{n}.
\end{align*}
For the toric surface $\overline{M}$, its canonical divisor can be written as $K_{\overline{M}}=-D_1-D_2-D_3-D$.
The positivity of $-(K_{\overline{M}}+D)$ therefore implies
$$-(K_{\overline{M}}+D)\cdot D_1=(D_1+D_2+D_3)\cdot D_1=\frac{-m+kn}{n}+1>0,$$
$$-(K_{\overline{M}}+D)\cdot D_3=(D_1+D_2+D_3)\cdot D_3=1+\frac{m-kn}{n}>0.$$
These force $(-1+k)n<m<(1+k)n$. However, we strictly have $0<m<n$. Consequently, we must have $k=0$ or $1$. If $k=0$, we exactly get the polytope of $\mathbf{F}_{m,n}$. If $k=1$, after performing a linear transform in $SL(2,\mathbb{Z})$ followed by a reflection, it follows that the polytope in this case is equivalent to the polytope of $\mathbf{F}_{n-m,n}$. Therefore, we conclude the case that $\mathbf{F}_{\overline{M}}$ has four edges.
\end{proof}

Finally, we conclude this section with the following theorem, which serves as a corollary of the results presented herein.

\begin{theorem}
Any Hermitian non-K\"ahler ALF/AF gravitational instanton must be holomorphically toric.
\end{theorem}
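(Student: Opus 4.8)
The statement to prove is that any Hermitian non-K\"ahler ALF/AF gravitational instanton is holomorphically toric, meaning there is an effective holomorphic $T^2$-action. The proof is essentially a bookkeeping synthesis of the structure theory developed in Section \ref{sec:compactification}, splitting along the two cases ($\clubsuit$) and ($\spadesuit$) that have been tracked throughout. In case ($\spadesuit$), where $\mathcal{K}$ induces only an $\mathbb{R}$-action, its closure in the isometry group was shown (in Section \ref{subsubsec:induce R-action}) to be a $T^2$-action acting holomorphically and isometrically, so there is nothing more to do: the gravitational instanton is already holomorphically (indeed isometrically) toric. Hence the entire content is in case ($\clubsuit$), where $\mathcal{K}$ generates an $S^1$-action and $J\mathcal{K}$ completes it to a holomorphic $\mathbb{C}^*$-action.

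In case ($\clubsuit$) the plan is to invoke the classification of the compactified surface $(\overline{M},D)$ obtained in this section. For the ALF subcase, Corollaries \ref{cor:alfpositive} and \ref{cor:ruled} show $(\overline{M},D)$ is, up to biholomorphism, one of $(\mathbb{P}^2,\mathbb{P}^1)$, $(Bl_p\mathbb{P}^2,\mathbb{P}^1)$, $(\mathbb{P}^1\times\mathbb{P}^1,0\times\mathbb{P}^1)$, or $(H_k,C_\infty)$; each of these is a toric surface, and moreover in each case the holomorphic $\mathbb{C}^*$-action induced by $\mathcal{K},J\mathcal{K}$ (extended to $\overline{M}$ by Proposition \ref{prop:extendaction}) is one of the two standard $\mathbb{C}^*$-factors of the torus. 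For the rational AF subcase, Corollary \ref{cor:af} shows $\overline{M}$ is the toric surface $H_{m,n}$ or $Bl_pH_{m,n}$, already presented via explicit fans, with $D$ a torus-invariant divisor. So in all of case ($\clubsuit$) the compactification $\overline{M}$ is a toric surface $\overline{M}$ carrying an effective holomorphic $T^2$-action, and $D$ is invariant under this torus.

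The remaining step, and the only one requiring a small argument rather than citation, is to descend the torus action from $\overline{M}$ back to $M = \overline{M}\setminus D$: since $D$ is a torus-invariant divisor of the toric surface $\overline{M}$, its complement $M$ is $T^2$-invariant, and the restricted action is still effective and holomorphic. One should also note that the $\mathbb{C}^*$-action we know agrees geometrically with the $\mathcal{K},J\mathcal{K}$ action — this is exactly how the classification was derived (the $\mathbb{C}^*$-action was used to identify the ruling/fan structure), so the torus we produce genuinely extends the given $S^1$ generated by $\mathcal{K}$. I do not expect a real obstacle here; the one point to be careful about is simply that the classification results cited are stated for the naturally compactified pair $(\overline{M},D)$ (with orbifold points in the AF case), so one must observe that an orbifold toric surface still carries an effective holomorphic $T^2$-action on its smooth locus, which contains all of $M$. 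Assembling these observations for both subcases of ($\clubsuit$) together with the already-toric case ($\spadesuit$) completes the proof.

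\begin{proof}
If $\mathcal{K}$ induces only an $\mathbb{R}$-action (case ($\spadesuit$)), then by Section \ref{subsubsec:induce R-action} the closure of this $\mathbb{R}$-action in the isometry group is an effective $T^2$-action acting holomorphically and isometrically on $(M,h)$, so $(M,h)$ is already holomorphically toric.

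It remains to treat the case ($\clubsuit$) where $\mathcal{K}$ generates an $S^1$-action, so that $\mathcal{K}$ and $J\mathcal{K}$ induce a holomorphic $\mathbb{C}^*$-action on $M$, which in the ALF case extends to the compactification $\overline{M}$ fixing $D$ by Proposition \ref{prop:extendaction} (and similarly in the rational AF case, cf.\ the remark after Theorem \ref{thm:finalcompactificationrational}). In the ALF subcase, by Corollary \ref{cor:alfpositive} and Corollary \ref{cor:ruled} the pair $(\overline{M},D)$ is, up to biholomorphism, one of $(\mathbb{P}^2,\mathbb{P}^1)$, $(Bl_p\mathbb{P}^2,\mathbb{P}^1)$, $(\mathbb{P}^1\times\mathbb{P}^1,0\times\mathbb{P}^1)$, or $(H_k,C_\infty)$. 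In the rational AF subcase, by Corollary \ref{cor:af} the pair $(\overline{M},D)$ is $(H_{m,n},D)$ or $(Bl_pH_{m,n},D)$ as described by the fans $\mathbf{F}_{m,n}$ and $Bl_p\mathbf{F}_{m,n}$. In every one of these cases $\overline{M}$ is a toric surface, possibly with orbifold singularities in the AF case, carrying an effective holomorphic $T^2$-action for which the $\mathbb{C}^*$-action generated by $\mathcal{K},J\mathcal{K}$ is a subtorus; indeed, the classification above was established precisely by identifying this $\mathbb{C}^*$-action with one of the standard torus factors of the ruling or fan. Moreover in each case the divisor $D$ is torus-invariant, being a fixed curve of the ruling (ALF) or the ray in the direction $(-n,-m)$ of the fan (AF).

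Since $D$ is a torus-invariant divisor of the toric surface $\overline{M}$, its complement $M=\overline{M}\setminus D$ is $T^2$-invariant, and the effective holomorphic $T^2$-action on (the smooth locus of) $\overline{M}$ restricts to an effective holomorphic $T^2$-action on $M$ extending the given $S^1$-action generated by $\mathcal{K}$. Therefore $(M,h)$ is holomorphically toric. Combining with case ($\spadesuit$), any Hermitian non-K\"ahler ALF/AF gravitational instanton is holomorphically toric.
\end{proof}
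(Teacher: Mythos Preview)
Your proof is correct and follows essentially the same approach as the paper's own proof: split into the cases where $\mathcal{K}$ has non-closed orbits (automatically toric by Section \ref{subsubsec:induce R-action}) versus closed orbits (invoke Corollaries \ref{cor:alfpositive}--\ref{cor:af} to see the compactified pair $(\overline{M},D)$ is toric with $D$ torus-invariant, hence $M=\overline{M}\setminus D$ inherits the holomorphic $T^2$-action). Your write-up is in fact more careful than the paper's terse two-sentence version, making explicit the descent of the torus action from $\overline{M}$ to $M$ and the handling of orbifold points in the rational AF case.
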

\begin{proof}
If $\mathcal{K}$ has non-closed orbits, then the gravitational instanton $(M,h)$ automatically is toric.

If $\mathcal{K}$ has closed orbits, then results from this section can be applied. 
The surfaces appear in Corollary \ref{cor:alfpositive}-\ref{cor:af} are all holomorphically toric, so we conclude the closed orbits case.
\end{proof}

By combining Corollary \ref{cor:alfpositive}-\ref{cor:af} and noticing that we always have $-(K_{\overline{M}}+D)>0$, we have completed the proof of Theorem \ref{thm:partialcompactify}. A simple consideration based on the topology of the reversed Taub-NUT, Taub-bolt, rational Kerr, rational Chen-Teo spaces gives their natural compactifications
\begin{itemize}
  \item The complex structure of the reversed Taub-NUT space can be compactified to $(\mathbb{P}^2,\mathbb{P}^1)$, where $D=\mathbb{P}^1$ is a standard hyperplane in $\mathbb{P}^2$.
  \item The complex structures of the Taub-bolt space can be compactified to $(H_1,C_0)$ or $(H_1,C_\infty)$ depending on the choice of orientation.
  \item The complex structure of the rational Kerr spaces can be compactified to $(H_{m,n},D)$.
  \item The complex structure of the rational Chen-Teo spaces can be compactified to $(Bl_pH_{m,n},D)$.
  \end{itemize}

\section{The Calabi-type theorem}
\label{sec:matsushima}

In this section, we prove a Calabi-type theorem for Hermitian non-K\"ahler ALF/AF gravitational instantons. 
Our focus is once again narrowed to gravitational instantons with closed $\mathcal{K}$-orbits. 
Drawing from the conclusions in Section \ref{sec:compactification}, we already know that the natural compactifications are holomorphically toric, in the sense that they possess a holomorphic $T^2$-action that preserves the divisor $D$ (maps points in $D$ into $D$).

Suppose $\bar\partial^{\#}$ is the operator acting on the space of smooth complex-valued functions, which maps each $f$ to the $(1,0)$-vector field $(\bar\partial f)^{\#}$ dual to $\bar\partial f$. Then the holomorphic extremal vector field is just $\bar\partial^{\#}s_g$. The vector field $\bar\partial^{\#}s_g$ extends to the compactified surface $\overline{M}$ as a holomorphic vector field, which fixes every point in $D$, because of Proposition \ref{prop:extendaction}.  Suppose that $\mathfrak{aut}_{\mathcal{K}}(M)$ denotes the vector space of real holomorphic vector fields that commute with $\mathcal{K}$, and $\mathfrak{hamiltoniso}_{\mathcal{K}}(M)$ denotes the vector space of Hamiltonian Killing fields that commute with $\mathcal{K}$ under the K\"ahler metric $g$. 
Now, we are going to prove the following Calabi-type theorem. We will refer to the theorem as a Calabi-type theorem, as part of the proof is indebted to Calabi's contributions in \cite{calabi}.
The statement and proof of this theorem are also partially inspired by \cite{ronan}, where a version of this theorem is proved for complete shrinking gradient K\"ahler-Ricci solitons.

\begin{theorem}[Calabi-type theorem]\label{thm:calabi}
For Hermitian non-K\"ahler ALF gravitational instantons or Hermitian non-K\"ahler rational AF gravitational instantons, there is a direct sum decomposition
\begin{equation}\label{eq:decomposition}
\mathfrak{aut}_{\mathcal{K}}(M)=\mathfrak{hamiltoniso}_{\mathcal{K}}(M)\oplus J\mathfrak{hamiltoniso}_{\mathcal{K}}(M).
\end{equation}
\end{theorem}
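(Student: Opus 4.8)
The plan is to adapt Calabi's classical argument for compact extremal Kähler manifolds to this non-compact setting, using the compactification $(\overline{M}, D)$ constructed in Section \ref{sec:compactification}. The right-hand side of \eqref{eq:decomposition} is always contained in the left, so the content is the reverse inclusion: any real holomorphic vector field $X$ on $M$ commuting with $\mathcal{K}$ must decompose as $X = Y + JZ$ with $Y, Z \in \mathfrak{hamiltoniso}_{\mathcal{K}}(M)$. The first step is to pass to the holomorphic picture: the complexification $X^{1,0} = X - iJX$ is a holomorphic vector field on $M$; because it commutes with $\mathcal{K}$, and $\mathcal{K}$ induces a holomorphic $\mathbb{C}^*$-action whose quotient structure we understand explicitly (the surface $\Sigma = \mathbb{P}^1$, the ansatz of Theorem \ref{thm:localansatz}), I expect $X^{1,0}$ to have controlled growth at infinity. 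The key point is to show $X^{1,0}$ extends holomorphically to $\overline{M}$: one uses the explicit asymptotic model (the metric is ALF/AF with the cusp-bundle-over-sphere description and the compactification coordinates $w = \exp(-\zeta - i\varphi)$, $x+iy$ from Step 1–2 of Theorem \ref{thm:compactifyalf}) to see that a holomorphic vector field commuting with the $\mathbb{C}^*$-action, bounded in the appropriate sense, must be holomorphic across $D$. This is where Hartogs-type extension and the fact that $D$ is a divisor fixed by the $\mathbb{C}^*$-action enter.

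\textbf{The averaging/holomorphy reduction.} Once $X$ extends to $\overline{M}$, the strategy follows Calabi. Decompose $X = X_1 + X_2$ where $X_1$ is the ``$d$-bounded'' (Hamiltonian-type) part and $X_2$ comes from the $\bar\partial$-harmonic part. Concretely: write $X^{1,0} = \bar\partial^{\#} f + H$ where $H$ is the component in the kernel of the relevant Laplacian-type operator; on $\overline{M}$ rational (a toric surface, hence $h^{0,1} = 0$, $b_1 = 0$), the obstruction space vanishes, so in fact every holomorphic vector field arising here is of the form $\bar\partial^{\#} f$ for a complex-valued function $f$ — i.e., is a gradient-type field. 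The extremal condition enters through LeBrun's observation \eqref{eq:curvatureform}: the hermitian metric $e^{-2\log s_g} g_{-K}$ has positive curvature, and $s_g = \lambda^{1/3}$ is the moment map. Using that $X$ commutes with $\mathcal{K}$, one shows $X(s_g)$ is again a Killing potential or, after subtracting an imaginary holomorphy potential, that $f$ can be taken so that $\mathrm{Re}\, f$ generates a Killing field. The argument is: real and imaginary parts of a holomorphy potential on a space with this positivity and the moment-map structure are forced (by an integration-by-parts / maximum-principle argument on the compactified surface, controlling boundary terms using the known decay rate $\delta_0$) to split into Killing $+$ $J$-Killing pieces.

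\textbf{Handling non-compactness and the boundary terms.} The honest technical work — and the main obstacle — is that Calabi's proof integrates by parts over a compact manifold, whereas here we integrate over $M$ and must control the error as $\rho \to \infty$. I would exploit Theorem \ref{thm:expansion} ($\lambda^{1/3} = \rho^{-1} + O'(\rho^{-1-\delta_0})$) and the expansions \eqref{eq:expansionw}, \eqref{eq:asymptoticv} for $W$ and $v$ to show that the boundary integrals over $\{\rho = R\}$ decay; equivalently, work directly on the compactification $\overline{M}$ where $g$ extends (after the conformal blow-down to the cusp-bundle, $g$ is asymptotic to $\mathbb{P}^1 \times$ cusp and the cusp completes to a point/curve) and verify that the relevant currents extend with no mass on $D$, as in Proposition \ref{prop:positivecurrent} and its Lelong-number computation. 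A subtlety I anticipate: $g$ itself is not a smooth Kähler metric on $\overline{M}$ — only the complex structure and the line bundle $-(K_{\overline{M}}+D)$ with its singular hermitian metric extend — so the Calabi decomposition must be run with respect to a genuine Kähler metric on $\overline{M}$ in the class of $-(K_{\overline{M}}+D)$ (which is ample by Theorem \ref{thm:ampleness}/\ref{thm:finalcompactificationrational}), transferring the extremal/Killing information through the explicit asymptotics rather than through global integral identities for $g$. Once the decomposition holds on $\overline{M}$ for holomorphic vector fields fixing $D$, restricting to $M$ and matching with $\mathcal{K}$-commuting fields yields \eqref{eq:decomposition}. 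The hard part will be the regularity bookkeeping at $D$: ensuring that the potential functions $f$ obtained from solving $\bar\partial^{\#} f = X^{1,0}$ have the right behavior near $D$ so that $\mathrm{Re}\, f$ and $\mathrm{Im}\, f$ genuinely integrate to complete Killing fields on all of $M$, not merely on the end.
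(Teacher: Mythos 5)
Your proposal correctly identifies several of the right ingredients --- LeBrun's positively curved hermitian metric $e^{-2\log s_g}g_{-K}$, the need to control integration by parts at infinity via the decay rate $\delta_0$, and the endgame of splitting the real and imaginary parts of a holomorphy potential into Killing and $J$(Killing) pieces --- but the central mechanism you propose has a gap that I do not see how to close. You suggest extending $X^{1,0}$ to $\overline{M}$ and then running Calabi's decomposition with respect to an auxiliary K\"ahler metric in the ample class of $-(K_{\overline{M}}+D)$, ``transferring the extremal/Killing information through the explicit asymptotics.'' The problem is that Calabi's decomposition relative to a metric $\omega'$ on $\overline{M}$ produces Hamiltonian Killing fields \emph{of $\omega'$}, which have no a priori relation to isometries of the extremal metric $g$ (or of $h$) on $M$; an arbitrary K\"ahler metric in that class is not extremal, and the whole content of the theorem is to produce isometries of the given gravitational instanton. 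The decomposition must be run with the operators $\mathfrak{L}=\bar\partial\bar\partial^{\#}$ and $\mathfrak{D}=\mathfrak{L}^*\mathfrak{L}$ built from $g$ itself, because it is the holomorphy of $\bar\partial^{\#}s_g$ for $g$ that makes $\mathfrak{D}$ and $\overline{\mathfrak{D}}$ commute and forces $\overline{\mathfrak{D}}\chi=0$. You flag this transfer as ``the hard part'' but do not supply the bridge, and I believe no such bridge exists along the route you describe.

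The paper instead stays entirely on the non-compact $(M,g)$. The holomorphy potential $\chi$ with $\bar\partial\chi=g(Y,\cdot)$ is produced by H\"ormander's $L^2$-existence theorem with weight $e^{-2\log s_g}$ on $-K_M$ (this is precisely where LeBrun's curvature positivity enters), after first subtracting a fiberwise-constructed approximate potential $f_\xi$ solved on each symplectic reduction $\Sigma=\mathbb{P}^1$ --- without this correction the $(0,1)$-form $g(Y,\cdot)$ is \emph{not} $L^2$ and the method fails. One then derives uniform bounds $|\nabla_g^k\chi|_g\leq C$ from the explicit asymptotics of $W$, $v$, $\xi$, and runs Calabi's argument with cut-off functions $\varphi_\varepsilon(\xi)$, using the finite volume $\mathrm{Vol}_g(M)<\infty$ to kill the error terms; the conclusion $\overline{\mathfrak{D}}\chi=0$ follows because $\overline{\mathfrak{D}}\chi$ is shown to be a bounded holomorphic function, hence a constant, which must vanish by orthogonality to constants. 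Two smaller omissions in your write-up: you do not address the directness of the sum (which in the paper is a separate finite-volume argument ruling out a parallel Killing field), and your appeal to $h^{0,1}(\overline{M})=0$ to get a potential does not substitute for the genuine analytic problem of solving $\bar\partial\chi=g(Y,\cdot)$ with control at infinity on the non-compact $M$.
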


For all the possible $M$, the theorem implies that $\mathfrak{hamiltoniso}_{\mathcal{K}}(M)$ contains more than just the 1-dimensional vector space spanned by the Hamiltonian Killing field $\mathcal{K}$.  Hence, this theorem in particular implies that any Hermitian non-K\"ahler ALF/AF gravitational instanton $(M,h)$ with closed $\mathcal{K}$-orbits is holomorphically isometrically toric.

The remainder of this section is dedicated to proving this theorem.
We begin the proof of the Calabi-type theorem.
The directness of the decomposition (\ref{eq:decomposition}) follows in a similar manner as showed in Section 5 of \cite{ronan}. If the decomposition were not a direct sum, it would imply the existence of a Hamiltonian Killing field denoted as $\mathfrak{k}$, where $J\mathfrak{k}$ is a Hamiltonian Killing field simultaneously. Detailed computation shows that under the metric $g$, $\mathfrak{k}$ and $J\mathfrak{k}$ would be parallel (as described on page 314 in \cite{ronan}). 
Importantly, these parallel vector fields have no fixed points, and the vector fields $\mathfrak{k}$ and $J\mathfrak{k}$ cannot both possess closed orbits simultaneously, as they are holomorphic. Therefore, at least one of these two vector fields induces an $\mathbb{R}$-symmetry and flow points on the end of the gravitational instanton to infinity.
However, this leads to a contradiction, as the metric $g$ only has finite volume $\mathrm{Vol}_g(M)$ (attributed to $g=Wd\xi^2+W^{-1}\eta^2+We^v(dx^2+dy^2)$).
This finite volume contrasts with the $\mathbb{R}$-symmetry.

In the following, we will demonstrate that for any real holomorphic vector field $Y_{\text{real}}\in\mathfrak{aut}_{\mathcal{K}}(M)$, there exists a decomposition $Y_{\text{real}}=\mathfrak{k}+J\mathfrak{w}$, where both $\mathfrak{k}$ and $\mathfrak{w}$ belong to $\mathfrak{hamiltoniso}_{\mathcal{K}}(M)$, combining Hamiltonian and Killing properties.
For a given $Y_{\text{real}}\in\mathfrak{aut}_{\mathcal{K}}(M)$, straightforward calculations show that $Y_{\text{real}}$ commutes with $J\mathcal{K}$, and $JY_{\text{real}}$ commutes with $\mathcal{K}$ and $J\mathcal{K}$.
We work under the K\"ahler metric $$g=Wd\xi^2+W^{-1}\eta^2+g_\xi.$$ Introduce $Y\coloneqq Y_{\text{real}}-iJY_{\text{real}}$ as the holomorphic vector field corresponding to the real holomorphic vector field $Y_{\text{real}}$. Our earlier discussions establish that $Y$ commutes with $\bar\partial^{\#}s_g$.

\subsection{Construction of the potential function}\label{subsec:potentialfunction}
In this subsection, we construct a complex-valued potential function $\chi$ for the holomorphic vector field $Y$, such that $\bar\partial\chi=g(Y,\cdot)$.

\begin{lemma}
The holomorphic vector field $Y$ induces a holomorphic vector field on $\Sigma=\mathbb{P}^1$, denoted by $Y_\Sigma$, which also induces a holomorphic $\mathbb{C}^*$-action on $\mathbb{P}^1$.
\end{lemma}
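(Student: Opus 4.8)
The claim is that the holomorphic vector field $Y$ on the end of the gravitational instanton descends to a holomorphic vector field $Y_\Sigma$ on $\Sigma = \mathbb{P}^1$, inducing a holomorphic $\mathbb{C}^*$-action there. The plan is to exploit the fact that $Y$ commutes with the holomorphic extremal vector field $\bar\partial^{\#}s_g = \mathcal{K} - iJ\mathcal{K}$, so that $Y$ preserves the $\mathbb{C}^*$-fibration structure defined by $\mathcal{K}$ and $J\mathcal{K}$. The complex quotient map $p : M\setminus K \to \Sigma$ was introduced earlier as the holomorphic quotient by this $\mathbb{C}^*$-action; since $Y$ is a holomorphic vector field commuting with the generators of the $\mathbb{C}^*$-action, it is projectable under $p$, i.e. $p_* Y$ is a well-defined holomorphic vector field $Y_\Sigma$ on $\Sigma$.

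First I would make precise the sense in which $Y$ is ``projectable.'' Because $Y$ commutes with both $\mathcal{K}$ and $J\mathcal{K}$ (as established in the paragraph preceding the lemma), the flow of $Y$ commutes with the $\mathbb{C}^*$-action generated by $\mathcal{K}, J\mathcal{K}$, hence carries $\mathbb{C}^*$-orbits to $\mathbb{C}^*$-orbits. Since the $\mathbb{C}^*$-orbits are exactly the fibers of $p$, the flow of $Y$ descends to a flow on $\Sigma$, whose infinitesimal generator is the desired $Y_\Sigma$; holomorphicity of $Y_\Sigma$ follows from holomorphicity of $Y$ and of $p$. I would then argue that $Y_\Sigma$ is a genuine (global) holomorphic vector field on all of $\Sigma = \mathbb{P}^1$: a priori $Y_\Sigma$ is defined on the image $p(M\setminus K)$, but since $\Sigma \cong \mathbb{P}^1$ is compact and $p$ is surjective onto it (the symplectic reductions $\Sigma_{\xi_0}$ are all biholomorphic to $\mathbb{P}^1$ by the discussion in Section~\ref{subsubsec:induce S^1-action}), $Y_\Sigma$ extends across any missing points by Hartogs/removable singularity for bounded holomorphic vector fields on a curve, or simply because $p$ restricted to a single level set $L_{\xi_0}/S^1$ is already an isomorphism onto $\Sigma$.

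Finally, to upgrade ``holomorphic vector field on $\mathbb{P}^1$'' to ``induces a holomorphic $\mathbb{C}^*$-action on $\mathbb{P}^1$'': every holomorphic vector field on $\mathbb{P}^1$ is complete (compactness), so it integrates to a one-parameter group; the one-parameter groups of $\mathrm{Aut}(\mathbb{P}^1) = \mathrm{PGL}(2,\mathbb{C})$ that arise here are those commuting with the $\mathbb{C}^*$-action induced by $\mathcal{K}_\Sigma$ (the image of $\mathcal{K}$). Since $\mathcal{K}$ acts nontrivially on $\Sigma$ — indeed the moment map $\xi$ is nonconstant on symplectic reductions, so the induced $S^1$ on $\Sigma$ is a nontrivial rotation with two fixed points — the centralizer of this $S^1$ (equivalently $\mathbb{C}^*$) in $\mathrm{PGL}(2,\mathbb{C})$ is the maximal torus $\mathbb{C}^*$ itself, so $Y_\Sigma$ lies in a $\mathbb{C}^*$ and hence generates a holomorphic $\mathbb{C}^*$-action (it could a priori be trivial, in which case the statement is read as the degenerate $\mathbb{C}^*$-action, but the content is the existence of the commuting $\mathbb{C}^*$).

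\textbf{Main obstacle.} The one genuinely delicate point is verifying that $Y$ is actually tangent-compatible with the fibration well enough to descend — i.e. that $Y$ has no component obstructing projectability and that the descended object is single-valued on $\Sigma$ rather than merely on a cover. This hinges on $Y$ commuting with $J\mathcal{K}$ (not just $\mathcal{K}$), which forces the $\mathbb{R}$-translation part of the $\mathbb{C}^*$-action to also be preserved; the commutation $[Y, J\mathcal{K}] = 0$ was asserted just before the lemma from ``straightforward calculations,'' and the proof of the lemma essentially unpacks that assertion together with the observation that $p$ is the quotient by the full $\mathbb{C}^*$ and not just the $S^1$.
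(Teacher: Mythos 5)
Your core argument --- that $Y$ descends to $\Sigma$ because it commutes with $\bar\partial^{\#}s_g$, hence with the holomorphic $\mathbb{C}^*$-action whose quotient map is $p$ --- is exactly the paper's proof; the paper's entire justification is the single sentence ``This is because $Y$ commutes with $\bar\partial^{\#}s_g$,'' and your unpacking of projectability, holomorphicity of $p_*Y$, and globality on the compact curve $\Sigma$ is a correct elaboration of it.

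However, your final paragraph, where you try to upgrade ``holomorphic vector field on $\mathbb{P}^1$'' to ``generator of a $\mathbb{C}^*$-action,'' contains a concrete error. You assert that $\mathcal{K}$ acts nontrivially on $\Sigma$ ``since the moment map $\xi$ is nonconstant on symplectic reductions,'' and then take the centralizer of the resulting $S^1$ in $\mathrm{PGL}(2,\mathbb{C})$. But $\Sigma_{\xi_0}$ is by definition the quotient of the level set $\{\xi=\xi_0\}$ by the $S^1$-action generated by $\mathcal{K}$: the moment map is constant on each reduction, and $\mathcal{K}$ descends to the \emph{zero} vector field on $\Sigma$ (likewise $p$ is the quotient by the full $\mathbb{C}^*$ generated by $\mathcal{K}$ and $J\mathcal{K}$, so both collapse). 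The centralizer argument therefore gives no constraint on $Y_\Sigma$, and your proof of semisimplicity collapses. What remains to be excluded is that $Y_\Sigma$ is a nonzero nilpotent field (conjugate to $\partial_z$, generating an additive $\mathbb{C}$-action with a single fixed point), and commutation with $\bar\partial^{\#}s_g$ alone does not rule this out --- to justify the $\mathbb{C}^*$ statement you would need a genuinely different input (or observe that the sequel, namely the construction of $f_\xi$ by solving $\bar\partial_\Sigma f_\xi = g_\xi(Y_\Sigma,\cdot)$ on $\mathbb{P}^1$, works verbatim for any holomorphic $Y_\Sigma$, so that only the descent part of the lemma is actually used).
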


\begin{proof}
This is because $Y$ commutes with $\bar\partial^{\#}s_g$. 
\end{proof}

The $\bar\partial$ operator on $\Sigma=\mathbb{P}^1$ will be denoted by $\bar\partial_{\Sigma}$ in the following.
The $\bar\partial_{\Sigma}$-equation can be solved easily on the Riemann sphere $\mathbb{P}^1$.
\begin{lemma}
Given a $(0,1)$-form $\nu$ on $\Sigma=\mathbb{P}^1$, then
\begin{equation}
f(z)=-\frac{1}{2\pi i}\int_{\mathbb{P}^1}\nu\wedge\frac{dw}{w-z}
\end{equation}
solves the $\bar\partial_{\Sigma}$-equation $\bar\partial_{\Sigma} f=\nu$.
\end{lemma}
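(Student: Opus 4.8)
The plan is to reduce this lemma to the classical explicit solution of the inhomogeneous Cauchy--Riemann equation in one complex variable; the only subtleties are the singular kernel on the diagonal and the behaviour at the point at infinity. First I would observe that on the curve $\Sigma=\mathbb{P}^1$ there are no $(0,2)$-forms, so $\nu$ is automatically $\bar\partial_\Sigma$-closed, and since $H^1(\mathbb{P}^1,\mathcal{O})=0$ a smooth solution $f$ exists a priori; the content of the lemma is therefore just that the stated formula produces one. Next I would fix the standard affine coordinate $w$ on $\mathbb{C}=\mathbb{P}^1\setminus\{\infty\}$, write $\nu = g(w,\bar w)\,d\bar w$ with $g$ smooth, and check that $\nu\wedge\frac{dw}{w-z}$ is a $2$-form on $\mathbb{P}^1$ carrying only integrable singularities: a simple pole at $w=z$ (and $1/(w-z)$ is locally $L^1$ in real dimension two), and, in the coordinate $v=1/w$, a factor behaving like $v^{-1}\,dv$ at $w=\infty$. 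Hence the integral defining $f(z)$ converges for every $z\in\mathbb{C}$, and after a routine check of sign and orientation conventions it equals the Cauchy transform $\frac{1}{2\pi i}\int_{\mathbb{C}}\frac{g(w)}{w-z}\,dw\wedge d\bar w$.

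The heart of the proof is differentiating $f$ in $\bar z$. Since the kernel is singular on the diagonal, I cannot differentiate directly under the integral sign, so I would use the substitution $\zeta = w-z$ to move the $z$-dependence off the kernel and onto the integrand:
\begin{equation*}
f(z)=\frac{1}{2\pi i}\int_{\mathbb{C}}\frac{g(\zeta+z)}{\zeta}\,d\zeta\wedge d\bar\zeta .
\end{equation*}
Because $1/\zeta$ is locally integrable and $g$ together with all of its derivatives is bounded on the compact surface $\mathbb{P}^1$, differentiation in $\bar z$ under this integral is now legitimate; changing variables back gives $\partial_{\bar z}f(z)=\frac{1}{2\pi i}\int_{\mathbb{C}}\frac{(\partial_{\bar w}g)(w)}{w-z}\,dw\wedge d\bar w$, and then the Cauchy--Pompeiu formula on a disk $\{|w|<R\}$, whose boundary term is $O(R^{-1})$ because $g=O(|w|^{-2})$ near $w=\infty$, identifies this with $g(z)$ in the limit $R\to\infty$. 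This yields $\bar\partial_\Sigma f = g\,d\bar z = \nu$ on the affine chart $\mathbb{C}$.

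Finally I would promote the identity to all of $\Sigma=\mathbb{P}^1$: a direct estimate gives $|f(z)|=O(|z|^{-1})$ as $z\to\infty$, and re-expressing the integral in the coordinate $v=1/w$ shows $f$ extends smoothly across $z=\infty$, so $\bar\partial_\Sigma f=\nu$ holds on the whole sphere by continuity of both sides. I expect the one genuine obstacle to be purely technical, namely making the singular-kernel differentiation rigorous; but this is precisely the classical computation of the Cauchy transform, and it carries over verbatim here because $\mathbb{P}^1$ is compact and $\nu$ is smooth, so no new difficulty actually arises.
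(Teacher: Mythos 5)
Your proof is correct. The paper states this lemma without proof, treating it as the classical fact that the Cauchy transform inverts $\partial_{\bar z}$; your argument (reduce to the affine chart, note $g=O(|w|^{-2})$ from smoothness of $\nu$ at $\infty$, substitute $\zeta=w-z$ to differentiate under the integral, apply Cauchy--Pompeiu with vanishing boundary term, and extend across $z=\infty$) is exactly the standard derivation, with the sign and orientation conventions checking out since $\tfrac{1}{2\pi i}\,dw\wedge d\bar w=-\tfrac{1}{\pi}\,dx\wedge dy$.
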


Recall that the metric $g_\xi$ on $\Sigma$ is the metric induced by the symplectic reduction at level $\xi$. Locally it is just $g_\xi=We^v(dx^2+dy^2)$.

\begin{lemma}
On each $(\Sigma,g_\xi)$, we can find a potential function $f_\xi$ for the holomorphic vector field $Y_\Sigma$, in the sense that
\begin{equation}
\bar\partial_\Sigma f_{\xi}=g_\xi(Y_\Sigma,\cdot).
\end{equation}
\end{lemma}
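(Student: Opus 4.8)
The plan is to produce the potential function $f_\xi$ on each symplectic quotient $(\Sigma,g_\xi)=(\mathbb{P}^1,We^v(dx^2+dy^2))$ by first solving the $\bar\partial_\Sigma$-equation abstractly and then adjusting the solution by a holomorphic function so that it represents $Y_\Sigma$. Concretely, the $(1,0)$-form $g_\xi(Y_\Sigma,\cdot)$ is of type $(0,1)$ (using the convention that $g_\xi(Y_\Sigma,\cdot)$ denotes contraction of $Y_\Sigma$ with the metric, which on a Kähler surface pairs a $(1,0)$-vector with $(0,1)$-forms), so I would first check it is $\bar\partial_\Sigma$-closed. On $\mathbb{P}^1$ this is automatic for degree reasons: every $(0,2)$-form vanishes, hence $\bar\partial_\Sigma(g_\xi(Y_\Sigma,\cdot))=0$ trivially. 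Then the explicit Cauchy-type integral from the preceding lemma gives a smooth function $f_\xi$ with $\bar\partial_\Sigma f_\xi = g_\xi(Y_\Sigma,\cdot)$.

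The next step is to verify that such $f_\xi$ genuinely is a \emph{potential} for $Y_\Sigma$ in the Hamiltonian-holomorphic sense, i.e.\ that the assignment is consistent with $Y_\Sigma$ being holomorphic. Here I would use that $Y_\Sigma$ is a holomorphic vector field on $\mathbb{P}^1$ and that $g_\xi$ is a Kähler metric on $\mathbb{P}^1$; the form $g_\xi(Y_\Sigma,\cdot)$ is then $\bar\partial_\Sigma$-exact by the above, and any two solutions differ by a global holomorphic function on $\mathbb{P}^1$, hence by a constant. So $f_\xi$ is unique up to an additive constant, and I would fix this constant later (in the global construction in Section~\ref{subsec:potentialfunction}) by a normalization such as prescribing $\int_\Sigma f_\xi\, dvol_{g_\xi}$ or matching values along the flow of $\bar\partial^\#s_g$. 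The holomorphy of $Y_\Sigma$ also guarantees that $\partial_\Sigma\bar\partial_\Sigma f_\xi = \partial_\Sigma(g_\xi(Y_\Sigma,\cdot)) = -\mathcal{L}_{Y_\Sigma}\omega_\xi + (\text{correction})$, so that the real and imaginary parts of $f_\xi$ are, respectively, a Killing potential and a holomorphy potential on $(\Sigma,g_\xi)$ — this is exactly the structure that will be propagated to $M$.

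The main obstacle I anticipate is not the existence of $f_\xi$ on a single fiber, which is soft, but rather the \emph{regularity and compatibility of $f_\xi$ as $\xi$ varies}, which is presumably the content of the lemmas that follow this one in the paper. One needs the family $\{f_\xi\}$ to depend smoothly (indeed real-analytically) on $\xi\in(0,\epsilon]$, with controlled behavior as $\xi\to 0$ (the end) and at the fixed points of the $S^1$-action where the symplectic reduction degenerates; only then can one assemble the $f_\xi$ together with a contribution along the $(\xi,t)$-directions into a global $\chi$ on $M\setminus K$ solving $\bar\partial\chi = g(Y,\cdot)$, and ultimately extend $\chi$ across $D$ on $\overline{M}$. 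For the present lemma, though, the argument is confined to one fiber: solve $\bar\partial_\Sigma f_\xi = g_\xi(Y_\Sigma,\cdot)$ via the Cauchy integral, using vanishing of $(0,2)$-forms on $\mathbb{P}^1$ to see the right-hand side is closed, and record that $f_\xi$ is determined up to a constant by $H^0(\mathbb{P}^1,\mathcal{O})=\mathbb{C}$.
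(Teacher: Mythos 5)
Your proposal is correct and follows essentially the same route as the paper: identify $g_\xi(Y_\Sigma,\cdot)$ as a globally defined smooth $(0,1)$-form on $\mathbb{P}^1$ (the paper writes it explicitly as $\tfrac12 We^v z\,d\bar z$ after normalizing $Y_\Sigma=z\partial_z$) and solve $\bar\partial_\Sigma f_\xi=g_\xi(Y_\Sigma,\cdot)$ by the Cauchy integral from the preceding lemma. Your added remarks on $\bar\partial$-closedness (automatic on a Riemann surface), uniqueness up to a constant, and the deferred issue of regularity in $\xi$ are all consistent with how the paper proceeds.
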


\begin{proof}
By picking coordinate $z=x+iy$ suitably on $\mathbb{P}^1$, we can assume that the holomorphic vector field $Y_\Sigma=z\partial_z$.
So $g_\xi(Y_\Sigma,\cdot)=\frac12We^vzd\bar z$. It suffices to solve the $\bar\partial_{\Sigma}$-equation $\bar\partial_\Sigma f_\xi=\frac12We^vzd\bar z$ on $\mathbb{P}^1$.
The above lemma then implies that we can simply take
\begin{equation}\label{eq:fxi}
f_\xi(z)=\frac{1}{4\pi i}\int_{\mathbb{P}^1}We^vw\frac{1}{w-z}dwd\bar w.
\end{equation}
\end{proof}

Given that $Y$ is a holomorphic vector field, the $(0,1)$-form dual to $g(Y,\cdot)$ on the gravitational instanton $M$ is $\bar\partial$-closed. The hermitian metric $e^{-2\log s_g}g_{-K}$ on $-K_M$ has strictly positive curvature. Our calculations from the proof of Proposition \ref{prop:positivecurrent} establish that, on the K\"ahler manifold $(M,g)$, the sum of curvature eigenvalues associated with $e^{-2\log s_g}g_{-K}$ is greater than a small positive value $\epsilon$. Treat the $\bar\partial$-closed $(0,1)$-form $g(Y,\cdot)$ as a section of $\Lambda^{2,1}T_M^*\otimes K^{-1}_M$. We shall apply the standard $L^2$-existence theorem (see Theorem 5.1 of \cite{demailly}) to the modified form $g(Y,\cdot)-\bar\partial f_\xi$.

\begin{lemma}\label{lem:L2}
The $\bar\partial$-closed $(0,1)$-form $g(Y,\cdot)-\bar\partial f_\xi$ is $L^2$ with respect to the hermitian metric on $\Lambda^{2,1}T_M^*\otimes K^{-1}_M$, which is induced by the K\"ahler metric on $\Lambda^{2,1}T_M^*$ and the hermitian metric $e^{-2\log s_g}g_{-K}$ on $-K_M$.
\end{lemma}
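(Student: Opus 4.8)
The plan is to estimate the $L^2$-norm of $g(Y,\cdot)-\bar\partial f_\xi$ by splitting $M$ into a compact piece $K$ and the end $M\setminus K$, where on the end the ansatz coordinates $(\xi,t,x,y)$ and the asymptotic expansions \eqref{eq:expansionw} and \eqref{eq:asymptoticv} are available. On the compact piece there is nothing to do, since $g(Y,\cdot)$ is smooth there, $f_\xi$ is smooth by construction, and $e^{-2\log s_g}g_{-K}$ is a smooth positive metric; the integrand is bounded and the domain has finite volume. So the entire content is the decay estimate on the end.

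On the end, I would first unwind the definitions: the hermitian metric on $\Lambda^{2,1}T_M^*\otimes K_M^{-1}$ pairs a $(2,1)$-form valued in $K_M^{-1}$; since $g(Y,\cdot)-\bar\partial f_\xi$ is being viewed as a $(0,1)$-form tensored with the canonical generator, its pointwise norm squared is $|g(Y,\cdot)-\bar\partial f_\xi|^2_g \cdot e^{-2\log s_g}g_{-K}(\text{can},\text{can})$, and we integrate against $dvol_g$. The computation from the proof of Proposition~\ref{prop:metricbasicestimate} already evaluates the factor coming from $e^{-2\log s_g}g_{-K}$ in the compactification gauge (it is $4s_g^2e^{-v}$, which is bounded); what remains is to show $|g(Y,\cdot)-\bar\partial f_\xi|^2_g\, dvol_g$ is integrable near $D$. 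The key structural point is that $Y$ projects to $Y_\Sigma$ on $\Sigma=\mathbb{P}^1$, and $f_\xi$ was chosen precisely so that $\bar\partial_\Sigma f_\xi = g_\xi(Y_\Sigma,\cdot)$ kills the ``horizontal'' part of $g(Y,\cdot)$ along each symplectic quotient $\Sigma$. Thus $g(Y,\cdot)-\bar\partial f_\xi$ has no $d\bar z$-component once one accounts for the $\xi$-derivative of $f_\xi$: more precisely, writing $g(Y,\cdot) = g(Y,\cdot)^{\text{hor}} + (\text{terms involving } d\xi+iW^{-1}\eta)$ and noting $\bar\partial f_\xi$ contributes $\bar\partial_\Sigma f_\xi$ plus a $(\partial_\xi f_\xi)$-term along $d\xi+iW^{-1}\eta$, the difference is supported on the fiber direction. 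One then checks, using \eqref{eq:expansionw}--\eqref{eq:asymptoticv} and the boundedness of $Y$ and of $\partial_\xi f_\xi$ (which follows from differentiating \eqref{eq:fxi} under the integral and using the expansions of $W, e^v$), that in the compactification coordinates $(\mathrm{Re}\,w,\mathrm{Im}\,w,x,y)$ with $w=\exp(-\zeta-i\varphi)$, the pointwise norm $|g(Y,\cdot)-\bar\partial f_\xi|_g$ decays like a power of $\rho$, equivalently is bounded by $C|w|\,|\log|w||^{a}$ for some $a$, while $dvol_g = We^v\,d\xi\wedge\eta\wedge dx\wedge dy$ is comparable to $\tfrac{1}{|w|^2|\log|w||^3}\,d\mathrm{Re}\,w\,d\mathrm{Im}\,w\,dx\,dy$ times bounded factors. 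The powers of $|w|$ then combine to give an integrable density near $D$.

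The main obstacle I anticipate is controlling $f_\xi$ and its $\xi$-derivative uniformly as $\xi\to 0$ (i.e.\ $\rho\to\infty$): the formula \eqref{eq:fxi} is a singular integral over $\mathbb{P}^1$, and one must verify that $W e^v$, together with its $\xi$-derivatives, has the expansions from \eqref{eq:expansionw}--\eqref{eq:asymptoticv} in a way that survives integration against the Cauchy kernel $\tfrac{1}{w-z}$. Since $We^v = e^{\varpi}(1+O'(\rho^{-\delta_0}))$ with $\varpi$ independent of $\xi$ and $\rho\sim\xi^{-1}$, the leading term of $f_\xi$ is $\xi$-independent, so $\partial_\xi f_\xi = O'(\rho^{-1-\delta_0})$, which is more than enough decay; the singular-integral estimate is standard once one notes $\frac{1}{w-z}$ is locally integrable on $\mathbb{P}^1$ and $We^v$ is uniformly bounded in $C^k$ on the (compact) base. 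The rest is bookkeeping of powers of $\rho$ versus the volume form, which is exactly the kind of computation already carried out in Proposition~\ref{prop:metricbasicestimate} and Proposition~\ref{prop:positivecurrent}, so I would cite those computations rather than repeat them. The upshot is that $g(Y,\cdot)-\bar\partial f_\xi$ lies in $L^2$, setting up the application of the Demailly $L^2$-existence theorem in the next step.
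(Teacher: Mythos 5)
Your skeleton is the same as the paper's: after discarding a compact set, the difference $g(Y,\cdot)-\bar\partial f_\xi$ reduces on the end to a ``fiber'' part $(Wd\xi^2+W^{-1}\eta^2)(Y,\cdot)=\eta(Y)(Jd\xi+id\xi)$ plus the $\partial_\xi f_\xi$ part along $d\xi+iJd\xi$, and each is estimated against the weight $e^{-2\log s_g}\sim\xi^{-2}$ and $dvol_g=We^v\,d\xi\wedge\eta\wedge dx\wedge dy$. But two of your key pointwise claims are not right as stated, and one of them hides the actual work.

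First, you dispose of the fiber part by invoking ``the boundedness of $Y$,'' but bounding $\eta(Y)$ \emph{is} the substantive step, and in fact $\eta(Y)$ is \emph{not} bounded. Writing $Y=a\partial_\zeta+b\partial_\varphi+c\partial_x+d\partial_y$ in the gauge where the coefficients depend only on $(x,y)$ and transforming back to $(\xi,t,x,y)$ produces terms $c\int_\xi^\epsilon\mathcal{Z}_x\,d\xi$ and $d\int_\xi^\epsilon\mathcal{Z}_y\,d\xi$; one must use the integrability condition \eqref{eq:deta} (i.e.\ $\mathcal{Z}_x=\mathcal{X}_\xi-W_y$, $\mathcal{Z}_y=\mathcal{Y}_\xi+W_x$) together with $|\partial_x W|,|\partial_y W|\leq C\xi^{-2+\delta_0}$ to get $|\eta(Y)|\leq C\xi^{-1+\delta_0}$, which grows. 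The decay of $|I|_g$ then comes only from the extra factor $|d\xi|_g=W^{-1/2}\sim\xi$, giving $|I|_g\lesssim\xi^{\delta_0}$; none of this follows from asserting $Y$ is bounded, and your proposal never engages with it. Second, your estimate $\partial_\xi f_\xi=O'(\rho^{-1-\delta_0})$ is off by a factor of $\rho^2$: the error term in $We^v=e^\varpi(1+O'(\rho^{-\delta_0}))$ is differentiated in $\xi$, not in $\rho$, and since $\xi\sim1/\rho$ one has $\partial_\xi\sim-\rho^2\partial_\rho$, so $|\partial_\xi(We^v)|\leq C\rho^{1-\delta_0}$ and hence $|\partial_\xi f_\xi|\leq C\rho^{1-\delta_0}$, which is \emph{unbounded}. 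The lemma still holds — $|II|_g\lesssim|\partial_\xi f_\xi|\,|d\xi|_g=O(\rho^{-\delta_0})$, and the weighted integral reduces to $\int_0^\epsilon\xi^{-2+2\delta_0}d\xi<\infty$ precisely because $\delta_0$ is taken close to $1$ (in particular $>1/2$) — but this bookkeeping is genuinely needed, and your ``more than enough decay'' shortcut rests on a false bound. You should redo both estimates along these lines rather than citing boundedness.
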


\begin{proof}
Notice that the approximating potential function $f_\xi$ is $\partial_{t}$-invariant. Recall that the complex structure is given by $J:d\xi\to -W^{-1}\eta$ and $J:dx\to -dy$. 
The complex structure and the exterior differential on  the base $\Sigma=\mathbb{P}^1$ will be denoted by $J_\Sigma$ and $d_\Sigma$ respectively.
So we have
\begin{align*}
\bar\partial f_\xi&=\frac{1}{2}\left(df_\xi+iJdf_\xi\right)=\frac12\left(d_\Sigma f_\xi+{\partial_\xi}f_\xi d\xi\right)+\frac12iJ\left(d_\Sigma f_\xi+{\partial_\xi}f_\xi d\xi\right)\\
&=\bar\partial_\Sigma f_\xi+\frac12{\partial_\xi}f_\xi d\xi+i\frac12{\partial_\xi}f_\xi Jd\xi.
\end{align*}
Hence,
\begin{align*}
g(Y,\cdot)-\bar\partial f_\xi&=(Wd\xi^2+W^{-1}\eta^2)(Y,\cdot)+g_\xi(Y_\Sigma,\cdot)-\bar\partial f_\xi\\
&=(Wd\xi^2+W^{-1}\eta^2)(Y,\cdot)-\frac12{\partial_\xi}f_\xi d\xi-i\frac12{\partial_\xi}f_\xi Jd\xi\\
&=Wd\xi(Y)d\xi+W^{-1}\eta(Y)\eta-\frac12{\partial_\xi}f_\xi d\xi-i\frac12{\partial_\xi}f_\xi Jd\xi.
\end{align*}
Now notice that $\eta(Y)=-WJd\xi(Y)=-Wd\xi(JY)=-iWd\xi(Y)$, so we have
\begin{align}
g(Y,\cdot)-\bar\partial f_\xi=\underbrace{\eta(Y)(Jd\xi+id\xi)}_{I}-\frac12\underbrace{\left({\partial_\xi}f_\xi d\xi+i{\partial_\xi}f_\xi Jd\xi\right)}_{II}.
\end{align}
We need to handle the two terms separately.

\paragraph{\textbf{\underline{\underline{The $I$-term}}}}
We will consider both the $(\xi,{t},x,y)$ and $(\zeta,\varphi,x,y)$ coordinates. This dual perspective is essential due to the metric is explicitly defined in the $(\xi,{t},x,y)$ coordinate system, while the complex structure behaves favorably in the $(\zeta,\varphi,x,y)$ coordinate.
In the $(\zeta,\varphi,x,y)$ coordinate, the holomorphic vector field $Y$ can be written as $Y=a\partial_\zeta+b\partial_\varphi+c\partial_x+d\partial_y$, where $a,b,c,d$ are complex-valued functions of $\zeta,\varphi,x,y$. Because $Y$ commutes with the real holomorphic vector fields $\mathcal{K}=-\partial_\varphi$ and $\partial_\zeta=J\partial_\varphi$, the coefficients $a$, $b$, $c$, and $d$ are independent of $\zeta$ and $\varphi$. Now recall that we write $\eta=d{t}+\mathcal{X}dx+\mathcal{Y}dy+\mathcal{Z}d\xi$ in the $(\xi,{t},x,y)$ coordinate. 
Through the explicit coordinate transformations (\ref{eq:coorchangezeta}) and (\ref{eq:coorchangephi}), it is straightforward to verify that
\begin{align}\label{eq:etay}
|\eta(Y)|&=|\eta(a\partial_\zeta+b\partial_\varphi+c\partial_x+d\partial_y)|\\
&=\left|-b+c\left(\int^\epsilon_\xi\mathcal{Z}_xd\xi+\mathcal{X}\right)+d\left(\int^\epsilon_\xi\mathcal{Z}_yd\xi+\mathcal{Y}\right)\right|\notag\\
&=\left|-b+c\left(\int^\epsilon_\xi(\mathcal{X}_\xi-W_y)d\xi+\mathcal{X}\right)+d\left(\int^\epsilon_\xi(\mathcal{Y}_\xi+W_x)d\xi+\mathcal{Y}\right)\right|\notag\\
&=\left|-b+c\left(\int^\epsilon_\xi-W_yd\xi+\mathcal{X}(\epsilon,x,y)\right)+d\left(\int^\epsilon_\xi W_xd\xi+\mathcal{Y}(\epsilon,x,y)\right)\right|\notag\\
&\leq C+C\int^\epsilon_\xi|W_y|d\xi+C\int^\epsilon_\xi |W_x|d\xi \notag.
\end{align}
Here, all the derivatives are taken in the $(\xi,{t},x,y)$ coordinate, and for the third equality we used equation (\ref{eq:deta}). Exploiting the asymptotic expansions $W=\rho^2+O'(\rho^{2-\delta_0})$ and $\xi=1/\rho+O'(\rho^{-1-\delta_0})$, we deduce in the $(\xi,{t},x,y)$ coordinate system
\begin{equation}\label{eq:estimatew}
|\partial_\xi^k\partial_x^j\partial_y^lW|\leq C\xi^{-2-k+\delta_0}
\end{equation}
when at least one of $j$ and $l$ is non-zero.
Particularly, the integrals $\int^\epsilon_\xi |W_x|d\xi,\int^\epsilon_\xi|W_y|d\xi$ can be estimated as
\begin{align}\label{eq:integraletay}
\int^\epsilon_\xi |W_x|d\xi,\int^\epsilon_\xi|W_y|d\xi
&\leq C\int^\epsilon_\xi\xi^{-2+\delta_0}d\xi\\
&\leq C\xi^{-1+\delta_0}\notag
\end{align}
on the end of the gravitational instanton.
Equations (\ref{eq:etay}) and (\ref{eq:integraletay}) together imply 
\begin{align}
|\eta(Y)|\leq C\xi^{-1+\delta_0}
\end{align}
on the end of the gravitational instanton.

We can now proceed to estimate the $L^2$-norm of the $I$-term as follows:
\begin{align*}
\int_M|I|^2_{g}e^{-2\log s_g}dvol_g&\leq C\int_{M\setminus K}|d\xi|^2_g|\eta(Y)|^2\xi^{-2} dvol_g+C\\
&\leq C\int_{M\setminus K} W^{-1} \xi^{-4+2\delta_0}dvol_g+C\\
&\leq C\int_{M\setminus K} \xi^{-2+2\delta_0}We^vd\xi\wedge\eta\wedge dx\wedge dy+C\\
&\leq C\int_{M\setminus K} \xi^{-2+2\delta_0}d\xi\wedge\eta\wedge dx\wedge dy+C.
\end{align*}
Given that we have chosen the decay order $\delta_0$ to be close to 1, the inequality $-2+2\delta_0>-1$ holds. Finally, 
\begin{align*}
\int_{M\setminus K} \xi^{-2+2\delta_0}d\xi\wedge\eta\wedge dx\wedge dy\leq C\int^\epsilon_0\xi^{-2+2\delta_0}d\xi\leq C.
\end{align*} 
This concludes the estimation of the $I$-term.

\paragraph{\textbf{\underline{\underline{The $II$-term}}}}
We once again work in the coordinate system $(\xi,{t},x,y)$. Recall that we have equation (\ref{eq:fxi}) $f_\xi(z)=\frac{1}{4\pi i}\int_{\mathbb{P}^1}We^vw\frac{1}{w-z}dwd\bar w$. Because it clearly holds that
$$\left|{\partial_\xi}f_\xi\right|\leq C\int_{\mathbb{P}^1}\left|{\partial_\xi}\left(We^v\right)\right|\frac{|w|}{|w-z|}idwd\bar w,$$
our focus narrows to estimating the term $|\partial_\xi(We^v)|$. 
Again, with the expansions $W=\rho^2+O'(\rho^{2-\delta_0})$, $v=\varpi(x,y)-2\log\rho+O'(\rho^{-\delta_0})$, and $\xi=1/\rho+O'(\rho^{-1-\delta_0})$, it is direct to check $|\partial_\xi(We^v)|\leq C\rho^{1-\delta_0}$.
The $L^2$-norm of the $II$-term can be estimated as
\begin{align*}
\int_M|II|^2_ge^{-2\log s_g}dvol_g&\leq C\int_{M\setminus K}|\partial_\xi f_\xi|^2|d\xi|_g^2\xi^{-2} dvol_g+C\\
&\leq C\int_{M\setminus K}|\partial_\xi f_\xi|^2W^{-1}\xi^{-2} dvol_g+C\\
&\leq C\int_{M\setminus K}\rho^{2-2\delta_0}dvol_g+C\\
&\leq C\int_{M\setminus K} \xi^{-2+2\delta_0}We^vd\xi\wedge\eta\wedge dx\wedge dy+C\\
&\leq C.
\end{align*}
Here, $We^v$ is bounded and because we have chosen $\delta_0$ to be close to 1, $\int_{M\setminus K} \xi^{-2+2\delta_0}d\xi\wedge\eta\wedge dx\wedge dy$ is finite.

Therefore the lemma is proved.
\end{proof}

\begin{proposition}
There exists a potential function $\chi$ for $Y$.
\end{proposition}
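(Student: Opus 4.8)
The plan is to produce $\chi$ by combining the fiberwise approximate potential $f_\xi$ with a correction obtained from $L^2$-Hörmander theory, and then to check that the resulting function is honestly a potential for $Y$.

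\medskip

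\textbf{Step 1: Apply the $L^2$-existence theorem.} By Lemma \ref{lem:L2}, the $\bar\partial$-closed $(0,1)$-form $\alpha := g(Y,\cdot)-\bar\partial f_\xi$, regarded as a section of $\Lambda^{2,1}T_M^*\otimes K_M^{-1}$, is $L^2$ with respect to the hermitian metric on $K_M^{-1}$ given by $e^{-2\log s_g}g_{-K}$, whose curvature $\Omega$ satisfies $\Omega\geq\vartheta$ for a strictly positive $(1,1)$-form (Proposition \ref{prop:positivecurrent}); in particular the sum of its curvature eigenvalues is bounded below by a positive constant $\epsilon$ on all of $(M,g)$. Since $(M,g)$ is a complete K\"ahler manifold (completeness of $g$ follows from the ALF/AF asymptotics and the form $g=Wd\xi^2+W^{-1}\eta^2+We^v(dx^2+dy^2)$), the standard $L^2$-existence theorem for $\bar\partial$ on line bundles with positive curvature (Theorem 5.1 of \cite{demailly}) applies: there is an $L^2$ section $\sigma$ of $\Lambda^{2,0}T_M^*\otimes K_M^{-1}$, i.e. an $L^2$ function $\psi$, with $\bar\partial\psi=\alpha$. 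Set $\chi := f_\xi+\psi$, so that $\bar\partial\chi = \bar\partial f_\xi+\alpha = g(Y,\cdot)$ as desired.

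\medskip

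\textbf{Step 2: Global consistency of $f_\xi$.} One must make sure the family $\{f_\xi\}$ really defines a single smooth function on the end rather than a merely local object: the fiberwise solution \eqref{eq:fxi} is canonical on each $(\Sigma,g_\xi)=\mathbb{P}^1$ once the normalization is fixed, and since the symplectic reductions $\Sigma$ are all biholomorphic via the flow of $\nabla_g s_g$ and $g_\xi$ varies smoothly in $\xi$, the integral \eqref{eq:fxi} depends smoothly on $\xi$ and is $\partial_t$-invariant; so $f_\xi$ extends to a smooth $\partial_t$-invariant function on $M\setminus K$, and $\bar\partial f_\xi$ extends $\bar\partial$-closedly. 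Then $\alpha$ is globally defined and smooth on $M$ (it is $g(Y,\cdot)$ on the compact part, and $g(Y,\cdot)-\bar\partial f_\xi$ on the end, agreeing on the overlap up to cutting off $f_\xi$ appropriately; one can absorb a cutoff into $\psi$ since the resulting error is compactly supported and automatically $L^2$). Elliptic regularity for $\bar\partial$ upgrades the $L^2$ solution $\psi$ to a smooth function, so $\chi$ is smooth.

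\medskip

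\textbf{Step 3: $\chi$ is a genuine potential.} Since $\bar\partial\chi = g(Y,\cdot)$ and $Y$ is the $(1,0)$-vector field metrically dual to this $(0,1)$-form, $\chi$ is by definition a (complex-valued) potential for $Y$, i.e. $(\bar\partial\chi)^{\#}=Y$. This completes the construction.

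\medskip

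\textbf{Main obstacle.} The essential content, and the only genuinely delicate point, is Step 1's input — already discharged in Lemma \ref{lem:L2} — namely that the clever choice of approximate potential $f_\xi$ kills the leading-order growth of $g(Y,\cdot)$ so that the difference is $L^2$ against the weight $e^{-2\log s_g}$. Everything after that (existence via Hörmander, smoothing via elliptic regularity, gluing the local $f_\xi$ into a global object) is routine; the one bookkeeping subtlety is ensuring that introducing a cutoff for $f_\xi$ on the transition region does not destroy either the $\bar\partial$-closedness or the $L^2$ bound, which is handled by noting the cutoff error is compactly supported.
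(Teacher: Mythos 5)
Your proposal is correct and follows essentially the same route as the paper: the paper's proof is literally the one-line "direct application of Theorem 5.1 of \cite{demailly}" to the $\bar\partial$-closed form $g(Y,\cdot)-\bar\partial f_\xi$, whose $L^2$ bound against the positively curved metric $e^{-2\log s_g}g_{-K}$ was the content of Lemma \ref{lem:L2}, with $\chi=f_\xi+\psi$. The extra bookkeeping you supply (globalizing $f_\xi$ with a cutoff, elliptic regularity for smoothness) is sound and merely makes explicit what the paper leaves implicit.
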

\begin{proof}
Direct application of Theorem 5.1 of \cite{demailly}. 
\end{proof}

\subsection{Estimates on the potential function} 
In this subsection we derive some estimates on the potential function $\chi$. 
\begin{lemma}\label{lem:invarianceofchi}
The potential function $\chi$ can be taken as invariant under the Killing field $\partial_{t}$.
\end{lemma}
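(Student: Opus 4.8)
The plan is to average the potential function $\chi$ over the $S^1$-action generated by $\mathcal{K}=-\partial_t$, exploiting the fact that $Y$ already commutes with $\mathcal{K}$ and $J\mathcal{K}$. Concretely, since $\partial_t$ generates a (possibly semi-free) $S^1$-action on $M$ — recall from Section~\ref{subsec:localansatztogravitational} and Proposition~\ref{prop:extendaction} that in the $S^1$-case $\mathcal{K}$ integrates to a circle action — I would set
\begin{equation*}
\chi_{\mathrm{av}} \coloneqq \frac{1}{2\pi}\int_0^{2\pi} (\Phi_\theta)^* \chi \, d\theta,
\end{equation*}
where $\Phi_\theta$ denotes the time-$\theta$ flow of $\partial_t$. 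The key point is that $\chi_{\mathrm{av}}$ is by construction $\partial_t$-invariant, and one must check that it is still a potential for $Y$, i.e.\ $\bar\partial \chi_{\mathrm{av}} = g(Y,\cdot)$. This follows because $\Phi_\theta$ is a biholomorphic isometry of $(M,g)$ (as $\mathcal{K}$ is Killing and real holomorphic), so $(\Phi_\theta)^*$ commutes with $\bar\partial$, and because $Y$ commutes with $\mathcal{K}$ we have $(\Phi_\theta)_* Y = Y$; hence $\bar\partial (\Phi_\theta)^*\chi = (\Phi_\theta)^*\bar\partial\chi = (\Phi_\theta)^* g(Y,\cdot) = g((\Phi_\theta)^{-1}_* Y, \cdot) = g(Y,\cdot)$ for every $\theta$, and averaging preserves this identity.

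The main technical point to address is interchanging $\bar\partial$ with the integral over $\theta$, which is routine once one knows $\chi$ and its first derivatives are locally bounded (the $L^2$-existence theorem of Section~\ref{subsec:potentialfunction} produces $\chi \in L^2_{loc}$ with $\bar\partial\chi = g(Y,\cdot) \in L^\infty_{loc}$, so elliptic regularity for $\bar\partial$ upgrades $\chi$ to be smooth, or at least $C^1$, away from the discrete fixed locus, and the averaging integral is over a compact parameter space). One small subtlety: in the rational AF case $\mathcal{K}$ only induces an $S^1$-action after passing to the finite cover $M'$, but the construction of $\chi$ there was already carried out on that cover (Section~\ref{subsec:compactificationrational}), so the averaging is performed upstairs and descends.

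I expect the main obstacle — such as it is — to be purely bookkeeping: verifying that the averaging does not destroy the decay/integrability properties of $\chi$ that will be needed in the subsequent estimates (the remaining lemmas of Section~\ref{sec:matsushima}), and confirming that $\chi$ is regular enough near the $S^1$-fixed points for the pullback integral to make sense pointwise. Since $g(Y,\cdot)$ is smooth on all of $M$, $\chi$ is smooth on all of $M$ by elliptic regularity for the $\bar\partial$-operator acting on functions, so the fixed-point issue does not actually arise, and the argument is clean.
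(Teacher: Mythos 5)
Your averaging argument is correct and is exactly the standard way to realize the paper's one-line assertion that $\chi$ can be chosen $\partial_t$-invariant because the metric $g$, the data $g(Y,\cdot)$, and the vector field $Y$ are all $\partial_t$-invariant. One small correction: in the rational AF case $\mathcal{K}$ already generates a (semi-free) $S^1$-action on $M$ itself, so the averaging is performed directly on $M$ rather than on the finite cover used for the compactification of the end.
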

\begin{proof}
This is clear since the K\"ahler metric $g$, the approximating potential function $f_\xi$, and the holomorphic vector field $Y$ are all $\partial_{t}$-invariant.
\end{proof}

Recalling that in the coordinate system $(\zeta,\varphi,x,y)$, we have $J\partial_\zeta=\partial_\varphi$, and $Y=a\partial_\zeta+b\partial_\varphi+c\partial_x+d\partial_y$, where $a$, $b$, $c$, and $d$ are only functions of $x$ and $y$ and hence bounded, as established in the proof of Lemma \ref{lem:L2}. The metric $g=Wd\xi^2+W^{-1}\eta^2+We^v(dx^2+dy^2)$ with $\eta=d{t}+\mathcal{X}dx+\mathcal{Y}dy+\mathcal{Z}d\xi$.
Applying the explicit coordinate transformations (\ref{eq:coorchangezeta}) and (\ref{eq:coorchangephi}), along with (\ref{eq:vectorrelation}), we arrive at
\begin{align*}
\bar{\partial}{\chi}(\partial_\varphi)=\frac12\left(d\chi+iJd\chi\right)(\partial_\varphi)=-\frac12id\chi(\partial_{\zeta})=\frac12iW^{-1}\partial_\xi\chi.
\end{align*}
Meanwhile, we have
\begin{align*}
\bar{\partial}{\chi}(\partial_\varphi)&=g(Y,\partial_\varphi)\\
&=g(a\partial_\zeta+b\partial_\varphi+c\partial_x+d\partial_y,\partial_\varphi)\\
&=-W^{-1}\left(-b+c\left(\mathcal{X}+\int^\epsilon_\xi \mathcal{Z}_xd\xi\right)+d\left(\mathcal{Y}+\int^\epsilon_\xi \mathcal{Z}_yd\xi\right)\right)\\
&=W^{-1}\left(b-c\left(\mathcal{X}(\epsilon,x,y)-\int^\epsilon_\xi W_yd\xi\right)-d\left(\mathcal{Y}(\epsilon,x,y)+\int^\epsilon_\xi W_xd\xi\right)\right).
\end{align*}
Here, in the third line we applied $\mathcal{Z}_x=\mathcal{X}_\xi-W_y$ and $\mathcal{Z}_y=\mathcal{Y}_\xi+W_x$ again, which comes from (\ref{eq:deta}).
Consequently,
\begin{equation}\label{eq:chiderivative}
\partial_\xi\chi=-2i\left(b+c\left(-\mathcal{X}(\epsilon,x,y)+\int^\epsilon_\xi W_yd\xi\right)+d\left(-\mathcal{Y}(\epsilon,x,y)-\int^\epsilon_\xi W_xd\xi\right)\right).
\end{equation}
We shall proceed to estimate the potential function $\chi$ based on this equation as follows.
\begin{lemma}
The estimate $|\nabla_g^k\chi|_g\leq C$ holds.
\end{lemma}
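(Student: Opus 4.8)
The goal is a $C^0$ bound (indeed a $C^k$ bound) on the potential function $\chi$ for the holomorphic vector field $Y$, working near infinity on the ALF/AF end; away from infinity everything is on a compact set and there is nothing to prove. The strategy is to integrate the explicit ODE \eqref{eq:chiderivative} for $\partial_\xi\chi$ in the $\xi$-variable, using that the right-hand side is controlled, and then to bootstrap to higher derivatives. First I would observe that by Lemma \ref{lem:invarianceofchi} the function $\chi$ is $\partial_t$-invariant, so on $p^{-1}(U)$ it is a function of $(\xi,x,y)$ only; thus it suffices to bound $\chi$ and its derivatives in these three variables. The coefficients $b,c,d$ of $Y$ in the $(\zeta,\varphi,x,y)$ frame depend only on $(x,y)$ and are bounded (as already used in the proof of Lemma \ref{lem:L2}), and $\mathcal{X}(\epsilon,x,y),\mathcal{Y}(\epsilon,x,y)$ are smooth functions on the compact base $\Sigma=\mathbb{P}^1$, hence bounded. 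So the only term in \eqref{eq:chiderivative} that needs care is $\int^\epsilon_\xi W_x\,d\xi$ and $\int^\epsilon_\xi W_y\,d\xi$, and by \eqref{eq:estimatew}–\eqref{eq:integraletay} these are $O(\xi^{-1+\delta_0})$.

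The key step is then: from $|\partial_\xi\chi|\le C\xi^{-1+\delta_0}$ and the fact that $\delta_0>0$ (indeed $\delta_0$ close to $1$), so that $\xi^{-1+\delta_0}$ is integrable near $\xi=0$, integrating from a fixed level $\xi=\epsilon$ inwards gives $|\chi|\le C$ on the end. (One must check $\chi$ does not blow up along $\Sigma$-directions for fixed $\xi$, but that is just compactness of $\Sigma$ together with smoothness of $\chi$ away from $D$.) This establishes $|\chi|_g\le C$, i.e. the $k=0$ case. For the higher-order bounds, I would differentiate \eqref{eq:chiderivative} in $x,y$ and use the refined estimate \eqref{eq:estimatew}, namely $|\partial_\xi^k\partial_x^j\partial_y^l W|\le C\xi^{-2-k+\delta_0}$ whenever $j$ or $l$ is nonzero, to get $|\partial_x^j\partial_y^l\partial_\xi\chi|\le C\xi^{-1+\delta_0}$ for all $j,l$, hence $|\partial_x^j\partial_y^l\chi|\le C$ after integrating in $\xi$; combined with the relation $\partial_\xi\chi$ itself being already bounded, and expressing $\nabla_g^k\chi$ in terms of these coordinate derivatives together with the controlled metric coefficients $W\sim\rho^2$, $W^{-1}\sim\rho^{-2}$, $We^v\sim e^\varpi$ and their derivatives from the expansions \eqref{eq:expansionw}, \eqref{eq:asymptoticv}, one obtains $|\nabla_g^k\chi|_g\le C$. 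The point is that each $\xi$-derivative of $\chi$ lands on the bounded RHS of \eqref{eq:chiderivative} (after one differentiation) and each tangential derivative only improves decay, so no derivative of $\chi$ ever grows.

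\textbf{Main obstacle.} The genuinely delicate point is making sure the $g$-norm of $\nabla_g^k\chi$, as opposed to the coordinate derivatives, stays bounded: the metric $g=Wd\xi^2+W^{-1}\eta^2+We^v(dx^2+dy^2)$ degenerates as $\xi\to0$ (the $\eta^2$-direction has length $\sim\rho\to\infty$), so one must be careful that differentiating in the long direction $\partial_\varphi$ (equivalently $\partial_t$) costs nothing — which is exactly where $\partial_t$-invariance of $\chi$ (Lemma \ref{lem:invarianceofchi}) saves the day — while differentiating in the short/collapsed or the $\xi$-direction is measured with the right metric weights. Concretely, $|\nabla_g\chi|_g^2 = W^{-1}|\partial_\xi\chi|^2 + (\text{Hamiltonian/angular terms}) + (We^v)^{-1}(|\partial_x\chi|^2+|\partial_y\chi|^2)$, and the weights $W^{-1}\sim\rho^{-2}$, $(We^v)^{-1}\sim e^{-\varpi}$ only help; the same bookkeeping at higher order, using that all Christoffel symbols of $g$ are polynomially controlled by the expansions, closes the estimate. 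The rest is routine.
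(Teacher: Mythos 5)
Your proposal is correct and follows essentially the same route as the paper: integrate the explicit formula \eqref{eq:chiderivative} in $\xi$ using the decay \eqref{eq:estimatew} of the tangential derivatives of $W$, differentiate it to control all coordinate derivatives $\partial_\xi^k\partial_x^j\partial_y^l\chi$, and then convert to $g$-norms using $|d\xi|_g\sim\xi$, $|dx|_g\sim|dy|_g\sim C$ together with the $\partial_t$-invariance of $\chi$ to dispose of the dangerous $\eta$-direction. The paper's proof is exactly this bookkeeping, recording $|\partial_\xi^k\partial_x^j\partial_y^l\chi|\le C\xi^{-k+\delta_0}$ for $k\ge1$ and deducing $|\nabla_g^k\chi|_g\le C\xi^{\delta_0}$ for $k\ge1$, with the $k=0$ bound coming from integrability of $\xi^{-1+\delta_0}$ as you say.
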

\begin{proof}
In the coordinate system $(\xi,{t},x,y)$, recalling (\ref{eq:estimatew}), we have established that
$|\partial_\xi^{k}\partial_x^j\partial_y^lW|\leq C\xi^{-2-k+\delta_0}$
when at least one of $j$ or $l$ is non-zero. This yields
$$\int^\epsilon_\xi |\partial_\xi^{k}\partial_x^j\partial_y^lW|d\xi\leq C\xi^{-1-k+\delta_0}$$
when at least one of $j$ or $l$ is non-zero. Moving on from equation (\ref{eq:chiderivative}), we obtain
$$\left|\partial_\xi^k\partial_x^j\partial_y^l\chi\right|\leq C\xi^{-k+\delta_0}$$
for $k\geq1$.
Finally to get the statement in the lemma, we observe that the metric $g=Wd\xi^2+W^{-1}\eta^2+We^v(dx^2+dy^2)$. Thus, $|d\xi|_g=\sqrt{W^{-1}}\sim\xi$ and $|dx|_g=|dy|_g=\sqrt{W^{-1}e^{-v}}\sim C$. Alongside the fact that the potential function $\chi$ is $\partial_{t}$-invariant, it is straightforward to check
$$|\nabla_g^k\chi|_g\leq C\xi^\delta_0$$
for $k\geq1$. This specifically implies that $|\nabla_g^k\chi|_g\leq C$ for all $k\geq0$.
\end{proof}

\subsection{Calabi's argument}

In this subsection, we employ an argument attributed to Calabi \cite{calabi}, involving various integration by parts that require verification. We define the operator $\mathfrak{L}\coloneqq \bar\partial\bar\partial^{\#}$.
A function $f$ is annihilated by $\mathfrak{L}$ if and only if the $(1,0)$-vector field $\bar\partial^{\#}f$ is holomorphic. In particular, for the potential function $\chi$ associated with $Y$, we have $\mathfrak{L}\chi=0$.
The dual operator $\mathfrak{L}^*$ of $\mathfrak{L}$ can be defined using the formula
\begin{equation}
\int_{M}\psi\overline{\mathfrak{L}^*\phi}dvol_g\coloneqq\int_{M}\mathfrak{L}\psi\overline{\phi}dvol_g
\end{equation}
for any smooth $\psi$ with compact support and any smooth $\phi$.
This leads us to the introduction of a fourth-order differential operator $\mathfrak{D}\coloneqq\mathfrak{L}^*\mathfrak{L}$. We then define the complex conjugate of the operator $\overline{\mathfrak{D}}$ as $\overline{\mathfrak{D}}f\coloneqq \overline{{\mathfrak{D}}\overline{f}}$.
\begin{proposition}\label{prop:commute}
The operators $\mathfrak{D}$ and $\overline{\mathfrak{D}}$ commute.
\end{proposition}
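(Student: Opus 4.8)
The plan is to exploit the K\"ahler (in particular, Ricci form being closed) identities together with the fact that $\mathfrak{L}\chi=0$ for the potential $\chi$ of any $Y\in\mathfrak{aut}_{\mathcal{K}}(M)$, following Calabi's scheme in \cite{calabi}. First I would record the local expression of $\mathfrak{D}=\mathfrak{L}^*\mathfrak{L}$: since $\mathfrak{L}f=\bar\partial\bar\partial^{\#}f$ and $\bar\partial^{\#}f$ is the $(1,0)$-field dual to $\bar\partial f$, one has $(\mathfrak{L}f)_{\bar i\bar j}=\nabla_{\bar i}\nabla_{\bar j}f$ (the complex Hessian in antiholomorphic indices), up to the usual factor. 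Integration by parts then gives $\mathfrak{D}f=\nabla^{\bar j}\nabla^{\bar i}\nabla_{\bar i}\nabla_{\bar j}f$, i.e. a fourth-order operator built from $(0,2)$-Hessians; its conjugate $\overline{\mathfrak{D}}f=\nabla^{j}\nabla^{i}\nabla_{i}\nabla_{j}f$ is the analogous operator with holomorphic indices. The commutator $[\mathfrak{D},\overline{\mathfrak{D}}]$ is then a sum of curvature terms arising from commuting $\nabla_{\bar i}$ past $\nabla_j$; the key algebraic point is that these terms all involve the Ricci tensor of $g$ and its derivatives, and because $g$ is extremal K\"ahler (so $\bar\partial^{\#}s_g$ is holomorphic, i.e. $s_g$ itself satisfies $\mathfrak{L}s_g=0$) many of them collapse.

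Concretely, the key steps in order would be: (1) derive the Bochner-type formula expressing $[\mathfrak{D},\overline{\mathfrak{D}}]f$ as a first- and second-order operator in $f$ with coefficients polynomial in $Rm_g$, $\nabla Rm_g$, $Ric_g$, $\nabla Ric_g$; (2) use the K\"ahler-ness of $g$ and the contracted second Bianchi identity $\nabla^{\bar i}R_{i\bar i}=\nabla_i s_g$ to rewrite all Ricci-derivative terms in terms of $\nabla s_g$ and $\nabla\bar\partial^{\#}s_g$; (3) invoke that $g$ is \emph{extremal}, so $\bar\partial^{\#}s_g$ is holomorphic, hence $\nabla_{\bar j}\nabla_{\bar i}s_g=0$ — this kills precisely the obstruction terms and leaves $[\mathfrak{D},\overline{\mathfrak{D}}]=0$ as an operator identity. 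This is exactly the computation Calabi performs in the compact case; in our setting it is purely pointwise/local, so there is no analytic subtlety in this proposition itself. One should also double-check that $\mathfrak{L}^*$ is genuinely the formal adjoint here — i.e. that the integration by parts defining $\mathfrak{L}^*$ via test functions of compact support yields the expected local second-order expression $\mathfrak{L}^*\phi=\nabla^{\bar j}\nabla^{\bar i}\phi_{\bar i\bar j}$ — which is automatic since the defining identity only involves compactly supported $\psi$.

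I expect the main obstacle to be purely bookkeeping: carefully tracking the curvature terms generated by commuting covariant derivatives of different types through a fourth-order operator, and verifying that after applying the K\"ahler Bianchi identities and the extremality condition $\nabla\bar\partial^{\#}s_g$ (holomorphicity of $\bar\partial^{\#}s_g$) they cancel in pairs. There is no PDE-theoretic or global difficulty here because the statement is an identity of differential operators, valid pointwise on $M$; completeness, asymptotics, and the compactification play no role. The one place to be slightly careful is that \emph{a priori} $\mathfrak{D}$ and $\overline{\mathfrak{D}}$ are only formal operators (defined by an integration-by-parts pairing), so the cleanest route is to first identify their local symbols as above and then prove the commutation as an identity of honest local differential operators acting on $C^\infty(M)$; this also makes the subsequent application (decomposing $\mathfrak{aut}_{\mathcal{K}}(M)$ via the joint kernel/spectral theory of the commuting pair) legitimate.
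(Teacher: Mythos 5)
You have correctly identified that this is a purely local statement about differential operators, that extremality is the essential input, and that the source is Calabi; the paper itself disposes of the proposition by citing Proposition 3.1 of \cite{calabi}. However, the mechanism you propose --- expanding the full commutator $[\mathfrak{D},\overline{\mathfrak{D}}]$ of two fourth-order operators into curvature terms via repeated commutation of $\nabla_j$ and $\nabla_{\bar k}$, and then arguing that Bianchi identities plus $\nabla_{\bar j}\nabla_{\bar i}s_g=0$ make everything ``cancel in pairs'' --- is not the argument, and as written it is an unverified claim rather than a proof. A priori that commutator is an operator of order up to six with a large number of curvature contractions, and nothing in your sketch controls them; ``many of them collapse'' is exactly the step that needs to be carried out, and it is not at all clear that it closes in the form you describe.

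The actual argument is much shorter and hinges on a structural fact you do not mention: the \emph{difference} $\overline{\mathfrak{D}}-\mathfrak{D}$ is a first-order operator. This is Calabi's identity (2.3'), recorded as Lemma \ref{lem:localcomputation} in the paper: $(\overline{\mathfrak{D}}-\mathfrak{D})f=s_g{}^{j}f_j-s_{g\,j}f^{j}$, which is precisely $-i$ times differentiation of $f$ along $\mathcal{K}=J\nabla_g s_g$. Extremality enters exactly here (the fourth-order terms cancel because $\mathfrak{L}s_g=0$), and it also guarantees that $\mathcal{K}$ is a real holomorphic Killing field. Since $\mathfrak{D}=\mathfrak{L}^*\mathfrak{L}$ is built canonically from $(g,J)$, the Lie derivative along any real holomorphic Killing field commutes with it, so
$$[\mathfrak{D},\overline{\mathfrak{D}}]=[\mathfrak{D},\overline{\mathfrak{D}}-\mathfrak{D}]=-i\,[\mathfrak{D},\mathcal{L}_{\mathcal{K}}]=0.$$
I would recommend replacing your step (1)--(3) with this two-line reduction; it is the content of Calabi's Proposition 3.1 and its corollary, and it avoids the bookkeeping you flag as the main obstacle. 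Your remarks about $\mathfrak{L}^*$ being the honest formal adjoint and about the absence of global/analytic issues in this proposition are correct.
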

\begin{proof}
This is essentially Proposition 3.1 and its corollary in \cite{calabi}. 
\end{proof}

Next, we proceed with some local calculations.

\begin{lemma}\label{lem:localcomputation}
In  local holomorphic coordinate, we have
$\left(\overline{\mathfrak{D}}-\mathfrak{D}\right)f={s_g}^{j}f_j-{s_{g}}_jf^j= g^{j\bar{k}}\partial_{\bar{k}}s_g\partial_{j}f-g^{j\bar{k}}\partial_{\bar{k}}f\partial_{j}s_g$, and $\mathfrak{L}f=f^{jk}g_{k\bar{l}}\partial_{j}\otimes d\bar{z}^l$.
\end{lemma}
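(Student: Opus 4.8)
The statement \texttt{Lemma \ref{lem:localcomputation}} is a pure local-coordinate computation: expand the operators $\mathfrak{L}=\bar\partial\bar\partial^{\#}$, its formal adjoint $\mathfrak{L}^{*}$, and $\mathfrak{D}=\mathfrak{L}^{*}\mathfrak{L}$ in a local holomorphic chart, using the fact that $(M,g)$ is K\"ahler and $s_g$ is a real-valued smooth function. The plan is first to record the coordinate formula $\bar\partial^{\#}f = g^{j\bar k}\partial_{\bar k}f\,\partial_j = f^{j}\partial_j$ (raising the index with the K\"ahler metric), so that $\mathfrak{L}f = \bar\partial(f^{j}\partial_j)$ is the $(0,1)$-form-valued vector field $\partial_{\bar l}(g^{j\bar k}\partial_{\bar k}f)\,\partial_j\otimes d\bar z^{l}$; using the K\"ahler identity $\partial_{\bar l}g^{j\bar k}=-g^{j\bar m}g^{p\bar k}\partial_{\bar l}g_{p\bar m}$ together with the symmetry $\partial_{\bar l}g_{p\bar m}=\partial_{\bar m}g_{p\bar l}$, one checks this equals $f^{jk}g_{k\bar l}\,\partial_j\otimes d\bar z^{l}$, where $f^{jk}$ denotes the appropriate second covariant/holomorphic derivative raised — this is the second displayed identity. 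This is the easier half and essentially a bookkeeping exercise in the K\"ahler condition.

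For the first identity, the idea is to integrate by parts. One computes $\langle \mathfrak{L}\psi,\mathfrak{L}\phi\rangle$ in $L^{2}(M,dvol_g)$ for compactly supported test functions, moves all derivatives onto $\phi$ to read off $\mathfrak{D}=\mathfrak{L}^{*}\mathfrak{L}$ as a fourth-order operator, and then forms $\overline{\mathfrak{D}}-\mathfrak{D}$. The point is that the top-order (fourth-order) parts of $\mathfrak{D}$ and $\overline{\mathfrak{D}}$ agree — both are built from $\Delta^{2}$-type terms that are real — as do several lower-order terms, and the only surviving discrepancy is the first-order commutator term involving the gradient of $s_g$. Concretely, because $\mathfrak{L}\chi=0$ encodes that $\bar\partial^{\#}\chi$ is holomorphic and $s_g$ itself satisfies $\mathfrak{L}s_g=0$ (the extremal condition, since $\bar\partial^{\#}s_g$ is the holomorphic extremal vector field), the difference $\overline{\mathfrak{D}}f-\mathfrak{D}f$ collapses to the Lie-derivative-type expression $g^{j\bar k}\partial_{\bar k}s_g\,\partial_j f - g^{j\bar k}\partial_{\bar k}f\,\partial_j s_g = s_g{}^{j}f_j - s_g{}_j f^{j}$, which is (up to sign/normalization) the pairing of $df$ with the holomorphic vector field $\bar\partial^{\#}s_g$ minus its conjugate — i.e. a multiple of the derivative of $f$ along the extremal field. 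This is exactly the computation carried out in Calabi \cite{calabi} (Section 3 there), so the cleanest route is to follow that reference and adapt the normalization conventions to the ones fixed earlier in the present paper.

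The main obstacle I anticipate is purely notational: keeping track of the normalization constants (factors of $2$ coming from the convention $\bar\partial f = \tfrac12(df + iJdf)$ used in this paper, versus Calabi's conventions) and of which covariant derivatives are hidden inside the symbol $f^{jk}$, so that the two displayed formulas come out with precisely the stated coefficients and no stray factors. There is no analytic difficulty here — everything is algebraic in the jets of $g$, $s_g$, and $f$ at a point — but a careless index computation will produce a spurious constant. Accordingly, in the write-up I would (i) fix once and for all the local formula for $\bar\partial^{\#}$ and for the K\"ahler form, (ii) derive the $\mathfrak{L}f$ formula by direct differentiation invoking only $\partial_{\bar l}g_{j\bar k}=\partial_{\bar k}g_{j\bar l}$, and (iii) for the $\overline{\mathfrak{D}}-\mathfrak{D}$ identity, quote Proposition 3.1 of \cite{calabi} and its proof, noting that the Ricci terms cancel between $\mathfrak{D}$ and $\overline{\mathfrak{D}}$ by the K\"ahler symmetry of the curvature and only the $s_g$-gradient term remains. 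I expect the whole proof to be under half a page.
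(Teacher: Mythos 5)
Your proposal matches the paper's own proof, which likewise disposes of the second identity by unwinding the definition $\mathfrak{L}=\bar\partial\bar\partial^{\#}$ in local coordinates and refers the first identity to Calabi (equation (2.3$'$) of \cite{calabi}); your index computation for $\mathfrak{L}f$ using $\partial_{\bar l}g_{p\bar m}=\partial_{\bar m}g_{p\bar l}$ is correct. One small correction to your sketch: the identity $\left(\overline{\mathfrak{D}}-\mathfrak{D}\right)f={s_g}^{j}f_j-{s_{g}}_jf^j$ is a general K\"ahler identity valid for every smooth $f$ — the cancellation of the fourth-order and Ricci terms uses only the K\"ahler symmetries of the metric and curvature, not $\mathfrak{L}s_g=0$ or $\mathfrak{L}f=0$, which enter only later when the lemma is applied to $\chi$.
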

\begin{proof}
Refer to equation (2.3') in \cite{calabi} for the derivation of the first equality. The second equality stems from the definition $\mathfrak{L}=\bar\partial\bar\partial^{\#}$.
\end{proof}

\begin{proposition}\label{prop:decomposition}
If $\mathfrak{D}\chi=\overline{\mathfrak{D}}\chi=0$, then there is a decomposition for $Y_{real}=\mathrm{Re}\,\bar\partial^{\#}\chi$
\begin{equation}
Y_{real}=\mathfrak{k}+J\mathfrak{w},
\end{equation}
with $\mathfrak{k},\mathfrak{w}\in\mathfrak{hamiltoniso}_{\mathcal{K}}(M)$.
\end{proposition}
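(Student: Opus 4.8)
The plan is to extract from the two hypotheses the single analytic fact that $\bar\partial^{\#}\bar\chi$ is holomorphic, and then to split the complex potential into real and imaginary parts, each of which will generate a Hamiltonian Killing field. Since $\bar\partial^{\#}\chi=Y$ is holomorphic we already have $\mathfrak{L}\chi=0$, so $\mathfrak{D}\chi=\mathfrak{L}^{*}\mathfrak{L}\chi=0$ carries no new information; the content lies entirely in $\overline{\mathfrak{D}}\chi=0$, which by definition is $\mathfrak{D}\bar\chi=\mathfrak{L}^{*}\mathfrak{L}\bar\chi=0$. First I would pair this with $\bar\chi$ and integrate by parts to obtain $\int_{M}|\mathfrak{L}\bar\chi|_{g}^{2}\,dvol_{g}=0$, hence $\mathfrak{L}\bar\chi=0$, i.e. $\bar\partial^{\#}\bar\chi$ is holomorphic as well. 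Writing $\chi=\psi+i\phi$ with $\psi,\phi$ real and using the complex linearity of $\bar\partial^{\#}$, the identities $\bar\partial^{\#}\psi=\tfrac12(\bar\partial^{\#}\chi+\bar\partial^{\#}\bar\chi)$ and $\bar\partial^{\#}\phi=\tfrac1{2i}(\bar\partial^{\#}\chi-\bar\partial^{\#}\bar\chi)$ then show that both $\bar\partial^{\#}\psi$ and $\bar\partial^{\#}\phi$ are holomorphic $(1,0)$-vector fields.

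The next step is the K\"ahler linear algebra turning this into Killing fields. For a real function $u$ one has $\nabla_{g}u=2\,\mathrm{Re}(\bar\partial^{\#}u)$ and $J\nabla_{g}u=-2\,\mathrm{Im}(\bar\partial^{\#}u)$, so when $\bar\partial^{\#}u$ is holomorphic the field $J\nabla_{g}u$ is real holomorphic, i.e. it preserves $J$. Since $\iota_{J\nabla_{g}u}\omega=-du$ is closed, $J\nabla_{g}u$ also preserves $\omega$, and preserving both $J$ and $\omega$ forces it to preserve $g=\omega(\cdot,J\cdot)$; thus $J\nabla_{g}u$ is a Hamiltonian Killing field with Hamiltonian $u$. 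Applying this to $\psi$ and $\phi$, I would set $\mathfrak{k}:=\tfrac12 J\nabla_{g}\phi$ and $\mathfrak{w}:=-\tfrac12 J\nabla_{g}\psi$, both Hamiltonian Killing. The asserted decomposition is then the pointwise identity
\begin{equation}
Y_{real}=\mathrm{Re}\,\bar\partial^{\#}\chi=\mathrm{Re}\,\bar\partial^{\#}\psi-\mathrm{Im}\,\bar\partial^{\#}\phi=\tfrac12\nabla_{g}\psi+\tfrac12 J\nabla_{g}\phi=\mathfrak{k}+J\mathfrak{w}.
\end{equation}
To place $\mathfrak{k},\mathfrak{w}$ in $\mathfrak{hamiltoniso}_{\mathcal{K}}(M)$ it remains to check that they commute with $\mathcal{K}$. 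This follows from $\mathcal{K}$-invariance of $\psi$ and $\phi$: by Lemma \ref{lem:invarianceofchi} the potential $\chi$ is $\partial_{t}$-invariant, hence $\mathcal{K}=-\partial_{t}$-invariant, so $\psi,\phi$ are $\mathcal{K}$-invariant, and since $\mathcal{K}$ is holomorphic and Killing one gets $[\mathcal{K},J\nabla_{g}u]=J\nabla_{g}(\mathcal{K}u)=0$ for $u=\psi,\phi$.

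The main obstacle is justifying the global integration by parts $\langle\mathfrak{L}^{*}\mathfrak{L}\bar\chi,\bar\chi\rangle_{L^{2}}=\|\mathfrak{L}\bar\chi\|_{L^{2}}^{2}$ on the non-compact surface $M$, that is, controlling the boundary contribution at infinity. This is exactly where the estimates of the previous subsection enter: the bound $|\nabla_{g}^{k}\chi|_{g}\le C$ together with Lemma \ref{lem:localcomputation} gives $|\mathfrak{L}\bar\chi|_{g}\le C$, while $|\nabla_g\chi|_g\le C\xi^{\delta_0}\to 0$ forces $\chi$ to approach a constant, so $|\chi|\le C$; since the K\"ahler metric $g$ has finite volume $\mathrm{Vol}_{g}(M)<\infty$, both $\bar\chi$ and $\mathfrak{L}\bar\chi$ lie in $L^{2}(g)$. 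I would then insert a cutoff $\eta_{R}$ equal to $1$ on $B_{R}$, supported in $B_{2R}$, with $|\nabla\eta_{R}|\le C/R$ and $|\nabla^2\eta_R|\le C/R^2$, apply the definition of $\mathfrak{L}^{*}$ to the compactly supported $\eta_{R}\bar\chi$ to get $0=\int_M \mathfrak{L}(\eta_R\bar\chi)\,\overline{\mathfrak{L}\bar\chi}\,dvol_g$, and expand $\mathfrak{L}(\eta_R\bar\chi)=\eta_R\mathfrak{L}\bar\chi+[\mathfrak{L},\eta_R]\bar\chi$. The commutator terms carry a factor $\nabla\eta_R$ or $\nabla^2\eta_R$ and are supported on the annulus $B_{2R}\setminus B_R$, whose $g$-volume tends to $0$ as $R\to\infty$; hence they vanish in the limit, leaving $\int_M\eta_R|\mathfrak{L}\bar\chi|_g^2\to\|\mathfrak{L}\bar\chi\|_{L^2}^2=0$. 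This yields $\mathfrak{L}\bar\chi=0$ and closes the argument. Every other step is either linear algebra of the complex structure or a direct consequence of estimates already established, so this tail control is the only genuinely delicate point.
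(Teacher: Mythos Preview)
Your proof is correct and follows essentially the same route as the paper: pass from $\mathfrak{D}=0$ to $\mathfrak{L}=0$ via a cutoff argument using the uniform bounds $|\nabla_g^k\chi|_g\le C$ and the finite $g$-volume, then split $\chi$ into real and imaginary parts and observe that a real potential for a holomorphic $(1,0)$-field yields a Hamiltonian Killing field commuting with $\mathcal{K}$. The only difference is cosmetic: the paper cuts off in the moment map via $\varphi_\varepsilon(\xi)$ rather than in $g$-distance balls, which makes the bounds $|\nabla_g\varphi_\varepsilon|_g,\,|\nabla_g^2\varphi_\varepsilon|_g\le C$ immediate from the explicit asymptotics of $W$ and $\xi$ and sidesteps any need to control the Hessian of the $g$-distance function.
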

\begin{proof}
The two equations $\mathfrak{D}\chi=\overline{\mathfrak{D}}\chi=0$ together imply that $\mathfrak{D}(\mathrm{Re}\,\chi)+i\mathfrak{D}(\mathrm{Im}\,\chi)=0$ and $\mathfrak{D}(\mathrm{Re}\,\chi)-i\mathfrak{D}(\mathrm{Im}\,\chi)=0$. This yields $\mathfrak{D}(\mathrm{Re}\,\chi)=\mathfrak{D}(\mathrm{Im}\,\chi)=0$. For brevity, let us denote $\mathrm{Re}\,\chi$ as $\chi_R$ and $\mathrm{Im}\,\chi$ as $\chi_I$. We assert that $\chi_R$ and $\chi_I$ also serve as potential functions for certain real holomorphic vector fields, and importantly, they are real-valued potential functions.

\begin{claim}
$\bar\partial^{\#}(\chi_R)$ and $\bar\partial^{\#}(\chi_I)$ are holomorphic as  $(1,0)$-vector fields.
\end{claim}
\begin{proof}
Given the similarity with the proof for the holomorphicity of $\bar\partial^{\#}(\chi_I)$, we will focus on proving the holomorphicity of $\bar\partial^{\#}(\chi_R)$. Our goal is to show that $\int_M\left|\mathfrak{L}(\chi_R)\right|^2dvol_g=0$.

Choose a cut-off function $\varphi_\varepsilon(\xi)$ as follows:
let $\varphi_\varepsilon(x)=1$ when $x>\varepsilon$ and $\varphi_\varepsilon(x)=0$ when $x<\frac12\varepsilon$, then smoothly extend $\varphi_\varepsilon$ to $\frac12\varepsilon\leq x\leq\varepsilon$ such that $\left|\frac{d}{dx}\varphi_\varepsilon\right|<\frac{100}{\varepsilon}$ and $\left|\frac{d^2}{dx^2}\varphi_\varepsilon\right|<\frac{100}{\varepsilon^2}$. 
The cut-off function $\varphi_\varepsilon(\xi)$ then can be defined on the end of the gravitational instanton, which  satisfies $|\nabla_g\varphi_\varepsilon|_g\leq C$ and $|\nabla^2_g\varphi_\varepsilon|_g\leq C$, for some $C$ does not depend on $\varepsilon$.
In general for any smooth function $f$, we have the estimate $|\bar\partial^{\#}f|_g=|\bar\partial f|_g\leq|\nabla_g f|_g$ and $|\mathfrak{L}(f)|_g=|\bar\partial\bar\partial^{\#}f|_g\leq |\nabla_g^2f|_g$. Hence,
\begin{align*}
& \left|\int_M\mathfrak{L}(\varphi_{\varepsilon}\chi_R)\overline{\mathfrak{L}(\chi_R)}dvol_g-\int_M\varphi_{\varepsilon}\mathfrak{L}(\chi_R)\overline{\mathfrak{L}(\chi_R)}dvol_g\right|\\
\leq& C\left(\int_M|\nabla_g\varphi_\varepsilon|_g|\nabla_g\chi_R|_g|\mathfrak{L}(\chi_R)|_gdvol_g+\int_M|\nabla_g^2\varphi_\varepsilon|_g|\chi_R||\mathfrak{L}(\chi_R)|_gdvol_g\right)\\
\leq& C\left(\int_{\frac12\varepsilon<\xi<\varepsilon}|\nabla_g\varphi_\varepsilon|_g|\nabla_g\chi_R|_g\left|\nabla^2_g\chi_R\right|_gdvol_g+\int_{\frac12\varepsilon<\xi<\varepsilon}|\nabla_g^2\varphi_\varepsilon|_g|\chi_R|\left|\nabla^2_g\chi_R\right|_gdvol_g\right)\\
\leq& C\int_{\frac12\varepsilon<\xi<\varepsilon}dvol_g\\
\leq & C\varepsilon.
\end{align*}
Therefore, 
$$\int_M\left|\mathfrak{L}(\chi_R)\right|^2dvol_g=\lim_{\varepsilon\to0}\int_M\mathfrak{L}(\varphi_{\varepsilon}\chi_R)\overline{\mathfrak{L}(\chi_R)}dvol_g=\lim_{\varepsilon\to0}\int_M\varphi_{\varepsilon}\chi_R\overline{\left(\mathfrak{L}^*\mathfrak{L}(\chi_R)\right)}dvol_g=0.$$
The claim is proved.
\end{proof}

Recall that $\chi$ is chosen as $\partial_{t}$-invariant. As $\chi=\chi_R+i\chi_I$, the real part $\chi_R$ and imaginary part $\chi_I$ are also $\partial_{t}$-invariant. So both the holomorphic vector fields $\bar\partial^{\#}\chi_R$ and $\bar\partial^{\#}\chi_I$ commute with $\partial_{t}$.
By writing $\bar\partial^{\#}(\chi_R)$ as $X-iJX$, we find that $\bar\partial(\chi_R)=g(X-iJX,\cdot)=\frac12\left(d\chi_R-iJd\chi_R\right)$. By comparing the real and imaginary parts, we get $g(X,\cdot)=\frac12d\chi_R$. Thus, the real holomorphic vector field $JX$ is Hamiltonian.
Since $JX$ is both a real holomorphic vector field and a Hamiltonian vector field, it is also a Killing vector field. This shows that $\mathrm{Im}\,\bar\partial^{\#}\chi_R$ is a Hamiltonian Killing vector field.

Similarly, starting with that $\bar\partial^{\#}\chi_I$ is holomorphic, replacing every $\chi_R$ by $\chi_I$ in the previous paragraph, we can conclude that $\mathrm{Im}\,\bar\partial^{\#}\chi_I$ is Hamiltonian Killing too. 
Since both $\bar\partial^{\#}\chi_R$ and $\bar\partial^{\#}\chi_I$ are holomorphic vector fields, we have: 
\begin{align*}
J\mathrm{Im}\,\bar\partial^{\#}\chi_R&=\mathrm{Re}\,\bar\partial^{\#}\chi_R,\\
J\mathrm{Im}\,\bar\partial^{\#}\chi_I&=\mathrm{Re}\,\bar\partial^{\#}\chi_I.
\end{align*}
Therefore, 
\begin{align*}
Y=\bar\partial^{\#}\chi=\bar\partial^{\#}\chi_R+i\bar\partial^{\#}\chi_I&=\left(\mathrm{Re}\,\bar\partial^{\#}\chi_R-\mathrm{Im}\,\bar\partial^{\#}\chi_I\right)+i\left(\mathrm{Im}\,\bar\partial^{\#}\chi_R+\mathrm{Re}\,\bar\partial^{\#}\chi_I\right)\\
&=\left(J\mathrm{Im}\,\bar\partial^{\#}\chi_R-\mathrm{Im}\,\bar\partial^{\#}\chi_I\right)+i\left(\mathrm{Im}\,\bar\partial^{\#}\chi_R+J\mathrm{Im}\,\bar\partial^{\#}\chi_I\right).
\end{align*}
Hence, with $\mathfrak{k}\coloneqq-\mathrm{Im}\,\bar\partial^{\#}(\chi_I)$ and $\mathfrak{w}\coloneqq\mathrm{Im}\,\bar\partial^{\#}(\chi_R)$, we have $Y_{real}=J\mathrm{Im}\,\bar\partial^{\#}\chi_R-\mathrm{Im}\,\bar\partial^{\#}\chi_I=\mathfrak{k}+J\mathfrak{w}$. The two Hamiltonian Killing fields $\mathfrak{k},\mathfrak{w}\in\mathfrak{hamiltoniso}_{\mathcal{K}}(M)$ because $\bar\partial^{\#}\chi_R$ and $\bar\partial^{\#}\chi_I$ commute with $\mathcal{K}=\partial_{t}$.
\end{proof}

Since we already have $\mathfrak{D}\chi=0$, to complete the proof of the Calabi-type theorem, it remains to show that $\overline{\mathfrak{D}}\chi=0$.
From the commutativity between the operators $\mathfrak{D}$ and $\overline{\mathfrak{D}}$, we have ${\mathfrak{D}}\overline{\mathfrak{D}}\chi=0$. As a $(1,0)$-vector field, $\bar\partial^{\#}(\overline{\mathfrak{D}}\chi)$ is holomorphic. 

\begin{lemma}
$\bar\partial(\overline{\mathfrak{D}}\chi)=0$, therefore $\overline{\mathfrak{D}}\chi$ is a holomorphic function.
\end{lemma}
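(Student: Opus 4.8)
The goal is to show $\bar\partial(\overline{\mathfrak{D}}\chi)=0$, which together with the holomorphicity of $\bar\partial^{\#}(\overline{\mathfrak{D}}\chi)$ will eventually force $\overline{\mathfrak{D}}\chi=0$. The idea is to exploit that $\chi$ is already annihilated by $\mathfrak{D}=\mathfrak{L}^*\mathfrak{L}$, so $\mathfrak{L}\chi=0$, i.e. $\bar\partial^{\#}\chi=Y$ is holomorphic. One should express $\overline{\mathfrak{D}}\chi$ using Lemma \ref{lem:localcomputation}: since $\mathfrak{D}=\overline{\mathfrak{D}}+(\mathfrak{D}-\overline{\mathfrak{D}})$ and $(\overline{\mathfrak{D}}-\mathfrak{D})f=g^{j\bar k}\partial_{\bar k}s_g\,\partial_j f-g^{j\bar k}\partial_{\bar k}f\,\partial_j s_g$, we get $\overline{\mathfrak{D}}\chi=\mathfrak{D}\chi-(\mathfrak{D}-\overline{\mathfrak{D}})\chi=(\overline{\mathfrak{D}}-\mathfrak{D})\chi$. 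The right-hand side is the function $Y(s_g)-\overline{\langle\bar\partial\chi,\bar\partial s_g\rangle}$, or more precisely the "Poisson-bracket-type" pairing of $\chi$ and $s_g$; the crucial point is that both $\bar\partial^{\#}\chi=Y$ and $\bar\partial^{\#}s_g$ (the holomorphic extremal vector field) are \emph{holomorphic} vector fields.

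\textbf{Key steps.} First I would rewrite $\overline{\mathfrak{D}}\chi=(\overline{\mathfrak{D}}-\mathfrak{D})\chi = s_g^{\;j}\chi_j - s_{g\,j}\chi^{\,j}$ in invariant terms as (a constant times) the function $f$ whose differential encodes the commutator bracket of the two holomorphic vector fields $Y=\bar\partial^{\#}\chi$ and $Z:=\bar\partial^{\#}s_g$. Concretely, for two holomorphic $(1,0)$-fields with potentials, the quantity $s_g^{\;j}\chi_j-s_{g\,j}\chi^{\,j}$ is, up to sign, a potential (or Hamiltonian-type function) for the Lie bracket $[Z,\overline Y]$ or equivalently is directly computed to be $-\,\overline{Y}(s_g)+\ldots$; the essential input is that $Y$ commutes with $Z$ (established earlier: "$Y$ commutes with $\bar\partial^{\#}s_g$"). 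Using holomorphicity of $Y$ and $Z$ and the fact that $\overline\partial$ of a holomorphic function vanishes, one computes $\bar\partial\big((\overline{\mathfrak{D}}-\mathfrak{D})\chi\big)$ and finds each term either vanishes by holomorphicity of $Y$ and $Z$ or cancels against another using the commutation relation $[Y,Z]=0$. Then $\bar\partial(\overline{\mathfrak{D}}\chi)=0$, so $\overline{\mathfrak{D}}\chi$ is holomorphic.

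\textbf{Concluding the argument.} Having shown $\overline{\mathfrak{D}}\chi$ is a holomorphic function, one would invoke that on $M$ the only holomorphic functions with the relevant growth/decay (controlled via the estimates $|\nabla_g^k\chi|_g\le C$ and the asymptotics of $s_g$) are constants, and that this constant must be zero—for instance because $\overline{\mathfrak{D}}\chi$ has vanishing integral against a suitable test function, or because a bounded holomorphic function on the end that decays must vanish. Then $\overline{\mathfrak{D}}\chi=0$, and combined with $\mathfrak{D}\chi=0$ Proposition \ref{prop:decomposition} yields the desired decomposition $Y_{\text{real}}=\mathfrak{k}+J\mathfrak{w}$, completing the Calabi-type theorem.

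\textbf{Main obstacle.} The delicate point is the integration-by-parts / boundary-term control needed when passing from "$\bar\partial^{\#}(\overline{\mathfrak{D}}\chi)$ is holomorphic" and "$\overline{\mathfrak{D}}\chi$ is holomorphic" to "$\overline{\mathfrak{D}}\chi=0$": one must rule out a nonzero holomorphic contribution using the cutoff argument in the style already used for $\bar\partial^{\#}\chi_R$, checking that the error terms supported on $\{\tfrac12\varepsilon<\xi<\varepsilon\}$ vanish as $\varepsilon\to0$, which relies on the bounds $|\nabla_g^k\chi|_g\le C$ and the explicit form of $dvol_g$ near the divisor. Establishing $\bar\partial(\overline{\mathfrak D}\chi)=0$ itself is a purely local computation once the bracket identity is set up, so I expect the genuine work to be in this global vanishing step rather than in the local identity.
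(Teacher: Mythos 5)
Your proposal is correct and follows essentially the same route as the paper: use $\mathfrak{D}\chi=0$ to write $\overline{\mathfrak{D}}\chi=(\overline{\mathfrak{D}}-\mathfrak{D})\chi=s_g^{\;j}\chi_j-s_{g\,j}\chi^{\,j}$, then use $\mathfrak{L}s_g=\mathfrak{L}\chi=0$ to identify $\bar\partial^{\#}(\overline{\mathfrak{D}}\chi)$ with the Lie bracket $[\bar\partial^{\#}s_g,\bar\partial^{\#}\chi]$, which vanishes by the $\partial_t$-invariance of $\chi$. (Your remarks about the global vanishing $\overline{\mathfrak{D}}\chi=0$ belong to the subsequent proposition, not to this lemma, but they are consistent with how the paper proceeds.)
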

\begin{proof}
From the equation $\mathfrak{D}\chi=0$, the following equality holds:
\begin{align*}
\overline{\mathfrak{D}}\chi&=\overline{\mathfrak{D}}\chi-{\mathfrak{D}}\chi=(\overline{\mathfrak{D}}-{\mathfrak{D}})\chi={s_g}^{j}\chi_j-{s_{g}}_j\chi^j.
\end{align*}
Here the third equality follows from Lemma \ref{lem:localcomputation}.
Moreover, we have ${s_g}^{jk}=\chi^{jk}=0$ since $\mathfrak{L}s_g=\mathfrak{L}\chi=0$.
Hence $g^{j\bar{k}}\partial_{\bar{k}}(\overline{\mathfrak{D}}\chi)=g^{j\bar{k}}\partial_{\bar{k}}\left({s_g}^{l}\chi_l-{s_{g}}_l\chi^l\right)={s_g}^{l}\chi_l^j-{s_g}^{j}_{l}\chi^l$. 
Now notice that the vector field $\bar\partial^{\#}(\overline{\mathfrak{D}}\chi)$ is exactly given by $g^{j\bar{k}}\partial_{\bar{k}}(\overline{\mathfrak{D}}\chi)\partial_{j}$.
It follows that 
$$\bar\partial^{\#}(\overline{\mathfrak{D}}\chi)=g^{j\bar{k}}\partial_{\bar{k}}\left({s_g}^{l}\chi_l-{s_g}_{l}\chi^l\right)\partial_j=\left({s_g}^{l}\chi_l^j-{s_g}^{j}_{l}\chi^l\right)\partial_{j}=\left[\bar\partial^{\#}s_g,\bar\partial^{\#}\chi\right]=0.$$
The second to last equality follows from that $\bar\partial^{\#}s_g={s_g}^{j}\partial_j$ and $\bar\partial^{\#}\chi=\chi^j\partial_j$.
The last equality follows from that $\bar\partial^{\#}s_g$ and $\bar\partial^{\#}\chi$ are commutative since $\chi$ is $\partial_{t}$-invariant.
Therefore $\bar\partial(\overline{\mathfrak{D}}\chi)=0$ and $\overline{\mathfrak{D}}\chi$ is a holomorphic function.
\end{proof}

\begin{proposition}
$\overline{\mathfrak{D}}\chi=0$.
\end{proposition}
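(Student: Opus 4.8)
The plan is to show that the holomorphic function $\overline{\mathfrak{D}}\chi$ must vanish by exploiting the asymptotic decay of $\chi$ established in the previous subsection together with Liouville-type reasoning on the compactified surface. First I would observe that from the estimates $|\nabla_g^k\chi|_g \leq C$ (and in fact $|\nabla_g^k\chi|_g \leq C\xi^{\delta_0}$ for $k\geq 1$), one has good control on $\overline{\mathfrak{D}}\chi = ({s_g}^{j}\chi_j - {s_g}_j\chi^j)$ near infinity: since $s_g = \xi \to 0$ and the relevant covariant derivatives decay, the holomorphic function $\overline{\mathfrak{D}}\chi$ tends to zero as we approach the end of the gravitational instanton. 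Concretely, $\overline{\mathfrak{D}}\chi = g(\bar\partial^{\#}s_g, \overline{\bar\partial\chi}) - g(\bar\partial^{\#}\chi,\overline{\bar\partial s_g})$ up to conjugation conventions, and both terms are bounded by $C|\nabla_g s_g|_g |\nabla_g \chi|_g$, which decays on the end.

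Next, the key step is that a holomorphic function on $M$ that decays at infinity must be identically zero. Here I would use the compactification $\overline{M}$ from Section \ref{sec:compactification}: $M = \overline{M}\setminus D$ where $D = \mathbb{P}^1$ is the divisor at infinity, and $\overline{M}$ is a smooth (or orbifold) compact algebraic surface. A holomorphic function on $M = \overline{M}\setminus D$ that is bounded near $D$ (indeed, that tends to zero along $D$) extends across $D$ by the Riemann extension theorem (since $D$ has complex codimension one and the function is bounded), giving a global holomorphic function on the compact surface $\overline{M}$, which must be constant; as it decays, the constant is zero. Therefore $\overline{\mathfrak{D}}\chi \equiv 0$.

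With $\mathfrak{D}\chi = \overline{\mathfrak{D}}\chi = 0$ in hand, Proposition \ref{prop:decomposition} immediately yields the decomposition $Y_{\text{real}} = \mathfrak{k} + J\mathfrak{w}$ with $\mathfrak{k},\mathfrak{w}\in\mathfrak{hamiltoniso}_{\mathcal{K}}(M)$, and combined with the directness argument given at the start of the section this completes the proof of the Calabi-type theorem.

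The main obstacle I anticipate is making the decay statement for $\overline{\mathfrak{D}}\chi$ precise enough to justify the extension across $D$, and in particular handling the orbifold points on $D$ in the rational AF case — one should either argue on the orbifold chart (where Riemann extension still applies to bounded holomorphic functions) or pass to the finite cover $\mathfrak{M}_{0,2\pi n}$ and its smooth compactification $M'$ as in Section \ref{subsec:compactificationrational}, prove vanishing there, and descend. A secondary point requiring care is checking that the expression $\overline{\mathfrak{D}}\chi = (\overline{\mathfrak{D}} - \mathfrak{D})\chi$ is genuinely $O(\rho^{-\delta})$ rather than merely bounded; this follows from the refined estimate $|\nabla_g^k\chi|_g \leq C\xi^{\delta_0}$ for $k \geq 1$ together with $|\nabla_g s_g|_g = |d\xi|_g \sim \xi$, so both terms are $O(\xi^{1+\delta_0})$, comfortably decaying.
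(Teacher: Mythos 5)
Your proof is correct, but it closes the argument by a genuinely different mechanism than the paper. Both routes first establish that the holomorphic function $\overline{\mathfrak{D}}\chi$ is forced to be a constant and then show that constant vanishes; the difference is in how the constant is killed. The paper uses only the crude bound $|\overline{\mathfrak{D}}\chi|\leq|\nabla_g^4\chi|_g\leq C$ to get $\overline{\mathfrak{D}}\chi\equiv c$, and then shows $c=0$ by an $L^2$ orthogonality argument: $\int_M c\,\overline{\overline{\mathfrak{D}}\chi}\,dvol_g=\int_M\mathfrak{L}(c)\,\overline{\mathfrak{L}(\overline{\chi})}\,dvol_g=0$, the integration by parts being justified by the finite volume of $(M,g)$ and $|\nabla_g^k\chi|_g\leq C$. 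You instead combine the refined estimate $|\nabla_g^k\chi|_g\leq C\xi^{\delta_0}$ for $k\geq1$ with $|ds_g|_g\sim\xi$ and the identity $\overline{\mathfrak{D}}\chi=(\overline{\mathfrak{D}}-\mathfrak{D})\chi={s_g}^{j}\chi_j-{s_g}_j\chi^j$ (valid because $\mathfrak{L}\chi=0$ gives $\mathfrak{D}\chi=\mathfrak{L}^*\mathfrak{L}\chi=0$) to conclude $\overline{\mathfrak{D}}\chi=O(\xi^{1+\delta_0})\to0$ at infinity, then extend across $D$ and apply the maximum principle on $\overline{M}$. Your route is more explicit on a point the paper leaves implicit, namely why a bounded holomorphic function on the noncompact $M$ is constant at all: this really does require the Riemann extension across $D\subset\overline{M}$ (with the orbifold charts, or the smooth cover $M'$, in the rational AF case) that you spell out. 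The paper's route buys independence from the decay of $\nabla_g\chi$, needing only its boundedness. Note also that once you know $\overline{\mathfrak{D}}\chi\to0$ at infinity you could dispense with the compactification entirely: a holomorphic function on a connected complex manifold whose modulus tends to zero at infinity attains its maximum modulus at an interior point and is therefore identically zero.
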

\begin{proof}
We need to prove the holomorphic function $\overline{\mathfrak{D}}\chi=0$.
Writing out the operators $\mathfrak{L}$ and $\mathfrak{D}$ locally in a holomorphic coordinate, for any function $f$, we have
\begin{align*}
\mathfrak{L}(f)&=\partial_{\bar{j}}\left(g^{k\bar{l}}\partial_{\bar{l}}f\right)\partial_{k}\otimes dz^{\bar{j}}={f^k}_{\bar{j}}\partial_{k}\otimes dz^{\bar{j}},\\
\mathfrak{D}(f)&=\mathfrak{L}^*\left({f^k}_{\bar{j}}\partial_{k}\otimes dz^{\bar{j}}\right)=g^{p\bar{q}}\partial_{p}\left(g^{r\bar{j}}\partial_{r}\left(g_{k\bar{q}}{f^k}_{\bar{j}}\right)\right)={f_{\bar{q}\bar{j}}}^{\bar{j}\bar{q}}.
\end{align*}
See the calculation in \cite{calabi} for the derivation of these formulae. In particular, $|\overline{\mathfrak{D}}\chi|=|{\overline{\chi}_{\bar{q}\bar{j}}}^{\bar{j}\bar{q}}|\leq |\nabla_g^4\chi|_g\leq C$. The holomorphic function $\overline{\mathfrak{D}}\chi$ now is bounded. Thus $\overline{\mathfrak{D}}\chi\equiv c$ for some constant $c$.

Next we prove that $c=0$. This is because constant functions are orthogonal to the image of $\overline{\mathfrak{D}}$: for any constant $c$,
$$\int_Mc\overline{\mathfrak{D}}\chi dvol_g=\int_M\mathfrak{L}(c)\overline{\mathfrak{L}(\overline\chi)}dvol_g=0.$$
The integration by parts is justified by the fact that $M$ has finite volume $\mathrm{Vol}_g(M)$ under $g$ and our previous estimate $|\nabla^k_g\chi|_g\leq C$. This forces $c=0$.
\end{proof}

As we have shown that $\overline{\mathfrak{D}}\chi=0$, with Proposition \ref{prop:decomposition}, this completes the proof of Theorem \ref{thm:calabi}.
Now finally we have proved the main theorem for ALF/AF gravitational instantons of our paper:
\begin{theorem}
Any Hermitian non-K\"ahler ALF/AF gravitational instanton must be holomorphically isometrically toric.
\end{theorem}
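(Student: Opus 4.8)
The plan is to combine the two main strands of the paper: the Calabi-type theorem (Theorem~\ref{thm:calabi}) handles the case where $\mathcal{K}$ has closed orbits, and the remaining $\mathbb{R}$-orbit case is automatically toric by construction, so one only needs to package these together. Concretely, split into the two exhaustive cases $(\clubsuit)$ and $(\spadesuit)$ from Section~\ref{subsec:analysis tangent cone}: either $\mathcal{K}$ generates an $S^1$-action or only an $\mathbb{R}$-action. This dichotomy is exhaustive because $\mathcal{K}$ is a Killing field for the complete metric $h$ and hence its flow is complete; the closure of the one-parameter group it generates inside $\mathrm{Iso}(M,h)$ is a compact abelian Lie group, i.e.\ a torus, and that torus is either $S^1$ (when $\mathcal{K}$'s orbits are closed) or $T^k$ with $k\ge 2$ (when they are not).

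\textbf{Case $(\spadesuit)$: $\mathcal{K}$ only induces an $\mathbb{R}$-action.} By the argument in Section~\ref{subsubsec:induce R-action} (and its refinement in Section~\ref{subsec:classification of ends}), the closure of this $\mathbb{R}$-action is a $T^2$-action, since a $T^3$-closure was ruled out and a $T^1$-closure would contradict the assumption that the orbits are non-closed. This $T^2$ acts isometrically on $(M,h)$; moreover, since $\mathcal{K}=-J\nabla_h\lambda^{-1/3}$ is real holomorphic and Killing and the torus is the closure of its flow, the whole $T^2$ acts holomorphically as well (a limit of holomorphic isometries is a holomorphic isometry). Effectiveness is automatic because the $\mathbb{R}$-orbits are already 1-dimensional and the link is forced to be $S^2\times S^1$, on which the relevant $T^2$ acts effectively. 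Hence in this case $(M,h)$ is holomorphically isometrically toric with no further work.

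\textbf{Case $(\clubsuit)$: $\mathcal{K}$ induces an $S^1$-action.} By Theorem~\ref{thm:classification of ends}, $(M,h)$ is ALF-$A$ or rational AF, so the results of Sections~\ref{sec:compactification} and~\ref{sec:matsushima} apply. In particular $\mathfrak{hamiltoniso}_{\mathcal{K}}(M)$ is at least $1$-dimensional (it contains $\mathcal{K}$), and the compactified surface $\overline M$ carries a holomorphic $\mathbb{C}^*$-action extending $\mathcal{K}-iJ\mathcal{K}$. The structural results Corollary~\ref{cor:alfpositive}–Corollary~\ref{cor:af} show that $\overline M$ is one of an explicit finite list of (log) del Pezzo surfaces, each of which is holomorphically toric; restricting the ambient $T^2$-action to $M=\overline M\setminus D$ shows $(M,h)$ is holomorphically toric. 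The Calabi-type theorem, Theorem~\ref{thm:calabi}, then upgrades this: the direct sum decomposition $\mathfrak{aut}_{\mathcal{K}}(M)=\mathfrak{hamiltoniso}_{\mathcal{K}}(M)\oplus J\mathfrak{hamiltoniso}_{\mathcal{K}}(M)$ applied to the holomorphic vector field generating the second $\mathbb{C}^*$ of the torus produces a Hamiltonian Killing field independent of $\mathcal{K}$, so $\mathfrak{hamiltoniso}_{\mathcal{K}}(M)$ is at least $2$-dimensional. The two commuting Hamiltonian Killing fields integrate to a holomorphic isometric $T^2$-action (they commute since both lie in $\mathfrak{hamiltoniso}_{\mathcal{K}}$, and completeness of their flows follows as in Remark~\ref{rm:complete}), which is effective because its complexification acts effectively on $\overline M$ by the toric description.

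\textbf{Main obstacle.} The two cases are essentially already done in the body of the paper, so the only genuine content of this final assembly is verifying that the abstract two-dimensional family of commuting holomorphic and Killing fields actually integrates to an \emph{effective} torus action rather than merely an $\mathbb{R}^2$- or $\mathbb{R}\times S^1$-action. In Case $(\clubsuit)$ this is clean because the ambient surface $\overline M$ is explicitly toric and the group is compact by construction on the compactification; the subtlety, which is the real heart of the whole paper, is precisely the finiteness of volume and the $L^2$-estimates in Section~\ref{subsec:potentialfunction} that make the Calabi argument go through in this non-compact setting — but that is Theorem~\ref{thm:calabi}, which we are entitled to assume here. So at the level of this concluding theorem, the proof is a short bookkeeping argument with no remaining obstruction.
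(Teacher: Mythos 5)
Your proposal is correct and follows essentially the same route as the paper: the $\mathbb{R}$-orbit case is handled by taking the $T^2$-closure of the flow of $\mathcal{K}$, and in the closed-orbit case the compactification results give holomorphic toricness, which the Calabi-type decomposition $\mathfrak{aut}_{\mathcal{K}}(M)=\mathfrak{hamiltoniso}_{\mathcal{K}}(M)\oplus J\mathfrak{hamiltoniso}_{\mathcal{K}}(M)$ upgrades to a second Hamiltonian Killing field independent of $\mathcal{K}$. Your extra attention to why the two commuting Hamiltonian Killing fields integrate to an effective torus (rather than a non-closed $\mathbb{R}^2$-action) is a point the paper leaves implicit, but it is handled exactly as you suggest.
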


To conclude the classification for Hermitian non-K\"ahler ALF/AF gravitational instantons $(M,h)$,  the classification results in the toric setting proved by Biquard-Gauduchon \cite{biquardgauduchon} can be applied (with slight modifications). 
Notice that the holomorphic isometric $T^2$-action we produced preserves the moment map $\xi$ and $\eta$ in $g=Wd\xi^2+W^{-1}\eta^2+g_\xi$. 
This guarantees that the toric assumption in \cite{biquardgauduchon} is satisfied.
Although the decay rate $\delta_0$ a prior is less than 1, it can be verified that the asymptotic expansion $\lambda^{1/3}=1/\rho+O'(\rho^{-1-\delta_0})$ and the toricness assumption allow us to apply the proof used by Biquard-Gauduchon with minor modifications.  The main difference lies in the need to change the decay of the error terms in the expansions in the Section 4.3 of \cite{biquardgauduchon}.
The final classification result we get is
\begin{theorem}\label{thm:classificationresult}
Any Hermitian non-K\"ahler ALF or AF gravitational instanton must be one of the following:
\begin{itemize}
\item the Kerr metrics;
\item the Chen-Teo metrics;
\item the reversed Taub-NUT metric, i.e. the Taub-NUT metric with orientation opposite to the hyperk\"ahler orientation;
\item the Taub-bolt metric with both orientations.
\end{itemize}
\end{theorem}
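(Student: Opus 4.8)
The plan is to reduce the classification to the toric case and then invoke the classification of toric Hermitian non-K\"ahler ALF/AF gravitational instantons of Biquard--Gauduchon \cite{biquardgauduchon}. First I would establish that $(M,h)$ is holomorphically isometrically toric. If $\mathcal{K}$ induces only an $\mathbb{R}$-action, this is automatic: as in Section \ref{subsubsec:induce R-action} (case $\spadesuit$ of Section \ref{sec:compactification}), the closure of the $\mathbb{R}$-action in $\mathrm{Iso}(M,h)$ is a $T^2$ acting holomorphically, isometrically, and Hamiltonianly. If instead $\mathcal{K}$ induces an $S^1$-action, I would use Theorem \ref{thm:partialcompactify}: the compactified surface $\overline{M}$ is holomorphically toric, so restricting the holomorphic $T^2$-action on $\overline{M}$ to $M$ yields a holomorphic $T^2$-action commuting with $\mathcal{K}$, whence $\dim_{\mathbb{R}}\mathfrak{aut}_{\mathcal{K}}(M)\geq 4$. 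The Calabi-type decomposition $\mathfrak{aut}_{\mathcal{K}}(M)=\mathfrak{hamiltoniso}_{\mathcal{K}}(M)\oplus J\mathfrak{hamiltoniso}_{\mathcal{K}}(M)$ of Theorem \ref{thm:calabi} then forces $\dim_{\mathbb{R}}\mathfrak{hamiltoniso}_{\mathcal{K}}(M)\geq 2$; since $(M,g)$ has finite volume, a Hamiltonian Killing field with a non-closed orbit would flow points off to infinity, so this two-dimensional space of Hamiltonian Killing fields integrates to a holomorphic isometric Hamiltonian $T^2$-action.

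Next I would place the metric into the form used by Biquard--Gauduchon. The second circle of the torus commutes with $\mathcal{K}=\partial_t$, hence preserves $\xi=\lambda^{1/3}$, the connection form $\eta$, and the reduced metric $g_\xi$ in the ansatz $g=Wd\xi^2+W^{-1}\eta^2+g_\xi$ of Theorem \ref{thm:localansatz}; so the toric hypothesis of \cite{biquardgauduchon} is met, with the relevant asymptotics supplied by Theorem \ref{thm:classification of ends} (the end is ALF-$A$, rational AF, or irrational AF) and by the expansion $\lambda^{1/3}=\rho^{-1}+O'(\rho^{-1-\delta_0})$ of Theorem \ref{thm:expansion}, where $\delta_0<1$ may be taken arbitrarily close to $1$. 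Then I would run Biquard--Gauduchon's argument to conclusion: since their classification assumed a slightly stronger decay rate, the content of this step is to check that the above expansion together with the toric structure suffices---one replays the asymptotic analysis of the momentum data in Section 4.3 of \cite{biquardgauduchon} with the weaker $O'(\rho^{-1-\delta_0})$ error terms, verifies that the relevant integrals still converge and the governing ODEs remain uniquely solvable, and recovers the same finite list of admissible momentum polytopes. Each such polytope is then identified with one of the Kerr metrics, the Chen--Teo metrics, the reversed Taub--NUT metric, or the Taub--bolt metric with one of its two orientations.

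The hard part will be this last step: faithfully propagating Biquard--Gauduchon's estimates under the diminished decay rate---making sure that no term which was previously negligible becomes borderline---and fitting the irrational AF case (asymptotic cone $\mathbb{H}$, where the momentum polytope is unbounded) into their bookkeeping. By contrast, the toricness input (Step 1) is essentially a formal consequence of the already-established Theorems \ref{thm:calabi} and \ref{thm:partialcompactify}, and the identification of polytopes with the named metrics is bookkeeping once the list of polytopes is in hand.
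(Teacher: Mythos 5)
Your proposal follows the paper's own route essentially verbatim: toricness in the $\mathbb{R}$-action case is automatic, toricness in the $S^1$-action case comes from the holomorphically toric compactification plus the Calabi-type decomposition of $\mathfrak{aut}_{\mathcal{K}}(M)$, and the conclusion is reached by verifying the toric hypothesis of Biquard--Gauduchon (the second circle preserving $\xi$, $\eta$, $g_\xi$) and rerunning their Section 4.3 asymptotics with the weaker $O'(\rho^{-1-\delta_0})$ error terms. This matches the paper's proof, including its identification of the decay-rate bookkeeping as the only point needing modification.
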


Combining with the classification of ends Theorem \ref{thm:classification of ends}, we conclude the following classification of Hermitian non-K\"ahler gravitational instantons.
\begin{theorem}\label{thm:classification of faster than quadratic}
A Hermitian non-K\"ahler gravitational instanton can only be one of the following:
\begin{itemize}
  \item ALE with $\Gamma\subset U(2)$, which corresponds to a special Bach-flat K\"ahler orbifold in the terminology of \cite{me}.
  \item ALF, in which case it must be the reversed Taub-NUT metric or the Taub-bolt metric with both orientations.
  \item AF, in which case it must be the Kerr metrics or the Chen-Teo metrics. 
\end{itemize}  
\end{theorem}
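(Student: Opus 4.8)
The plan is to assemble the classification from the structural results already proved in the paper, organized according to the classification of ends. By Theorem~\ref{thm:classification of ends}, any Hermitian non-Kähler gravitational instanton $(M,h)$ falls into exactly one of four mutually exclusive classes: ALE with asymptotic cone $\mathbb{C}^2/\Gamma$, $\Gamma\subset U(2)$; ALF-$A$; rational AF; or irrational AF. I would treat these four cases in turn, showing that in each case the list of possible metrics is precisely the one claimed. In the ALE case there is nothing new to prove: the classification of Hermitian non-Kähler ALE gravitational instantons is carried out in \cite{me}, where each such instanton is put in correspondence with a special Bach-flat Kähler orbifold obtained by compactifying the extremal Kähler metric $g$ with one orbifold point, so I would simply invoke that result, which already yields the first bullet.

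For the ALF-$A$ and rational AF cases the extremal vector field $\mathcal{K}$ induces an $S^1$-action (case $\clubsuit$), and I would run the chain Section~\ref{sec:compactification} $\to$ Theorem~\ref{thm:calabi} $\to$ \cite{biquardgauduchon}. First, Theorems~\ref{thm:compactifyalf}, \ref{thm:ampleness}, \ref{thm:finalcompactificationrational} together with the structure analysis of Section~\ref{subsec:structure} compactify $M$ to a (possibly orbifold) toric log del Pezzo surface $\overline{M}$, so $(M,h)$ is holomorphically toric. Next, the Calabi-type theorem, Theorem~\ref{thm:calabi}, upgrades this to holomorphically isometrically toric, producing an effective holomorphic isometric $T^2$-action preserving $\xi$ and $\eta$ in the ansatz $g=Wd\xi^2+W^{-1}\eta^2+g_\xi$. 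This is exactly the toric hypothesis needed by Biquard-Gauduchon \cite{biquardgauduchon}; feeding their argument the expansion $\lambda^{1/3}=1/\rho+O'(\rho^{-1-\delta_0})$ of Theorem~\ref{thm:expansion}, with the error-term bookkeeping of their Section~4.3 adapted to a rate $\delta_0$ only close to $1$, their classification forces $(M,h)$ to be Kerr, Chen-Teo, Taub-bolt with either orientation, or reversed Taub-NUT. Sorting by asymptotic cone, the ALF members of this list are exactly the reversed Taub-NUT metric and the Taub-bolt metric, and the AF members are exactly the Kerr and Chen-Teo metrics.

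For irrational AF (case $\spadesuit$) the vector field $\mathcal{K}$ induces only an $\mathbb{R}$-action whose closure in the isometry group is a $2$-torus, so $(M,h)$ is automatically holomorphically isometrically toric and \cite{biquardgauduchon} applies directly, yielding an irrational Kerr or Chen-Teo metric. Collecting all four cases gives Theorem~\ref{thm:classificationresult}, and combining that with the case-by-case asymptotic description in Theorem~\ref{thm:classification of ends} produces the three bullet points as stated. The genuinely hard ingredient in this chain is the Calabi-type theorem: the delicate point there is constructing a $\partial_t$-invariant potential $\chi$ for an arbitrary $Y\in\mathfrak{aut}_{\mathcal{K}}(M)$ via the weighted $L^2$-existence theorem of Lemma~\ref{lem:L2} — where both the choice of the approximating fiberwise potential $f_\xi$ and the decay rate $\delta_0$ close to $1$ are essential — and then running Calabi's integration-by-parts argument on a non-compact manifold of finite $g$-volume to conclude $\overline{\mathfrak{D}}\chi=0$. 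A secondary subtlety I would check carefully is that Biquard-Gauduchon's proof really does go through with the weaker decay $\delta_0<1$ in place of their original decay hypothesis.
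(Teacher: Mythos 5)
Your proposal is correct and follows essentially the same route as the paper: the classification of ends (Theorem \ref{thm:classification of ends}) splits the problem into the ALE case handled by \cite{me}, the $S^1$-action case handled by compactification, the Calabi-type theorem, and then Biquard--Gauduchon, and the irrational AF case where toricness is automatic and Biquard--Gauduchon applies directly. Your identification of the Calabi-type theorem and the adaptation of the Biquard--Gauduchon argument to decay rate $\delta_0<1$ as the delicate points matches where the paper concentrates its effort.
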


\bibliographystyle{plain}
\bibliography{reference}

\begin{thebibliography}{10}

\bibitem{conjecture}
S.~Aksteiner and L.~Andersson.
\newblock {Gravitational instantons and special geometry}.
\newblock {\em arXiv:2112.11863}, 2021.
\newblock To appear in Journal of Differential Geometry.

\bibitem{and23}
S.~Aksteiner, L.~Andersson, M.~Dahl, G.~Nilsson, and W.~Simon.
\newblock {Gravitational instantons with $S^1$ symmetry}.
\newblock {\em arXiv:2306.14567}, 2023.

\bibitem{ah88}
M.~Atiyah and N.~Hitchin.
\newblock {\em {The Geometry and Dynamics of Magnetic Monopoles}}.
\newblock Princeton University Press, Princeton, 1988.

\bibitem{besse}
A.~Besse.
\newblock {\em {Einstein manifolds}}.
\newblock Classics in Mathematics. Springer-Verlag Berlin, Heidelberg, 1987.

\bibitem{biquardgauduchonb}
O.~Biquard and P.~Gauduchon.
\newblock {About a Family of ALF Instantons with Conical Singularities}.
\newblock {\em Symmetry, Integrability and Geometry: Methods and Applications (SIGMA)}, 19(e79), 2023.

\bibitem{biquardgauduchon}
O.~Biquard and P.~Gauduchon.
\newblock {On Toric Hermitian ALF Gravitational Instantons}.
\newblock {\em Communications in Mathematical Physics}, 339(1):389--442, 2023.

\bibitem{claude}
O.~Biquard, P.~Gauduchon, and C.~LeBrun.
\newblock {Gravitational Instantons, Weyl Curvature, and Conformally Kaehler Geometry}.
\newblock {\em arXiv:2310.14387}, 2023.

\bibitem{biquardminerbe}
O.~Biquard and V.~Minerbe.
\newblock {A Kummer Construction for Gravitational Instantons}.
\newblock {\em Communications in Mathematical Physics}, 308(3):773--794, 2011.

\bibitem{biquardozuch}
O.~Biquard and T.~Ozuch.
\newblock {Instability of conformally K\"ahler, Einstein metrics}.
\newblock {\em arXiv:2310.10109}, 2023.

\bibitem{calabi}
E.~Calabi.
\newblock {\em {Extremal K\"ahler Metrics II}}.
\newblock Differential Geometry and Complex Analysis. Springer-Verlag, Heidelberg, 1985.

\bibitem{cfg}
J.~Cheeger, K.~Fukaya, and M.~Gromov.
\newblock {Nilpotent structures and invariant metrics on collapsed manifolds}.
\newblock {\em Journal of American Mathematical Society}, 5(2):327--372, 1992.

\bibitem{cheegertian}
J.~Cheeger and G.~Tian.
\newblock {Curvature and injectivity radius estimates for Einstein 4-manifolds}.
\newblock {\em Journal of the American Mathematical Society}, 19(2):487--525, 2006.

\bibitem{cc17}
G.~Chen and X.~Chen.
\newblock {Gravitational instantons with faster than quadratic curvature decay (II)}.
\newblock {\em Journal f\"{u}r die reine und angewandte Mathematik}, 2019(756):259--284, 2017.

\bibitem{cc}
G.~Chen and X.~Chen.
\newblock {Gravitational instantons with faster than quadratic curvature decay (I)}.
\newblock {\em Acta Mathematica}, 227(2):263--307, 2021.

\bibitem{cc21}
G.~Chen and X.~Chen.
\newblock {Gravitational instantons with faster than quadratic curvature decay (III)}.
\newblock {\em Mathematische Annalen}, 380(1-2):687--717, 2021.

\bibitem{cv21}
G.~Chen and J.~Viaclovsky.
\newblock {Gravitational instantons with quadratic volume growth}.
\newblock {\em arXiv:2110.06498}, 2021.
\newblock To appear in Journal of the London Mathematical Society.

\bibitem{CVZ}
G.~Chen, J.~Viaclovsky, and R.~Zhang.
\newblock {Torelli-type theorems for gravitational instantons with quadratic volume growth}.
\newblock {\em Duke Mathematical Journal}, 173(2):227--275, 2024.

\bibitem{cxx}
X.~Chen, C.~LeBrun, and B.~Weber.
\newblock {On conformally K\"ahler Einstein manifolds}.
\newblock {\em Journal of the American Mathematical Society}, 21(4):1137--1168, 2008.

\bibitem{liyu}
X.~Chen and Y.~Li.
\newblock {On the geometry of asymptotically flat manifolds}.
\newblock {\em Geometry \& Topology}, 25(5):2469--2572, 2021.

\bibitem{chenteo}
Y.~Chen and E.~Teo.
\newblock {A new AF gravitational instanton}.
\newblock {\em Physics Letters B}, 703(3):359--362, 2011.

\bibitem{chengyau}
S.~Cheng and S.~Yau.
\newblock {Differential equations on Riemannian manifolds and their geometric applications}.
\newblock {\em Communications on Pure and Applied Mathematics}, 28(3):333--354, 1975.

\bibitem{ch05}
S.~Cherkis and N.~Hitchin.
\newblock {Gravitational Instantons of Type $D_k$}.
\newblock {\em Communications in Mathematical Physics}, 260(2):299--317, 2005.

\bibitem{ck99}
S.~Cherkis and A.~Kapustin.
\newblock {Singular Monopoles and Gravitational Instantons}.
\newblock {\em Communications in Mathematical Physics}, 203(3):713--728, 1999.

\bibitem{ck02}
S.~Cherkis and A.~Kapustin.
\newblock {Hyper-K\"ahler metrics from periodic monopoles}.
\newblock {\em Physical Review D}, 65(8):084015, 2002.

\bibitem{cjl22}
T.~Collins, A.~Jacob, and Y.~Lin.
\newblock {The Torelli theorem for $ALH^*$ gravitational instantons}.
\newblock {\em Forum of Mathematics, Sigma}, 10(79), 2022.

\bibitem{ronan}
R.~Conlon, A.~Deruelle, and S.~Sun.
\newblock {Classification results for expanding and shrinking gradient K\"ahler-Ricci solitons}.
\newblock {\em Geometry \& Topology}, 28(1):267--351, 2024.

\bibitem{demailly92}
J.~Demailly.
\newblock {Regularization of closed positive currents and intersection theory}.
\newblock {\em Journal of Algebraic Geometry}, 1(3):361--409, 1992.

\bibitem{demailly}
J.~Demailly.
\newblock {\em {Analytic Methods in Algebraic Geometry}}.
\newblock International Press, Boston, 2012.

\bibitem{demaillycomplex}
J.~Demailly.
\newblock {\em {Complex Analytic and Differential Geometry}}.
\newblock Online book, 2012.
\newblock Version of Thursday June 21, 2012.

\bibitem{derdzinski}
A.~Derdzi\'{n}ski.
\newblock {Self-dual K\"ahler manifolds and Einstein manifolds of dimension four}.
\newblock {\em Compositio Mathematica}, 49(3):405--433, 1983.

\bibitem{fukaya}
K.~Fukaya.
\newblock {Collapsing of Riemannian manifolds and eigenvalues of Laplace operator}.
\newblock {\em Inventiones Mathematicae}, 87(3):517--547, 1987.

\bibitem{gibbons}
G.~Gibbons and M.~Perry.
\newblock {New gravitational instantons and their interactions}.
\newblock {\em Physical Review D}, 22(2):313--321, 1980.

\bibitem{gromov}
M.~Gromov.
\newblock {\em {Metric Structures for Riemannian and Non-Riemannian Spaces}}.
\newblock Modern Birkh\"{a}user Classics. Birkh\"{a}user, Boston, 2007.

\bibitem{hei12}
H.~Hein.
\newblock {Gravitational instantons from rational elliptic surfaces}.
\newblock {\em Journal of the American Mathematical Society}, 25(2):355--393, 2012.

\bibitem{hsvz22}
H.~Hein, S.~Sun, J.~Viaclovsky, and R.~Zhang.
\newblock {Nilpotent structures and collapsing Ricci-flat metrics on the K3 surface}.
\newblock {\em Journal of the American Mathematical Society}, 35(1):123--209, 2022.

\bibitem{yamada}
M.~Khuri, M.~Reiris, G.~Weinstein, and S.~Yamada.
\newblock {Gravitational Solitons and Complete Ricci Flat Riemannian Manifolds of Infinite Topological Type}.
\newblock {\em arXiv:2204.08048}, 2022.

\bibitem{kro89b}
P.~Kroheimer.
\newblock {A Torelli-type theorem for gravitational instantons}.
\newblock {\em Journal of Differential Geometry}, 29(3):685--697, 1989.

\bibitem{kro89a}
P.~Kroheimer.
\newblock {The construction of ALE spaces as hyper-K\"ahler quotients}.
\newblock {\em Journal of Differential Geometry}, 29(3):665--683, 1989.

\bibitem{lebrun}
C.~LeBrun.
\newblock {Explicit self-dual metrics on $\mathbb{CP}_2\#\ldots\#\mathbb{CP}_2$}.
\newblock {\em Journal of Differential Geometry}, 34(1):223--253, 1991.

\bibitem{lebrun95}
C.~LeBrun.
\newblock {Einstein Metrics on Complex Surfaces}.
\newblock In {\em Geometry and Physics}, pages 167--176. CRC Press, 1995.

\bibitem{lebrun12}
C.~LeBrun.
\newblock {On Einstein, Hermitian 4-manifolds}.
\newblock {\em Journal of Differential Geometry}, 90(2):277--302, 2012.

\bibitem{lebrun20}
C.~LeBrun.
\newblock {Bach-Flat K\"ahler Surfaces}.
\newblock {\em Journal of Geometric Analysis}, 30(3):2491--2514, 2020.

\bibitem{leelin}
T.~Lee and Y.~Lin.
\newblock {Period domains for gravitational instantons}.
\newblock {\em arXiv:2208.13083}, 2022.

\bibitem{me}
M.~Li.
\newblock {On 4-dimensional Ricci-flat ALE manifolds}.
\newblock {\em arXiv:2304.01609}, 2023.

\bibitem{ls}
M.~Li and S.~Sun.
\newblock {On asymptotically flat 4-manifolds}.
\newblock {\em arXiv:2410.11168}, 2024.

\bibitem{minerbe2010}
V.~Minerbe.
\newblock {On the asymptotic geometry of gravitational instantons}.
\newblock {\em Annales scientifiques de l'\'Ecole Normale Sup\'erieure}, 43(6):883--924, 2010.

\bibitem{minerbe}
V.~Minerbe.
\newblock {Rigidity for Multi-Taub-NUT metrics}.
\newblock {\em Journal f\"{u}r die reine und angewandte Mathematik}, 2011(656):47--58, 2011.

\bibitem{naber}
A.~Naber and G.~Tian.
\newblock {Geometric Structures of Collapsing Riemannian Manifolds I}.
\newblock {\em arXiv:0804.2275}, 2008.

\bibitem{page}
D.~Page.
\newblock {A compact rotating gravitational instanton}.
\newblock {\em Physics Letters B}, 79(3):253--238, 1978.

\bibitem{petrunin}
A.~Petrunin and W.~Tuschemann.
\newblock {Asymptotical flatness and cone structure at infinity}.
\newblock {\em Mathematische Annalen}, 321(4):775--788, 2001.

\bibitem{rong}
X.~Rong.
\newblock {Convergence and collapsing theorems in Riemannian geometry}.
\newblock In {\em Handbook of geometric analysis}, pages 193--299. International Press, Somerville, 2010.

\bibitem{sz}
S.~Sun and R.~Zhang.
\newblock {Collapsing geometry of hyperk\"ahler 4-manifolds and applications}.
\newblock {\em arXiv:2108.12991}, 2021.
\newblock To appear in Acta Mathematica.

\bibitem{problemyau}
S-T. Yau.
\newblock {Problem Section}.
\newblock In {\em Seminar on Differential Geometry}, Annals of Mathematics Studies, pages 669--706. Princeton University Press, Princeton, 1982.

\end{thebibliography}

\Addresses

\end{document}